\newcommand{\al}{\alpha}
\newcommand{\be}{\beta}
\newcommand{\de}{\delta}
\newcommand{\na}{\nabla}
\renewcommand{\th}{\theta}
\newcommand{\si}{\sigma}
\newcommand{\om}{\omega}
\newcommand{\Ga}{\Gamma}
\newcommand{\Om}{\Omega}
\newcommand{\De}{\Delta}
\newcommand{\Si}{\Sigma}
\newcommand{\ZZ}{{\mathbb Z}}
\newcommand{\QQ}{{\mathbb Q}}
\newcommand{\RR}{{\mathbb R}}
\newcommand{\sD}{\mathscr D}
\newcommand{\cC}{\mathscr C}
\newcommand{\lto}{\longrightarrow}
\newcommand{\lk}{\operatorname{\ell{\it k}}}
\newcommand{\vlk}{\operatorname{{\it v}\ell{\it k}}}
\newcommand{\ind}{\operatorname{ind}}
\newcommand{\nullity}{\operatorname{nullity}}
\newcommand{\sig}{\operatorname{sig}}
\newcommand{\Hom}{\operatorname{Hom}}
\newcommand{\sm}{\smallsetminus}
\newcommand{\co}{\colon}
\newcommand{\im}{{\text{im}}}
\newcommand{\id}{{\text{id}}}
\newcommand{\Int}{\operatorname{Int}}
\newcommand*\wbar[1]{%        % widebar 
  \hbox{ \kern-0.2em%
    \vbox{%
      \hrule height 0.5pt  % The actual bar
      \kern0.25ex%         % Distance between bar and symbol
      \hbox{%
        \kern-0.15em%       % Shortening on the left side
        \ensuremath{#1}%
        \kern-0.05em%       % Shortening on the right side
      }%
    }%
  \kern0.05em}%
}
\newtheorem{theorem}{Theorem}[section]
\newtheorem{lemma}[theorem]{Lemma}
\newtheorem{proposition}[theorem]{Proposition}
\newtheorem{corollary}[theorem]{Corollary}
\newtheorem*{question}{Question}
\newtheorem{conjecture}[theorem]{Conjecture}
\theoremstyle{definition}     % this and the line below gives definitions in roman
\newtheorem{definition}[theorem]{Definition}
\theoremstyle{remark}
\newtheorem{remark}[theorem]{Remark}
\newtheorem{example}[theorem]{Example}
\title[Signature and concordance of virtual knots]{Signature and concordance of virtual knots}
\author[H. U. Boden]{Hans U. Boden}
\address{Mathematics \& Statistics, McMaster University, Hamilton, Ontario}
\email{boden@mcmaster.ca}
\urladdr{math.mcmaster.ca/~boden}
\author[M. Chrisman]{Micah Chrisman}
\address{Mathematics, Ohio State University at Marion, Marion, Ohio}
\email{chrisman.76@osu.edu}
\urladdr{https://micah46.wixsite.com/micahknots}
\author[R. Gaudreau]{Robin Gaudreau}
\address{Mathematics, University of Toronto, Toronto, Ontario}
\email{robin.gaudreau@mail.utoronto.ca}
\subjclass[2010]{Primary: 57M25, Secondary: 57M27}
\keywords{Virtual knots, concordance, Seifert surface, signature, slice knot, slice genus.}
\date{\today}                 % Activate to display a given date or no date
\begin{document}

\begin{abstract}
We introduce Tristram-Levine signatures of virtual knots and use them to investigate virtual knot concordance. The signatures are defined first for almost classical knots, which are virtual knots admitting homologically trivial representations. The signatures and $\om$-signatures are shown to give bounds on the \emph{topological} slice genus of almost classical knots, and they are applied to address a recent question of Dye, Kaestner, and Kauffman on the virtual slice genus of classical knots. A conjecture on the topological slice genus is formulated and confirmed for all classical knots with up to 11 crossings and for 2150 out of 2175 of the 12 crossing knots.

The Seifert pairing is used to define directed Alexander polynomials, which we show satisfy a Fox-Milnor criterion when the almost classical knot is slice.
We introduce virtual disk-band surfaces and use them to establish realization theorems for Seifert matrices of almost classical knots. As a consequence, we deduce that any integral polynomial $\De(t)$ satisfying $\De(1)=1$ occurs as the Alexander polynomial of an almost classical knot.
 
In the last section, we use parity projection and Turaev's coverings of knots to extend the Tristram-Levine signatures to all virtual knots. A key step is a theorem saying that parity projection preserves concordance of virtual knots. This theorem implies that the signatures, $\om$-signatures, and Fox-Milnor criterion can be lifted to give slice obstructions for all virtual knots. There are 76 almost classical knots with up to six crossings, and we use our invariants to determine the slice status for all of them and the slice genus for all but four.  Table \ref{table-2} at the end summarizes our findings.

\end{abstract}

\maketitle
%\tableofcontents
%%%%%%%%%%%%%%%%%%%%%%%%%%%%%%%%%%%%%%%%%%%%%%%%%%%%%%%%%%%%%%%%%%%%%%%%%%%%%%%%%%%%%%%%%%
\section*{Introduction}
A knot in $S^3$ is said to be \emph{slice} if it bounds an embedded disk in $D^4$, and two oriented knots $K,J$ in $S^3$ are said to be \emph{concordant} if their connected sum $K \# -J$ is slice. The set of concordance classes of knots forms an abelian group $\cC$ with addition given by connected sum. Since its introduction by Fox and Milnor in \cite{Fox-1962, Fox-Milnor-1966}, the subject of knot concordance has been of considerable interest to geometric topologists. In the 1980s, breakthroughs in 4-dimensional topology revealed a vast chasm separating smooth and topological concordance; indeed the groundbreaking results of Freedman and Donaldson combine to produce striking examples, including knots that are topologically slice but not smoothly slice \cite{Gompf-1986}. The $(-3,5,7)$ pretzel knot is perhaps the first and most famous example, but since then many other examples have been discovered.

Over the past 50 years, our understanding of topological concordance has progressed with  
the introduction of increasingly sophisticated invariants,
such as the Arf invariant \cite{Robertello-1965}, 
the Trotter-Murasugi and Tristram-Levine signatures \cite{Trotter-1962, Murasugi-1965, Tristram-1969}, 
the Casson-Gordon invariants \cite{Casson-Gordon-1978, Casson-Gordon-1986}, 
and the Cochran-Orr-Teichner invariants \cite{Cochran-Orr-Teichner-2003}.  
In a similar fashion, knowledge about
smooth concordance has advanced by means of the many invariants arising from gauge theory and knot homology, primarily the $d$-, $\tau$- and $\Upsilon$-invariants coming from Heegaard-Floer theory \cite{Ozsvath-Szabo-2003, Ozsvath-Szabo-Stipsicz-2017} and the Rasmussen $s$-invariant from Khovanov homology \cite{Rasmussen-2010}. Further results have been obtained from the many variants that can be defined using instanton, monopole, and Pin-monopole Floer homology and Khovanov-Rozansky $sl_n$ knot homology, see  \cite{Hom-2017, Lobb-2009}. 
 
When combined with constructive techniques, these invariants can be successfully deployed to determine the slice genus $g_4(K)$ and the unknotting number $u(K)$ of many low-crossing knots. Recall that the \emph{slice genus} of $K$ is the minimum genus among all oriented surfaces $F$ embedded in  $D^4$ with boundary $\partial F = K$, and the
 \emph{unknotting number} $u(K)$ is the minimum number of crossing changes needed to unknot $K$. These invariants are related by the inequality $g_4(K) \leq u(K).$ There are in fact two slice genera; the smooth slice genus $g_4(K)$ is a minimum over all smoothly embedded surfaces in $D^4$, and the topological slice genus $g^{\rm top}_4(K)$
is the minimum over all locally flat surfaces in $D^4$.
For knots up to nine crossings, $g_4(K) = g^{\rm top}_4(K)$,  but already for knots with ten crossings, the difference begins to emerge (see \cite{Lewark-McCoy-2017} and KnotInfo \cite{Knotinfo}).

Virtual knots were introduced by Kauffman in \cite{Kauffman-1999}, and concordance and cobordism of virtual knots were studied in \cites{Carter-Kamada-Saito, Turaev-2008-a, Kauffman-2015}.
While many of the standard invariants of classical knots extend to virtual knots in a straightforward way, very few concordance invariants have been extended. Apart from the Rasmussen invariant, which was extended to virtual knots by Dye, Kaestner, and Kauffman \cite{Dye-Kaestner-Kauffman-2014} using Manturov's generalization of Khovanov homology for virtual knots \cite{Manturov-2007}, none of the other concordance invariants for classical knots have been extended to the virtual setting. 

Our work is motivated in part by the following interesting question: 
\begin{question}\cite{Dye-Kaestner-Kauffman-2014}
Can the extension from the category of classical knots to virtual knots lower the slice genus?
\end{question}
One can ask this question about the smooth slice genus or the topological slice genus, so there are really two problems here.
For instance, Dye, Kaestner and Kauffman show that this cannot happen for any classical knot $K$ with smooth slice genus equal to its Rasmussen invariant, and that includes all positive knots \cite[Theorem 6.8]{Dye-Kaestner-Kauffman-2014}.
Similarly, the main result of \cite{Boden-Nagel-2016} implies that
this cannot happen for any classical knot with slice genus one.
Since that result holds for both smoothly slice knots and topologically slice knots, it follows that any classical knot like the $(-3,5,7)$ pretzel knot which is topologically slice but not smoothly slice remains so after passing to the virtual category.

In \cite{Boden-Chrisman-Gaudreau-2017, Boden-Chrisman-Gaudreau-2017t}, we develop methods for slicing virtual knots and use virtual unknotting operations to determine the slice genus.  For instance, we prove that Turaev's graded genus and the writhe polynomial are concordance invariants, and we use them to obtain useful slice obstructions for virtual knots. Unfortunately, these invariants are trivial on classical knots, and thus they are not helpful in addressing the above question.
 
Our goal in this paper is to extend signatures to virtual knots and to apply them to the above question about the topological slice genus of classical knots. For instance, our results resolve the above question for any classical knot whose signature satisfies $|\si(K)| = 2 g_4^{\rm top}(K)$. Taken with results in 
 \cite{Dye-Kaestner-Kauffman-2014}  and \cite{Boden-Nagel-2016}, they show that the topological and smooth slice genera do not change in passing to the virtual category for all classical knots with 11 or fewer crossings with two possible exceptions: $11a_{211}$ and $11n_{80}$ \cite{Knotinfo}.

We recall briefly the history and construction of knot signatures and outline the difficulties in extending them to the virtual setting.
Trotter defined the signature as an invariant of knots in \cite{Trotter-1962}, and Murasugi showed it to be invariant under concordance in \cite{Murasugi-1965}. The more general Tristram-Levine signatures were introduced in \cite{Tristram-1969} and used  to define a surjective homomorphism $\phi \colon \cC \to \ZZ^\infty \oplus (\ZZ/2)^\infty \oplus (\ZZ/4)^\infty$ in \cite{Levine-1969}.

The knot signatures are defined in terms of the linking pairing associated to a choice of Seifert surface $F$ for the knot. The Seifert matrix $V$ depends on the choice of surface as well as basis for $H_1(F)$, but its $S$-equivalence class is an invariant of the underlying knot \cite[\S 5.3]{Kawauchi-1990}. Since the signature $\sig(V + V^\uptau)$ is invariant under $S$-equivalence, it gives a well-defined invariant of the knot. 

For virtual knots, there are several obstacles to defining signatures using this approach. Firstly, Seifert surfaces do not always exist, and even when they do, the $S$-equivalence class of the associated Seifert matrices  depends on the choice of Seifert surface and does not determine a well-defined invariant of the underlying knot.

Before delving into details, we take a moment to review related results in  \cite{Im-Lee-Lee-2010} and \cite{Cimasoni-Turaev}. In \cite{Im-Lee-Lee-2010}, Im, Lee, and Lee introduce signature-type invariants, denoted $\si_\xi(K)$, for checkerboard colorable virtual knots in terms of Goeritz matrices.
These invariants depend on a choice of checkerboard coloring $\xi$, and it is not generally known whether they are invariant under virtual knot concordance.
In \cite{Cimasoni-Turaev}, Cimasoni and Turaev extend many invariants of classical knots, including signatures, to knots in quasi-cylinders. Their results do not immediately give concordance invariants for virtual knots for several reasons. One is that their notion of concordance is more restrictive and does not take into account stable equivalence. Another more serious issue is their assumption that $H_2(M) = 0$ for the quasi-cylinder $M$. This assumption is key to showing that invariants of $S$-equivalence classes of Seifert triples give well-defined invariants of knots in quasi-cylinders. However, this condition is not satisfied for a knot $K$ in a thickened surface $\Si \times I$.
%, and in that case the $S$-equivalence class of the Seifert triple is not independent of the choice of Seifert surface. 
Indeed, in \cite[\S 8.4, p.~558]{Cimasoni-Turaev} they write ``it is very unlikely that any Seifert type invariant can be constructed in this
general setting.'' 
%They introduce the notion of $\varphi$-equivalence to deal with this issue but the signatures defined in \cite[\S 6]{Cimasoni-Turaev} are not invariant under $\varphi$-equivalence.

We develop a  different approach to extending knot signatures to virtual knots. It starts
with defining signatures for \emph{almost classical knots}, which are virtual knots that can be represented by a homologically trivial knot in a thickened surface.
If $K$ is such a knot, then it admits a Seifert surface, which can be used to determine Seifert matrices $V^+$ and $V^-$ as in \cite{Boden-Gaudreau-Harper-2016}. 
In the case of a classical knot, 
%viewed as a knot $K$ in $S^2 \times I,$ 
the two matrices $V^+$ and $V^-$ are transposes of one another, but this is no longer true in the more general setting of almost classical knots. 
In that case, one can define signatures in terms of the symmetrization of either $V^+$ or $V^-$, and we show that the two symmetrizations are equal, hence so are their signatures. 
%that $V^+ + \left(V^+\right)^\uptau=V^- + \left(V^-\right)^\uptau$, hence the two signatures are equal. 
%follows that $\si^+(K,F) = \si^-(K,F)$. 

%For classical knots,  the signature of the symmetrized Seifert matrix $V + V^\uptau$ is unchanged under $S$-equivalence, and this shows that it  is independent of the choice of Seifert surface. However, 
Just as for classical knots, the signature of an almost classical knot is invariant under  $S$-equivalence of the Seifert pair $(V^+,V^-)$. However, the $S$-equivalence class of $(V^+,V^-)$  does not give a well-defined invariant of $K$ but rather depends on the choice of Seifert surface $F$. This behavior is a departure from the situation for classical knots, but we view it as a feature rather than a bug; the reason being that for the purposes of obstructing sliceness, it is often useful to be able to consider a different Seifert surface for $K$ (see Subsection \ref{subsec-computations}).

In any case, the same ideas can be used to define the directed Alexander polynomials $\na^\pm_{K,F}(t)$ and $\om$-signatures $\widehat{\si}_\om(K,F)$ for almost classical knots. Like the signatures, these invariants obstruct sliceness and depend on the choice of Seifert surface $F$. The two main results here are Theorem \ref{thm:main} and \ref{thm:main-2}. The first result shows that for an almost classical knot $K$ with Seifert surface $F$, its topological slice genus $g^{\rm top}_s(K)$ is bounded below by $|\widehat{\si}_\om(K,F)|/2$. The second result shows that if $K$ is topologically slice, then there exist polynomials $f^\pm(t) \in \ZZ[t]$ such that $\na^\pm_{K,F}(t) = f^\pm(t) f^\pm(t^{-1})$. This result is the analogue of the Fox-Milnor condition in the virtual setting.  We apply these invariants to the problem of determining sliceness and the slice genus of almost classical knots up to six crossings, and the results are summarized in Table \ref{table-2}. 
This table and the others include classical knots, and our results show that the knot signature of a classical knot $K$ continues to give an effective lower bound on its virtual topological slice genus. 

To extend the signatures from almost classical knots to all virtual knots, we use parity projection, as defined by Manturov \cite{Manturov-2010},  and Turaev's notion of lifting of knots in surfaces \cite{Turaev-2008-a}.
% (see \S \ref{PandP} and \ref{gaussianparity}).
A key result is Theorem \ref{thm_parity_conc}, which shows that if $K_0$ and $K_1$ are virtual knots and are concordant, then so are the virtual knots  $P_n(K_0)$ and $P_n(K_1)$ obtained under parity projection. Here $P_n$ denotes projection with respect to the mod $n$ Gaussian parity. For any virtual knot $K$, set ${K}_0 = P_0^\infty(K) = \lim_{n\to\infty} (P_0)^n (K)$, the image under stable projection. Then ${K}_0$ is an almost classical knot, and  its concordance class is determined by that of $K$. Thus, the signatures, $\om$-signatures, and directed Alexander polynomials of $K_0$ all lift  to give sliceness obstructions for $K$.
  
We end this introduction with a brief outline of the contents of this paper.
In Section \ref{sec-1}, we introduce the basic notions  such as Gauss diagrams, virtual knot concordance, Carter surfaces,  the virtual knot concordance group, and almost classical knots. 
In Section \ref{sec-3}, we introduce various invariants for almost classical knots, including the Alexander-Conway polynomials, the knot signature and nullity, and the directed Alexander polynomials and $\om$-signatures. 
In Section \ref{sec-4}, we construct virtual Seifert surfaces realizing any Seifert pair as arising from an almost classical knot. A modification of the construction shows how to realize any null-concordant Seifert pair by a ribbon almost classical knot diagram.
In Section \ref{sec-5}, we show how to compute the signatures, $\om$-signatures, and directed Alexander polynomials for almost classical knots. Included is a skein relation for  $\na^\pm_{K,F}(t)$ (see equation \eqref{eq-skein}) and a method for computing the signature under crossing changes (see equations \eqref{eq-Conway-1} and \eqref{eq-Conway-2}). These computations are applied to the problem of determining the slice genus of almost classical knots up to six crossings, which are given in Table \ref{table-2},
and used to provide evidence in support of the conjectured equality of the virtual and classical slice genera for classical knots with up to 12 crossings. 
In Section \ref{sec-2}, we review parity and relate parity projection to lifting knots along covers. The main theorem is Theorem \ref{thm_parity_conc}, showing that parity projection preserves concordance.
At the end of the paper, we present tables of almost classical knots up to six crossings along with  their Alexander-Conway polynomials
(Table \ref{table-1}), and their graded genera, signatures, $\om$-signatures, and slice genera (Table \ref{table-2}), and pairs of Seifert matrices (Table \ref{table-3}). Figure \ref{slice-Gauss} on p.~\pageref{slice-Gauss}
shows all slice almost classical knots up to six crossings with their slicings, and
Figure \ref{ACknot-diagrams} on p.~\pageref{ACknot-diagrams} shows almost classical knots up to six crossings realized as knots in thickened surfaces.

Notation: Throughout this paper all homology groups will be taken with $\ZZ$ coefficients unless otherwise noted. 
%Here we work mainly with smooth concordance, though occasionally the weaker hypothesis of local flatness is sufficient. 
Decimal numbers such as 4.99 and 5.2012 refer to virtual knots in Green's tabulation \cite{Green}.

%%%%%%%%%%%%%%%%%%%%%%%%%%%%%%%%%%%%%%%%%%%%%%%%%%%%%%%%%%%%%%%%%%%%%%%%%%%%%%%%%%%%%%%%%%
\section{Preliminaries} \label{sec-1}

Concordance is an equivalence relation on classical knots that was extended to virtual knots in \cite{Carter-Kamada-Saito, Turaev-2008-a, Kauffman-2015}. Following the latter two approaches, we define virtual knots and links as equivalence classes of Gauss diagrams, which we take a moment to explain. 

%%%%%%%%
\subsection{Gauss diagrams}
\begin{figure}[ht]
\centering
\includegraphics[scale=0.820]{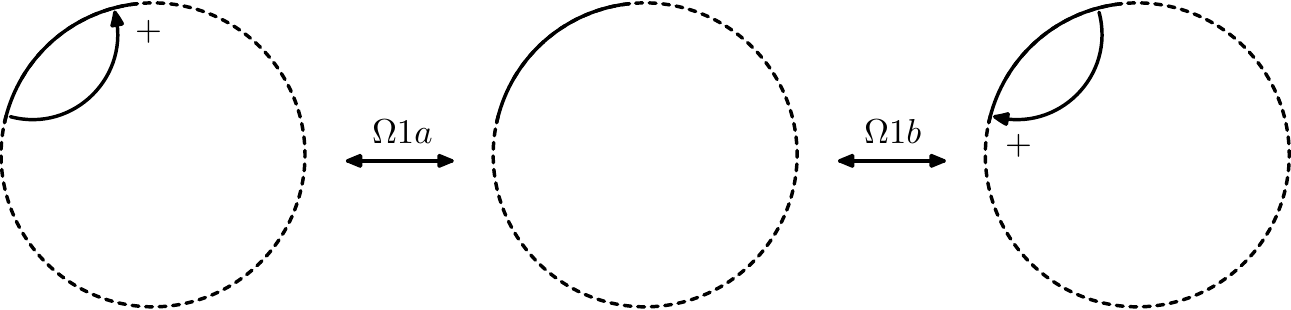} \smallskip \\
\includegraphics[scale=0.820]{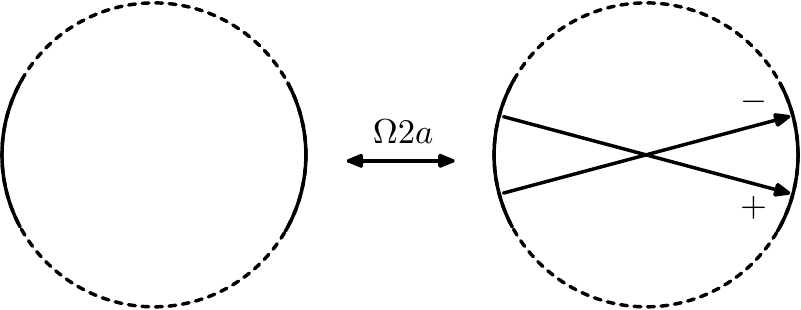}  \bigskip \\
 \includegraphics[scale=0.820]{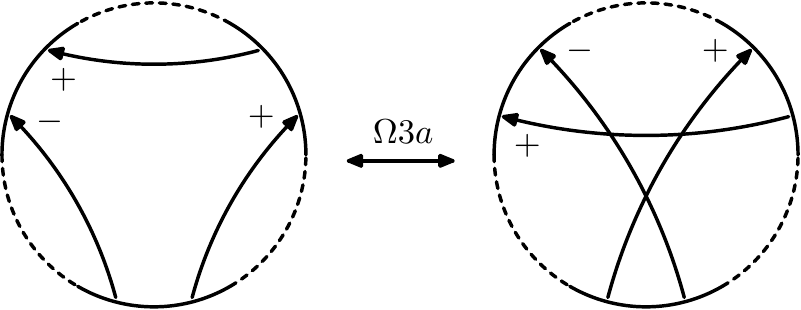}  \qquad  
  \includegraphics[scale=0.820]{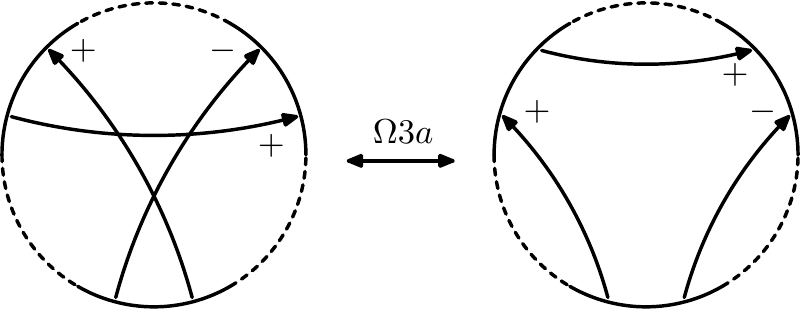}
\caption{Reidemeister moves for Gauss diagrams.}
\label{GD-RM}
\end{figure}

A Gauss diagram is a decorated trivalent graph consisting of one or more core circles, oriented counterclockwise, together with a finite collection of signed, directed chords connecting distinct pairs of points on the circles. Each core circle represents a knotted curve on a surface,  and the  directed chords, which are also called arrows, connect preimages of the double points of the underlying immersed curve; they point from the over-crossing arc to the under-crossing arc, and their sign ($+$ or $-$) indicates the writhe of the crossing. A virtual knot or link is then an equivalence class of Gauss diagrams under the equivalence generated by the Reidemeister moves.  In \cite{Polyak}, Polyak showed that all Reidemeister moves can be generated by the four moves $\Om1a, \Om1b, \Om 2a,$ and $\Om 3a$, which are depicted for diagrams on one component in Figure \ref{GD-RM}.

An equivalent and alternative definition for virtual knots and links is as equivalence classes of virtual knot and link diagrams as explained in \cite{Kauffman-1999}. Note that a virtual link diagram is said to be oriented if every component has an orientation. We use $K^r$ to denote the knot or link with its orientation reversed.

%%%%%%%%%
\subsection{Virtual knot concordance} \label{subsec-VKC}

We say that two virtual knots $K_0$ and $K_1$ are \emph{concordant} if $K_0$ can be transformed into  $K_1$ by a finite sequence of $b$ births, $d$ deaths, $s$ saddle moves, and Reidemeister moves, such that $s=b+d$. Births, deaths, and saddles are local moves on a virtual knot or link diagram; they are the same as in classical concordance. However, there is an equivalent description of them in terms of Gauss diagrams that is particularly convenient, and these are depicted in Figure  \ref{GD-conc}. Thus a saddle is an oriented smoothing along a chord whose endpoints are disjoint from the endpoints of all other chords. A birth is the addition of a disjoint unknotted component, whereas a death is its removal.  Saddles are indicated with a dotted line segment as in Figure \ref{GD-conc}.

\begin{figure}[ht]
\centering
\includegraphics[scale=0.82]{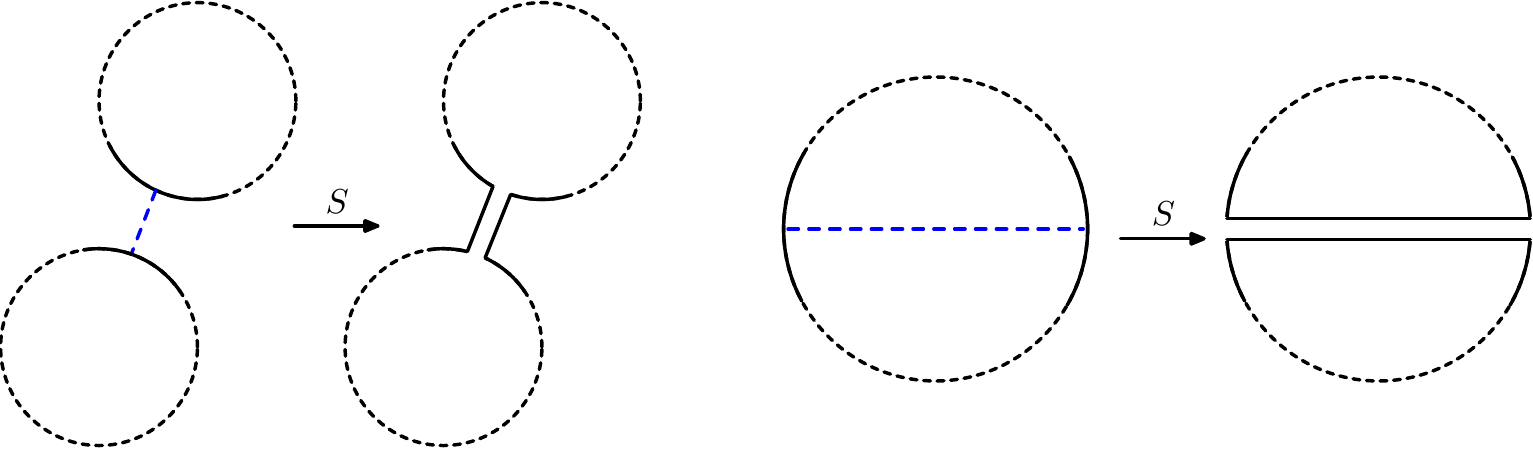}   
\caption{Concordance is generated by saddle moves, births and deaths.}
\label{GD-conc}
\end{figure}

Given an arbitrary virtual knot $K$, it is elementary to show that there exists a finite sequence of  births, deaths, and saddles transforming $K$ to the unknot. Given such a sequence, set $g= (s-b-d)/2$, the genus of the surface cobordism from $K$ to the unknot,  where $b,d,s$ are the numbers of births, deaths, and saddles, respectively. 
The \emph{slice genus} of $K$ is defined to be the minimum $g$ over all such sequences.  
The virtual knot $K$ is said to be \emph{slice} if it is concordant to the unknot, and it is called \emph{ribbon} if it is concordant to the unknot by a sequence of moves that includes only saddles and deaths. 

It is not known whether every slice virtual knot is ribbon. This is the virtual analogue of  Fox's question, which asks whether every classical slice knot is ribbon \cite{Fox-1962-b}. 
Note that there could be classical knots which are virtually ribbon but not classically ribbon, i.e., knots which admit ribbon virtual knot diagrams but not ribbon classical knot diagrams. Thus, a weaker version of Fox's question is whether every classical knot that is slice is virtually ribbon. 

It is tempting to define concordance for welded knots in an analogous way. Recall that  welded knots are equivalence classes of Gauss diagrams under the equivalence generated by Reidemeister moves along with the first forbidden move $f1$ shown in Figure \ref{GD-f1}. However, concordance of welded knots leads a trivial theory, and in fact one can show that every welded knot is concordant to the unknot \cite{Gaudreau-2018}.
\begin{figure}[ht]
\centering
\includegraphics[scale=0.820]{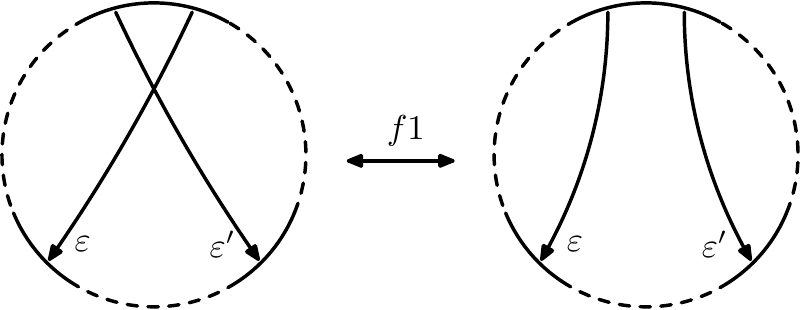} 
\caption{The first forbidden move.}
\label{GD-f1}
\end{figure}

%%%%%%%%%
\subsection{Concordance of knots in thickened surfaces}
Given a Gauss diagram $D$, we review the construction of the Carter surface $\Si$. Our discussion is based on \cite{Carter}, and the analogous construction for virtual knot diagrams can be found in \cite{Kamada-Kamada-2000}.

Suppose that $D$ has $n$ chords on the core circle O, which is oriented counterclockwise. The portions of O from one crossing to the next are called arcs. We thicken O to give an annulus and perform $n$ plumbings, one for each chord, according to its sign (see Figure \ref{plumbing}). This results in an oriented surface with boundary, and we form a closed surface by gluing disks to each boundary component of the plumbed annulus. The resulting closed oriented surface is called the \emph{Carter surface}.  

\begin{figure}[ht]
\def\svgwidth{0.85\textwidth} 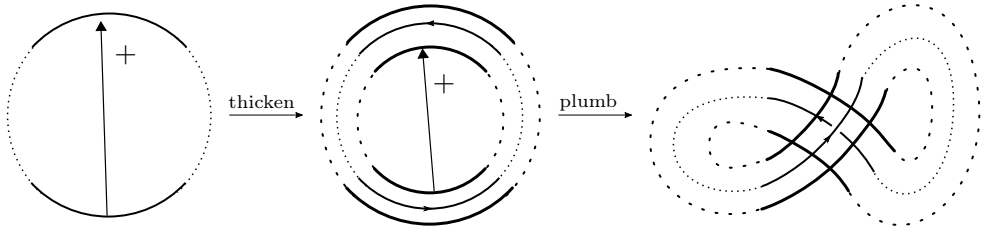
\caption{Plumbing along a chord in a Gauss diagram.}
\label{plumbing}
\end{figure}

Conversely, given a knot $K$ in a thickened surface $\Si \times I$, its crossing information determines a Gauss diagram which in turn determines a virtual knot. Stabilization of a knot $K$ in a thickened surface $\Si \times I$ is the result of performing surgery on $\Si$ by attaching a 1-handle disjoint from $K$. The opposite procedure is called  \emph{destablization}, and it involves surgery on $\Si$ removing a 1-handle disjoint from $K$. Notice that stablization and destablization do not affect the underlying Gauss diagram, so they preserve the associated virtual knot. Two knots $K_0$ in $\Si_0 \times I$ and $K_1$ in $\Si_1 \times I$ in thickened surfaces are \emph{stably equivalent} if they become equivalent under a finite number of stablizations and destablizations. By results of \cite{Carter-Kamada-Saito}, there is a one-to-one correspondence between virtual knots  and knots in thickened surfaces up to stable equivalence.

In \cite{Turaev-2008-a}, Turaev studied concordance for knots in thickened surfaces, which is defined as follows.

\begin{definition} \label{defn-conc}
Two oriented knots $K_0$ in $\Si_0 \times I$  and $K_1$ in $ \Si_1 \times I$ are called \emph{concordant} if there exists an  oriented $3$-manifold~$M$ with  
$\partial M \cong -\Si_0 \sqcup \Si_1$ and an annulus~$A \subset M \times I$ with $\partial A = -K_0 \sqcup K_1$. If the annulus $A$ is smoothly embedded, then $K_0$ and $K_1$ are said to be \emph{smoothly concordant}. If the annulus $A$ is locally flat, then  $K_0$ and $K_1$ are said to be \emph{topologically concordant}.
A knot $K$ in $\Si \times I$ which is (smoothly or topologically) concordant to the unknot is called (smoothly or topologically) \emph{slice}. 
\end{definition}

One can verify that stably equivalent knots are smoothly concordant, and thus both notions of concordance of knots in surfaces induce equivalence relations on virtual knots. By \cite[Lemma 12]{Carter-Kamada-Saito}, it follows that the notion of smooth concordance  for virtual knots is equivalent to the definition in Subsection \ref{subsec-VKC}. 
See Figure \ref{comparison} for an illustration of this correspondence.

\begin{figure}[ht]
\def\svgwidth{0.95\textwidth} 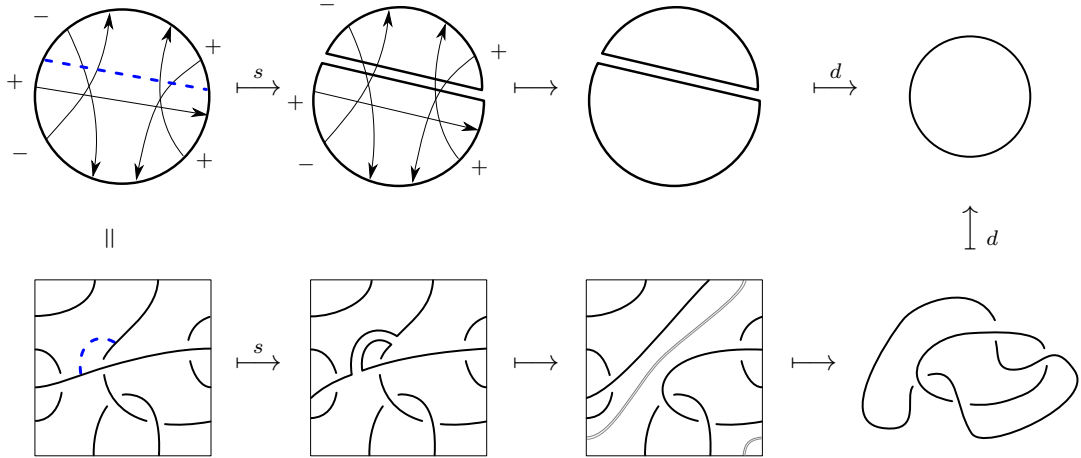
\caption{A side-by-side comparison of virtual knot cobordism, and cobordism of the knot in the surface. The torus is destabilized along the grey curve to obtain the bottom right diagram.}
\label{comparison}
\end{figure}

\subsection{Almost classical knots} \label{subsec-ack}

In this subsection, we give three equivalent definitions of the index of a crossing $c$ in a virtual knot $K$ in terms of its three representations, namely as a virtual knot diagram, as a Gauss diagram, and as a knot diagram on a surface.

First, we review the definition of \emph{virtual linking numbers}. Suppose  $L=J \cup K$ is a virtual link with two components, and define $\vlk(J, K)$ as the sum of the writhe of the crossings where $J$ goes over $K$. Notice that if $L$ is classical, then $\vlk(J, K) = \vlk(K, J)$, but the virtual Hopf link in Figure \ref{vhopf} shows that this is not true in general for virtual links. 

\begin{figure}[ht]
\includegraphics[scale=0.90]{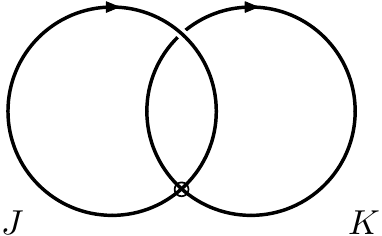}  
\caption{The virtual Hopf link has $\vlk(J, K) = 1$ and $\vlk(K, J) =0$ } \label{vhopf}
\end{figure}

Suppose $K$ is a virtual knot diagram and $c$ is a crossing of $K$. Then the oriented smoothing of $K$ at $c$ is a virtual link diagram with two components $K'$ and $K''$, where $K'$ denotes the component that contains the outward pointing over-crossing arc at $c$. The index of $c$ is defined by setting $$\ind(c)=\vlk(K', K'')-\vlk(K'',K').$$

Now suppose $c$ is an arrow in a Gauss diagram $D$, which we draw with $c$ pointing up. Set $$\ind(c)= r_+(c)-r_-(c)+\ell_-(c)-\ell_+(c), $$ where $r_\pm(c)$ are the numbers of $\pm$-arrows intersection $c$ and pointing to the right, and $\ell_\pm(c)$ are the numbers of $\pm$-arrows pointing to the left. An easy exercise shows that $\vlk(K', K'') = r_+(c)-r_-(c)$ and $\vlk(K'',K') = \ell_+(c)-\ell_-(c),$ thus this definition of $\ind(c)$ agrees with the previous one.

Lastly, suppose $K\subset\Si \times I$ is a knot in a thickened surface, and let $p \co \Si \times I \to \Si$ denote the projection map. For any crossing $c$ of $K$,  let $K'$ and $K''$ be the two components obtained from the oriented smoothing at $c$ as above and set 
\begin{equation}\label{eq:intersect}
\ind(c)=[p_*(K')]\cdot [p_*(K'')],
\end{equation}
the algebraic intersection of the homology classes $[p_*(K')], [p_*(K'')]\in H_1(\Si)$. 

Given a classical knot $K$ represented as a knot in $S^2 \times I$, then the Jordan curve theorem implies that every chord $c$ of $K$ has $\ind(c)=0.$  The converse is however false, and virtual knots such that each chord has index zero are called \emph{almost classical} \cite{Silver-Williams-2006b}.

\begin{definition}  \label{defn:AC}
A virtual knot $K$ is \emph{almost classical} if it can be represented by a Gauss diagram $D$ such that every chord $c$ satisfies $\ind(c) = 0$. 
\end{definition}

One can alternatively define almost classical knots as those admitting Alexander numberable diagrams, and this approach generalizes to define almost classical links. As explained in \cite{Boden-Gaudreau-Harper-2016}, a virtual link is almost classical if and only if it can be represented by a homologically trivial link in a thickened surface. 
If $K \subset \Si \times I$ is an almost classical knot or link, then by the modified Seifert algorithm in \cite[\S 6]{Boden-Gaudreau-Harper-2016}, one can construct a compact oriented surface $F$ in $\Si \times I$ with $\partial F =K$. We call $F$ a \emph{Seifert surface} for $K$.

%%%%%%%%%%%%%%%%%%%%%%%%%%%%%%%%%%%%%%%%%%%%%%%%%%%%%%%%%%%%%%%%%%%%%%%%%%%%%%%%%%%%%%%%%%

\section{Signatures for almost classical knots} \label{sec-3}

In this section, we recall the Seifert pairing from \cite{Boden-Gaudreau-Harper-2016} and use it to define invariants of almost classical knots. We extend the signature and Levine-Tristram signatures to almost classical knots, where unlike in the classical case, they depend on the choice of Seifert surface. Also, we introduce the directed Alexander polynomials of the pair $(K,F)$, where $K$ is an almost classical knot and $F$ is a Seifert surface. These are two distinct generalizations of the Alexander-Conway polynomial. The main results are Theorems \ref{thm:main} and \ref{thm:main-2}, which show that the Tristram-Levine signatures provide lower bounds on the slice genus and
that the directed Alexander polynomials give slice obstructions.

%%%%%%%%
\subsection{Linking numbers} \label{subsec:link-num}
We review the basic properties of the linking numbers in $\Si \times I$. 
Let $J,K$ be disjoint oriented knots in $\Si \times I$. By \cite[Proposition 7.1]{Boden-Gaudreau-Harper-2016}, we see
that  the relative homology group $H_1(\Si \times I\sm J,\Si \times \{1\})$ is infinite cyclic and generated by a meridian $\mu$ of $J$. Let $[K]$ denote the homology class of $K$ in  $H_1(\Si \times I\sm J,\Si \times \{1\})$, and define $\lk(J,K)$ to be the unique integer $m$ such that $[K]=m \mu$. The linking number $\lk(J,K)$ can be computed as $J\cdot B$, the algebraic intersection number, where $B$ is a 2-chain  in $\Si \times I$ with $\partial B = K -v$ for some 1-cycle $v$ in $\Si \times \{1\}$. In practice, to compute $\lk(J,K)$, we count, with sign, the number of times that $J$ crosses above $K$ in $\Si \times I$. Here, ``above'' is taken with respect to the positive $I$-direction in $\Si \times I$. Thus, computing the linking number  $\lk(J,K)$ is formally similar to computing the virtual linking number, which were introduced in Subsection \ref{subsec-ack}.

Linking numbers in $\Si \times I$ are not symmetric but rather satisfy (see \S10.2 \cite{Cimasoni-Turaev}):   
\begin{equation} \label{eq:linking}
\lk(J,K) - \lk(K,J) = p_* [K]\cdot p_*[J], 
\end{equation}
where $\cdot$ is the algebraic intersection number in $\Si$ of the projections of $J$ and $K$ to $\Si$. 

%%%%%%%%%%%%%%%%%%%%%%%%%%%%
\subsection{Seifert forms and Seifert matrices} \label{subsec:SfSm}
Suppose that $K$ is an almost classical link, realized as a homologically trivial link in $\Si \times I$. Suppose further that $F$ is a  Seifert surface for $K$. The Seifert forms $\th^{\pm}\co H_1(F) \times H_1(F) \to \ZZ$ are defined by $\th^{\pm}(x,y)=\lk(x^{\pm},y)$, where $x^{\pm}$ denote the $\pm$ ``pushoffs'' of $x$ into $\Si \times I \sm F$. 
Each of $\th^+$ and $\th^-$ is bilinear, and they satisfy 
\begin{equation} \label{skew-ness}
\th^-(x,y) -\th^+(x,y) =\lk(x^-,y) -\lk(x^+,y) =  \langle x,y\rangle_F,
\end{equation}
where  $\langle\cdot,\cdot\rangle_F$ denotes the intersection form on the surface $F$.

The Seifert forms $(\th^+,\th^-)$ are represented by a pair of Seifert matrices $(V^+,V^-)$, which we introduce next.
The homology group $H_1(F)$ is a free abelian group of rank $n$ for some integer $n \geq 0$.
Let $\{a_1,\ldots, a_{n}\}$ be an ordered basis for $H_1(F)$, and define
the two Seifert matrices $(V^+,V^-)$ by setting the $i,j$ entry of $V^\pm$ equal to $\th^\pm(a_i,a_j)$.
The Seifert matrices are not invariants of the almost classical link $K$; they depend on the choice of Seifert surface $F$ and basis for $H_1(F)$. A different choice of basis for $H_1(F)$ alters the matrices $(V^+,V^-)$ by simultaneous unimodular congruence. Recall that
an integral square matrix $P$ is said to be \emph{unimodular} if it has determinant $\det(P) = \pm 1$, and that two integral square matrices $V,W$ are said to be \emph{unimodular congruent} if there is a unimodular matrix $P$ such that $W =  P V P^\uptau.$

Assume now that $K$ is almost classical knot with Seifert surface $F$.
For any pair of Seifert matrices $(V^+,V^-)$, since the intersection form $\langle\cdot,\cdot\rangle_F$ on $F$ is skew-symmetric and non-singular, equation \eqref{skew-ness} implies that $V^- -V^+$ is a skew-symmetric matrix satisfying $\det(V^- -V^+)=1.$ In Subsection \ref{subsec:proof_real},  we will see that any pair of integral $2g \times 2g$ matrices 
%$(V^+,V^-)$ with $V^- -V^+$ skew-symmetric and $\det(V^- -V^+) =1$ 
satisfying these conditions occurs as the Seifert matrices for some
almost classical knot (cf. Theorem \ref{thm_realize}).

\subsection{The Alexander-Conway polynomial} 
Suppose $K$ is an almost classical link, realized as a homologically trivial link in $\Si \times I$, and $F$ is a  Seifert surface for $K$. Let $(V^+, V^-)$ be the Seifert pair associated to $K$, $F$, and a choice of basis for $H_1(F).$
The Alexander-Conway polynomial of $K$ is then defined by setting
\[
\De_K(t)=\det\left(t^{1/2} \, V^{-} - \, t^{-1/2} \, V^{+}\right).
\]
A proof that $\De_K(t)$ is independent of the choice of Seifert surface $F$ and basis for $H_1(F)$ can be found in \cite{Boden-Gaudreau-Harper-2016}, and it follows that $\De_K(t) \in \ZZ[t^{1/2},t^{-1/2}]$ is well-defined up to multiplication by $t^k$, where $k$ is the \emph{virtual genus} of $K$, namely the smallest genus among all surfaces $\Si$ containing a representative for $K$.
In case $K$ is an almost classical knot, then $\De_K(t) \in \ZZ[t,t^{-1}]$. 

For classical links, $V^{-} = (V^{+})^\uptau,$ and this shows that $\De_K(t)$ is a \emph{balanced} polynomial, i.e., that it satisfies $\De_K(t) = \De_K(t^{-1}).$ 
Indeed, the Alexander polynomial of any classical knot $K$ satisfies: (i) $\De_K(1)=1$ and  (ii) $\De_K(t^{-1}) \doteq \De_K(t)$, where we write $f(t) \doteq g(t)$ for $f(t),g(t) \in \ZZ[t^{1/2},t^{-1/2}]$ if $f(t)= t^\ell g(t)$ for some integer $\ell$.
A well-known result due to Seifert \cite{Seifert-1935} shows that any integral polynomial of even degree satisfying (i) and (ii) occurs as the Alexander polynomial of a classical knot (see \cite[Theorem[8.13]{Burde-Zieschang-Heusener} for a proof). 

If $K$ is an almost classical knot, it is no longer true that  $V^{-}$ equals $(V^{+})^\uptau,$ thus $\De_K(t)$ is not necessarily balanced. Nevertheless, the Alexander-Conway polynomial of any almost classical knot continues to satisfy the first condition.
In Subsection \ref{subsec:proof_alex} we will see that every integral polynomial $\De(t)$ with $\De(1)=1$ occurs as the Alexander polynomial of some 
almost classical knot (cf. Theorem \ref{thm:real_alex}).

%%%%%%%%%%%
\subsection{Signature for almost classical knots}  \label{subsec:sign-AC}
Suppose that $K$ is an almost classical link, realized as a link in $\Si \times I$, and that $F$ is a Seifert surface for $K$ in $\Si \times I$. The \emph{signature} and \emph{nullity} of the pair $(K,F)$ are defined to be 
\[
\si^\pm(K,F)=\sig(V^{\pm}+(V^{\pm})^{\uptau}) \quad \text{and} \quad n^\pm(K,F)=\nullity(V^{\pm}+(V^{\pm})^{\uptau}).
\] 
The following lemma implies that $\si^+(K,F)= \si^-(K,F)$ and $n^+(K,F)= n^-(K,F)$, and henceforth we use $\si(K,F)$ and $n(K,F)$ to denote the signature and nullity of $(K,F)$.

\begin{lemma} \label{first-lem}
$V^++(V^+)^{\uptau}=V^-+(V^-)^{\uptau}$.
\end{lemma}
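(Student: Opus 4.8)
The plan is to deduce the identity from the skew-symmetry relation \eqref{skew-ness}, which says that $V^- - V^+$ represents the intersection form $\langle\cdot,\cdot\rangle_F$ on $F$ in the chosen basis. Since $F$ is a Seifert surface for an almost classical \emph{knot}, it has one boundary component, so the intersection form is skew-symmetric; that is, $(V^- - V^+)^\uptau = -(V^- - V^+)$. Transposing this and rearranging is exactly what should give the claim.

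Concretely, I would proceed as follows. First, write $A = V^- - V^+$ and record that $A^\uptau = -A$ from the skew-symmetry of the intersection form on a Seifert surface of a knot (this is already noted in Subsection \ref{subsec:SfSm}). Second, expand $A^\uptau = -A$ as $(V^-)^\uptau - (V^+)^\uptau = -(V^- - V^+) = V^+ - V^-$. Third, rearrange to collect the $+$ terms on one side and the $-$ terms on the other: $(V^-)^\uptau + V^- = V^+ + (V^+)^\uptau$, which is precisely the asserted equality $V^+ + (V^+)^\uptau = V^- + (V^-)^\uptau$.

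That is essentially the whole argument; the only thing to be careful about is that the relation $\th^-(x,y) - \th^+(x,y) = \langle x,y\rangle_F$ in \eqref{skew-ness} holds for all $x,y \in H_1(F)$, so it passes to the matrix level with respect to any fixed basis, giving $V^- - V^+$ as the Gram matrix of $\langle\cdot,\cdot\rangle_F$; and that $\langle\cdot,\cdot\rangle_F$ is genuinely skew-symmetric because $\partial F = K$ is connected. I expect no real obstacle here — the statement is a short piece of linear algebra once \eqref{skew-ness} is in hand. If one wanted to be thorough, one could remark that the same computation shows $\si^+(K,F) = \si^-(K,F)$ and $n^+(K,F) = n^-(K,F)$ immediately, since the two symmetrized matrices are literally equal, not merely congruent.
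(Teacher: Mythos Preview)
Your proof is correct, but it takes a different route from the paper's. You deduce the identity purely from equation \eqref{skew-ness}: since $V^- - V^+$ represents the intersection form $\langle\cdot,\cdot\rangle_F$, and this form is skew-symmetric on any oriented surface, the matrix $V^- - V^+$ is skew-symmetric and the lemma follows by transposing. The paper instead works directly with linking numbers: using the push-off identity $\lk(J_1^{\pm},J_2)=\lk(J_1,J_2^{\mp})$ together with the asymmetry relation \eqref{eq:linking} for linking in $\Si\times I$, it shows that the $(i,j)$ entry of $V^{\pm}$ equals the $(i,j)$ entry of $(V^{\mp})^{\uptau}$ plus the symmetric term $p_*[a_i]\cdot p_*[a_j]$, whence $V^+-V^-=(V^-)^{\uptau}-(V^+)^{\uptau}$. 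Your argument is shorter and stays at the level of linear algebra once \eqref{skew-ness} is granted; the paper's argument is more geometric and incidentally yields the finer relation $V^{\pm}=(V^{\mp})^{\uptau}+P$ with $P_{ij}=p_*[a_i]\cdot p_*[a_j]$.

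One small correction: the skew-symmetry of $\langle\cdot,\cdot\rangle_F$ holds because $F$ is oriented, not because $\partial F$ is connected. Connectedness of the boundary is what gives non-degeneracy (and hence $\det(V^--V^+)=1$), which you do not need here. This matters because the lemma in the paper is stated for almost classical \emph{links}, where $F$ may have several boundary components; your argument still goes through in that generality once the reason for skew-symmetry is stated correctly.
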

\begin{proof} First note that for two knots $J_1,J_2$ on $F$, $\lk(J_1^{\pm},J_2)=\lk(J_1,J_2^{\mp})$. Let $\{a_1, \ldots, a_{2g}\}$ be a collection of simple closed curves on $F$ giving a basis for $H_1(F)$.   
Writing the Seifert matrices with respect to this basis, we see that $V^{\pm}$ has $(i,j)$ entry  
\begin{eqnarray*}
\lk(a_i^{\pm},a_j) &=& \lk(a_j,a_i^{\pm})+ p_*[a_i]\cdot p_*[a_j] \\
              &=& \lk(a_j^{\mp},a_i)+p_*[a_i]\cdot p_*[a_j].  
\end{eqnarray*} 
The first term is the $(i,j)$ entry of the transposed matrix $(V^{\mp})^{\uptau}$, thus it follows that $V^+ -V^-=(V^-)^{\uptau}-(V^+)^{\uptau}$, which proves the lemma. 
\end{proof}

As in the classical case, it is useful to generalize the signature to Tristram-Levine signature functions, and in the case of an almost classical knot, we actually get a pair of signature functions. To define the signature functions, it is first necessary to define the  \emph{directed Alexander polynomials}, which are given by setting 
$$\na^\pm_{\! K,F}(t)=\det \left( t^{1/2} \, V^{\pm}-t^{-1/2}(V^{\pm})^{\uptau}\right).$$ 
Notice that $\na^\pm_{\! K,F}(t) \in \ZZ[t, t^{- 1}]$, and they are balanced polynomials, namely they satisfy $\na^\pm_{\! K,F}(t)=\na^\pm_{\! K,F}(t^{-1})$. We call $\na^+_{\! K,F}(t)$ the \emph{up} Alexander polynomial and $\na^-_{\! K,F}(t)$ the \emph{down} Alexander polynomial.
Both polynomials depend on the choice of Seifert surface $F$, and the up and down Alexander polynomials are generally distinct from one another and from the Alexander-Conway polynomial. Of course, for classical knots, all three polynomials coincide; i.e., if $K$ is classical then $\De_K(t) = \na^+_{\! K,F}(t) =\na^-_{\! K,F}(t)$. In particular, in this case the up and down Alexander polynomials are independent of the choice of Seifert surface. 

For $\om$ a complex unit number, $\om \neq 1$, the matrices $(1-\om) V^{\pm}+(1-\wbar{\om}) (V^{\pm})^{\uptau}$ are Hermitian, and we define the \emph{$\om$-signatures} by setting
\[
\widehat{\si}_{\om}^{\pm}(K,F)=\sig \left((1-\om) V^{\pm}+(1- \wbar{\om}) (V^{\pm})^{\uptau}\right).
\] 
If the matrix $(1-\om) V^{\pm}+(1-\wbar{\om}) (V^{\pm})^{\uptau}$ is non-singular, then we will show that $\widehat{\si}_{\om}^{\pm}(K,F)$ provides an obstruction to sliceness of $K$. We will further relate non-singularity of the above Hermitian matrix to the vanishing of $\na^\pm_{\! K,F}(\om).$ As usual, for $\om = -1,$ we have that $\widehat{\si}_{-1}^{\pm}(K,F)=\si(K,F)$. Lemma \ref{first-lem} implies that $\na^+_{K,F}(-1)=\na^-_{K,F}(-1)$, and notice also that $\na^\pm_{K,F}(-1) \neq 0 \; \Longleftrightarrow \; n(K,F)=0.$

%%%%%%%%%%%
\subsection{Signature and concordance of virtual knots}
In this subsection, we study the signature functions and the directed Alexander polynomials as obstructions to sliceness.  The method of the proof follows that of the classical case  (cf. \cite[Chapter 8]{Lickorish}). The main issue is to adapt the classical proof to linking numbers in $\Si \times I$, as defined in \S \ref{subsec:link-num}. 

The main results are Theorems \ref{thm:main} and \ref{thm:main-2}, and they will follow from a sequence of lemmas, which we now state and prove.

For the next lemma, let $K \subset \Si \times I$ be a knot in a thickened surface, $W$ a compact oriented $3$-manifold with $\partial W=\Si,$ and $S$ a locally flat oriented surface in $W \times I$ with $K=\partial S$. By \cite[\S 9.3]{Freedman-Quinn}, $S$ has a normal bundle homeomorphic to $S \times D^2$, which we denote by $N$. 

\begin{lemma} \label{lem-inc-iso}
The inclusion of pairs $(\Si \times I\sm K, \Si \times \{1\}) \hookrightarrow (W \times I\sm S, W \times \{1\})$ induces an isomorphism in homology $H_1(\Si \times I\sm K, \Si \times \{1\}) \to H_1(W \times I\sm S, W \times \{1\}) \cong \ZZ$.
\end{lemma}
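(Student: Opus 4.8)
The plan is to compute both homology groups via a Mayer--Vietoris or Alexander-duality argument and to check that the inclusion is compatible with the generators (meridians). First I would recall from \cite[Proposition 7.1]{Boden-Gaudreau-Harper-2016} that $H_1(\Si \times I \sm K, \Si \times \{1\})$ is infinite cyclic, generated by a meridian $\mu$ of $K$; this is the left-hand side. For the right-hand side, I would use the normal bundle $N \cong S \times D^2$ supplied by \cite[\S 9.3]{Freedman-Quinn}: write $X = W \times I \sm \Int(N)$, so that $W \times I \sm S$ deformation retracts onto $X$ and $\partial N = S \times S^1$ is a circle bundle over $S$. The boundary circle of the $D^2$ factor is exactly a meridian $\mu$ of $S$, and it is freely homotopic (inside $\partial N$, hence inside $W \times I \sm S$) to the meridian $\mu$ of $K$ sitting in the boundary piece $\Si \times I \sm K \subset \partial(W \times I \sm S)$. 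So it suffices to prove that $H_1(W \times I \sm S, W \times \{1\}) \cong \ZZ$ with this meridian as generator, because then naturality of the long exact sequences and the identification of generators forces the inclusion-induced map to be an isomorphism.

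To compute $H_1(W \times I \sm S, W \times \{1\})$, I would first pass to the closed-up version and invoke the relevant form of Alexander duality, as is done in the classical case (cf. \cite[Chapter 8]{Lickorish}): since $S$ is a properly embedded locally flat surface in the compact $4$-manifold $W \times I$, with a normal bundle, the complement $W \times I \sm S$ has $H_1$ controlled by the fact that $S$ is connected and $\partial S = K$. Concretely, using the Mayer--Vietoris sequence for $W \times I = (W \times I \sm S) \cup N$ with $N \cap (W \times I \sm S) \simeq S \times S^1$, together with $H_*(W \times I) \cong H_*(W)$ and $H_*(N) \cong H_*(S)$, one extracts that $H_1(W \times I \sm S)$ surjects onto $\ZZ$ via the meridian. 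Relativizing with respect to $W \times \{1\}$ kills the contribution of $H_1(W)$ (a meridian of $S$ bounds no $2$-chain touching $W \times \{1\}$ nontrivially, and conversely every class becomes a multiple of $\mu$ once we are allowed to push off into $W \times \{1\}$), leaving exactly $\ZZ$. This is the direct analogue of the argument showing the complement of a Seifert surface has infinite cyclic first homology relative to the boundary.

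The main obstacle I anticipate is the relative part of the statement — keeping careful track of the role of $\Si \times \{1\}$ and $W \times \{1\}$ — rather than the absolute homology computation, which is standard Alexander duality. In the classical case ($\Si = S^2$, $W = D^3$) one has $H_1(W) = 0$ and the relativization is automatic; here $H_1(\Si)$ and $H_1(W)$ can be large, so one must verify that relativizing against $W \times \{1\}$ genuinely collapses these to zero and that no new $H_1$ classes are introduced by the $3$-manifold $W$. I would handle this by choosing $W$ so that $\Si = \partial W$ and noting that every $1$-cycle in $W \times I \sm S$ can be homotoped, using a product-neighborhood collar of $W \times \{1\}$ and general position against $S$ (which meets $W \times \{1\}$ only in $K$), into a cycle lying in $\Si \times I \sm K$; combined with the left-hand computation this shows the inclusion is onto, and an intersection-number argument with a $2$-chain bounding a pushoff of $K$ (as in the definition of $\lk$ in \S\ref{subsec:link-num}) shows it is injective. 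Thus the inclusion induces an isomorphism onto $\ZZ$.
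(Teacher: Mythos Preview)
Your outline follows the same route as the paper --- Mayer--Vietoris for $W\times I = (W\times I\sm S)\cup N$ with $N\cong S\times D^2$, then pass to homology relative to $W\times\{1\}$ --- but the step you flag as the ``main obstacle'' is exactly where your argument is incomplete. You assert that ``relativizing with respect to $W\times\{1\}$ kills the contribution of $H_1(W)$, leaving exactly $\ZZ$,'' and your parenthetical justification (``a meridian bounds no $2$-chain touching $W\times\{1\}$ nontrivially, and conversely every class becomes a multiple of $\mu$\ldots'') is not a proof: it restates the conclusion. What is needed is that the long exact sequence of the pair $(W\times I\sm S,\,W\times\{1\})$ breaks into short exact sequences, and separately that the Mayer--Vietoris sequence is short exact in degree $1$ (so that the meridian really does inject). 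Both facts follow from one clean observation that you are missing: the map $j\colon W\to W\times I\sm S$, $x\mapsto(x,1)$, satisfies $p\circ i\circ j=\id_W$ where $i$ is inclusion into $W\times I$ and $p$ is projection to $W$. Hence $j_*$ is split injective (giving the short exact pair sequence) and $i_*$ is split surjective in every degree (so the Mayer--Vietoris connecting map $H_2(W\times I)\to H_1(S\times S^1)$ vanishes). Combining the two short exact sequences immediately yields $H_1(W\times I\sm S,\,W\times\{1\})\cong\ZZ$, generated by the meridian.

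Your fallback geometric argument in the last paragraph does not work as stated. The surface $S$ does not meet $W\times\{1\}$ at all (its boundary $K$ lies in $\Si\times I=\partial W\times I$), so the phrase ``$S$ meets $W\times\{1\}$ only in $K$'' is incorrect. More seriously, an arbitrary $1$-cycle in $W\times I\sm S$ cannot in general be homotoped into $\Si\times I\sm K$: classes coming from $H_1(W)$ that are not in the image of $H_1(\Si)$ have no representative there. This is precisely why one must work relative to $W\times\{1\}$ and why the splitting argument above is the right tool.
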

\begin{proof} 
Let $i\co  W \times I \sm S \to W \times I$ be inclusion, $p\co  W \times I \to W$ be projection onto the first factor, and $j\co  W \to W \times I \sm S$ be defined by $x \to (x,1)$. Note that $p \circ i \circ j =\id_W$. Since $p_*\co H_*(W \times I) \to H_*(W)$ is an isomorphism, it follows that $i_* \co H_*(W \times I \sm S) \to H_*(W \times I)$  is a split surjection and $j_* \co H_*(W) \to H_*(W \times I \sm S)$ is a split injection.

Let $\Int(N)$ denote the interior of $N \subset W \times I$ and
apply a Mayer-Vietoris argument to $W \times I=(W \times I \sm S) \cup \Int(N)$
to obtain
\[
\xymatrix{
0 \ar[r] & H_1(S \times S^1) \ar[r] &H_1(W \times I \sm S) \oplus H_1(\Int(N)) \ar[r] & H_1(W \times I) \ar[r]& 0.}
\]
Since $j_*$ is split, the long exact sequence for the pair $(W \times I\sm S,W \times \{1\})$ splits, giving the short exact sequence:
\[
\xymatrix{
0 \ar[r] & H_1(W \times \{1\}) \ar[r] &H_1(W \times I \sm S)  \ar[r] & H_1(W \times I\sm S, W \times \{1\}) \ar[r] & 0.
}
\]
From the two short exact sequences, we conclude that $H_1(W \times I\sm S, W \times \{1\})$ is infinite cyclic. Since the summand $H_1(S^1)$ of $H_1(S \times S^1)$ is generated by a meridian of $K$, we conclude that the induced map  $H_1(\Si \times I\sm K, \Si \times \{1\}) \to H_1(W \times I\sm S, W \times \{1\})$ sends a meridian of $K$ to a generator of  $H_1(W \times I\sm S, W \times \{1\})$. Thus the inclusion of pairs induces an isomorphism in homology as claimed. 
\end{proof}

The next two results are adapted  from Lemmas 8.13 and 8.14 in \cite{Lickorish}. We provide detailed proofs for completeness.

\begin{lemma} \label{lemma_L813}
For $i=1,2$, suppose $\varphi_i\co Y_i \to W \times I$ are maps of orientable surfaces into $W \times I$ such that $\im(\varphi_1) \cap \im(\varphi_2)=\varnothing$ and that $K_i=\varphi_i(\partial Y_i)$ is a knot in $(\partial W) \times I$. Then $\lk(K_1,K_2)=0$.
\end{lemma}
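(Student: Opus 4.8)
The plan is to argue, just as in Lickorish's Lemma 8.13, that the linking number $\lk(K_1,K_2)$ can be computed as an algebraic intersection number which is forced to vanish because the two surfaces $\varphi_1(Y_1)$ and $\varphi_2(Y_2)$ have disjoint images. The main conceptual point is that $\lk(K_1,K_2)$ was \emph{defined} in \S\ref{subsec:link-num} via the relative homology group $H_1((\partial W)\times I \sm K_1, (\partial W)\times\{1\})$, which is infinite cyclic on the meridian of $K_1$; the number $\lk(K_1,K_2)$ is the coefficient $m$ with $[K_2] = m\mu$ there. So the first step is to promote the $2$-chain computing this. Specifically, since $\varphi_1\co Y_1 \to W\times I$ maps $\partial Y_1$ onto $K_1 \subset (\partial W)\times I$, the image $\varphi_1(Y_1)$ is a $2$-chain in $W\times I$ whose boundary is $K_1$ (no correction cycle $v$ in $(\partial W)\times\{1\}$ is needed since $\partial Y_1$ already maps into $(\partial W)\times I$, or one can push it there). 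Then $\lk(K_1,K_2)$ equals the algebraic intersection number $\varphi_1(Y_1)\cdot K_2$ inside $W\times I$ — this is the analogue of the formula "$\lk(J,K) = J\cdot B$" recorded in \S\ref{subsec:link-num}, and it requires checking that the homology class of $K_2$ in $H_1(W\times I \sm K_1, (\partial W)\times\{1\})$ (or, after the isomorphism from Lemma \ref{lem-inc-iso}, the ambient relative group) is detected by intersection with $\varphi_1(Y_1)$.

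Next I would use the second surface. Because $K_2 = \varphi_2(\partial Y_2)$ bounds the $2$-chain $\varphi_2(Y_2)$ in $W\times I$ as well, the intersection number $\varphi_1(Y_1)\cdot K_2$ can be re-expressed, by a standard homological manipulation (sliding the computation from a cycle to a chain it bounds), as $\pm$ the intersection number of the two $2$-chains $\varphi_1(Y_1)$ and $\varphi_2(Y_2)$ in the $4$-manifold $W\times I$; more carefully, one pushes $K_2$ off itself into $\varphi_2(Y_2)$ and counts $\varphi_1(Y_1) \cap \varphi_2(Y_2)$. But $\im(\varphi_1)\cap\im(\varphi_2)=\varnothing$ by hypothesis, so this intersection is empty and the count is $0$. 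Hence $\lk(K_1,K_2)=0$. One small technical point to address along the way: the surfaces $Y_i$ are only mapped in, not embedded, but since we work at the level of singular chains and homological intersection pairings — and since all we need is that the \emph{images} are disjoint — this causes no difficulty; alternatively one perturbs $\varphi_1,\varphi_2$ to be transverse without creating new intersections, which is possible precisely because their images are already disjoint.

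The step I expect to be the main obstacle is the first one: justifying rigorously that $\lk(K_1,K_2)$, as defined through the relative group $H_1((\partial W)\times I\sm K_1,(\partial W)\times\{1\})$, agrees with the intersection number $\varphi_1(Y_1)\cdot K_2$ taken in the ambient $W\times I$ rather than in $(\partial W)\times I$. This is where Lemma \ref{lem-inc-iso} does the work: it identifies $H_1((\partial W)\times I\sm K_1,(\partial W)\times\{1\})$ with $H_1(W\times I\sm \varphi_1(Y_1), W\times\{1\})\cong\ZZ$ (applying that lemma with $S = \varphi_1(Y_1)$, or a regular neighbourhood thereof, as the relevant "surface"), and under this identification the meridian of $K_1$ still generates. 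So $\lk(K_1,K_2)$ is computed by the image of $[K_2]$ in this ambient infinite cyclic group, which is exactly the intersection pairing with the chain $\varphi_1(Y_1)$. Once that translation is in place, the rest is the disjointness argument above. The remaining bookkeeping — signs, and the fact that $\partial\varphi_i(Y_i)$ can be taken to lie in $(\partial W)\times\{1\}$ after an isotopy — is routine and mirrors the classical treatment in \cite[Chapter 8]{Lickorish}.
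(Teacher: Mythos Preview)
Your overall strategy is right and matches the paper's: invoke Lemma~\ref{lem-inc-iso} to pass from $H_1(\Si\times I\sm K_1,\Si\times\{1\})$ to $H_1(W\times I\sm \im(\varphi_1),W\times\{1\})$, and then use that $K_2$ bounds $\varphi_2(Y_2)$ inside the complement of $\im(\varphi_1)$. However, your detour through intersection numbers is both unnecessary and dimensionally awkward: the expression ``$\varphi_1(Y_1)\cdot K_2$ inside $W\times I$'' pairs a $2$-chain with a $1$-cycle in a $4$-manifold, where generic intersections are empty, so this does not literally compute a number. The paper bypasses this entirely. After noting (by standard arguments) that the $\varphi_i$ may be taken to be locally flat embeddings, it simply observes that $\lk(K_1,K_2)$ is \emph{by definition} the class $[K_2]$ in $H_1(\Si\times I\sm K_1,\Si\times\{1\})\cong H_1(W\times I\sm\im(\varphi_1),W\times\{1\})$, and since $\im(\varphi_2)\subset W\times I\sm\im(\varphi_1)$ with $\partial\varphi_2(Y_2)=K_2$, that class vanishes. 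No intersection pairing, no ``sliding'' of chains --- just the statement that a boundary is zero in homology. Your final paragraph essentially arrives at this, so the argument is salvageable; just drop the intersection-theoretic middle and go straight from Lemma~\ref{lem-inc-iso} to ``$K_2$ bounds in the complement, hence $[K_2]=0$.''
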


\begin{proof} Using standard arguments, it may be assumed each $\varphi_1, \varphi_2$ are locally flat embeddings. By definition, $\lk(K_1,K_2)$ is the homology class of $K_2$ in $H_1(\Si \times I\sm K_1, \Si \times \{1\}) \cong H_1(W \times I\sm \im(\varphi_1),W \times \{1\})$. Since $\im(\varphi_2) \subset W \times I\sm \im(\varphi_1)$, $K_2$ is trivial in $H_1(\Si \times I\sm K_1, \Si \times \{1\})$. Thus, $\lk(K_1,K_2)=0$. 
\end{proof}

\begin{lemma} \label{lemma_L814} 
Let $F$ be a Seifert surface for $K$ in $\Si \times I$ and let $S$ be a locally flat orientable surface in $W \times I$ with $\partial S = K$. Then $F \cup S$ bounds a two-sided $3$-manifold $M \subset W \times I$ with $M \cap (\Si \times I)=F$.
\end{lemma}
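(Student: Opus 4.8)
The plan is to build $M$ as the transverse preimage of a regular value of a suitable map $(W \times I) \sm S \to S^1$, just as in the classical case, with Lemma~\ref{lem-inc-iso} playing the role that $H_2(D^4) = 0$ plays for the $4$-ball. Write $X = W \times I$, a compact oriented $4$-manifold (with corners, which we smooth), and let $N \cong S \times D^2$ be the normal bundle of $S$ from the paragraph before Lemma~\ref{lem-inc-iso}; we may choose it so that $N \cap (\Si \times I)$ is a tubular neighborhood $\nu K$ of $K$ and so that $S$ meets $\partial X$ only along $K \subset \Si \times \Int(I)$. Fix also a bicollar $F \times [-1,1]$ of $F$ in $\Si \times I$, with $F \subset \Si \times \Int(I)$, and an interior collar $(\Si \times I) \times [0,1)$ of $\Si \times I$ in $X$. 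As usual, $F$ determines a smooth map $g \co (\Si \times I) \sm \Int(\nu K) \to S^1$ (``cut $\Si \times I$ along $F$''): it is the normal angular coordinate on $\partial(\nu K)$, is locally constant equal to $1$ outside a neighborhood of $F$ and near $\Si \times \partial I$, and has $g^{-1}(p)$ equal to a parallel copy of $F$ for every $p \in S^1 \sm \{1\}$.

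The heart of the matter is to extend $g$, together with the angular coordinate on the part of $\partial N$ lying in $\Int(X)$ (a circle bundle over $S$), to a smooth map $f \co X \sm \Int(N) \to S^1$ that is locally constant on $W \times \{0,1\}$. Since $S^1$ is a $K(\ZZ,1)$, this amounts to finding a class $\alpha \in H^1(X \sm S;\ZZ)$ restricting to the prescribed class on the relevant boundary pieces, the obstruction lying in $H^2$ of the corresponding pair. This is where Lemma~\ref{lem-inc-iso} and \cite[Proposition 7.1]{Boden-Gaudreau-Harper-2016} enter: both $H_1((\Si \times I) \sm K, \Si \times \{1\})$ and $H_1((W \times I) \sm S, W \times \{1\})$ are infinite cyclic generated by the meridian and the inclusion identifies them, and since $F \subset \Si \times \Int(I)$ the class of $g$ is trivial on $H_1(\Si \times \{1\})$, so it comes from the relative group and hence extends to a class $\alpha$ on $X \sm S$ with $\langle \alpha, \mu \rangle = 1$ for a meridian $\mu$ of $S$ and with $\alpha|_{W \times \{1\}} = 0$; using that $W \times \{0,1\}$ is disjoint from $S$ and that $K$ is homologically trivial in $\Si$ (the almost classical hypothesis), one checks that $\alpha$ can be arranged to vanish on $W \times \{0,1\}$ as well. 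Homotoping a smooth representative of $\alpha$ rel a smaller tube around $S$ and rel a neighborhood of $\partial X$, where it already has the prescribed form, produces $f$ with the stated normal forms.

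Now fix a regular value $p \in S^1 \sm \{1\}$ and set $M_0 = f^{-1}(p)$, a compact properly embedded $3$-submanifold of $X \sm \Int(N)$. By the normal forms, $\partial M_0$ is the union of a parallel copy $S_p$ of $S$ in $\partial N$ and the parallel copy $F_p = g^{-1}(p)$ of $F$, glued along a pushoff of $K$, while $M_0 \cap (W \times \{0,1\}) = \varnothing$ and $M_0 \cap (\Si \times I) = F_p$. We then cap off: attach to $M_0$ the product region $S \times \gamma \subset S \times D^2 = N$ for an arc $\gamma$ in $D^2$ from the point defining $S_p$ to $0$, which (after smoothing the corner along $K$) replaces the $S_p$-boundary by $S$; and use the bicollar of $F$ to isotope $F_p$ back to $F$ inside $\Si \times I$, carrying $M_0$ along. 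The result is a compact $M \subset X$ with $\partial M = F \cup S$ and $M \cap (\Si \times I) = F$. Since $M$ is built from the transverse preimage of a regular value of an $S^1$-valued map by attaching a product, its normal bundle in $X$ is trivial; hence $M$ is two-sided, and orientable because $X$ is oriented.

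The step I expect to be the main obstacle is the construction of $f$ in the second paragraph: one must produce the $S^1$-valued map globally on $X \sm S$ while simultaneously forcing its restriction to $\Si \times I$ to be the Seifert map of the \emph{given} surface $F$ -- so that $M \cap (\Si \times I)$ is precisely $F$ and not merely some Seifert surface for $K$ -- and keeping it locally constant near $W \times \{0,1\}$ so that $\partial M$ is exactly $F \cup S$. Reconciling these boundary conditions is exactly where Lemma~\ref{lem-inc-iso}, \cite[Proposition 7.1]{Boden-Gaudreau-Harper-2016}, and the homological triviality of $K$ in $\Si$ are used, and the corner structure of $W \times I$ calls for some additional care.
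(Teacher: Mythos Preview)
Your approach is essentially the paper's: build an $S^1$-valued map on the complement of a tube around $S$ that restricts to the Seifert map of $F$ on $\Si\times I$ and to the angular coordinate on $\partial N$, then take the preimage of a point. You package the extension step via the $K(\ZZ,1)$ property of $S^1$ and a cohomology class; the paper does the identical obstruction theory by hand, triangulating $(W\times I\sm\Int N,\,W\times\{1\})$, extending over a spanning tree, then over $1$-simplices using Lemma~\ref{lem-inc-iso} exactly as you indicate, and then over higher simplices. The paper works relative only to $W\times\{1\}$ (not both ends), which already suffices for the conclusion.

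One technical point deserves attention. You speak of \emph{smooth} maps, regular values, and transverse preimages, but $S$ is only assumed locally flat, so while $W\times I$ is smooth, the normal bundle $N\cong S\times D^2$ and hence the boundary piece $\partial N$ need not be smoothly embedded; the complement $W\times I\sm\Int N$ is a topological manifold with boundary and the smooth transversality package is not directly available. The paper handles this by making the map $\psi$ simplicial (for a suitable triangulation of the pair) and taking $M=\psi^{-1}(1)$ where $1$ is not a vertex of the chosen triangulation of $S^1$; the preimage is then automatically a bicollared $3$-manifold. Your argument goes through once you either make this simplicial replacement or invoke topological transversality; it is a technicality, not a gap in the strategy.
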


\begin{proof} 
As above, let $N \approx S \times D^2$ denote the normal bundle of $S$ in $W\times I$. We will construct a map of pairs $$\psi\co  (W\times I \sm N, W \times \{1\}) \lto (S^1,\{-1\})$$ 
inducing an isomorphism on the relative first homology groups and so that $M=\psi^{-1}(1)$ is a 3-manifold with the desired properties. Throughout, we identify $H_1(S^1,\{-1\}) \cong H_1(S^1).$

Let $X=\Si \times I \sm \Int(N(K))$ be the complement of an open tubular neighborhood of $K$ in $\Si \times I$, and
notice that $X = \Si \times I \cap \left(W\times I \sm \Int(N)\right).$  
In the following, we will identify $F$ with $F \cap X$. 

Define the map $\psi$ first on $X$ so that $\psi(x,t) = e^{i \pi t}$ for $(x,t) \in F \times [-1,1]$, a product neighborhood of $F$ in $X$,  and so that $\psi$ maps the rest of $X$ to $-1 \in S^1.$ Notice that $\psi$ sends $\Si \times \{1\}$ to $-1 \in S^1,$ and that $\psi_* \co H_1(X,\Si \times \{1\}) \lto H_1(S^1)$ is an isomorphism.

Now extend $\psi$ over the rest of $\partial N$  so that $\psi^{-1}(1) = F \cup (S \times \{x_0\})$, where $x_0 \in \partial D^2$ is chosen so that $\partial S \times \{x_0\}$ is a longitude for $K$.

In order to extend $\psi$ from $X \cup \partial N$ to all of $W\times I \sm \Int(N),$ we 
use obstruction theory. We will
work with an arbitrary but fixed triangulation of the pair $(W\times I \sm \Int(N), W \times \{1\})$. Let $T$ be a spanning tree in the 1-skeleton which includes a maximal tree in $X \cup \partial N$.
Extend $\psi$ over $T$ so that it maps every 0-simplex and 1-simplex in $W \times \{1\}$ to $-1 \in S^1$.
For a 1-simplex $\si$ not in $T,$ define $\psi$ so that, for a 1-cycle $z$ obtained as the sum of $\si$ and a 1-chain in $T$, $[\psi(z)] \in H_1(S^1)$ is the image of $[z]$ under the isomorphism of Lemma \ref{lem-inc-iso} 
$$H_1(W\times I \sm \Int(N), W \times \{1\}) \stackrel{\cong}{\longleftarrow} H_1(X,\Si \times \{1\}) \stackrel{\psi_*}{\lto} H_1(S^1).$$
To extend $\psi$ over the 2-simplices, note that
the boundary of any 2-simplex $\tau$ is evidently trivial in $H_1(W\times I \sm \Int(N), W \times \{1\}).$ Thus
$[\psi (\partial \tau)] = 0$ in $H_1(S^1)$, and $\psi$ can be extended over $\tau.$ 
Similarly, $\psi$ can be extended over all 3-simplices, and then over all 4-simplices,
such that $W \times \{1\}$ is mapped to $-1\in S^1$.

Choose a triangulation of $S^1$ in which 1 is not a vertex so that 
$\psi$ becomes a simplicial map of pairs, and notice that $M=\psi^{-1}(1)$ is a bi-collared 3-manifold in $W \times I$ with boundary $\partial M = F \cup S$ as claimed.
\end{proof}

The following theorem establishes the slice obstructions and slice genus bounds from the directed Alexander polynomials and the signature functions.

\begin{theorem}\label{thm:main}
Let $K$ be an almost classical knot, represented as a knot in the thickened surface $\Si \times I$ with Seifert surface $F$, and suppose $W$ is an oriented $3$-manifold with $\partial W=\Si$ and  $S$ is a locally flat orientable surface in $W \times I$ with $\partial S=K.$  
 
If $\om \neq 1$ is a unit complex number such that $\na^{\pm}_{\! K,F}(\om) \ne 0$, then $|\widehat{\si}^{\pm}_{\om}(K,F)| \leq 2 \, {\rm genus} (S).$ Thus if $K$ is topologically slice, then $\widehat{\si}^{\pm}_{\om}(K,F)=0$, and in particular $\si(K,F)=0$.
\end{theorem}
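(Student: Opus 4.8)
The plan is to imitate the classical argument of \cite[Chapter~8]{Lickorish}, with Lemmas \ref{lem-inc-iso}, \ref{lemma_L813}, and \ref{lemma_L814} playing the roles of Lickorish's three technical lemmas. Write $h=\genus(F)$, so that $H_1(F;\RR)$ has dimension $2h$ and the Seifert matrices $V^\pm$ are $2h\times 2h$, and set $g=\genus(S)$. By Lemma \ref{lemma_L814}, $F\cup S$ bounds a compact two-sided $3$-manifold $M\subset W\times I$ with $M\cap(\Si\times I)=F$; note that $\partial M=F\cup_K S$ is a \emph{closed} oriented surface of genus $h+g$. The manifold $M$ is the device that produces a ``metabolizer'': I would prove that $P:=\ker\bigl(H_1(F;\RR)\to H_1(M;\RR)\bigr)$ has dimension at least $h-g$, and that both Seifert forms $\th^\pm$ vanish identically on $P$. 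Granting these two facts, the theorem follows by linear algebra.

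For the dimension estimate I would apply ``half lives, half dies'' (Poincar\'e--Lefschetz duality together with the long exact sequence of $(M,\partial M)$) to $M$: the kernel $\mathcal K$ of $H_1(\partial M;\RR)\to H_1(M;\RR)$ has dimension exactly $\tfrac12\dim H_1(\partial M;\RR)=h+g$. Since $\partial M=F\cup_K S$ and the boundary circle $K$ is null-homologous in each of $F$ and $S$, a Mayer--Vietoris argument identifies $H_1(\partial M;\RR)\cong H_1(F;\RR)\oplus H_1(S;\RR)$, of dimension $2h+2g$. Intersecting the $(h+g)$-dimensional subspace $\mathcal K$ with the $2h$-dimensional subspace $H_1(F;\RR)$ inside this $(2h+2g)$-dimensional space gives $\dim\bigl(\mathcal K\cap H_1(F;\RR)\bigr)\ge(h+g)+2h-(2h+2g)=h-g$, and $\mathcal K\cap H_1(F;\RR)\subseteq P$ because the composite $H_1(F)\to H_1(\partial M)\to H_1(M)$ is the inclusion-induced map. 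Hence $\dim_\RR P\ge h-g$.

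For the vanishing, take $x,y\in P$, represented by disjoint simple closed curves on $F$; then $x$ and $y$ bound rational $2$-chains $X$ and $Y$ in $M$. Using the $S^1$-valued map $\psi$ from the proof of Lemma \ref{lemma_L814}, set $M'=\psi^{-1}(e^{i\ep})$ for a small regular value; for $\ep$ of the appropriate sign $M'$ is a parallel copy of $M$ disjoint from $M$, with $M'\cap(\Si\times I)$ equal to the positive pushoff $F^+$ of $F$, and $M'$ carries the corresponding pushoff $X^+$ of $X$, with $\partial X^+=x^+\subset F^+$. Then $X^+$ and $Y$ are (images of) orientable surfaces in $W\times I$ with disjoint images and with boundaries $x^+,y\subset(\partial W)\times I=\Si\times I$, so Lemma \ref{lemma_L813}, extended by bilinearity from knots to $1$-cycles, gives $\th^+(x,y)=\lk(x^+,y)=0$. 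Replacing $\ep$ by $-\ep$ yields $\th^-(x,y)=0$; alternatively, once $\th^+$ vanishes on $P$ one uses \eqref{skew-ness} and the fact that $P$ lies in the Lagrangian $\mathcal K$ of $\partial M$, hence is isotropic for the intersection form of $F$. I expect this vanishing step to be the main obstacle: it requires setting up the pushoff of $M$ carefully in the thickened-surface setting, matching orientation conventions so that $F^+$ is precisely the pushoff relevant to $\th^+$, and checking that the homological input (naturally rational $2$-chains rather than embedded surfaces) can still be fed into Lemma \ref{lemma_L813}.

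Finally I would run the linear algebra. Since $\th^\pm$ vanishes on $P$, the Hermitian form $H_\om^\pm:=(1-\om)V^\pm+(1-\wbar{\om})(V^\pm)^\uptau$ vanishes on the complex subspace $P\otimes_\RR\CC\subset\CC^{2h}$. A direct computation, factoring $(1-\om)V^\pm+(1-\wbar{\om})(V^\pm)^\uptau=(1-\om)\om^{-1/2}\bigl(\om^{1/2}V^\pm-\om^{-1/2}(V^\pm)^\uptau\bigr)$, gives $\det H_\om^\pm=(1-\om)^{2h}\om^{-h}\,\na^\pm_{\!K,F}(\om)$, so the hypothesis $\na^\pm_{\!K,F}(\om)\ne0$ (with $\om\ne1$) forces $H_\om^\pm$ to be non-singular. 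A non-degenerate Hermitian form on $\CC^N$ of signature $(p,q)$ with $p+q=N$ has maximal totally isotropic subspaces of dimension $\min(p,q)$, so if it vanishes on a subspace of dimension $k$ then $k\le\min(p,q)$ and $|p-q|=N-2\min(p,q)\le N-2k$. Applying this with $N=2h$ and $k=\dim_\CC(P\otimes\CC)\ge h-g$ yields $|\widehat{\si}^\pm_\om(K,F)|\le 2h-2(h-g)=2g=2\genus(S)$. If $K$ is topologically slice, take $S$ to be a disk, so $g=0$ and $\widehat{\si}^\pm_\om(K,F)=0$ for every admissible $\om$; in particular $\si(K,F)=\widehat{\si}^\pm_{-1}(K,F)=0$ when $\na^\pm_{\!K,F}(-1)\ne0$, i.e.\ $n(K,F)=0$, and the remaining case follows by letting $\om\to-1$ along the unit circle through the (finitely many) points where $\na^\pm_{\!K,F}$ is nonzero.
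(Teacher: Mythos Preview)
Your argument is correct and is essentially the paper's proof: cap $F\cup S$ by the $3$-manifold $M$ of Lemma~\ref{lemma_L814}, use half-lives--half-dies to produce a subspace of $H_1(F)$ of dimension at least $h-g$ that dies in $M$, push off via the bicollar of $M$ and invoke Lemma~\ref{lemma_L813} to see both Seifert forms vanish there (the paper does this with integer multiples $n_i f_i$ bounding actual surfaces rather than rational chains, which sidesteps your worry about feeding chains into Lemma~\ref{lemma_L813}), and finish with the isotropy-index bound for the nondegenerate Hermitian form $(1-\om)V^\pm+(1-\wbar\om)(V^\pm)^\uptau$. One correction: drop the limit argument for the case $\na^\pm_{K,F}(-1)=0$, since the $\om$-signature is not continuous at roots of $\na^\pm_{K,F}$ and the conclusion $\si(K,F)=0$ can genuinely fail there---e.g.\ the slice knot $6.73583$ in Table~\ref{table-2} has $\si(K,F)=1$ (one checks from Table~\ref{table-3} that $\na^+_{K,F}\equiv 0$)---so the ``in particular'' in the theorem should be read under the standing hypothesis $\na^\pm_{K,F}(-1)\ne 0$.
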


\begin{proof} 
Since it is always true that $|\widehat{\si}^{\pm}_{\om}(K,F)| \leq 2 \, {\rm genus} (F),$ we can assume that
${\rm genus} (S) \leq {\rm genus} (F)$. 
Let $M$ be the $3$-manifold given by Lemma \ref{lemma_L814}, so $M$ is compact and oriented and $\partial M=F \cup S$ is a closed surface of genus $g+k$, where $g$ is the genus of $F$ and $k$ is the genus of $S$. By \cite[Lemma 8.16]{Lickorish}, the subspace $U \subset H_1(\partial M;\ZZ)$ consisting of elements that map to zero under inclusion into $H_1(M,\QQ)$ has ${\rm rank}(U) = g+k.$ Thus $U \cap H_1(F; \ZZ)$ has rank at least $g-k$, and we can choose an integral basis $[f_1], \ldots,[f_{2g}]$ of $H_1(F;\ZZ)$ such that $[f_1],\ldots,[f_{g-k}]$ lie in $U$ and each $f_i$ is a simple closed curve on $F$. 

Let $V^{\pm}$ be the $2g \times 2g$ Seifert matrix $(\lk(f_i^{\pm},f_j))$. For $i=1, \ldots, g-k$, there are integers $n_i$ such that $n_i [f_i]$ vanishes in $H_1(M,\ZZ)$. It follows that the curve $n_i f_i$ bounds the image of an orientable surface $Y_i$ mapped into $M$. Pushing the $Y_i$ into $M \times \{\pm 1\}$ gives surfaces $Y_i^{\pm}$ disjoint from each $Y_j$. Thus by Lemma \ref{lemma_L813}, $\lk((n_i f_i)^{\pm}, n_i f_j)=0$ for $1 \leq i,j \leq g-k.$ This implies that there is a $(g-k) \times (g-k)$ block of zeros in the upper left hand corner of $V^{\pm}$.

In general, given a quadratic form $Q$ over a field, a subspace consisting of elements $x$ for which $Q(x)=0$ is called \emph{isotropic}, and the isotropy index of $Q$ is the dimension of a maximal isotropic subspace. If $Q$ is non-singular with rank $N$ and signature $\si$, then it is well-known that the isotropy index  is given by $\min((N+\si)/2, (N-\si)/2),$ where  $(N+\si)/2 =n_+$ and  $(N-\si)/2=n_-$, the number of positive and negative eigenvalues of $Q$.  

Since $\om \ne 1$ and $\na_{K,F}^{\pm}(\om) \ne 0$, the matrix: 
\[
(1-\om) V^{\pm}+(1-\wbar{\om}) (V^{\pm})^\uptau=-(1-\wbar{\om})(\om V^{\pm}-(V^{\pm})^{\uptau})
\]
is non-singular.  Therefore, the quadratic form associated to the Hermitian matrix 
$(1-\om) V^{\pm}+(1-\wbar{\om}) (V^{\pm})^\uptau$ is also non-singular.
Since the Seifert matrices $V^{\pm}$ both have a $(g-k) \times (g-k)$ block of zeros, so does the Hermitian matrix. Thus the isotropy index of the quadratic form is at least $g-k.$ Let $\si = \widehat{\si}^{\pm}_{\om}(K,F)$ be the signature of this quadratic form.
Since it has rank $2g$, we see that $2(g-k)\leq \min(2g+\si, 2g-\si).$ Using this, one can easily show that $0 \leq |\widehat{\si}^{\pm}_{\om}(K,F)| \leq 2k,$ and this completes the proof of the theorem.
\end{proof}

\begin{figure}[ht]
\centering
\includegraphics[scale=0.9]{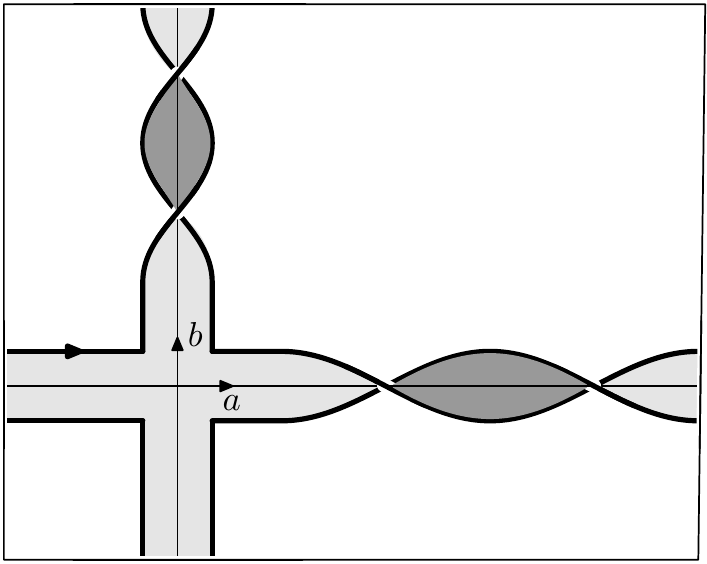}
\caption{A Seifert surface for $K=4.105$, drawn as a knot in $T^2 \times I$.}
\label{AC-4-105}
\end{figure}

\begin{example}
The almost classical knot $K=4.105$ has Seifert surface $F$ and basis $\{a,b\}$ for $H_1(F)$ shown in Figure \ref{AC-4-105}.
By our convention, the positive push-offs $a^+$ and $b^+$ are obtained by pushing up along the lighter region of $F$ and down along the darker region.
From this, one can readily compute that $\lk(a^\pm,a)=1= \lk(b^\pm,b),$
$\lk(a^+,b)=0= \lk(b^+,a)$, $\lk(a^-,b)=1$ and $\lk(b^-,a)=-1$. Thus,  the two Seifert matrices are given by
 $$V^+ = \begin{bmatrix}1 & 0 \\ 0 & 1 \end{bmatrix} \quad \text{and} \quad
V^- = \begin{bmatrix}1 & 1 \\ -1 & 1 \end{bmatrix}.$$ 
%(see Table \ref{table-3}).
Therefore $V^+ + (V^+)^\uptau =V^- + (V^-)^\uptau$ is a diagonal matrix with signature 2, and we see that $\si(K,F) = 2$. Theorem \ref{thm:main} applies to show that 4.105 is not slice. On the other hand, since 4.105 admits a genus one cobordism to the unknot, we see that its slice genus is equal to one.
\end{example}

The Alexander polynomial of a classical slice knot must satisfy the famous Fox-Milnor condition, which asserts that $\De_K(t) = f(t)f(t^{-1}) $ for some $f(t) \in \ZZ[t].$ The following example shows that the Fox-Milnor condition fails for almost classical slice knots.

\begin{example}
The almost classical knot $K=5.2160$ has Alexander polynomial $\De_K(t) = t-1+t^{-1}$ (see Table \ref{table-1}), which is the same as the Alexander polynomial of the trefoil. Notice that $\De_K(t)$ does not satisfy the Fox-Milnor condition for any $f(t) \in \ZZ[t]$, even though $5.2160$ is known to be slice (see Figure \ref{slice-Gauss}). 
\end{example}

In Theorem \ref{thm:real_null2}, we will prove that \emph{any} integral polynomial $\De(t)$ with $\De(1)=1$ occurs as the  Alexander polynomial of an almost classical slice knot.
%Thus,  the Alexander polynomial contains no useful sliceness information for almost classical knots.
Although the Fox-Milnor condition does not extend to almost classical knots using the Alexander polynomial, the following alternative formulation gives
useful slice criteria in terms of
 the \emph{directed} Alexander polynomials.

\begin{theorem}\label{thm:main-2}
Let $K$ be an almost classical knot, represented as a knot in the thickened surface $\Si \times I$ with Seifert surface $F$. If $K$ is topologically slice, then there are polynomials $f^\pm(t) \in \ZZ[t]$ such that $\na^{\pm}_{\! K,F}(t)=f^\pm(t)f^\pm(t^{-1})$.
\end{theorem}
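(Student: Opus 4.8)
The plan is to mimic the classical Fox--Milnor argument: extract a half-rank metabolizer for the Seifert forms from the slice disk exactly as in the proof of Theorem~\ref{thm:main}, and then do a block-determinant computation on the directed Alexander polynomials.

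First I would set up the metabolizer. Since $K$ is topologically slice, choose $W$ with $\partial W=\Si$ and a locally flat \emph{disk} $S\subset W\times I$ with $\partial S=K$, and apply Lemma~\ref{lemma_L814} to obtain a compact oriented $3$-manifold $M\subset W\times I$ with $\partial M=F\cup S$ and $M\cap(\Si\times I)=F$. Because $S$ is a disk, $\partial M$ is a closed orientable surface of the same genus $g$ as $F$, and a Mayer--Vietoris argument shows the inclusion induces an isomorphism $H_1(F)\cong H_1(\partial M)\cong\ZZ^{2g}$ (here one uses that $\partial F$ is null-homologous in $F$). Now run the argument in the proof of Theorem~\ref{thm:main} with $k=0$: by \cite[Lemma~8.16]{Lickorish} the subgroup $U\subseteq H_1(\partial M;\ZZ)=H_1(F;\ZZ)$ of classes that die in $H_1(M;\QQ)$ has rank $g$ (it is moreover a direct summand, since the quotient $H_1(F)/U$ injects into the torsion-free group $H_1(M;\ZZ)/\mathrm{torsion}$). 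Exactly as in that proof, choose a basis $[f_1],\dots,[f_{2g}]$ of $H_1(F;\ZZ)$ represented by simple closed curves on $F$ with $[f_1],\dots,[f_{g}]\in U$.

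Next I would show the metabolizer forces a zero block. For $1\le i,j\le g$ there are integers $n_i\ne 0$ with $n_i[f_i]=0$ in $H_1(M;\ZZ)$, so $n_if_i$ bounds a map of an orientable surface into $M$; pushing these surfaces to opposite sides of $M$ inside $W\times I$ and invoking Lemma~\ref{lemma_L813} gives $\lk\big((n_if_i)^{\pm},n_jf_j\big)=0$, whence $\lk(f_i^{\pm},f_j)=0$ by bilinearity of $\th^{\pm}$. Thus in this basis both Seifert matrices take the block form
\[
V^{\pm}=\begin{pmatrix} 0 & N_1^{\pm}\\ N_2^{\pm} & N_3^{\pm}\end{pmatrix}
\]
with $g\times g$ blocks; the zero block occurs for both signs simultaneously, which is what lets the $+$ and $-$ statements be handled at once.

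Finally I would compute directly from the block form. Writing $A^{\pm}(t)=t^{1/2}N_1^{\pm}-t^{-1/2}(N_2^{\pm})^{\uptau}$, the lower-left block of $t^{1/2}V^{\pm}-t^{-1/2}(V^{\pm})^{\uptau}$ is $t^{1/2}N_2^{\pm}-t^{-1/2}(N_1^{\pm})^{\uptau}$, whose transpose equals $-A^{\pm}(t^{-1})$; since the matrix is block anti-diagonal except for its lower-right block, its determinant is $(-1)^g\det\big(A^{\pm}(t)\big)\det\big(t^{1/2}N_2^{\pm}-t^{-1/2}(N_1^{\pm})^{\uptau}\big)=\det\big(A^{\pm}(t)\big)\det\big(A^{\pm}(t^{-1})\big)$. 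Factoring $A^{\pm}(t)=t^{-1/2}\big(tN_1^{\pm}-(N_2^{\pm})^{\uptau}\big)$ and setting $f^{\pm}(t)=\det\big(tN_1^{\pm}-(N_2^{\pm})^{\uptau}\big)\in\ZZ[t]$, the half-integer powers cancel in pairs and $\na^{\pm}_{\!K,F}(t)=f^{\pm}(t)f^{\pm}(t^{-1})$. Since $\na^{\pm}_{\!K,F}(t)$ is independent of the choice of basis of $H_1(F)$ (a unimodular basis change multiplies it by $\det(P)^2=1$), this proves the theorem. The geometric input is essentially that of Theorem~\ref{thm:main}, so the only genuinely new point is this linear-algebra identity — the observation that the two off-diagonal blocks of $t^{1/2}V^{\pm}-t^{-1/2}(V^{\pm})^{\uptau}$ are interchanged by $t\mapsto t^{-1}$ followed by transposition, so the block determinant is not merely balanced but an honest product $f^{\pm}(t)f^{\pm}(t^{-1})$. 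I expect that, together with checking that the argument of Theorem~\ref{thm:main} genuinely delivers a \emph{rank-$g$} metabolizer when $S$ is a disk, to be where the content lies; the rest is bookkeeping.
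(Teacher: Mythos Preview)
Your proposal is correct and follows essentially the same approach as the paper: invoke the argument of Theorem~\ref{thm:main} with the slice disk to obtain a $g\times g$ zero block in both $V^{\pm}$, then compute the block determinant of $t^{1/2}V^{\pm}-t^{-1/2}(V^{\pm})^{\uptau}$ to extract $f^{\pm}(t)=\det\big(tN_1^{\pm}-(N_2^{\pm})^{\uptau}\big)$. Your extra care in justifying that $U$ is a direct summand and in tracking the sign $(-1)^g$ is welcome but not a departure from the paper's argument.
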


\begin{proof} Let $D$ be a slice disk for $K$. Repeating the previous argument with $S=D$, it follows that there is a $g \times g$ block of zeros in the upper left hand corner of the Seifert matrices $V^\pm$.  Thus, we can write these matrices in block form:
$$V^{\pm} = 
\begin{bmatrix} \mathbf{0} & Q^\pm \\ R^\pm & S^\pm
\end{bmatrix},$$
where $Q^\pm, R^\pm, S^\pm$ are $g \times g$ integral matrices.
 
Thus it follows that
\begin{eqnarray*}
\na^\pm_{\! K,F}(t) & = & \det \begin{bmatrix} \mathbf{0} & t^{1/2} Q^\pm -t^{-1/2}(R^\pm)^\uptau \\ t^{1/2}R^\pm - t^{-1/2} (Q^\pm)^\uptau & 
t^{1/2} S^\pm - t^{-1/2} (S^\pm)^\uptau
\end{bmatrix}, \\
& = & \det\left( t Q^\pm -(R^\pm)^\uptau\right) \; \det\left(t^{-1} Q^\pm - (R^\pm)^\uptau\right).
\end{eqnarray*}
Taking $f^\pm(t)= \det\left( t Q^\pm -(R^\pm)^\uptau \right)$, the conclusion follows.
\end{proof}

\begin{example}
For the almost classical knot $K=6.87857$ with Seifert surface $F$ from the diagram in Figure \ref{ACknot-diagrams}, its Seifert pair is
$$V^+ = \begin{bmatrix}2 & 0 \\ 0 & -1 \end{bmatrix} \quad \text{and} \quad
V^- = \begin{bmatrix}2 & 1 \\ -1 & -1 \end{bmatrix}.$$ 
%(see Table \ref{table-3}).
The associated signature and $\om$-signatures all vanish, and so 
Theorem \ref{thm:main} is inconclusive on the question of whether this knot is slice.

On the other hand, the directed Alexander polynomials are given by  $$\na^+_{K,F}(t) =-2t+4-2t^{-1} \quad \text{and} \quad  \na^-_{K,F}(t)=-t+6-t^{-1}.$$ Neither of these polynomials satisfies the Fox-Milnor condition, so Theorem \ref{thm:main-2} implies that $6.87857$ is not slice. In fact, applying a single crossing change to $6.87857$ gives a slice knot, and thus it has slice genus one.
\end{example}

%%%%%%%%%%%%%%%%%%%%%%%%%%%%%%%%%%%%%%%%%%%%%%%%%%%%%%%%%%%%%%%%%%%%%%%%%%%%%%%%%%%%%%%%%%%
\section{Realization theorems} \label{sec-4}
%\color{red} 
In this section, we provide necessary and sufficient conditions for a pair of matrices $(V^+,V^-)$ to occur as the Seifert matrices of some almost classical knot, and we use it to show that any integral polynomial $\De(t)$ with $\De(1)=1$ occurs as the Alexander polynomial of some almost classical knot.
The arguments are constructive and assume some familiarity with the basics of virtual knot theory as found, for example, in \cite{Kauffman-1999, Kamada, Kauffman-2012}. We will also make essential use of virtual disk-band surfaces, which are prototypes of the more general notion of virtual Seifert surfaces defined and studied in \cite{Chrisman-2017}.

%%%%%%%%%%%%%%%%%%%%%%%%%
\subsection{Virtual Disk-Band Surfaces} \label{sec_virt_band} 
In this subsection, we introduce virtual disk-band surfaces, defined as follows.

\begin{definition} \label{defn:vbs}
A \emph{virtual disk-band surface} consists of a finite union of disjoint disks $D_1,\ldots, D_n$ in $\RR^2$, with a finite collection of bands $B_1,\ldots, B_m$ in $\RR^2 \sm \Int(D_1 \cup \cdots \cup D_n)$, connecting the disks. The bands may have (classical) twists, and in any region of the plane, at most two bands intersect. Each such band crossing is either classical or virtual as in Figure \ref{fig_virt_band_cross}. 
\end{definition}

\begin{figure}[ht]
\centering
\includegraphics[scale=0.80]{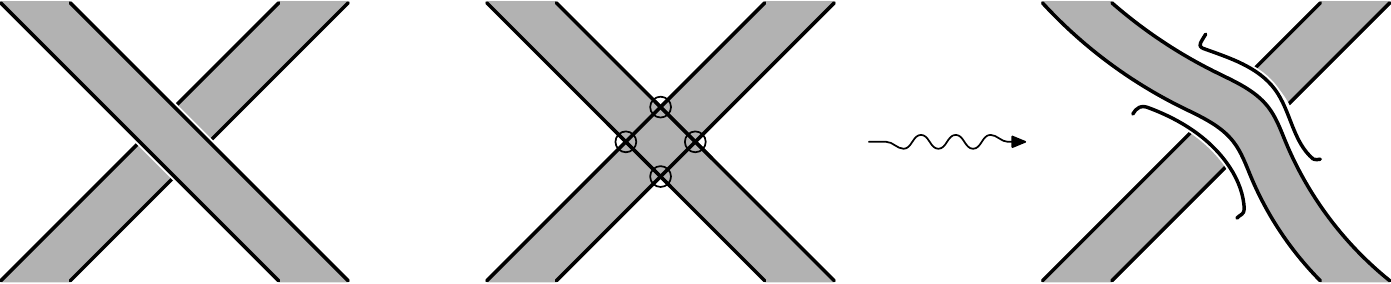} 
\caption{A classical band crossing (left), a virtual band crossing (middle), and a 1-handle attached at a virtual band crossing (right).}
\label{fig_virt_band_cross}
\end{figure}

%\begin{figure}[ht]
%\def\svgwidth{0.65\textwidth}  \input{Figures/handleadd.eps_tex} 
%  \caption{Attaching a 1-handle at a virtual band crossing.}
%\label{fig-handleadd}
%\end{figure}

We will work exclusively with orientable virtual disk-band surfaces here, but we remark that Definition \ref{defn:vbs} can also be used to describe non-orientable spanning surfaces. Taking the boundary of a virtual disk-band surface gives a virtual knot diagram which is almost classical, and which can be seen as follows. 
 
View the virtual disk-band surface $S$ in $S^2 \times I$ and  construct a higher genus surface by attaching one-handles to $S^2$ at each virtual band crossing as in Figure \ref{fig_virt_band_cross}. The 1-handles are attached to allow one band to pass along the 1-handle over the other band. The result is a disk-band surface in $\Si \times I,$ where $\Si$ has genus equal to the number of virtual band crossings of $S$. The boundary of this disk-band surface is an almost classical knot in $\Si \times I$ representing $K$. The next lemma establishes the converse.

\begin{lemma} \label{disk-band-lem}
Suppose $K$ is an almost classical knot or link, realized as a knot or a link in a thickened surface $\Si \times I$. Then there exists a  virtual disk-band surface whose boundary is a virtual knot diagram for $K$. 
\end{lemma}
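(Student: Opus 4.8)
The statement is the converse to the construction preceding it: every almost classical knot or link $K\subset\Si\times I$ arises as the boundary of some virtual disk-band surface. The plan is to start from a Seifert surface $F\subset\Si\times I$ for $K$, which exists by the modified Seifert algorithm of \cite{Boden-Gaudreau-Harper-2016} (see \S\ref{subsec-ack}), and then put $F$ into disk-band (handle-decomposition) form in a way that is compatible with a choice of handle decomposition of $\Si$. Concretely, first I would isotope $F$ so that it is transverse to the cocores of a system of $1$-handles of $\Si$ realizing $\Si$ as $S^2$ with $2k$ one-handles (equivalently, present $K$ on a genus-$k$ page built from an annulus by plumbings as in the Carter surface discussion), and arrange that $F$ meets each such $1$-handle in a collection of parallel bands running along it. Cutting $\Si\times I$ open along the belt spheres of these $1$-handles returns us to $S^2\times I$, and $F$ becomes a surface in $S^2\times I$ whose pieces sitting over the old $1$-handles are bands threaded through the cut locus; recording where each such band crossed the cut sphere as a \emph{virtual} band crossing produces a diagram in the plane.

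\textbf{Key steps, in order.} (1) Fix a handle decomposition of $F$ itself: choose a single $0$-handle (disk) and $2g$ one-handles (bands) attached to it, using that $F$ deformation retracts onto a wedge of circles, and realize a symplectic basis of $H_1(F)$ by the cores of the bands. (2) Choose a handle decomposition of $\Si$ — say $S^2$ with $k$ pairs of $1$-handles — that is \emph{adapted} to $F$, meaning $F$ lies in a collar $S^2\times I$ of $S^2$ union the product $1$-handles, with $F$ meeting each $1$-handle of $\Si$ in a union of trivial product bands. This can be arranged by general position plus finger-move/isotopy arguments on the surface, pushing any complicated part of $F$ down into the $S^2\times I$ piece. (3) Destabilize mentally: delete the $1$-handles of $\Si$, keeping track of the bands of $F$ that used them; each such band now reappears in the plane and may cross other bands, and we declare each crossing coming from a deleted $1$-handle to be a virtual band crossing, while genuine crossings already present in $S^2\times I$ are classical band crossings (possibly with twists, from the framings). (4) Check that at most two bands meet in any region — achievable after a further generic perturbation — so that Definition \ref{defn:vbs} is satisfied, and verify that attaching a $1$-handle back at each virtual band crossing, as in Figure \ref{fig_virt_band_cross}, recovers $F$ in $\Si'\times I$ for some $\Si'$ stably equivalent to $\Si$, so that $\partial$ of the resulting virtual disk-band surface is a virtual knot diagram for $K$.

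\textbf{Main obstacle.} The delicate point is step (2)–(3): showing that $F$ can be isotoped so that all of its ``nontrivial topology with respect to $\Si$'' is concentrated in product bands running along the $1$-handles of $\Si$, with everything else living in a planar $S^2\times I$ region. Morally this is a relative handle-straightening argument — simultaneously normalizing the handle decomposition of $F$ and the handle decomposition of $\Si\times I$ — and care is needed because the bands of $F$ can wind around the $1$-handles of $\Si$ in complicated ways and can link each other. I expect the cleanest route is to argue directly on Gauss-diagram / diagrammatic data: take the almost classical knot diagram for $K$ on $\Si$, apply the modified Seifert algorithm to get $F$ as an explicit union of Seifert disks and twisted bands lying on $\Si$, then destabilize $\Si$ one $1$-handle at a time, each destabilization converting the bands passing over that handle into bands with a virtual crossing in the plane, and observe by induction on $k$ that after all destabilizations we have exactly a virtual disk-band surface with boundary $K$. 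The genus accounting is automatic: the number of virtual band crossings equals the genus of the surface $\Si$ we destabilized, matching the construction in the paragraph before the lemma.
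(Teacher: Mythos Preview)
Your proposal is correct and reaches the same conclusion by essentially the same mechanism, but the paper's argument is organized around the polygon model of $\Si$ rather than a handle decomposition, and this sidesteps exactly the ``main obstacle'' you flag. The paper represents $\Si$ as the identification space of a $4g$-gon $P$ in the plane, isotopes the disks of $F$ into the interior of $P$ and arranges the bands to meet $\partial P$ only along edges (not vertices) and pairwise in at most double points; then one simply extends each band that exits an edge of $P$ across the plane to where it re-enters the identified edge, declaring every band crossing that occurs outside $P$ to be virtual. No delicate adapted-handle or straightening argument is needed, because in the polygon model the ``cuts'' are already the edges of $P$ and transversality plus a generic perturbation is all that is required. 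Your handle-decomposition version is a fine alternative and is morally the same picture (cutting along cocores of $1$-handles instead of along the polygon edges), but the polygon phrasing makes the isotopy step a one-liner rather than an inductive normalization. Note also that the paper allows multiple disks $D_1,\ldots,D_n$ in Definition~\ref{defn:vbs}, so there is no need to reduce to a single $0$-handle at this stage; that reduction is deferred to the discussion of virtual band surfaces later in the section.
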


\begin{proof} (sketch) The knot can be represented as a knot in $\Si \times I$ with Seifert surface $F$. Just as with any surface with boundary, $F$ can be decomposed as a union of disks and bands in $\Si \times I$. Assuming $\Si$ has genus $g$, it can be realized as the identification space of the $4g$-gon $P$, drawn in the plane, under the usual identification of its sides. 
Under further isotopy, we can arrange that the images of the disks of $F$ are pairwise disjoint and lie in the interior of $P$, and that the images of the bands are disjoint from the disks and that at most two bands meet in any region. We can also arrange that the bands  meet the boundary of the $4g$-gon $P$ only along its edges and not at any of its vertices. It is now a simple matter to draw the associated virtual disk-band surface in the plane by extending the bands of $F$ outside the $4g$-gon $P$ and introducing virtual crossings whenever two bands cross outside $P$.  The result is a virtual disk-band surface with boundary a virtual knot diagram for $K.$
\end{proof}

\begin{figure}[ht]
\def\svgwidth{0.55\textwidth}  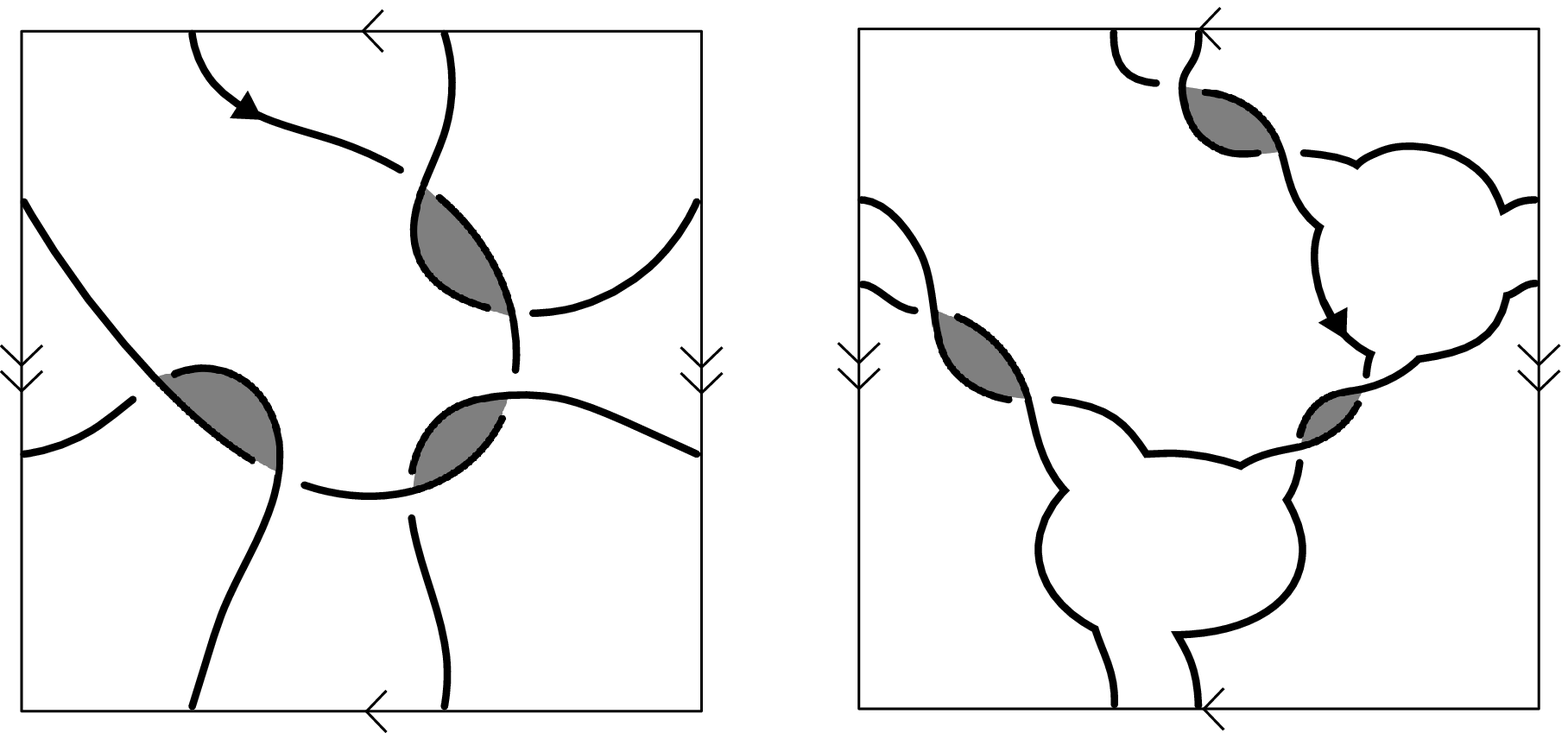 \quad 
\def\svgwidth{0.65\textwidth} {\small 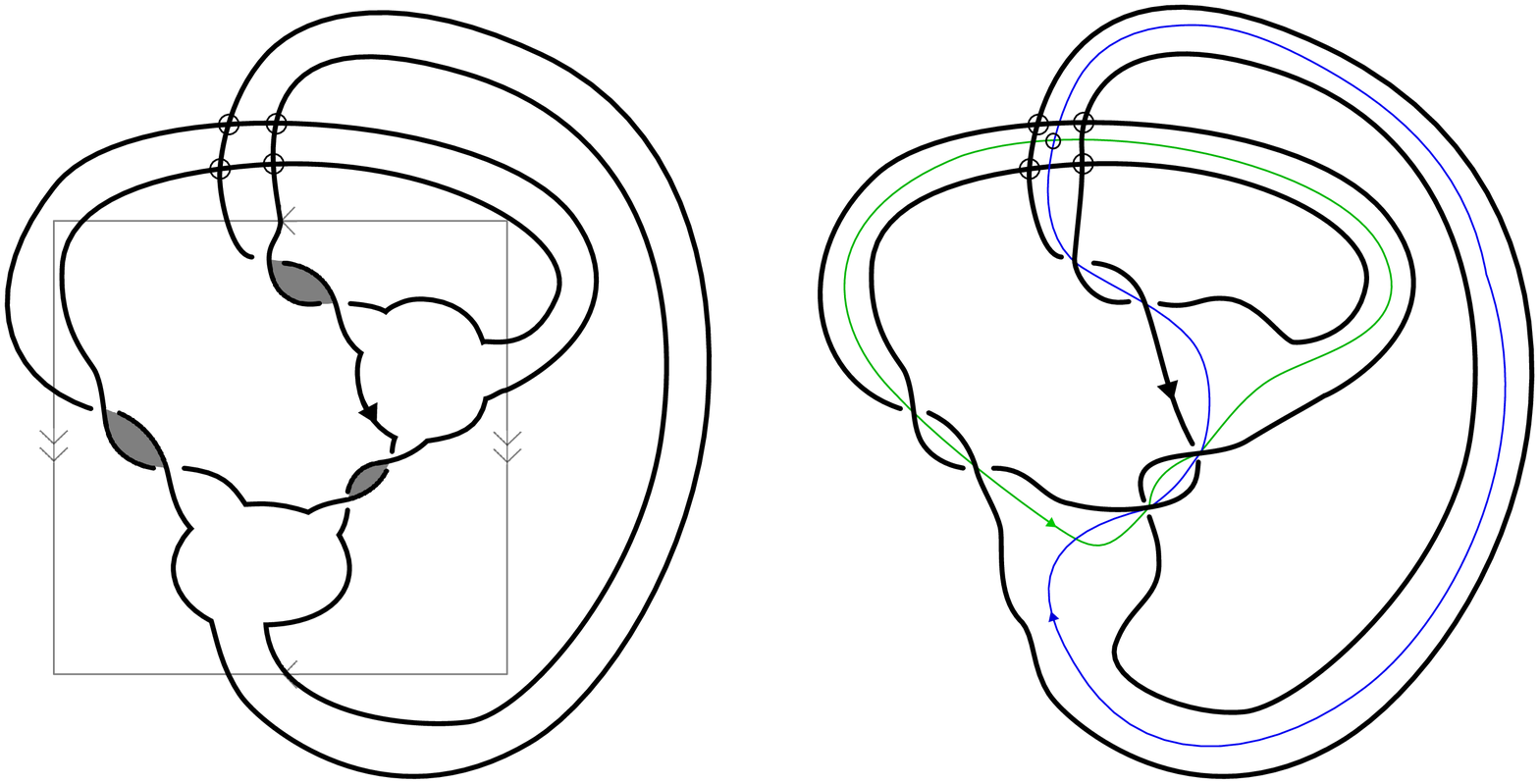}
  \caption{The Seifert surface for the almost classical knot 6.90228 in the thickened torus transforming   into a virtual disk-band surface.}
\label{fig-disk-band-surface}
\end{figure}
Figure \ref{fig-disk-band-surface} shows the evolution of  a Seifert surface in $\Si \times I$ to a virtual disk-band surface for the almost classical knot 6.90228.
An orientation of $K$ induces an orientation on the Seifert surface $F$ and the virtual disk-band surface $S$, and this determines the directions of the positive and negative push-offs.
Here we follow the convention used in Subsection \ref{subsec-computations}, so a small left-handed meridian pierces $F$ and $S$ from the negative side and exits from the positive side. For example, in Figure \ref{fig-disk-band-surface}, over the shaded regions of $F$ and $S$, the positive push-off is above the page, and elsewhere it is below the page.

The next lemma follows from a straightforward comparison between the conventions for computing linking numbers of curves in $\Si \times I$ relative to $\Si \times \{1\}$ and the conventions for computing virtual linking numbers. The details are left to the reader.

\begin{lemma} \label{vlk-lem}
Suppose $K$ is an almost classical knot, realized as a knot in a thickened surface $\Si \times I$
with Seifert surface $F$.
Suppose $a_1,\ldots, a_n$ are simple closed curves giving a basis for $H_1(F)$, and let $\al_1, \ldots, \al_n$ be their images  on the virtual disk-band surface $S$ associated to $F$ as in Lemma \ref{disk-band-lem}. Then   
\begin{equation}
lk(a_i^\pm, a_j) = vlk(\al_i^\pm, \al_j).
\end{equation}
\end{lemma}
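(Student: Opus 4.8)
The plan is to verify the identity crossing by crossing, using the concrete recipes for both sides. Recall from Subsection \ref{subsec:link-num} that $\lk(a_i^\pm, a_j)$ is computed by projecting $a_i^\pm \cup a_j$ to $\Si$ and counting, with sign, those double points at which $a_i^\pm$ lies above $a_j$ in the $I$-direction; and recall from Subsection \ref{subsec-ack} that $\vlk(\al_i^\pm, \al_j)$ is computed from any virtual link diagram of $\al_i^\pm \cup \al_j$ by summing the writhes of the classical crossings at which $\al_i^\pm$ is the overstrand, ignoring all virtual crossings. So both quantities are signed counts of crossings, and it suffices to set up a sign- and over/under-preserving bijection between the crossings being counted.

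First I would fix the correspondence between curves. By construction (Lemma \ref{disk-band-lem}), the Seifert surface $F \subset \Si \times I$ is isotoped into disk-band position, $\Si$ is presented as the identification space of a $4g$-gon $P$ with the disks and the parts of the bands lying inside $P$, and $S$ is obtained by redrawing the bands that cross the edges of $P$ so that they run outside $P$, introducing a virtual crossing wherever two such bands meet outside $P$; these virtual crossings are in one-to-one correspondence with the $1$-handles one attaches to $S^2 \times I$ to recover $\Si \times I$. Under this isotopy-and-recut procedure the simple closed curve $a_i \subset F$ is carried to $\al_i \subset S$, and I would check that, with the orientation conventions fixed in the paragraph after Lemma \ref{disk-band-lem} (a small left-handed meridian enters from the negative side and exits from the positive side), the positive and negative push-offs $a_i^\pm$ of $a_i$ off of $F$ in $\Si \times I \sm F$ are carried to the push-offs $\al_i^\pm$ of $\al_i$ off of $S$.

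Next I would compare crossings. The diagram of $a_i^\pm \cup a_j$ on $\Si$ and the virtual diagram of $\al_i^\pm \cup \al_j$ agree, except that each place where a band of the virtual diagram runs over a $1$-handle of $\Si$ appears in the latter as a virtual crossing. Such a place contributes $0$ to $\vlk(\al_i^\pm, \al_j)$ by definition, and it contributes $0$ to $\lk(a_i^\pm, a_j)$ as well, since running over a $1$-handle takes place within $\Si$ and does not change the $I$-coordinate, so the two strands there are never one above the other. All remaining crossings are classical band crossings, which are the same in both pictures: the over/under data is preserved because the band that passes above the other in $\Si \times I$ is precisely the one drawn as the overstrand in the virtual diagram, and the writhe sign is preserved because the orientations of $a_i^\pm, a_j$ correspond to those of $\al_i^\pm, \al_j$. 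Summing over crossings gives $\lk(a_i^\pm, a_j) = \vlk(\al_i^\pm, \al_j)$.

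The hard part will be bookkeeping rather than mathematics: one must carry the ``above''/$I$-direction convention on $\Si \times I$ faithfully through the operation of cutting $\Si$ open along $P$ and extending the bands, confirm that it matches the overstrand convention used to define $\vlk$, and likewise confirm that the side-of-$F$ convention defining $a_i^\pm$ matches the side-of-$S$ convention defining $\al_i^\pm$. Once the conventions are pinned down (as they are in Subsection \ref{subsec-computations} and the discussion following Lemma \ref{disk-band-lem}), the identity is immediate from the crossing-by-crossing comparison above, which is presumably why the authors leave the details to the reader.
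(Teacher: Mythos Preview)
Your proposal is correct and follows exactly the approach the paper indicates: the paper states only that the lemma ``follows from a straightforward comparison between the conventions for computing linking numbers of curves in $\Si \times I$ relative to $\Si \times \{1\}$ and the conventions for computing virtual linking numbers'' and leaves the details to the reader, which is precisely the crossing-by-crossing bookkeeping you have written out. You have supplied the details the authors omitted, and your final remark anticipating this is on the mark.
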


As a consequence of Lemma \ref{vlk-lem}, the Seifert matrices $(V^+,V^-)$ associated to an almost classical knot $K$ with Seifert surface $F$ and a choice of basis for $H_1(F)$ can be computed entirely in terms of the virtual linking numbers of the corresponding curves on the virtual disk-band surface $S$ associated to $F$ as in Lemma \ref{disk-band-lem}. 

\begin{example}  Consider the virtual disk-band surfaces shown in Figure \ref{fig-disk-band-surface}. Using the basis $\{\al,\be\}$ in the last frame, one can compute the Seifert pair associated to this surface to be 
\[
V^+=\left[\begin{array}{cc} 2 & 1\\ 1 & 2  \end{array} \right]
\quad \text{and } \quad
V^-=\left[\begin{array}{cc} 2 & 2 \\ 0 & 2  \end{array} \right].
\]
\end{example}

Just as in the classical case, the calculation of the Seifert matrices is especially simple when the virtual disk-band surface $S$ consists of only one disk with bands attached, and this can always be arranged by isotopy. In that case, the cores of the bands are simple closed curves on $S$ which give a natural choice of basis for $H_1(F)$, and each band has an even number of half twists, and as such can be drawn without twists by using positive or negative kinks (cf. Figure \ref{band-twists}).
Virtual disk-band surfaces of that form are called \emph{virtual band surfaces} and they are especially useful in establishing the realization theorems for Seifert pairs in the next two subsections.

%%%%%%%%%%
\subsection{Realization of Seifert pairs} \label{subsec:proof_real} 
In Subsection \ref{subsec:SfSm}, we observed that any pair $(V^+,V^-)$ of Seifert matrices for an almost classical knot $K$ satisfies $V^- -V^+$ is skew-symmetric and $\det(V^- -V^+)=1.$ In this subsection, we establish the converse result. We begin with the following definition.

\begin{definition} \label{defn:Seifert-pair}
A \emph{Seifert pair} is a pair $(V^+,V^-)$ of integral square $2g \times 2g$ matrices such that  $V^- -V^+$ is skew-symmetric and $\det(V^- -V^+)=1.$
Equivalence of Seifert pairs is given by simultaneous unimodular congruence. 
\end{definition}

\begin{remark} \label{remark-useful}
If $(V^+,V^-)$ is a Seifert pair, then a standard argument (see \cite[A1]{Burde-Zieschang-Heusener}, for instance) shows that $V^- - V^+$ is unimodular congruent to $H^{\oplus g}$, a block sum of $g$ copies of the $2\times 2$ matrix:
\begin{equation} \label{block-H}
H=\left[\begin{array}{cc} 0 & 1 \\ -1 & 0 \end{array} \right].
\end{equation}
\end{remark}

The next theorem is the main result in this section, and we will prove it by constructing a virtual band surface realizing any given Seifert pair $(V^+,V^-)$.

\begin{theorem} \label{thm_realize}
A pair $(V^+, V^-)$ of integral square $2g \times 2g$ matrices represents the pair $(\th^+,\th^-)$ of Seifert forms associated to an almost classical knot $K$ with Seifert surface $F$ if and only if $V^- -V^+$ is skew-symmetric and $\det(V^- -V^+)=1.$
\end{theorem}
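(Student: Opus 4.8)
The plan is to prove the non-trivial direction: given a Seifert pair $(V^+,V^-)$ in the sense of Definition \ref{defn:Seifert-pair}, we construct a virtual band surface $S$ of genus $g$ whose associated almost classical knot $K=\partial S$ has $(V^+,V^-)$ as its Seifert matrices. By Lemma \ref{disk-band-lem} and Lemma \ref{vlk-lem}, it suffices to exhibit a single-disk virtual band surface with $2g$ bands $b_1,\dots,b_{2g}$, whose core curves $\al_1,\dots,\al_{2g}$ satisfy $\vlk(\al_i^+,\al_j)=V^+_{ij}$ and $\vlk(\al_i^-,\al_j)=V^-_{ij}$; taking its boundary then produces the desired $K$ and, after attaching one $1$-handle per virtual band crossing as in Figure \ref{fig_virt_band_cross}, realizes $K$ as a homologically trivial knot in $\Si\times I$ with Seifert surface $F$ whose $H_1$ has the $\{\al_i\}$ as a basis.

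The construction proceeds in three steps. First, I would normalize the data: by Remark \ref{remark-useful}, after a simultaneous unimodular congruence we may assume $V^- - V^+ = H^{\oplus g}$, so the diagonal entries of $V^\pm$ agree and the off-diagonal entries of $V^+$ and $V^-$ differ only by the prescribed skew-symmetric pattern coming from the standard symplectic blocks. Equation \eqref{skew-ness} says $\vlk(\al_i^-,\al_j)-\vlk(\al_i^+,\al_j)=\langle\al_i,\al_j\rangle_F$, so this normalization corresponds to choosing the bands so that the geometric intersection form on the band surface is the standard symplectic form, i.e.\ bands $b_{2k-1},b_{2k}$ cross each other once (via a virtual crossing, contributing a genus-$1$ handle) and are otherwise interleaved in the standard ``double-point'' pattern, while bands from different symplectic pairs are nested disjointly. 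Second, I would read off which band crossings and which band twists are needed to produce the remaining linking-number data: a twist in band $b_i$ changes the diagonal entry $\vlk(\al_i^\pm,\al_i)$ by $\pm 1$ (left/right kinks as in Figure \ref{band-twists} give even numbers of half-twists and hence arbitrary integer self-linking), and a crossing where band $b_i$ passes over band $b_j$ contributes to $\vlk(\al_i^\pm,\al_j)$ but not to $\vlk(\al_j^\pm,\al_i)$ — this is exactly the asymmetry in \eqref{eq:linking} that lets us realize a non-symmetric pair. By piling up the appropriate number of over-crossings (with appropriate sign, using the writhe/push-off convention fixed before Lemma \ref{vlk-lem}) of $b_i$ across $b_j$, and making the auxiliary crossings virtual so as not to disturb the $1$-handle count beyond what the intersection form forces, we can hit every prescribed value of $V^+_{ij}$ and $V^-_{ij}$ independently. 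Third, I would verify that the resulting object is genuinely a virtual band surface satisfying Definition \ref{defn:vbs} (at most two bands meet in any region — arrange this by spreading crossings out along the bands), that its boundary is a single circle (this is where the parity of twists and the structure of the symplectic pairing must be checked — connectivity of $\partial S$ is forced by $\det(V^--V^+)=1$, equivalently by $V^--V^+$ being unimodular, which is precisely the condition that the boundary of the band surface is connected), and that applying Lemma \ref{vlk-lem} recovers $(V^+,V^-)$ on the nose.

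The main obstacle is the bookkeeping of push-off directions and crossing signs so that the asymmetric pair $(V^+,V^-)$ — and not merely its symmetrization — is realized correctly: one must carefully track, for each over-crossing and each twist, whether it contributes to the $+$ push-off linking, the $-$ push-off linking, or both, relative to the orientation conventions fixed just before Lemma \ref{vlk-lem}, and confirm that the total contributions assemble to the two given matrices simultaneously. A secondary point requiring care is ensuring that introducing extra crossings to realize large off-diagonal entries does not secretly force additional handles (hence change the genus of $F$ and the rank of $H_1(F)$); this is handled by making all such ``excess'' crossings virtual and only promoting to a $1$-handle the single crossing within each symplectic pair demanded by the intersection form $H^{\oplus g}$. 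Once these conventions are pinned down, the verification that $\partial S$ is a knot and that $F\subset\Si\times I$ is a bona fide Seifert surface for it is routine, and the theorem follows.
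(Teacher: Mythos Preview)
Your overall strategy matches the paper's: normalize so that $V^- - V^+ = H^{\oplus g}$ (Remark \ref{remark-useful}) and then build a virtual band surface realizing $V^+$. The paper carries this out by induction on $g$, with an explicit base case (Figure \ref{base-case}) and an explicit inductive template (Figure \ref{band-induct}); you propose a direct one-shot construction. Either scheme can work, but the paper's modular induction sidesteps most of the bookkeeping you flag as the main obstacle.

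There is, however, a genuine confusion in your proposal about what controls the intersection form on $F$ versus what controls the linking numbers. The intersection form $\langle\cdot,\cdot\rangle_F$ is intrinsic to $F$ as an abstract surface: it is fixed entirely by how the bands are attached to the disk (interleaved versus nested), not by whether band crossings in the planar diagram are classical or virtual. Your sentence ``bands $b_{2k-1},b_{2k}$ cross each other once (via a virtual crossing, contributing a genus-$1$ handle)'' conflates a virtual band crossing (which adds a $1$-handle to the Carter surface $\Si$, cf.\ Figure \ref{fig_virt_band_cross}) with the interleaving of band feet on the disk (which adds genus to $F$). Likewise, your secondary worry that extra crossings might ``change the genus of $F$ and the rank of $H_1(F)$'' is misplaced: band crossings, classical or virtual, never change the genus of $F$. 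And your proposed fix, making excess crossings virtual to avoid handles, is backwards: virtual band crossings are precisely the ones that force $1$-handles on $\Si$.

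The second gap is the mechanism for the off-diagonal entries. You want to pile up classical over-crossings of $b_i$ over $b_j$ and assert this adjusts $\vlk(\al_i^\pm,\al_j)$ independently of $\vlk(\al_j^\pm,\al_i)$. That claim is not obviously true once push-offs are taken into account, and you do not verify it. The paper's solution is different and cleaner: it uses the four \emph{virtual half-twist} gadgets (Figure \ref{band-twists}, right) inserted at the two places where $a$ crosses $b$ and where $b$ crosses $a$, and checks directly that these realize arbitrary $q=V^+_{12}$ and $r=V^+_{21}$ independently. This gadget is the key technical device you are missing; with it, the ``bookkeeping of push-off directions and crossing signs'' becomes a short explicit computation rather than an open-ended verification.
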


\begin{proof}
Suppose $(V^+, V^-)$ is a Seifert pair (cf.~Definition \ref{defn:Seifert-pair}). 
Our goal is to construct an almost classical knot $K$ and Seifert surface $F$ whose Seifert matrices equal $(V^+, V^-)$. Notice that it is enough to prove this up to simultaneous unimodular congruence, because if a Seifert pair $(V^+,V^-)$ is realized by some almost classical knot $K$ and Seifert surface $F$ and choice of basis for $H_1(F)$, then any pair of matrices simultaneously unimodular congruent to $(V^+,V^-)$ can be realized by making a change of basis for $H_1(F)$.

Suppose then that $(V^+,V^-)$ is a pair of integral square $2g \times 2g$ matrices such that $V^- -V^+$ is skew-symmetric and $\det(V^- -V^+)=1.$ By Remark \ref{remark-useful},
we have that $V^- - V^+$ is unimodular congruent to the block sum $H^{\oplus g}$.
Therefore, it is sufficient to prove the statement under the assumption that $V^--V^+ = H^{\oplus g}$. In this case, $V^-$ is determined by $V^+$, thus we will show that any integral square $2g \times 2g$ matrix $V^+$ can be realized as the Seifert matrix of a virtual band surface.

\begin{figure}[ht]
\centering
\includegraphics[scale=1.20]{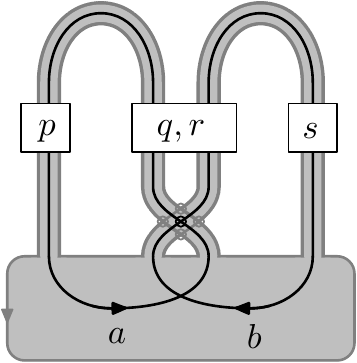} 
\caption{A virtual band surface with $g=1$ and kinks and virtual half twists as indicated.}
\label{base-case}
\end{figure}

The proof is by induction on $g$. 
For $g=1$, consider the virtual band surface given in Figure \ref{base-case}. Here $\{a,b\}$ denotes an ordered basis for $H_1(F)$, as discussed in Section \ref{sec_virt_band}, and an easy exercise shows that the intersection form $\langle\cdot,\cdot\rangle_F$ is represented by the matrix $H$ with respect to this basis. The labelled boxes in Figure \ref{base-case} indicate the number and types of kinks and virtual half twists that need to be inserted.

\begin{figure}[ht]
\centering
\includegraphics[scale=0.750]{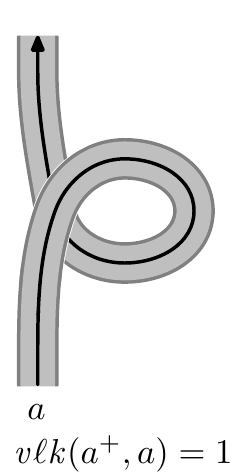} \quad
\includegraphics[scale=0.750]{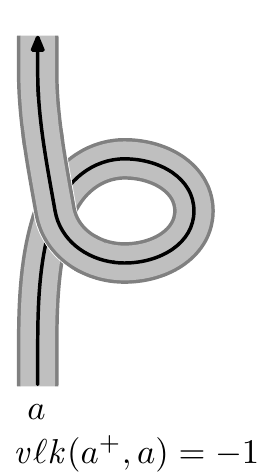} \qquad
\includegraphics[scale=0.750]{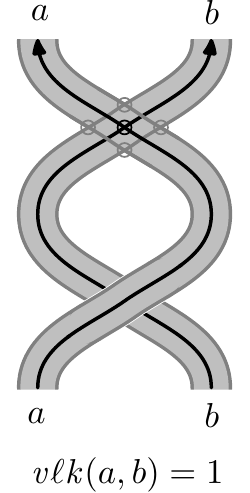} \quad
\includegraphics[scale=0.750]{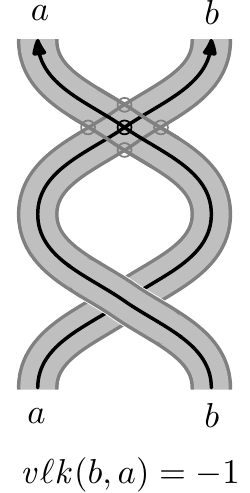} \quad
\includegraphics[scale=0.750]{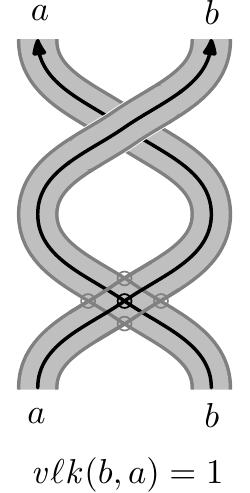}\quad
\includegraphics[scale=0.750]{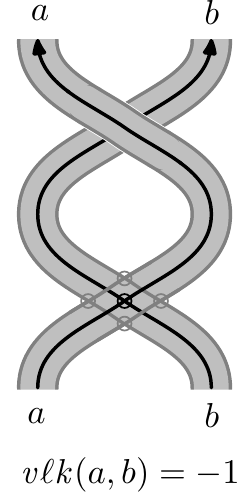} 
\caption{Positive and negative kinks (on left) and the four types of virtual half-twisted bands (on right).}
\label{band-twists}
\end{figure}

For instance, suppose $$V^+ =\begin{bmatrix} p & q \\ r & s \end{bmatrix}.$$
Then to realize $V^+$ as the virtual linking matrix, insert $|p|$ kinks on the first band and $|s|$ kinks on the second, using right handed kinks when $p$ or $s$ is positive and left-handed kinks when they are negative (see two kinks on left in Figure \ref{band-twists}). Also insert $|q|$ virtual half twists where $a$ crosses over $b$ and $|r|$ virtual half twists where $b$ crosses over $a$, again using right-handed virtual twists when $q$ or $r$ is positive and left-handed virtual twists when $q$ or $r$ is negative (see four virtual half twists on right of Figure \ref{band-twists}).
With these choices, it is not difficult to verify that 
\begin{eqnarray*}
&\vlk(a^+,a) = p, \quad \vlk(a^+,b)=q,&\\
&\vlk(b^+,a)=r, \quad \vlk(b^+,b)=s.&
\end{eqnarray*}
Thus, this virtual band surface has Seifert matrix $V^+$ with respect to the basis $\{a,b\}.$

To proceed with the proof, we argue by induction. Suppose $(V^+,V^-)$ is a pair of integral square $(2g+2) \times (2g+2)$ matrices such that $V^- -V^+$ is skew-symmetric and $\det(V^- -V^+)=1.$ Under simultaneous unimodular congruence, we can arrange that $V^- -V^+ = H^{\oplus g+1}$. As before, it is enough to construct a virtual band surface $F$ realizing $V^+$. Write $V^+ = (v_{ij})$, where $1 \leq i,j \leq 2g+2$, and set $V_1 = (v_{ij})_{1\leq i,j \leq 2g}$ and $V_2 =(v_{ij})_{2g+1\leq i,j \leq 2g+2}$. Thus, $V_1$ and $V_2$ are the diagonal block submatrices of $V^+$ of sizes $2g \times 2g$ and $2 \times 2$, respectively, such that: 
$$V^+=
\left[\begin{array}{cc}
V_1 & * \\ 
* & V_2  
\end{array}\right].$$
By induction, we have virtual band surfaces realizing $V_1$ and $V_2$, and Figure \ref{band-induct} depicts a virtual band surface obtained from combining these two surfaces. In the figure, bands are represented by lines. The large boxes labelled $V_1$ and $V_2$ indicate what can be arranged by induction. Furthermore, it is straightforward to see that, with the indicated basis $\{a_1,\ldots, a_{2g+2} \}$ for $H_1(F)$, the intersection form $\langle \cdot,\cdot\rangle_F$ is a block sum of $g+1$ copies of the matrix $H$. Thus $V^- - V^+ = H^{\oplus (g+1)}$.

To complete the proof, one must insert virtual twists into the small unlabelled boxes in Figure \ref{band-induct}, and these are chosen to achieve the desired linking of the last two bands around the first $2g$ bands. This will involve a combination of left or right virtual half twists, as shown in Figure \ref{band-twists}, and here we no longer assume $a_j$ is oriented upwards. To be very specific, for the box involving the bands $a_i$ and $a_j$, where $1 \leq i \leq 2g$ and $2g+1 \leq j \leq 2g+2$, we insert the 2-strand virtual braid so that $\vlk(a_i,a_j) = v_{ij}$ and $\vlk(a_j,a_i) = v_{ji}$. The pair of Seifert matrices for the resulting virtual band surface with respect to the basis $\{a_1,\ldots, a_{2g+2} \}$ is now equal to the given matrices $(V^+, V^-),$ and
this completes the induction and finishes the argument. 
\end{proof}

\begin{figure}[ht]
\centering
\includegraphics[scale=1.30]{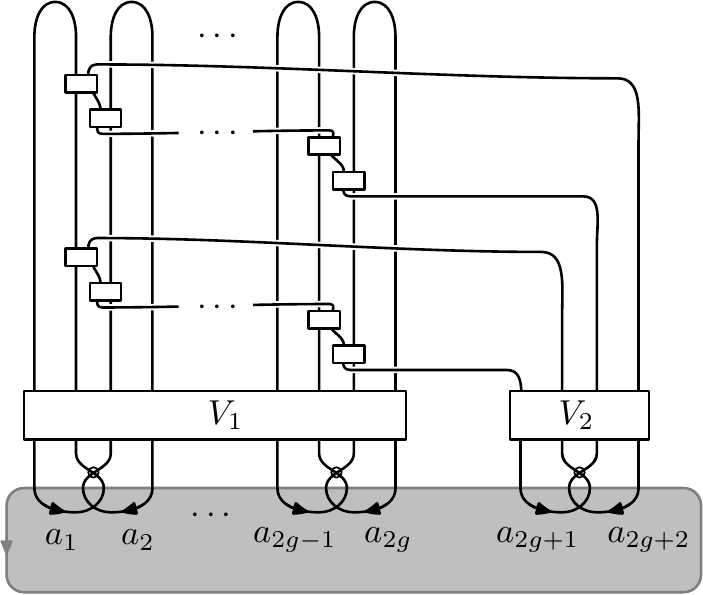} 
\caption{A virtual band surface for the inductive step.}
\label{band-induct}
\end{figure}

\subsection{Realization of Alexander polynomials}  \label{subsec:proof_alex}
For any almost classical knot, the Alexander polynomial satisfies $\De_K(1)=1$, and in this subsection, we establish the converse result, that any integral polynomial $\De(t)$ satisfying $\De_K(1)=1$ occurs as the Alexander polynomial of some almost classical knot.

The proof uses Theorem \ref{thm_realize}, and we begin by defining the \emph{algebraic} Alexander polynomial associated to an integral square $2g \times 2g$ matrix $A$ defined as 
$$\De_A(t) = \det\left(A(t-1) -I\right).$$ 
Setting $t=1$, we see that this polynomial satisfies $\De_A(1)=1$. The following lemma is useful and proved using companion matrices.
The details are left to the reader.

\begin{lemma} \label{alg_real}
For any integral polynomial $\De(t)$ satisfying $\De(1)=1$, there exists an integral square $2g \times 2g$ matrix $A$ 
with $\De_A(t) \doteq \De(t).$
\end{lemma}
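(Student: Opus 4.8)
The plan is to realize $\De_A(t)$ via the \emph{companion matrix} of a suitable monic integer polynomial. The starting point is an identity relating $\De_A$ to the characteristic polynomial of $A$: if $A$ is an $n\times n$ integer matrix with $p_A(u)=\det(uI-A)=u^n+c_{n-1}u^{n-1}+\cdots+c_1u+c_0$, then substituting $s=t-1$ and using $\det(sA-I)=s^n\det(A-s^{-1}I)=(-1)^n s^n p_A(s^{-1})$ gives
\[
\De_A(t)=\det\bigl(A(t-1)-I\bigr)=(-1)^n\Bigl(1+c_{n-1}(t-1)+c_{n-2}(t-1)^2+\cdots+c_0(t-1)^n\Bigr).
\]
In other words, expanded in powers of $(t-1)$ the coefficients of $\De_A$ are those of $p_A$ read in reverse order, up to the global sign $(-1)^n$; note that at $t=1$ this reads $\De_A(1)=(-1)^n$, so one wants $n$ even in order for $\De_A(t)\doteq\De(t)$ to hold, which is why $A$ will be taken of even size $n=2g$.

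Next I would expand the given polynomial about $t=1$: write $\De(t)=\sum_{j=0}^{d}a_j(t-1)^j$. The hypothesis $\De(1)=1$ gives $a_0=1$, and substituting $t^k=\bigl((t-1)+1\bigr)^k$ and expanding binomially shows each $a_j\in\ZZ$, since $\De(t)\in\ZZ[t]$. Now fix an even integer $n=2g\ge d$ and set $c_{n-k}=a_k$ for $1\le k\le d$ and $c_{n-k}=0$ for $d<k\le n$; this defines a monic polynomial $p(u)=u^n+c_{n-1}u^{n-1}+\cdots+c_0\in\ZZ[u]$. Let $A$ be the $2g\times 2g$ companion matrix of $p(u)$: its entries are the integers $c_i$ together with $0$'s and $1$'s, and $\det(uI-A)=p(u)$. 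Plugging $p_A=p$ into the displayed formula and using $(-1)^n=1$ yields
\[
\De_A(t)=1+a_1(t-1)+a_2(t-1)^2+\cdots+a_d(t-1)^d=\De(t),
\]
so in particular $\De_A(t)\doteq\De(t)$, as desired.

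This argument is entirely routine and there is no genuine obstacle. The only points needing care are the reversal-and-substitution identity for $\det(A(t-1)-I)$ together with its sign $(-1)^n$ (handled by allowing the size of $A$ to be any sufficiently large even number $2g$ and padding $p(u)$ with zero coefficients), and the observation that the Taylor coefficients of an integral polynomial about $t=1$ are again integers. Both are elementary, which is presumably why the paper leaves the details to the reader.
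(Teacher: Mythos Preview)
Your proof is correct and follows exactly the approach the paper indicates: the paper says only that the lemma ``is useful and proved using companion matrices,'' leaving the details to the reader, and you have supplied precisely those details. Your handling of the sign $(-1)^n$ by taking $n=2g$ even (and padding with zero coefficients) is the right way to ensure $\De_A(1)=1$ matches the hypothesis, and in fact you obtain $\De_A(t)=\De(t)$ on the nose rather than merely up to $\doteq$.
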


%\begin{proof}
%First assume $\deg \De(t) =n = 2g$ is even. We have integers $a_1, \ldots, a_n$ such that
%$$\De(t) = 1+ a_1(t-1) + \cdots + a_n(t-1)^n.$$ This polynomial has companion matrix given by
%$$A = \begin{bmatrix}
%\,0  &   &&\cdots & &-a_{n} \\
%\,1 & 0 & &\cdots& &-a_{n-1} \\
%\, 0 & 1 & 0 & \cdots & &-a_{n-2} \\
%\,\vdots &  & & \ddots & & \vdots \\
%% &&&\ddots& 0\\
%\,0& &  &\cdots &1 & -a_1
%\end{bmatrix}.$$
%Observe that $A$ is an integral square $n \times n$ matrix with  
%$$\De_A(t) = \det((t-1)A -I) = 1+a_1(t-1) + \cdots + a_n(t-1)^n =\De(t).$$
%This proves the desired result in case $\De(t)$ has even degree, but notice that at no point did we require that $a_n \neq 0.$ In particular, if $\De(t)$ had been a polynomial with $\deg \De(t) =n$ odd, then applying the same argument to 
%$\De(t) = 1+ a_1(t-1) + \cdots + a_{n}(t-1)^{n}+ a_{n+1}(t-1)^{n+1},$ where $a_{n+1}=0,$ we obtain an integral square $(n+1) \times(n+1)$ matrix $A$ with $\De_A(t)= \De(t)$. Thus, every integral polynomial $\De(t)$ with $\De(1)=1$ occurs as the algebraic Alexander polynomial for some integral square $2g \times 2g$ matrix $A$, and
%this completes the proof.
%\end{proof}

The next result follows by combining Theorem \ref{thm_realize} and
Lemma \ref{alg_real}.

\begin{theorem} \label{thm:real_alex}
For any integral polynomial $\De(t)$ satisfying $\De(1)=1$, there exists an almost classical knot $K$ with $\De_K(t) \doteq \De(t).$
\end{theorem}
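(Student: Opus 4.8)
The plan is to chain together the two realization tools we have built: the matrix-realization Lemma~\ref{alg_real}, which produces an integral $2g\times 2g$ matrix $A$ with algebraic Alexander polynomial $\De_A(t)\doteq\De(t)$, and the Seifert-pair realization Theorem~\ref{thm_realize}, which constructs an almost classical knot from any pair $(V^+,V^-)$ with $V^--V^+$ skew-symmetric and $\det(V^--V^+)=1$. So the task reduces to manufacturing, from the matrix $A$, a genuine Seifert pair whose Alexander-Conway polynomial equals $\De_A(t)$ up to a unit.

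First I would observe that $\De_A(t)=\det(A(t-1)-I)$ can be rewritten so as to match the shape $\det(t^{1/2}V^- - t^{-1/2}V^+)$. Multiplying inside the determinant by $t^{-1/2}$ and factoring, one gets $\De_A(t)\doteq\det(tA - (A+I)) = \det\bigl(t^{1/2}(t^{1/2}A) - t^{-1/2}(t^{1/2}(A+I))\bigr)$, which suggests the candidate $V^+ = A$, $V^- = A+I$ — but this is the wrong size unless $2g$ is even in the needed sense, and more importantly $V^--V^+ = I$ is neither skew-symmetric nor unimodular-of-the-right-type. The fix is the standard doubling trick: replace $A$ by a larger matrix that builds in a copy of the hyperbolic form $H$. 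Concretely I would set, on a space of dimension $4g$ (or $2g$ enlarged appropriately), $V^+$ in block form so that $V^- - V^+ = H^{\oplus g}$ as required by Definition~\ref{defn:Seifert-pair}, while arranging the diagonal blocks so that $\det(t^{1/2}V^- - t^{-1/2}V^+)$ collapses, via a block-triangular computation, to $\pm t^{k}\det(A(t-1)-I)$. A clean way to do this is to take $V^+ = \begin{bmatrix} \mathbf{0} & A \\ A^\uptau + I & B\end{bmatrix}$ or a similar arrangement, choosing the off-diagonal and corner blocks so that the skew-symmetric difference condition holds and the determinant of the Alexander matrix factors through $\det(tA-(A+I))$ — the second factor coming out as a unit $\pm t^\ell$. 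Then Theorem~\ref{thm_realize} applies to $(V^+,V^-)$ and yields an almost classical knot $K$ with Seifert matrices $(V^+,V^-)$, and by construction $\De_K(t)=\det(t^{1/2}V^- - t^{-1/2}V^+) \doteq \De_A(t) \doteq \De(t)$.

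The main obstacle I anticipate is the bookkeeping in the previous paragraph: pinning down a block matrix $V^+$ that \emph{simultaneously} (i) has $V^- - V^+ = H^{\oplus g}$ skew-symmetric with determinant $1$, so that Theorem~\ref{thm_realize} is applicable, and (ii) has its Alexander-Conway determinant equal to $\De_A(t)$ up to multiplication by $t^k$. Getting both conditions at once requires choosing the interaction blocks carefully, and one must verify the determinant identity by an honest block row/column reduction rather than hand-waving. A secondary point to check is the parity of the matrix size: Lemma~\ref{alg_real} already gives an \emph{even}-dimensional $A$, and the doubling construction preserves evenness, so $2g\times 2g$ for $V^\pm$ is fine, but I would state this explicitly. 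Once the block matrix is written down and the factorization $\det\bigl(t^{1/2}V^- - t^{-1/2}V^+\bigr) \doteq \det\bigl(tA - (A+I)\bigr)$ is confirmed, the rest is immediate from the two earlier results, so the theorem follows.
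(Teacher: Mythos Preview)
Your overall strategy is right: combine Lemma~\ref{alg_real} with Theorem~\ref{thm_realize}. But the crucial middle step --- producing an explicit Seifert pair $(V^+,V^-)$ from $A$ with the correct Alexander--Conway polynomial --- is not carried out, and the specific block matrix you propose does not work. With $V^+ = \begin{bmatrix} \mathbf{0} & A \\ A^\uptau + I & B\end{bmatrix}$ and $V^- - V^+ = \begin{bmatrix} \mathbf{0} & I \\ -I & \mathbf{0}\end{bmatrix}$, one computes $\det(tV^- - V^+) = \pm\det\bigl((t-1)A + tI\bigr)\det\bigl((t-1)A - I\bigr)$, and the first factor is generally \emph{not} a unit in $\ZZ[t,t^{-1}]$, so this does not give $\De_A(t)$ up to $\doteq$. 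You correctly flag the bookkeeping as the obstacle, but as written the argument has a genuine gap here.

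The paper avoids the doubling trick entirely by exploiting an algebraic property of $J = H^{\oplus g}$ you have not used: $J^2 = -I$ and $\det(J)=1$. Since Lemma~\ref{alg_real} already produces $A$ of even size $2g\times 2g$, one may work in the \emph{same} dimension and simply set $V^- = -JA$ and $V^+ = V^- - J$. Then $V^- - V^+ = J$ is skew-symmetric with determinant $1$, so Theorem~\ref{thm_realize} applies directly, and
\[
\det(tV^- - V^+) = \det\bigl((t-1)V^- + J\bigr) = \det(J)\det\bigl((t-1)JV^- + J^2\bigr) = \det\bigl((t-1)A - I\bigr) = \De_A(t),
\]
using $JV^- = -J^2A = A$. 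This is a one-line determinant identity rather than a block computation, and it sidesteps exactly the obstacle you anticipated.
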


\begin{proof}
Let 
$J = H^{\oplus g}$ be the block sum of $g$ copies of the $2 \times 2$ matrix $H$ from equation \eqref{block-H}. 
Thus, $J$ is an integral square matrix with $\det(J)=1$ and $J^2 =-I,$ where $I$ denotes the $2g \times 2g$ identity matrix.

Suppose $A$ is an integral square $2g \times 2g$ matrix.
By Theorem \ref{thm_realize}, we can find an almost classical knot $K$ whose Seifert matrices $(V^+,V^-)$ satisfy $V^- - V^+ = J$ and $V^- = -J A.$

Therefore, 
\begin{eqnarray*}
\De_K(t) &=& \det\left(t V^- - V^+\right) \\
&=& \det\left((t-1) V^- + J\right) \\
&=& \det\left(J\right) \det\left((t-1) V^- + J\right)\\
&=& \det\left((t-1) J V^- -I\right) \\
&=& \det\left((t-1) A -I\right) \doteq \De_A(t).
\end{eqnarray*}

Thus,  there exists an almost classical knot $K$ with
$\De_K(t) \doteq \De_A(t)$.
Lemma \ref{alg_real} implies that any integral polynomial $\De(t)$ satisfying
$\De(1)=1$ can be realized as $\De_A(t)$ for some integral square  $2g \times 2g$ matrix $A$, and the above argument shows the same is true for the Alexander polynomials of almost classical knots. 
This completes the proof.
\end{proof}

\begin{remark} In fact, in Theorem \ref{thm:real_null2} we will see that any integral polynomial $\De(t)$ with $\De(1)=1$ can be realized as the Alexander polynomial of a slice and even ribbon 
almost classical knot.
\end{remark}

%%%%%%%%%%%%%%%%%%%%%%%%%
\subsection{Realization for null-concordant Seifert pairs} \label{subsec-real}
%\color{red} 
The notion of \emph{algebraic concordance} for almost classical knots is defined  in terms of null-concordant Seifert pairs, which we introduce next.
\begin{definition}
A pair $(V^+,V^-)$ of integral square $2g \times 2g$ matrices is called \emph{null-concordant}  if $V^+$ and $V^-$ are simultaneously unimodular congruent to matrices in block form:
$$\begin{pmatrix} \mathbf{0} & P^\pm \\ Q^\pm & R^\pm \end{pmatrix},$$
where  $P^\pm, Q^\pm, R^\pm$ are integral $g \times g$ matrices. 
\end{definition}

The proof of Theorem \ref{thm:main-2} shows that, if $K$ is an almost classical slice knot and $F$ is any Seifert surface for $K$, then the associated Seifert pair $(V^+, V^-)$is null-concordant. More generally, if $K$ is any almost classical knot with Seifert surface $F$ such that the Seifert pair $(V^+, V^-)$ is null-concordant, then Theorems \ref{thm:main} and \ref{thm:main-2} continue to hold. This is summarized in the following proposition.

\begin{proposition} \label{alg-slice}
Suppose $K$ is an almost classical knot with Seifert surface $F$ such that the associated Seifert pair $(V^+, V^-)$ is null-concordant. Then for any $\om \neq 1$ unit complex number such that $\na^\pm_{K,F}(\om)\neq 0,$ we have $\widehat{\si}^\pm_\om(K)=0.$
Further, there exist polynomials
$f^\pm(t) \in \ZZ[t]$ such that $\na^\pm_{K,F}(t) = f^\pm(t) f^\pm(t^{-1})$.
\end{proposition}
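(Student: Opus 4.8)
The plan is to observe that Proposition \ref{alg-slice} is a purely algebraic corollary of the proofs of Theorems \ref{thm:main} and \ref{thm:main-2}, not of their statements. In both of those theorems, the geometric input (the existence of a slice disk or low-genus surface $S$) is used \emph{only} to produce, after a change of basis for $H_1(F)$, a block of zeros in the upper-left corner of the Seifert matrices $V^{\pm}$. Here we are instead handed that algebraic conclusion directly: $(V^+,V^-)$ is null-concordant, so by definition there is a single unimodular matrix $P$ with $P V^{\pm} P^{\uptau}$ in the block form $\begin{pmatrix} \mathbf{0} & P^\pm \\ Q^\pm & R^\pm \end{pmatrix}$. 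Since the $\om$-signatures $\widehat{\si}^\pm_\om(K,F)$ and the directed Alexander polynomials $\na^\pm_{K,F}(t)$ depend on $(V^+,V^-)$ only up to simultaneous unimodular congruence, we may as well assume $V^{\pm}$ themselves are already in this block form.

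First I would dispose of the $\om$-signature statement. Fix $\om\neq 1$ with $\na^\pm_{K,F}(\om)\neq 0$; as noted in the proof of Theorem \ref{thm:main}, this is equivalent to the Hermitian matrix $(1-\om)V^{\pm}+(1-\wbar\om)(V^{\pm})^{\uptau}$ being non-singular. Because $V^{\pm}$ has a $g\times g$ zero block in the upper-left corner, so does this Hermitian matrix, which therefore has an isotropic subspace of dimension $g$ for its associated quadratic form. Since the form is non-singular of rank $2g$, its isotropy index equals $\min\big((2g+\si)/2,(2g-\si)/2\big)$ where $\si=\widehat{\si}^\pm_\om(K,F)$; combining with the isotropy bound $g\le$ isotropy index forces $\si=0$. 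This is word-for-word the endgame of Theorem \ref{thm:main} with $k=0$, so I would simply cite it.

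Next, the Fox--Milnor-type factorization: writing $V^{\pm}=\begin{bmatrix}\mathbf{0} & Q^\pm \\ R^\pm & S^\pm\end{bmatrix}$ with $Q^\pm,R^\pm,S^\pm$ integral $g\times g$ matrices, the determinant computation from the proof of Theorem \ref{thm:main-2} gives, verbatim,
\[
\na^\pm_{K,F}(t)=\det\!\left(tQ^\pm-(R^\pm)^{\uptau}\right)\det\!\left(t^{-1}Q^\pm-(R^\pm)^{\uptau}\right),
\]
so taking $f^\pm(t)=\det\!\left(tQ^\pm-(R^\pm)^{\uptau}\right)\in\ZZ[t]$ finishes it. One small point to check is that the directed Alexander polynomials really are invariant under simultaneous unimodular congruence of the Seifert pair — but $\det\big(t^{1/2}PV^{\pm}P^{\uptau}-t^{-1/2}(PV^{\pm}P^{\uptau})^{\uptau}\big)=\det(P)^2\det\big(t^{1/2}V^{\pm}-t^{-1/2}(V^{\pm})^{\uptau}\big)$ and $\det(P)^2=1$, so this is immediate, and the same remark handles the $\om$-signature via Sylvester's law of inertia.

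I do not expect a genuine obstacle here: the entire content of the proposition is the recognition that ``null-concordant Seifert pair'' is exactly the algebraic hypothesis extracted mid-proof in Sections as above, so the work reduces to (i) noting unimodular-congruence invariance of the relevant invariants and (ii) rerunning the two determinant/isotropy arguments verbatim. If anything is mildly delicate, it is being careful that the \emph{same} change of basis $P$ simultaneously normalizes both $V^+$ and $V^-$ — but that is built into the definition of null-concordance, so even this is automatic.
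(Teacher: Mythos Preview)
Your proposal is correct and takes essentially the same approach as the paper. The paper does not give a separate proof of Proposition~\ref{alg-slice}; it simply remarks (in the paragraph preceding the proposition) that if the Seifert pair is null-concordant then ``Theorems~\ref{thm:main} and~\ref{thm:main-2} continue to hold,'' which is precisely the observation you have spelled out in detail---that the geometric hypotheses in those theorems serve only to produce the $g\times g$ zero block, after which the isotropy and determinant arguments are purely algebraic.
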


The next theorem shows that every null-concordant Seifert pair can be realized by an almost classical knot which is slice.
\color{black} 

\begin{theorem} \label{thm:real_null}
Suppose $(V^+,V^-)$ is a pair of integral square $2g \times 2g$ matrices such that $V^- - V^+$ is skew-symmetric and $\det(V^- - V^+)=1$. Then $(V^+,V^-)$ is null-concordant if and only it occurs as the Seifert pair of an almost classical knot $K$ which is slice. 
\end{theorem}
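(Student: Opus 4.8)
The plan is to prove the two implications separately, with the forward direction being essentially a packaging of earlier results and the reverse direction requiring a refinement of the realization construction from Theorem \ref{thm_realize}. For the forward implication, suppose the Seifert pair $(V^+,V^-)$ occurs as the Seifert pair of a slice almost classical knot $K$ with Seifert surface $F$. Then the argument already given in the proof of Theorem \ref{thm:main-2} applies verbatim: using a slice disk $D$ for $K$ in $W \times I$ together with Lemmas \ref{lemma_L814} and \ref{lemma_L813}, one produces an integral basis of $H_1(F)$ in which both $V^+$ and $V^-$ have a $g \times g$ block of zeros in the upper-left corner. Since changing basis corresponds to simultaneous unimodular congruence, $(V^+,V^-)$ is null-concordant by definition. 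So this direction is immediate.

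The reverse implication is the substantive part. Suppose $(V^+,V^-)$ is null-concordant. By simultaneous unimodular congruence (which, as noted in the proof of Theorem \ref{thm_realize}, does not change the realizing knot's existence) we may assume
\[
V^\pm = \begin{pmatrix} \mathbf 0 & P^\pm \\ Q^\pm & R^\pm \end{pmatrix},
\]
and we may further arrange $V^- - V^+ = H^{\oplus g}$ — but here one must check this is compatible with the block-zero form; concretely, $H^{\oplus g}$ is unimodular congruent to $\left(\begin{smallmatrix} \mathbf 0 & I \\ -I & \mathbf 0 \end{smallmatrix}\right)$, so after a further simultaneous congruence we can take $P^- - P^+ = I$, $Q^- - Q^+ = -I$, and $R^- = R^+$. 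Then Theorem \ref{thm_realize} gives an almost classical knot $K$ with Seifert surface $F$ realizing exactly this Seifert pair. The remaining task is to show that this particular $K$, built from the virtual band surface construction, is \emph{slice}. The idea is to read off a slicing from the block structure: the first $g$ basis bands $a_1,\dots,a_g$ of the virtual band surface have the property that the corresponding curves have zero linking ($\vlk(a_i^\pm, a_j) = 0$ for $1 \le i,j \le g$) and span a half-dimensional ``null'' subspace of $H_1(F)$. One then performs $g$ saddle moves along the bands $a_1,\dots,a_g$ — compressing the surface along these bands — followed by deaths to remove the resulting unknotted, unlinked components, and checks that the count $s = b + d$ is satisfied with genus zero, so this exhibits $K$ as concordant to the unknot. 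In fact, since only saddles and deaths are used, $K$ is ribbon, which also sets up the stronger statement in Theorem \ref{thm:real_null2}.

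The main obstacle I expect is the last step: verifying rigorously that compressing the virtual band surface along the $g$ ``null'' bands actually produces a sequence of saddles and deaths yielding the unknot, rather than merely reducing the genus. Classically, a Seifert surface with a half-dimensional null-linking subspace spanned by disjoint curves with trivial self-linking can be surgered to a collection of disks, but in the virtual setting one must be careful that (i) the bands $a_1,\dots,a_g$ can be chosen disjoint and embedded on the disk-band surface, (ii) surgering along them yields a surface each of whose components is a disk bounding an unknotted unlinked circle (using that the relevant linking numbers vanish), and (iii) the moves translate into legitimate saddle/death moves on the associated virtual knot diagram with the right arithmetic $s = b+d$ and genus zero. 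Handling the virtual band crossings and twists carefully during this compression — and confirming that no index-nonzero chords are introduced so the intermediate diagrams stay well-defined as virtual knot cobordism — is where the real work lies. Once that is established, combining it with the forward direction completes the proof.
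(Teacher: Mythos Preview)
Your forward direction is fine and matches the paper. The reverse direction has the right overall shape, but two steps diverge from the paper in ways that matter.

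First, the normalization. You assert that after putting $V^\pm$ in block form $\left(\begin{smallmatrix} \mathbf{0} & P^\pm \\ Q^\pm & R^\pm \end{smallmatrix}\right)$ you can ``further arrange'' $V^- - V^+ = \left(\begin{smallmatrix} \mathbf{0} & I \\ -I & \mathbf{0} \end{smallmatrix}\right)$ by a simultaneous congruence. But a congruence bringing $V^- - V^+$ to this standard symplectic form need not preserve the upper-left zero block of $V^\pm$; the null submodule and a symplectic basis are two separate pieces of data, and you have to prove they can be made compatible. The paper does exactly this via an inductive symplectic-basis argument: given the rank-$g$ isotropic submodule $L$ on which both $V^\pm$ vanish, it builds a symplectic basis $\{a_1,\ldots,a_g,b_1,\ldots,b_g\}$ for $V^- - V^+$ with $L = \operatorname{span}\{a_1,\ldots,a_g\}$. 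This is not hard but it is not automatic, and your proposal skips it.

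Second, and more substantively, the slicing. You invoke the generic realization from Theorem~\ref{thm_realize} and then propose to saddle the \emph{null} bands $a_1,\ldots,a_g$. In that construction, however, the $a$-bands are not geometrically trivial: even with the zero block, the off-diagonal entries $P^+,Q^+$ are realized by virtual half-twists placed on \emph{both} the $a$- and $b$-bands, so after cutting the $a$-bands you are left with the $b$-bands still carrying kinks and mutual linking (from $R^\pm$), and there is no clear reason the result is an unlink. The paper avoids this by \emph{not} using Theorem~\ref{thm_realize} directly. Instead it builds a tailored virtual band surface in which the $a$-bands are left completely trivial and \emph{all} of the linking data --- including the off-diagonal blocks --- is loaded onto the $b$-bands by letting them wind around the $a$-bands. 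Then it saddles the $b$-bands, not the $a$-bands: a small lemma shows the surgered ends of each $b$-band retract to the disk by Reidemeister~II and virtual Reidemeister~II moves, so after $g$ saddles one is left with the trivial $(g{+}1)$-component unlink and $g$ deaths finish the ribbon concordance. So the construction is refined precisely so that the slicing step becomes mechanical; your version puts the complexity on the wrong side and leaves the obstacle you yourself flag unresolved.
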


\begin{proof}
Suppose $(V^+,V^-)$ is the Seifert pair associated to an almost classical
knot $K$ with Seifert surface $F$. If $K$ is slice, then as in the proof of Theorem \ref{thm:main-2}, it follows that $(V^+,V^-)$ is null-concordant. This proves the theorem in one direction.

To prove the converse, suppose $(V^+,V^-)$ is a null-concordant pair of integral square $2g \times 2g$ matrices
with $V^--V^+$ skew-symmetric and $\det(V^--V^+)=1$. We will construct an almost classical knot $K$ which is slice and admitting $(V^+,V^-)$ as a Seifert pair. In fact, it will follow from the construction that $K$ is actually ribbon.
 
Since $V^- - V^+$ is skew-symmetric and unimodular, the bilinear form $\be$ on the free module $U = \ZZ^{2g}$ 
defined by $\be(x,y) =  x \left( V^- - V^+ \right) y^\uptau$ for $x,y \in U$ is a non-degenerate skew form. 
For a submodule $L \subset U$, we define $$L^\perp = \{ y \in U \mid \beta(x,y)=0 \text{ for all $x \in L$} \}.$$ A submodule $L$ is called \emph{isotropic} if $L \subset L^\perp$, and
a basis $\{a_1,\ldots, a_g, b_1\ldots, b_g\}$ for $U$ is said to be \emph{symplectic} if $\be(a_i,b_j) = \de_{ij}$ and $\be(a_i,a_j)= 0=\be(b_i,b_j)$ for all $1 \leq i,j \leq g.$

We claim that there is a symplectic basis $\{a_1,\ldots,a_{g}, b_1,\ldots, b_g\}$ for $U$ such that the restrictions of $V^+$ and $V^-$ to the submodule generated by $a_1,\ldots,a_{g}$ are both trivial.

The claim is proved by induction on $g$. For $g=1$, null-concordance implies there is a basis $\{a,b\}$ such that $V^+$ and $V^-$ both vanish on $a.$ Then $a$ is isotropic for $\be$. Further, since $\det(V^- - V^+) =1,$ replacing $b$ with $-b$ if necessary, we can arrange that $\{a,b\}$ is a symplectic basis for $\be$. 

Now suppose it has been proved for free abelian groups of rank $2g-2$ and consider $U= \ZZ^{2g}.$ Null-concordance implies there is a primitive submodule $L \subset U$ of rank $g$ on which $V^+$ and $V^-$ vanish. Clearly $L$ is isotropic, in fact one can easily see that $L = L^\perp.$ Let $\{a_1, \ldots, a_g\}$  be a basis for $L$ and $M$ be the submodule  generated by $\{a_2, \ldots ,a_g\}.$ Then $L^\perp \subsetneq M^\perp,$ and the basis $\{a_1, \ldots, a_g\}$ can be extended to a basis for $M^\perp$ by adding one element $b_1 \in M^\perp$, which can be chosen so that $\be(a_1, b_1) = 1$ (since $V^--V^+$ is unimodular). 
Notice that $\be$ restricts to a non-degenerate form on the submodule generated by $\{a_1,b_1\}.$ Set $W = \left({\rm span}\{a_1,b_1\}\right)^\perp.$ Then $\be$ restricts to a non-degenerate skew form on $W$, which has rank $2g-2$, and $M$ is isotropic with respect to the restriction, and we apply induction.

Since it is enough to prove the theorem up to simultaneous unimodular congruence, by the claim we can arrange that  
\begin{equation} \label{eqn-null-conc}
V^\pm= \begin{bmatrix} \mathbf{0} & Q^\pm \\ R^\pm & S^\pm \end{bmatrix} \quad \text{ and } \quad
V^--V^+ = \begin{bmatrix} \mathbf{0} & I \\ -I & \mathbf{0} \end{bmatrix}.
\end{equation}
For such pairs, $V^-$ is determined by $V^+$. The proof of Theorem \ref{thm_realize} given in
Subsection \ref{subsec-real}
already shows how to construct a virtual band surface with prescribed Seifert matrix $V^+$, and we will explain how the assumption of null-concordance results in this surface bounding an almost classical knot that is slice. As a warm-up, we give the proof in case $g=1$.

For instance, suppose $$V^+ =\begin{bmatrix} 0 & q \\ r & s \end{bmatrix}$$ and $V^-=V^+ +H$. Clearly the Seifert pair $(V^+,V^-)$ is null-concordant, and the virtual band surface depicted in Figure \ref{base-case} with $p=0$ realizes the Seifert pair $(V^+, V^-)$.

\begin{figure}[ht]
\centering
\includegraphics[scale=1.05]{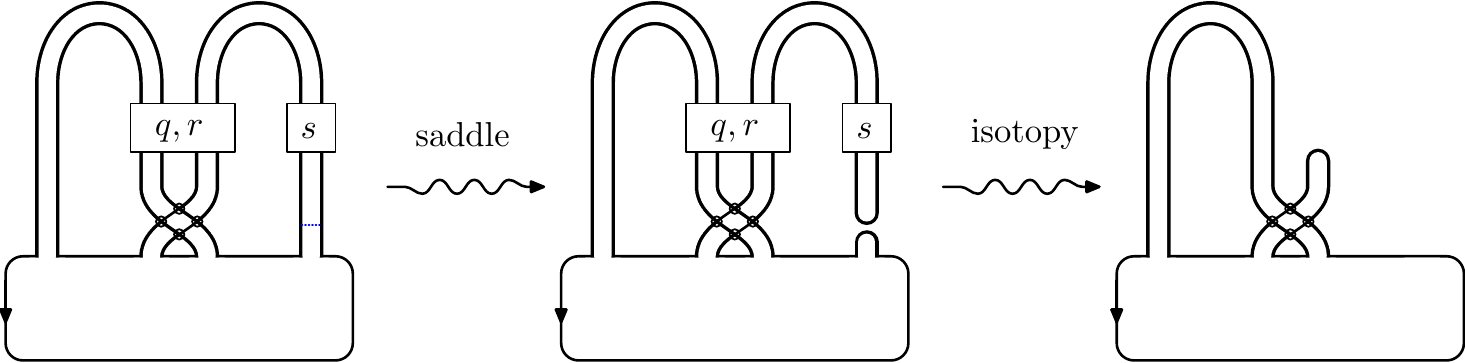}
\caption{A movie for the almost classical knot bounded by the virtual band surface.}
\label{ribbon-movie}
\end{figure}

We claim that the almost classical knot bounded by this surface is slice, and the movie  in Figure \ref{ribbon-movie} gives the concordance to the unknot. In fact, after performing one saddle at the base of the second band on the right, the resulting link is isotopic by a sequence of Reidemeister II move and virtual Reidemeister II moves to the trivial link with two components. Performing one death results in the unknot, and this produces a ribbon concordance 
%from the knot $K$ bounded by the virtual band surface in Figure \ref{vbp-ribbon-base} 
to the unknot. This shows that the original knot is ribbon and completes the proof in the case $g=1$.

\begin{figure}[htb]
\includegraphics[scale=1.40]{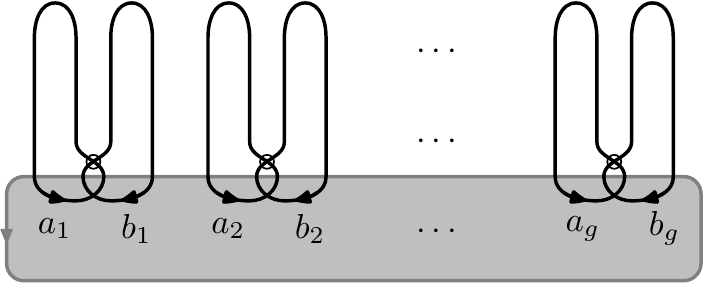}
\caption{Initial placement of arcs to realize a null-concordant pair $(V^+,V^-)$} \label{fig_slice_base}
\end{figure}

A virtual band surface can be constructed realizing any such null-concordant pair $(V^+,V^-)$ as in the proof of Theorem \ref{thm_realize}. Here, we label the bands according to Figure \ref{fig_slice_base}, where bands are drawn as lines. It depicts the virtual band surface with $2g$ bands whose boundary is the trivial knot. We can modify this surface to realize any Seifert pair $(V^+,V^-)$ satisfying \eqref{eqn-null-conc} by only altering the second set of bands $\{b_1,\ldots, b_g\}$ but leaving the first set of bands $\{a_1,\ldots, a_g\}$ trivial. 
Redrawing the second set of bands $\{b_1,\ldots, b_g\}$ to allow them to virtually link themselves and the first set of bands $\{a_1,\ldots a_g\}$ arbitrarily, and adding kinks to the second set of bands as needed, we can realize any null-concordant Seifert pair $(V^+,V^-)$ satisfying \eqref{eqn-null-conc}.

It remains to show the resulting virtual knot bounding this surface is slice, and the following lemma is useful.
\begin{lemma}
For any virtual band surface as in Figure \ref{band-induct}, if a saddle move is performed to one of its bands, then there is an isotopy of the resulting link which contracts the two surgered ends to the disk by a sequence of Reidemeister II and virtual Reidemeister II moves (as in Figure \ref{ribbon-movie}).
\end{lemma}

Performing $g$ saddle moves to each of bands $\{b_1,\ldots, b_g\}$ and applying the lemma, we obtain a $g+1$ component virtual unlink.   This describes a ribbon concordance to the unknot, and  it completes the proof of the theorem.  
\end{proof}

We close this section with the next result, which is in marked contrast to the situation for classical knots.

\begin{theorem} \label{thm:real_null2}
For any integral polynomial $\De(t)$ satisfying $\De(1)=1$, there exists a ribbon almost classical knot $K$ with $\De_K(t) \doteq \De(t).$
\end{theorem}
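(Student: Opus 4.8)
The plan is to combine Theorem~\ref{thm:real_null}, which produces a slice (indeed ribbon) almost classical knot realizing any prescribed null-concordant Seifert pair, with the algebraic realization Lemma~\ref{alg_real}, mimicking the proof of Theorem~\ref{thm:real_alex} but replacing the matrix $J=H^{\oplus g}$ used there with a block-form choice that builds null-concordance in from the start. First I would let $\De(t)$ be an integral polynomial with $\De(1)=1$. By Lemma~\ref{alg_real} there is an integral square $2g\times 2g$ matrix $A$ with $\De_A(t)\doteq\De(t)$, where $\De_A(t)=\det(A(t-1)-I)$.

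Next I would set up the Seifert pair. Let $N=\begin{bmatrix}\mathbf 0 & I\\ -I & \mathbf 0\end{bmatrix}$, the $2g\times 2g$ standard symplectic matrix (so $N$ is skew-symmetric with $\det N=1$ and $N^2=-I$, exactly as $J$ was used in Theorem~\ref{thm:real_alex}). Define $V^- = -NA$ and $V^+ = V^- - N$; then $V^- - V^+ = N$ is skew-symmetric with determinant $1$, so $(V^+,V^-)$ is a Seifert pair in the sense of Definition~\ref{defn:Seifert-pair}. The Alexander polynomial computation is then verbatim the one in the proof of Theorem~\ref{thm:real_alex}:
\[
\De_K(t)=\det(tV^--V^+)=\det((t-1)V^- + N)=\det(N)\det((t-1)V^-+N)=\det((t-1)NV^- - I)=\det((t-1)A - I)\doteq\De_A(t)\doteq\De(t),
\]
using $\det N=1$ and $NV^-=-N^2A=A$. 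So any knot realizing $(V^+,V^-)$ as a Seifert pair has the desired Alexander polynomial.

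The key step — and the one I expect to be the actual content rather than bookkeeping — is arranging that $(V^+,V^-)$ is \emph{null-concordant}, so that Theorem~\ref{thm:real_null} applies and delivers a \emph{slice, indeed ribbon} almost classical knot $K$ with this Seifert pair. The point is that $A$ is arbitrary, so $V^-=-NA$ need not have a $g\times g$ block of zeros in the upper-left corner up to unimodular congruence. To fix this I would not use the companion matrix $A$ from Lemma~\ref{alg_real} directly, but instead choose $A$ from the start in block form $A=\begin{bmatrix}\mathbf 0 & *\\ * & *\end{bmatrix}$ with a $g\times g$ zero block; one must check that the companion-matrix construction behind Lemma~\ref{alg_real} can be carried out so that $A$ has this shape (equivalently, that $\De_A(t)=\det((t-1)A-I)$ still realizes an arbitrary $\De(t)$ with $\De(1)=1$ for such block-structured $A$ — enlarging $g$ if necessary). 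Given such an $A$, a short computation shows $V^-=-NA$ and $V^+=V^--N$ are simultaneously in the block form $\begin{bmatrix}\mathbf 0 & Q^\pm\\ R^\pm & S^\pm\end{bmatrix}$ of Definition~\ref{defn:Seifert-pair}, i.e. $(V^+,V^-)$ is null-concordant. Then Theorem~\ref{thm:real_null} produces a ribbon almost classical knot $K$ with Seifert pair $(V^+,V^-)$, and by the computation above $\De_K(t)\doteq\De(t)$, completing the proof. The main obstacle is precisely this compatibility of the algebraic-realization construction with the null-concordant block structure; everything else is a direct transcription of the proof of Theorem~\ref{thm:real_alex}.
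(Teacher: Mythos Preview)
Your strategy matches the paper's, but the ``short computation'' you invoke is incorrect as stated. With $N=\begin{bmatrix}\mathbf 0 & I\\ -I & \mathbf 0\end{bmatrix}$ and $A=\begin{bmatrix}\mathbf 0 & P\\ Q & R\end{bmatrix}$ one finds $V^-=-NA=\begin{bmatrix}-Q & -R\\ \mathbf 0 & P\end{bmatrix}$, so the upper-left block of $V^-$ is $-Q$, not $\mathbf 0$; the pair $(V^+,V^-)$ is therefore \emph{not} null-concordant for generic $Q$. The condition you actually need on $A$ is that its \emph{lower}-left $g\times g$ block vanish (block upper-triangular), for instance $A=\begin{bmatrix}A_0 & 0\\ 0 & 0\end{bmatrix}$ with $A_0$ a companion matrix realizing $\De(t)$; then $V^\pm$ do land in null-concordant form and the rest of your argument goes through.

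The paper's proof sidesteps this by abandoning the parameterization $V^-=-JA$ and Lemma~\ref{alg_real} altogether. It writes the Seifert pair down directly: for $\De(t)=1+\sum_{i=1}^n a_i(t-1)^i$ it takes the $n\times n$ companion matrix $A$ and sets
\[
V^-=\begin{bmatrix}\mathbf 0 & A\\ \mathbf 0 & *\end{bmatrix},\qquad V^+=V^--J.
\]
This pair is manifestly null-concordant, so Theorem~\ref{thm:real_null} applies immediately, and a block-determinant calculation gives $\det(tV^--V^+)=\det((t-1)A+I)\doteq\De(t)$. Placing the companion matrix in the off-diagonal block of $V^-$ rather than inside an auxiliary $2g\times 2g$ matrix $A$ dissolves your ``main obstacle'' without any compatibility check and also halves the size of the Seifert matrices.
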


\begin{proof}
Since $\De(1)=1$, we can write $\De(t) = 1+ a_1(t-1) + \cdots + a_n(t-1)^n$
for some $a_1,\ldots, a_n \in \ZZ$ with $a_n \neq 0$.
Let $$J=\begin{bmatrix}
\,\mathbf{0}  &I \, \\
\,-I & \mathbf{0} \, 
\end{bmatrix},$$
where $I$ is the $n \times n$ identity matrix.
Consider the matrices $(V^+,V^-)$ of size $2n \times 2n$ written in block form:
$$
V^- = \begin{bmatrix}
\,\mathbf{0}  &A \, \\
\,\mathbf{0} & * \, 
\end{bmatrix}
\quad \text{ and } \quad 
V^+ = \begin{bmatrix}
\,\mathbf{0}  &A-I \, \\
\,I & * \, 
\end{bmatrix}.$$
Here, $*$ is an arbitrary
$n\times n$ integral matrix, and 
$$A = \begin{bmatrix}
\,0  &   &\cdots & -a_{n} \\
\,1 & 0 & & -a_{n-1} \\
%\, 0 & 1 & 0 & \cdots & &-a_{n-1} \\
\,\vdots &    \ddots & & \vdots \\
% &&&\ddots& 0\\
\,0&  \cdots &1  &-a_1
\end{bmatrix}$$
is the $n\times n$ companion matrix to $\De(t)$.

Obviously, $V^- -V^+ = J$, and the Seifert pair $(V^+,V^-)$ is evidently null-concordant. 
By Theorem \ref{thm:real_null}, there is an almost classical knot which is ribbon and which realizes the pair $(V^+,V^-)$. Furthermore, this knot has Alexander polynomial 
$$\De_K(t) \doteq \det\left(tV^--V^+\right) = \det\left((t-1)V^- +J\right)= \det\left((t-1)A +I\right) = \De(t).$$
\end{proof}

%%%%%%%%%%%%%%%%%%%%%%%%%%%%%%%%%%%%%%%%%%%%%%%%%%%%%%%%%%%%%%%%%%%%%%%%%%%%%%%%%%%%%%%%
\section{Applications} \label{sec-5}
In this section, we apply Theorems \ref{thm:main} and \ref{thm:main-2} to the problem of determining the slice genus for every almost classical knot up to six crossings, and this step relies on computations of the Seifert matrices (see Table \ref{table-3}) and Turaev's graded genus (see \cite{Boden-Chrisman-Gaudreau-2017, Boden-Chrisman-Gaudreau-2017t}).
We show that the directed Alexander polynomials satisfy a skein relation and study  how the knot signature behaves under crossing changes. 

%%%%%%%%%%%
\subsection{Computations} \label{subsec-computations}

In this subection, we explain how to compute the Seifert matrices and signatures for almost classical knots. To start off, we take a homologically trivial knot in a thickened surface and apply Seifert's algorithm to construct a Seifert surface. This method was used to produce the Seifert matrices $V^\pm$ for almost classical knots with up to 6 crossings in Table \ref{table-3}, namely we applied Seifert's algorithm to the knots in surfaces depicted in Figure \ref{ACknot-diagrams} and  computed linking pairings.

In all computations, the top of the Carter surface is facing up, and the orientation of the Seifert surface determines the direction of the positive and negative push-offs. We follow the conventions of \cite{Boden-Gaudreau-Harper-2016} in this aspect, which means that we parameterize a regular neighborhood $N(F)$ of the Seifert surface as $F \times [-1,1]$ such that a small oriented meridian of $K$ enters $N(F)$ at $F \times \{ -1\}$ and exits at $F \times \{1\}.$ Our convention is that meridians are left-handed.

%In performing these computations, it is helpful to organize the almost classical knots into families having the same underlying flat knot. 
It is helpful to notice that the oriented smoothing of a knot in a thickened surface depends only on the underlying flat knot, which is the virtual knot up to crossing changes, and so we organize the almost classical knots into families according to their underlying flat knot. 
Notice further that the Seifert surfaces of two knots with the same underlying flat knot differ only in the types of half-twisted bands that are attached. For positive crossings, the attached band has a left-handed twist, and for negative crossings, the band has a right-handed twist.

\begin{example} \label{5-family}
Consider the almost classical knots 5.2012, 5.2025, 5.2133, and 5.2433, which all have the same underlying flat knot. Each occurs as a knot in a Carter surface of genus two and admits a Seifert surface of genus two. Two of the knots in this family are slice (5.2025 and 5.2133), and we will mainly focus on the other two. 

%Notice that the diagram for 5.2433 has all negative crossings, and that 5.2012 is obtained by changing one of the crossings of 5.2433 to be positive.  

In this family of 5-crossing almost classical knots, 5.2012 is obtained by performing a crossing change to 5.2433, and each of 5.2025 and 5.2133 are obtained by a crossing change to 5.2012. Since all the knots in the family are related by crossing changes, one can use Conway's method to perform some of these computations. We will return to this in Example \ref{5-Conway} below.

\begin{figure}[h]
\def\svgwidth{0.45\textwidth} 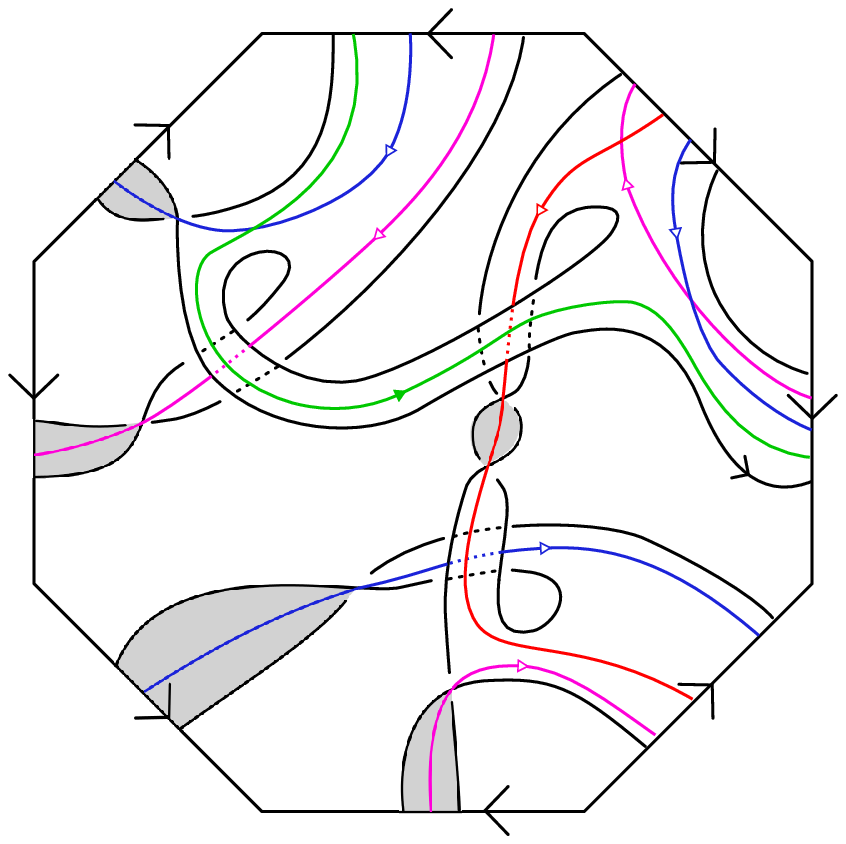
\qquad
\def\svgwidth{0.45\textwidth} 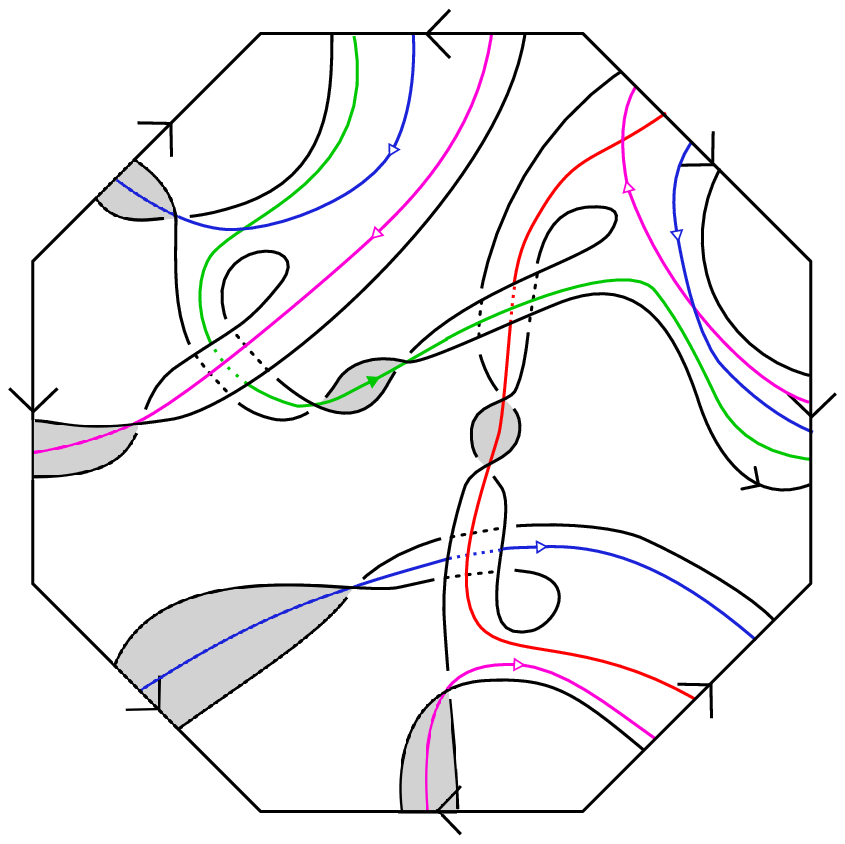

\caption{Seifert surfaces of the virtual knots 5.2012 (left) and 5.2433 (right).}
\label{5-2012}
\end{figure}

Consider the Seifert surfaces in Figure \ref{5-2012} for 5.2012 and 5.2433. Also depicted there is a basis $\{a,b,c,d\}$ of simple closed curves for $H_1(F)$.
The shaded part of $F$ is the positive side and the unshaded part is the negative side. Thus the positive push-offs of $a,b,c,d$ lie above $F$ along the shaded portion and below it along the unshaded portion. Recall further that if $J, K$ are two simple closed curves in $\Si \times I$ that do not intersect, then $\lk(J,K)$ is an algebraic count of only the crossings when $J$ goes over $K$ (this is because linking is taken in the relative homology of the pair $(\Si \times I, \Si \times \{1\}).$

For 5.2433, one obtains that 
$$V^+ = \begin{bmatrix} 1 & 0 & 0 & 1 \\ 0 & 1 & 0 & 0 \\ 1 & 0 & 1 & 0 \\ 0 & -1 & 0 & 1\end{bmatrix}
\quad \text{and} \quad
V^- = \begin{bmatrix} 1 & -1 & 0 & 1 \\ 1 & 1 & 1 & 0 \\ 1 & -1 & 1 & 1 \\ 0 & -1 & -1 & 1\end{bmatrix}.$$
Thus $\De_K(t) = t^2-2t+4-3t^{-1}+t^{-2}$ and $\si(K,F)=4.$
The directed Alexander polynomials are equal in this case and given by
$\na^\pm_{K,F}(t) = t^2-t+1-t^{-1}+t^{-2}$. This polynomial has roots  $e^{\pm 2 \pi i /5}, e^{\pm 2 \pi i 3/5}$ at the fifth roots of $-1$, and $\widehat{\si}^\pm_\om(K,F)$ takes values $\{0,2,4\}$.

For 5.2012, one obtains that 
$$V^+ =\begin{bmatrix}   0&0&-1&1 \\ 0&1&0&0 \\ 0&0&0&0 \\ 0&-1&0&1 \end{bmatrix}  
\quad \text{ and } \quad
V^- =\begin{bmatrix}  0&-1&-1&1 \\ 1&1&1&0 \\ 0&-1&0&1\\ 0&-1&-1&1 \end{bmatrix}.$$
Thus $\De_K(t) = t$ and $\si(K,F)=2.$
The directed Alexander polynomials are again equal and given by $\na^\pm_{K,F}(t) = t-1+t^{-1}$. This polynomial has roots $e^{\pm 2 \pi i /3}$ at the third roots of $-1$, and $\si^\pm_\om(K,F)$ takes values $\{0,2\}$. 
\end{example}

It should be noted that the Seifert matrices in Table \ref{table-3} are computed with just one choice of Seifert surface, and different choices would lead to possibly different signatures, $\om$-signatures, and directed Alexander polynomials. Passing to a second Seifert surface is useful in finding slice obstructions for a given almost classical knot, and we apply this method to show 5.2439, 6.72695, 6.77908, and 6.87548 are not slice in Subsection \ref{subsec:slice-o}.
%%%%%%%%%%%%%%%%%%%%%%%%
\subsection{Conway's method} \label{subsec:Conway}
Conway developed methods for computing the Alexander polynomial and signatures of classical knots using crossing changes, and these methods carry over to give skein formulas for the directed Alexander polynomials and the signature of almost classical knots. (The Alexander polynomial $\De_K(t)$ of almost classical knots also satisfies a skein relation, see \cite[Theorem 7.11]{Boden-Gaudreau-Harper-2016} for more details.) We begin by reviewing the situation for classical knots.

Suppose $K_+$ and $K_-$ are classical knots that are identical except at one crossing, which is positive for $K_+$ and negative for $K_-$. Choosing Seifert surfaces for $K_+$ and $K_-$ that differ only in the half-twisted band at the crossing, and choose a basis for its first homology so the first generator passes through the crossing and all the other generators do not. It follows that the Seifert matrices $V(K_+)$ and $V(K_-)$ are the same except for the $(1,1)$ entry, in fact, we have
$$V(K_+) = \begin{bmatrix} a & u \\ v &W \end{bmatrix} \quad \text{ and } \quad V(K_-) = \begin{bmatrix} a+1 & u \\ v &W \end{bmatrix},$$
where $a$ is the self-linking of the first generator, $u$ and $v$ are row and column vectors, respectively, and $W$ represents the Seifert matrix for the two component link $K_0$ obtained from the $0$-smoothing of $K$.
Arguing as in \cite{Giller-1982}, it follows that the Alexander polynomials satisfy the skein relation
$$\De_{K_+}(t) - \De_{K_-}(t) = (t^{-1/2}-t^{1/2}) \De_{K_0}(t),$$
and the knot signatures of $K_+$ and $K_-$ differ by at most two. 
In fact, one can further show that  
\begin{equation} \label{eq-Conway-1}
\si(K_+) \leq \si(K_-) \leq \si(K_+) + 2,
\end{equation}
so either $\si(K_+) = \si(K_-)$ or $\si(K_+)+2 = \si(K_-)$. 

In general, for any non-singular symmetric real matrix, its signature modulo 4 is determined by the sign of its determinant. Applying this observation to the symmetrized Seifert matrix of a knot $K$, since it is a $2g \times 2g$ matrix with $\det(V+V^\uptau) = \De_K(-1)$, it follows that
\begin{equation} \label{eq-Conway-2}
\si(K) =
\begin{cases}
0 \mod 4 & \text{ if $\De_K(-1)>0$,} \\
2 \mod 4 & \text{ if $\De_K(-1)<0$.}
\end{cases}
\end{equation}
Thus one can determine $\si(K_+)$ from $\si(K_-)$ entirely by comparing the signs of $\De_{K_+}(-1)$ and $\De_{K_-}(-1)$; if the signs are the same then $\si(K_+)=\si(K_-)$, otherwise $\si(K_+)+2 = \si(K_-).$ 

Similar formulas hold for almost classical knots provided one uses compatible Seifert surfaces for $K_+, K_-,$ and $K_0$, which will be denoted $F_+, F_-,$ and $F_0$, respectively and which are assumed to be identical except for the half-twisted bands at the crossing. One must further assume that the symmetrized Seifert matrices for $K_+, K_-$, and $K_0$ are all non-singular, which is automatic for classical knots but which does not always hold for almost classical knots. Under those assumptions, for appropriate choices of generators, the Seifert matrices $V^\pm(K_+)$ and $V^\pm(K_-)$ are given by
$$V^+(K_+) = \begin{bmatrix} a & u \\ v &W^+ \end{bmatrix} \quad \text{ and }
\quad V^+(K_-) = \begin{bmatrix} a+1 & u \\ v &W^+ \end{bmatrix},$$
$$V^-(K_+) = \begin{bmatrix} a & u' \\ v' &W^- \end{bmatrix} \quad \text{ and }
\quad V^-(K_-) = \begin{bmatrix} a+1 & u' \\ v' &W^- \end{bmatrix}.$$
It follows that the directed Alexander polynomials satisfy the skein relation
\begin{equation}\label{eq-skein}
\na^\pm_{K_+, F_+}(t) - \na^\pm_{K_-, F_-}(t) = (t^{-1/2}-t^{1/2}) \na^\pm_{K_0, F_0}(t)
\end{equation}
and that equations \eqref{eq-Conway-1} and \eqref{eq-Conway-2} continue to hold, where in the second formula $\De_K(-1)$ is replaced by $\na^\pm_{K,F}(-1).$ (Notice that Lemma \ref{first-lem} implies that $\na^+_{K,F}(-1) = \na^-_{K,F}(-1).$) 

\begin{example} \label{5-Conway}
In this example, we apply Conway's method to determine the effect on the signature of the crossing change for the virtual knots 5.2012 and 5.2433 depicted in Figure \ref{5-2012}.
Note that 5.2433 has all negative crossings, and that 5.2012 is obtained by changing one of the crossings of 5.2433.   

Set $K_- = 5.2433$ and $K_+ = 5.2012$,  and let $F_-$ and $F_+$ be the associated Seifert surfaces. From Example \ref{5-family}, we have 
\begin{eqnarray*}
\na^\pm_{K_-,F_-}(t) &=& t^2-t+1-t^{-1}+t^{-2},\\
\na^\pm_{K_+,F_+}(t) &=& t-1+t^{-1}.
\end{eqnarray*}
Since $\na^\pm_{K_-,F_-}(-1) = 5 > 0$ has the opposite sign of $\na^\pm_{K_+,F_+}(-1) = -3<0$,  equations \eqref{eq-Conway-1} and \eqref{eq-Conway-2} imply that $\si(K_-,F_-)=\si(K_+,F_+) +2.$ This is consistent with the calculation given in  Example \ref{5-family}, where we determined that  $ \si(K_-,F_-) = 4$ and  $\si(K_+,F_+) = 2.$  
\end{example}

%%%%%%%%%%%%%%%%%%%%%%%%%
\subsection{Slice obstructions} \label{subsec:slice-o}
In this subsection, we apply the previous results  to the question of sliceness of almost classical knots. Combined with the graded genus  \cite{Boden-Chrisman-Gaudreau-2017, Boden-Chrisman-Gaudreau-2017t}, the computations are sufficient  to determine sliceness for all almost classical knots up to six crossings. These results are presented in Table \ref{table-2} at the end of the paper, which gives the graded genus, signature, and $\om$-signatures for these knots.  The almost classical knots that are known to be slice are also displayed with their slicings in Figure \ref{slice-Gauss}. 

All the other knots in the table can be seen to be non-slice, and we explain this now. In the majority of cases, this follows by applying Theorem \ref{thm:main}, using either  the signature or $\om$-signatures. When the signatures and $\om$-signatures fail to obstruct sliceness, we apply  Theorem \ref{thm:main-2} or the graded genus. For instance, the knots $K_1=4.108$ and $K_2=6.90172$ are  classical, and although their signatures and $\om$-signatures all vanish, their Alexander-Conway polynomials, which are $\De_{K_1}(t)=t-3+t^{-1}$ and $\De_{K_2}(t)=t^2-3t+5-3t^{-1}+t^{-2}$ do not factor as $f(t) f(t^{-1})$, and so Theorem \ref{thm:main-2} applies to show that neither $K_1$ nor $K_2$ is slice. 

In a similar way, one can use the directed Alexander polynomials to obstruct sliceness for the knots  6.87857 and 6.90194. For  $K=6.87857$, the directed Alexander polynomials are $\na^+_{K,F}(t)=-2t+4-2t^{-1}$ and   $\na^-_{K,F}(t)=-t+6-t^{-1}$, neither of which factors as $f(t) f(t^{-1})$, so Theorem \ref{thm:main-2} applies to show it is not slice. Likewise, $K=6.90194$ has down polynomial $\na^-_{K,F}(t)=-t^2-3t+8-3t^{-1}+t^{-2},$ which does not factor so $K$ is not slice.

The five almost classical knots 6.77905, 6.77985, 6.78358, 6.85091, and 6.90232 all have graded genus $\vartheta(K) =1$. Since $\vartheta(K)$ is a concordance invariant of virtual knots \cite{Boden-Chrisman-Gaudreau-2017}, it follows that none of them are slice. 

In some cases, we obstruct sliceness using invariants derived from a second Seifert surface $F'$. The new surface is obtained from the standard one by connecting it to a parallel copy of the Carter surface by a small tube. For instance, for $K=5.2439$, the new Seifert matrices are 
$$V^- = \begin{bmatrix}-1 & 1 & 0 & 0 & 0 & 1 \\ -1 & 0 & 1 & 0 & 1 & 0 \\ 0 & 0 & 1 & 0 & 0 & 0 \\ 0 & 0 & -1 & 1 & 0 & 0 \\ 0 & 0 & 0 & 0 & 0 & 1 \\ 0 & 0 & 0 & 0 & -1 & 0
\end{bmatrix} \qquad
V^+ = \begin{bmatrix}-1 & 0 & 0 & 0 & 0 & 1 \\ 0 & 0 & 0 & 0 & 1 & 0 \\ 0 & 1 & 1 & -1 & 0 & 0 \\ 0 & 0 & 0 & 1 & 0 & 0 \\ 0 & 0 & 0 & 0 & 0 & 0 \\ 0 & 0 & 0 & 0 & 0 & 0
\end{bmatrix}.$$
From this, it follows that $\si(K,F')=2$, and Theorem \ref{thm:main} applies to show $K$ is not slice. 
For $K = 6.77908$, using a second Seifert surface $F'$, we find that $\widehat{\si}^\pm_\om(K,F')$ takes values $\{-2,0\}$, and Theorem \ref{thm:main} again shows it is not slice.

In the case of $K = 6.72695$, the new Seifert surface $F'$ has $\na^\pm_{K,F'}(t)=-t+3-t^{-1}$, which does not factor, and Theorem \ref{thm:main-2} applies and gives an obstruction to sliceness. Likewise for $K = 6.87548$, using a second Seifert surface $F'$, we have $\na^+_{K,F'}(t)=-t^2-t+5-t^{-1}-t^{-2}$
and $\na^-_{K,F'}(t)=-t+3-t^{-1}$, and neither factors, so Theorem \ref{thm:main-2} applies again to show it is not slice.

Table \ref{table-2} shows, with fewer details, a slice obstruction for each of the remaining non-slice almost classical knots with up to six crossings, and Seifert pairs for all almost classical knots are given in Table \ref{table-3}.

%%%%%%%%%%%%%%%%%%%%%%%%%%%%%%%%%%%%%%%%%%%%%%%%%%%%%%%%%%%%%%%%%%%%%%%%%%%%%%%%%%%%%%%%%%

\subsection{Slice genera} \label{subsec-tables}
In this subsection, we return to the question \cite{Dye-Kaestner-Kauffman-2014} mentioned in the introduction on the virtual slice genus of classical knots, which we rephrase optimistically below as a conjecture. We begin with a brief review of the slice genus for virtual knots.

The (smooth) slice genus $g_s(K)$ of a virtual knot $K$ was  introduced in Subsection \ref{subsec-VKC} and is related to the virtual unknotting number of $K$ (see \cite{Boden-Chrisman-Gaudreau-2017}). Computations of the slice genus for many virtual knots with up to six crossings are given in \cite{Boden-Chrisman-Gaudreau-2017, Rushworth-2017}, and a table of these computations can be found online \cite{Boden-Chrisman-Gaudreau-2017t}. 

The \emph{topological} slice genus $g_s^{\rm top}(K)$ is defined as the minimum genus over all topological locally flat embedded oriented surfaces $F$ in $W \times I$ with $\partial F=K$, where $W$ is a 3-manifold with $\partial W = \Si$ and $K$ is a representative knot in $\Si \times I$ (cf. Definition \ref{defn-conc}).
The following conjecture was posed as an open problem by Dye, Kaestner, and Kauffman \cite{Dye-Kaestner-Kauffman-2014}.

\begin{conjecture} \label{conj-DKK}
For any classical knot, its smooth and topological slice genus as a virtual knot agree with its smooth and topological slice genus as a classical knot.
\end{conjecture}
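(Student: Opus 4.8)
The statement is a conjecture, so the plan is not to prove it outright but to verify it for all classical knots below a crossing bound and to isolate the obstruction to a general argument. One inequality is free: a classical concordance (respectively slicing) of a classical knot is in particular a virtual one, so the virtual smooth and topological slice genera are at most the classical ones. The real content is that passing to the virtual category cannot decrease either slice genus.

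For the topological half I would represent the classical knot $K$ as a knot in $S^2\times I$ with a genus-minimizing Seifert surface $F$. Because $H_1(S^2)=0$, equation \eqref{eq:linking} kills every intersection correction term, so $V^-=(V^+)^\uptau$; hence $\si(K,F)$ is the classical Trotter--Murasugi signature, $\widehat{\si}_\om(K,F)$ is the classical Tristram--Levine $\om$-signature, and $\na^\pm_{K,F}(t)=\De_K(t)$ is the ordinary Alexander polynomial. Theorem \ref{thm:main} then yields $g^{\rm top}_s(K)\ge\tfrac12|\widehat{\si}_\om(K,F)|$ for every unit $\om\ne 1$ with $\De_K(\om)\ne 0$, and Theorem \ref{thm:main-2} recovers the classical Fox--Milnor condition on $\De_K$. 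For every classical knot whose topological slice genus is already detected by these invariants --- by \cite{Knotinfo, Lewark-McCoy-2017} this means all knots of at most $11$ crossings except $11a_{211}$ and $11n_{80}$, and all but $25$ of the $2175$ knots of $12$ crossings --- the chain
\[
g^{\rm top}_4(K)\;\ge\;g^{\rm top}_s(K)\;\ge\;\tfrac12\max_{\om}|\widehat{\si}_\om(K,F)|\;=\;g^{\rm top}_4(K)
\]
collapses to equality. The parity-projection machinery (Theorem \ref{thm_parity_conc}) is irrelevant here since $P_0^\infty(K)=K$ for classical $K$, and the graded-genus obstruction of \cite{Boden-Chrisman-Gaudreau-2017}, being identically zero on classical knots, contributes nothing; a direct case-by-case check, done by computer for the $11$- and $12$-crossing tables, finishes this half.

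The smooth half needs an extra ingredient, because for some ten- and eleven-crossing knots $g^{\rm smooth}_4(K)>g^{\rm top}_4(K)$ while the signatures only bound the topological genus from below. Here the plan is to combine the present bounds with the virtual Rasmussen invariant of Dye--Kaestner--Kauffman \cite{Dye-Kaestner-Kauffman-2014}, which detects $g^{\rm smooth}_4$ for the knots at issue, and with the genus-one result of \cite{Boden-Nagel-2016}; together these suffice for all classical knots of at most $11$ crossings apart from the two named exceptions.

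The main obstacle, and the reason the conjecture is left open, is precisely the classical knots whose topological slice genus is invisible to every invariant in play: $11a_{211}$ and $11n_{80}$ are topologically slice, but only via Casson--Gordon \cite{Casson-Gordon-1978} or Cochran--Orr--Teichner \cite{Cochran-Orr-Teichner-2003} style metabelian obstructions, which have no evident virtual counterpart. A complete proof would seem to require either extending those higher-order invariants to virtual knots compatibly with the classical ones on classical knots --- a substantial undertaking, since the relevant metabelian covers and $L^2$-signatures do not obviously make sense for a knot in a thickened surface --- or a direct geometric surgery argument showing that a locally flat genus-$g$ surface in $W\times I$ with $\partial W=S^2$ bounding a classical knot can be converted, without raising the genus, to a locally flat surface in $D^4$; this amounts to controlling the $3$-manifold $W$, which is a punctured closed oriented $3$-manifold. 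Neither route is pursued here.
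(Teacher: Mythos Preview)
Your approach matches the paper's: use Theorem \ref{thm:main} to see that the classical signature bounds the virtual topological slice genus from below, invoke the virtual Rasmussen invariant of \cite{Dye-Kaestner-Kauffman-2014} for the smooth side, apply \cite{Boden-Nagel-2016} to dispose of all knots with slice genus at most one, and then run through \cite{Knotinfo}. The paper carries this out in the proposition immediately following the conjecture, using only the ordinary signature $\si$ and the Rasmussen invariant $s$ rather than the full $\om$-signature function.

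However, you have the 11-crossing bookkeeping backwards. The paper verifies the \emph{topological} half of the conjecture for every 11-crossing knot without exception; $11a_{211}$ and $11n_{80}$ are the two possible exceptions for the \emph{smooth} half. Your diagnosis of why they resist the argument is also wrong: neither knot is topologically slice, and no Casson--Gordon or $L^2$-signature obstruction is in play. The issue is more mundane --- for $11n_{80}$ the classical smooth 4-genus is simply unknown, and for $11a_{211}$ the smooth 4-genus strictly exceeds both $|s(K)|/2$ and $|\si(K)|/2$, so neither available lower bound reaches it. Note too that if these knots were topologically slice as you assert, the Boden--Nagel result you already invoke would settle the conjecture for them immediately, so your final paragraph is internally inconsistent.
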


Using $g_4(K)$ and $g_4^{\rm top}(K)$ to denote the smooth and topological slice genera as a classical knot,
the conjecture asserts that, if $K$ is classical, then
\begin{equation} \label{conj-DKK}
g_s(K) =g_4(K) \quad \text{ and } \quad g_s^{\rm top}(K)=g_4^{\rm top}(K).
\end{equation}

Using KnotInfo \cite{Knotinfo}, one can use the signatures and Rasmussen invariants to verify this conjecture for classical knots with up to 12 crossings. This step implicitly uses the fact that the signatures and Rasmussen invariant extend to the virtual setting. The first is proved here, and the second is a consequence of \cite{Dye-Kaestner-Kauffman-2014}.

The next result provides a summary of our findings.

\begin{proposition}
For classical knots with up to 10 crossings, Conjecture \ref{conj-DKK} is true for both the smooth and topological slice genera.
 
For classical knots with 11 crossings,
Conjecture \ref{conj-DKK} on the topological slice genus is true in all cases. The conjecture on the smooth slice genus is true with just two possible exceptions: $11a_{211}$ and $11n_{80}.$

For classical knots with 12 crossings,
Conjecture \ref{conj-DKK} on the topological slice genus is true with 15 possible exceptions. The conjecture on the smooth slice genus is true with 37 possible exceptions. The possible exceptions are listed below in equations \eqref{exc-list1}, \eqref{exc-list2},\eqref{exc-list3}, and \eqref{exc-list4}.

\end{proposition}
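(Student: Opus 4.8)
The plan is to reduce Conjecture~\ref{conj-DKK} to a collection of per-knot lower bounds on the virtual slice genus. Since a locally flat (resp.\ smooth) slice surface in $D^4$ gives a locally flat (resp.\ smooth) virtual concordance in the sense of Definition~\ref{defn-conc}, every classical knot $K$ satisfies $g_s(K) \le g_4(K)$ and $g_s^{\rm top}(K) \le g_4^{\rm top}(K)$ automatically, so only the reverse inequalities need to be established. The tools available for this are: Theorem~\ref{thm:main}, which for $K$ realized in $S^2 \times I$ with a Seifert surface $F$ identifies $\na^\pm_{K,F}(t) \doteq \De_K(t)$ and $\widehat{\si}^\pm_\om(K,F)$ with the classical Tristram--Levine signature $\si_\om(K)$, yielding $|\si_\om(K)| \le 2\,g_s^{\rm top}(K)$ for every unit $\om \ne 1$ with $\De_K(\om) \ne 0$, and in particular $|\si(K)| \le 2\,g_s^{\rm top}(K) \le 2\,g_s(K)$; Theorem~\ref{thm:main-2}, whose Fox--Milnor obstruction for classical $K$ is the usual factorization condition on $\De_K$; the Rasmussen invariant, extended to virtual knots in \cite{Dye-Kaestner-Kauffman-2014}, with $|s(K)| \le 2\,g_s(K)$; and the main theorem of \cite{Boden-Nagel-2016}, which forbids any classical knot with classical slice genus $\ge 1$ from becoming virtually slice, in either category.

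With these in hand, I would argue knot by knot using the KnotInfo tables \cite{Knotinfo}. Read off $g_4(K)$ and $g_4^{\rm top}(K)$. If the relevant $4$-genus is $0$, then $K$ is classically slice, hence virtually slice, and Conjecture~\ref{conj-DKK} holds trivially for $K$; if it equals $1$, apply \cite{Boden-Nagel-2016}; if it is $\ge 2$, compare it with the extended obstructions. For the topological statement, conclude $g_s^{\rm top}(K) = g_4^{\rm top}(K)$ whenever $\max_\om |\si_\om(K)| = 2\,g_4^{\rm top}(K)$, or whenever $\De_K$ already fails the Fox--Milnor condition; for the smooth statement, conclude $g_s(K) = g_4(K)$ whenever in addition $|s(K)| = 2\,g_4(K)$. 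The point is that each of these obstructions is an invariant of \emph{virtual} concordance that restricts to its classical counterpart, so sharpness over classical surfaces forces equality of the classical and virtual $4$-genera.

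Running this sweep gives the stated numerical outcome. For knots up to ten crossings, where $g_4 = g_4^{\rm top}$, every $4$-genus in KnotInfo is realized by a signature or Rasmussen bound, so Conjecture~\ref{conj-DKK} holds in full. For eleven-crossing knots, the topological $4$-genera are all certified this way (or are $\le 1$), whereas $11a_{211}$ and $11n_{80}$ have $g_4 = 2$ but $\si_\om(K) \equiv 0$ and $|s(K)|/2 = 1$, so the extended invariants only give $g_s \ge 1$; these two are the possible exceptions. For twelve-crossing knots the same comparison isolates the fifteen candidate exceptions for $g_s^{\rm top}$ and the thirty-seven for $g_s$ recorded in \eqref{exc-list1}, \eqref{exc-list2}, \eqref{exc-list3}, and \eqref{exc-list4}.

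I expect the main obstacle to be the bookkeeping in the twelve-crossing range. One has to determine, for each knot, whether the KnotInfo value of $g_4^{\rm top}(K)$ (resp.\ $g_4(K)$) is actually \emph{certified} by a signature-type or Rasmussen obstruction rather than by Casson--Gordon invariants, the $\tau$- or $\Upsilon$-invariants, or other tools not known to extend to virtual concordance; and one must separately flag any knot whose classical (topological) $4$-genus is itself known only to lie in a range, since for such a knot no conclusion can be drawn in either direction. Assembling the knots where the certification fails yields precisely the exception lists.
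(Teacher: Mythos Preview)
Your approach is essentially identical to the paper's: reduce to per-knot lower bounds using Theorem~\ref{thm:main} (so the classical signature bounds $g_s^{\rm top}$), the Dye--Kaestner--Kauffman extension of the Rasmussen invariant (so $|s(K)|$ bounds $2g_s$), and the Boden--Nagel result for the $g_4 \le 1$ cases, then sweep KnotInfo. One small factual slip: you write that $11n_{80}$ has $g_4 = 2$, but the paper records its smooth slice genus as \emph{unknown} at the time, which is the actual reason it lands in the exception list; similarly, the paper's argument uses only the ordinary signature $\si(K)$ rather than $\max_\om|\si_\om(K)|$, and does not invoke Fox--Milnor for this proposition, so your toolkit is slightly broader than what is actually needed.
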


\begin{proof}
The main result in \cite{Boden-Nagel-2016}, which holds for both smooth and topological concordance, implies that 
equation \eqref{conj-DKK} holds for any classical knot with smooth  slice genus equal to one. Thus, to confirm the conjecture, we only need to consider knots with $g_4(K) \ge 2$.

For classical knots with up to ten crossings, for all but five cases, either $g_4(K) \leq 1$ or $g_4(K) = |\si(K)|/2$. This confirms the conjecture in all cases except for the knots:
$$10_{139}, 10_{145}, 10_{152}, 10_{154} \text{ and } 10_{161}.$$ 
Interestingly, each one has $g_4(K) \neq g_4^{\rm top}(K)$ and satisfies $g_4(K) = |s(K)|/2$ and $g_4^{\rm top}(K) = |\si(K) |/2.$ Combining our signature results with the results of \cite{Dye-Kaestner-Kauffman-2014} on Rasmussen's invariant, it follows that \eqref{conj-DKK} holds for these five knots. We conclude that  Conjecture \ref{conj-DKK} holds for all knots with up to 10 crossings.

In a similar way, one can check the conjecture on all the classical knots with 11 crossings, and it can be confirmed for all but the following six knots, namely   
$$11a_{211},  11n_{9}, 11n_{31}, 11n_{77}, 11n_{80}  \text{ and } 11n_{183}.$$
The smooth slice genus is unknown for $11n_{80},$ and the other five have $g_4(K) \neq g_4^{\rm top}(K).$ Four of them satisfy $g_4(K) = |s(K)|/2$ and $g_4^{\rm top}(K) = |\si(K) |/2$, so \eqref{conj-DKK} holds for these four. This confirms the conjecture  for classical knots with 11 crossings with two possible exceptions: $11a_{211}$ and $11n_{80}.$

Similar methods apply to 12 crossing knots. For example,
the topological slice genus $g_4^{\rm top}(K)$  has been computed for all 12-crossing knots except for the following seven \cite{Lewark-McCoy-2017}, \cite{Knotinfo}:
\begin{equation} \label{exc-list1}
12a_{244}, 12a_{810}, 12a_{905}, 12a_{1142}, 12n_{549}, 12n_{555} \text{ and } 12n_{642}.
\end{equation}
For the other knots with 12-crossings, one can check that they all satisfy $g_4^{\rm top}(K) \leq 1$ or $g_4^{\rm top}(K) = |\si(K)|/2$, except for the
following  eight  \cite{Knotinfo}:
\begin{equation} \label{exc-list2}
12a_{787}, 12n_{269}, 12n_{505}, 12n_{598}, 12n_{602}, 12n_{694}, 12n_{749} \text{ and } 12n_{756}.
\end{equation}
This confirms that $g_4^{\rm top}(K)=g_s^{\rm top}(K)$ for  classical knots with 12 crossings with 15 possible exceptions.

The smooth slice genus $g_4(K)$ has been computed for all 12-crossing knots except for the following 20 \cite{Lewark-McCoy-2017}, \cite{Knotinfo}:
\begin{equation} \label{exc-list3}
12a_{\{153, 187, 230, 317, 450, 570, 624, 636, 905, 1189, 1208\}} \text{ and }  12n_{\{52, 63, 225, 239, 512, 555, 558, 665, 886\} }.
\end{equation}
For the other knots with 12-crossings, one can check that they all satisfy $g_4(K) \leq 1$ or $g_4(K) = |s(K)|/2$, except for the
following 17  \cite{Knotinfo}:
\begin{equation} \label{exc-list4}
12a_{\{244, 255, 414, 534, 542, 719, 787, 810, 908, 1118, 1142, 1185\}} \text{ and }  12n_{\{269, 505, 598, 602, 756\}}.
\end{equation}
This confirms that $g_4(K)=g_s(K)$ for  classical knots with 12 crossings with 37 possible exceptions.
\end{proof}

For almost classical knots up to six crossings, our methods were not able to determine the slice genus in four cases: $$6.90115, 6.90146, 6.90150, \text{ and } 6.90194.$$ For the first three, the signature tells us that  $g_s(K) \geq 1$, and for 6.90194, Theorem \ref{thm:main-2}  implies that
 $g_s(K) \geq 1$. Further, by the results of \cite{Boden-Chrisman-Gaudreau-2017}, we know that $g_s(K) \leq 2$ for all four.
In the classical case, for stubborn knots such as $8_{18}$ with vanishing signature, signature function, and Rasmussen invariant, the slice genus can be investigated by other techniques, such as the $T$-genus and the triple point method \cite{Murakami-Sugishita-1984}. It would be interesting to generalize these results to virtual knots, and in their recent paper \cite{Fedoseev-Manturov-2017}, Fedoseev and Manturov define a slice criterion for free knots using triple points of free knot cobordisms. Their work represents a promising development toward realizing this goal.

At the end of this paper, we have included a number of figures and tables of almost classical knots with up to six crossings.  Figure \ref{slice-Gauss} shows all the slice almost classical knots, with the slicing indicated as a saddle move on the Gauss diagram. Table \ref{table-1} gives their Alexander-Conway polynomials and Table \ref{table-2} shows their graded genus, signature, $\om$-signatures, and slice genus. Figure \ref{ACknot-diagrams} gives realizations of each almost classical knot as a knot in a thickened surface. The knot diagrams determine Seifert surfaces in the usual way, and the resulting pairs of Seifert matrices are listed in Table \ref{table-3}. Note that the Tables \ref{table-1}, \ref{table-2}, and \ref{table-3} include classical knots.

%%%%%%%%%%%%%%%%%%%%%%%%%%%%%%%%%%%%%%%%%%%%%%%%%%%%%%%%%%%%%%%%%%%%%%%%%%%%%%%%%%%%%%%%% 

\section{Parity, projection, and concordance} \label{sec-2}
In this section, we introduce Manturov's notion of parity projection and Turaev's construction of lifting of knots in surfaces.  
The main result is that parity projection preserves concordance, and it is established  by interpreting parity projection in terms of lifting knots to coverings.

%%%%%%
\subsection{Parity and projection} \label{PandP}
A parity is a collection of functions $\{f_D\}$, one for each Gauss diagram $D$ in the \emph{diagram category}, which is a category $\sD$ whose objects are Gauss diagrams and whose morphisms are compositions of Reidemeister moves. Given $D \in {\rm ob}(\sD)$, the function $f_D$ is a map from the chords of $D$ to the set $\{0, 1\}.$ Chords $c$ with $f_D(c)=1$ are called \emph{odd}, and those with $f_D(c)=0$ are called \emph{even}. 
The collection $\{f_D \mid D \in {\rm ob}(\sD) \}$ of functions is required to satisfy the following:

\subsection*{Parity axioms}
Suppose $D$ and $D'$ are related by a single Reidemeister move.
\begin{enumerate} \addtocounter{enumi}{-1}
\item The parity of every chord not participating in the Reidemeister move does not change.
\item If $D$ and $D'$ are related by a Reidemeister I move which eliminates the chord $c_0$ of $D$, then, $f_D(c_0)=0$.  
\item If $D$ and $D'$ are related by a Reidemeister II move which eliminates the chords $c_1$ and $c_2$ of $D$, then, $f_{D}(c_1)=f_D(c_2)$.
\item If $D$ and $D'$ are related by a Reidemeister III move, then the parities of the three chords involved in the Reidemeister III move do not change. Moreover, the three parities involved in the move are either all even, all odd, or exactly two are odd.  
\end{enumerate}

In \cite{Manturov-2010}, this is referred to as ``parity in the weak sense''. Readers interested in more details on parity are referred to \cite{Manturov-2010}, \cite{Ilyutko-Manturov-Nikonov-2011} and \cite{Nikonov-2016}. 

Given a parity $f$, there is a map on Gauss diagrams called \emph{parity projection} and denoted $P_f$. For a Gauss diagram $D$, its image $P_f(D)$ under parity projection is the Gauss diagram obtained from $D$ by eliminating all of its odd chords. 

\begin{proposition}[\cite{Manturov-2010}] \label{prop-closedparity}
If $D$ and $D'$ are two Gauss diagrams  equivalent through Reidemeister moves, then their parity projections $P_f(D)$ and $P_f(D')$ are also equivalent through Reidemeister moves.
\end{proposition}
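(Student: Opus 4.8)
The plan is to reduce to the case where $D$ and $D'$ differ by a single Reidemeister move and then check, move by move, that $P_f(D)$ and $P_f(D')$ are related by at most one Reidemeister move — in several subcases they will in fact be \emph{equal}. Since parity projection only deletes chords and a Reidemeister move is a local operation on a Gauss diagram, the recurring elementary observation is: if a Gauss diagram $E$ contains a local configuration to which one of Polyak's moves $\Om1a,\Om1b,\Om2a,\Om3a$ applies, then any diagram obtained from $E$ by deleting chords other than those participating in the move still contains that configuration — deleting chords can only empty the relevant arcs further, and it leaves the signs, directions, and relative cyclic positions of the participating chords unchanged. So the whole argument comes down to tracking the parities of the chords that do participate, using the parity axioms.

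First I would dispose of the Reidemeister I case. Suppose $D'$ is obtained from $D$ by an $\Om1$ move eliminating the chord $c_0$ (the case where an $\Om1$ move creates a chord is the same after exchanging the roles of $D$ and $D'$). Axiom (1) gives $f_D(c_0)=0$, so $c_0$ survives in $P_f(D)$ as an isolated kink, and axiom (0) says every other chord has the same parity in $D$ and $D'$; hence $P_f(D')$ is obtained from $P_f(D)$ by deleting the kink $c_0$, i.e.\ by the same $\Om1$ move. For Reidemeister II, let $c_1,c_2$ be the two chords eliminated; axiom (2) gives $f_D(c_1)=f_D(c_2)$. If both are odd, both are deleted by projection, and since $D$ and $D'$ agree away from $c_1,c_2$ and all other chords keep their parity by axiom (0), we get $P_f(D)=P_f(D')$. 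If both are even, they survive, and by the elementary observation $P_f(D)$ still contains the $\Om2$ configuration on $c_1,c_2$; deleting it yields $P_f(D')$, so the two projections differ by one $\Om2$ move.

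The Reidemeister III case is the main point and uses axiom (3) in full. Let $c_1,c_2,c_3$ be the three chords in the move; by axiom (3) their parities are unchanged by the move, and the number of odd chords among them is $0$, $2$, or $3$ — never exactly $1$. The key combinatorial fact is that an $\Om3$ move alters a Gauss diagram only by interchanging the cyclic order of certain pairs of adjacent endpoints, where each such pair consists of one endpoint of $c_i$ and one endpoint of $c_j$ for some $i\ne j$, everything else (in particular all other chords) being left fixed. If all three chords are even, all three survive and $P_f(D)$ still contains the $\Om3$ configuration, so the projections differ by one $\Om3$ move. If all three are odd, all three are deleted and $P_f(D)=P_f(D')$. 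If exactly two are odd, say $c_1,c_2$ are deleted and $c_3$ survives, then every endpoint of $c_3$ that the move would interchange is paired with an endpoint of $c_1$ or $c_2$, which is no longer present after projection; hence $c_3$ is unmoved and again $P_f(D)=P_f(D')$. It is precisely here that ruling out ``exactly one odd chord'' in axiom (3) is needed: were one of the three odd, two even chords would survive whose mutual endpoints still get interchanged, and that interchange is in general not realizable by Reidemeister moves.

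I expect the Reidemeister III analysis to be the only delicate step, and the single detail that must be pinned down carefully is the exact combinatorial description of how an $\Om3$ move acts on a Gauss diagram — namely that it permutes endpoints only among the three participating chords, always in pairs involving two distinct chords — since the identity $P_f(D)=P_f(D')$ in the two-odd subcase rests entirely on this. Everything else (Reidemeister I and II, and the all-even and all-odd subcases of Reidemeister III) is routine once the ``a sub-diagram inherits the local configuration'' observation is in place. Finally, composing these elementary steps along a sequence of Reidemeister moves from $D$ to $D'$ produces a sequence of Reidemeister moves from $P_f(D)$ to $P_f(D')$, which proves the proposition.
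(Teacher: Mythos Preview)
Your proof is correct and is exactly the case-by-case verification via the parity axioms that the paper has in mind; the paper itself gives only a one-line proof (``This result is a direct consequence of the parity axioms'') and cites Manturov, so you have simply supplied the details that the paper omits. Your handling of the Reidemeister III case, in particular the two-odd subcase and the explanation of why ``exactly one odd'' must be excluded, is the heart of the matter and is carried out correctly.
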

This result is a direct consequence of the parity axioms. Although parity projection is defined in terms of the underlying Gauss diagram, Proposition \ref{prop-closedparity} implies that it is well-defined as a map on virtual knots. 

Since each application of parity projection $P_f$ removes chords from the Gauss diagram $D$, we have $P_f^{k+1}(D) =P^k_f(D)$ for $k$ sufficiently large.  Using this observation, we define $P_f^\infty(D)=\lim_{k\to \infty}P^k_f(D)$, and we call $P_f^\infty$ the \emph{stable projection} with respect to the parity $f$. Although stable projection is defined on the level of the Gauss diagrams, it gives rise to a well-defined map on virtual knots.

%%%%%%%%%
\subsection{Gaussian parity} \label{gaussianparity}

We now consider Gaussian parities, which are defined in terms of the indices of the chords in a Gauss diagram.

\begin{definition}
Let $n >1$ be an integer and $D$ a Gauss diagram. The \emph{mod $n$ Gaussian parity} is defined by setting, for any chord $c$ of $D$, 
$$f_n(c) = \begin{cases} 0 & \text{if $\ind(c) = 0$ mod $n$, and} \\ 1 &  \text{otherwise.}\end{cases}$$
\end{definition}
 
Here, we  denote mod $n$ Gaussian parity by $f_n$, its associated parity projection by $P_n$, and stable projection with respect to $f_n$ by $P_n^\infty$. For example, the mod $n$ parity projection $P_n$ acts on a Gauss diagram $D$ by removing all chords whose index is nonzero modulo $n$, and the image of $P^\infty_n$ consists of all Gauss diagrams $D$ with $\ind(c) = 0 \mod n$ for every chord $c$ of $D$. A virtual knot satisfies this condition if and only if it can be represented as a knot $K$ in a thickened surface $\Si \times I$ which is homologically trivial in $H_1(\Si, \ZZ/n)$  (see  \S5 of \cite{Boden-Gaudreau-Harper-2016}).

Another very useful parity is the total Gaussian parity, which is defined next.

\begin{definition}
The \emph{total Gaussian parity} is defined by setting, for any chord $c$ of $D$,
$$f_0(c) = \begin{cases} 0 & \text{if $\ind(c) = 0$, and} \\ 1 &  \text{otherwise.}\end{cases}$$ 
\end{definition}

Alternatively, if $D$ is a Gauss diagram, one can define the total Gaussian parity by setting $f_0(c)=\lim_{n\to \infty}f_n(c)$. Indeed, taking $n$ to be larger than the number of chords of $D$, one can easily see that $f_n(c)=0$ if and only if $\ind(c)=0.$

Throughout this paper, we  denote total Gaussian parity by $f_0$, its associated parity projection by $P_0$, and the stable projection with respect to $f_0$ by $P_0^\infty.$ 
For example, the parity projection $P_0$ acts on a Gauss diagram $D$ by removing all chords whose index is nonzero, and the image of $P_0^\infty$ consists of Gauss diagrams $D$ with $\ind(c) = 0$ for every chord $c$ of $D$, i.e., almost classical knots. 

%%%%%%%%
\subsection{Coverings and Parity}  
In this subsection, we show how to interpret Gaussian parity projection of a virtual knot $K$ represented as a knot in a thickened surface in terms of Turaev's \emph{lifting} along an abelian covering of the surface \cite{Turaev-2008-a}.
Using this approach, we prove that if two virtual knots $K_0$ and $K_1$ are concordant, then their images $P_n(K_0)$ and $P_n(K_1)$ under parity projection are concordant (see Theorems \ref{thm_parity_conc} and \ref{thm_st_parity_conc}). 

In general, if $\pi \co \widetilde{\Si} \to \Si$ is a covering of surfaces and  $K$ is a knot in $\Si \times I,$ then Turaev observed that one can lift $K$ by taking $\widetilde{K}$ to be the knot in $\widetilde{\Si} \times I$ given as a connected component of the preimage $\pi^{-1}(K)$. The equivalence class of $\widetilde{K}$ can be shown to be independent of the choice of connected component, and one can define invariants of $K$ in terms of invariants of the lifted knot $\widetilde{K}$.  

We review this construction and relate the knots obtained by lifting along abelian covers to those obtained by projection with respect to Gaussian parity. This correspondence gives a natural topological interpretation of parity projections, and we use it to prove that parity projection preserves concordance. We begin with Turaev's construction for lifting knots along covers and explain how it is related to parity projection.

Let $\Si$ be a closed oriented surface, $p\co \Si\times I \to \Si$ the projection, and $K$ a knot in $\Si\times I$.
Fix an integer $n>1$ and consider the homology class $[p_*(K)]\in H_1(\Si;\ZZ/n)$. Its 
 Poincar\'{e} dual determines an element in $H^1(\Si;\ZZ/n) \cong \Hom_{\ZZ}(H_1(\Si),\ZZ/n)$, and  precomposing with the Hurewicz map, we obtain a homomorphism $\psi_K \co \pi_1(\Si,c) \to\ZZ/n$.
The proof of the next result is standard and left to the reader.

\begin{lemma} \label{lemm_left} 
For $\al \in \pi_1(\Si,c)$, $\psi_K(\al)=[\al] \cdot [K] \pmod n$.
\end{lemma}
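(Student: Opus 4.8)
The plan is to reduce the identity to two standard facts about the closed oriented surface $\Si$: the compatibility of Poincar\'e duality with the universal coefficient isomorphism, and the identification of the mod $n$ intersection form with the cup product pairing.

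First I would note that, since the target $\ZZ/n$ is abelian, the homomorphism $\psi_K$ factors automatically through the Hurewicz map $h\co \pi_1(\Si,c)\to H_1(\Si)$, say $\psi_K=\overline{\psi}_K\circ h$ with $\overline{\psi}_K\in\Hom(H_1(\Si),\ZZ/n)$, so no base-point subtlety arises. Because $H_0(\Si)$ is free, the universal coefficient theorem identifies $H^1(\Si;\ZZ/n)\cong\Hom(H_1(\Si),\ZZ/n)$ via the Kronecker evaluation pairing $\langle\,\cdot\,,\,\cdot\,\rangle$, and by the very definition of $\psi_K$ the functional $\overline{\psi}_K$ is the image under this isomorphism of the Poincar\'e dual $\mathrm{PD}([p_*(K)])\in H^1(\Si;\ZZ/n)$, where $\mathrm{PD}\co H_1(\Si;\ZZ/n)\to H^1(\Si;\ZZ/n)$ is the duality isomorphism. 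It therefore suffices to prove the cohomological identity
\[
\big\langle\, \mathrm{PD}([p_*(K)]),\, [\al]\,\big\rangle \;=\; [\al]\cdot[K]\pmod n ,
\]
where $[\al]=h(\al)$ and $\cdot$ denotes the mod $n$ intersection form on $H_1(\Si)$.

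Next I would invoke the cap--cup adjunction $\langle \xi,\,\eta\cap[\Si]\rangle=\langle \xi\cup\eta,\,[\Si]\rangle$ for $\xi,\eta\in H^1(\Si;\ZZ/n)$, together with the standard fact that on a closed oriented surface the intersection number of two homology classes equals the evaluation on the fundamental class of the cup product of their Poincar\'e duals, i.e.\ $a\cdot b=\langle \mathrm{PD}(a)\cup\mathrm{PD}(b),\,[\Si]\rangle$. Writing $\xi=\mathrm{PD}([p_*(K)])$ and $\eta=\mathrm{PD}([\al])$, so that $[p_*(K)]=\xi\cap[\Si]$ and $[\al]=\eta\cap[\Si]$, one computes
\[
\big\langle \xi,\,[\al]\big\rangle=\big\langle \xi,\,\eta\cap[\Si]\big\rangle=\big\langle \xi\cup\eta,\,[\Si]\big\rangle ,
\]
which is the intersection number of $[K]$ and $[\al]$ up to the sign governed by the anticommutativity of the cup product on $H^1$. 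Fixing the orientation of $\Si$ and the normalisation of $\mathrm{PD}$ in accordance with the conventions for linking and intersection numbers set up in Subsection~\ref{subsec:link-num} then makes the right-hand side read exactly $[\al]\cdot[K]$.

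The only point requiring care is precisely this sign bookkeeping: the intersection form is skew-symmetric, so $[\al]\cdot[K]$ and $[K]\cdot[\al]$ differ, and one must check that the composite of Poincar\'e duality, the universal coefficient isomorphism and the Hurewicz map is normalised to produce the intersection number in the order stated. I would pin this down either by tracking the conventions through explicitly, or — more cleanly — by verifying the displayed identity on a symplectic basis $\{x_1,y_1,\dots,x_g,y_g\}$ of $H_1(\Si;\ZZ/n)$, where both sides are immediate, and then extending by bilinearity. Everything else is formal, which is why the authors describe the proof as standard.
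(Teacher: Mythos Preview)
Your argument is correct and is precisely the ``standard'' unwinding of the definition of $\psi_K$ via Poincar\'e duality, universal coefficients, and the cup--intersection correspondence; the paper itself supplies no proof at all (it declares the result standard and leaves it to the reader), so there is nothing further to compare against. Your attention to the sign convention is appropriate, and checking on a symplectic basis is the cleanest way to pin it down.
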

 
The kernel of $\psi_K$ yields a regular covering space $\Si^{(n)} \to \Si$ of finite index. Moreover, the map sends the homotopy class of $K$ to $0$. Hence, $K$ lifts to a knot $K^{(n)}$ in $\Si^{(n)} \times \RR$. As any lift $K^{(n)}$ of $K$ may be obtained from any other lift by a diffeomorphism
%\footnote{Turaev uses the term \emph{diffeomorphic} to describe this particular equivalence of knots.} 
of $\Si$, all lifts of $K$ represent the same virtual knot. The knot $K^{(n)}$ is called a \emph{lift} of $K$.
 
\begin{example} If the map $\psi_K \co \pi_1(\Si,c) \to\ZZ/n$ is trivial, then $\Si^{(n)}=\Si$ and $K^{(n)}=K$. If the group of covering transformations of $\Si^{(n)} \to \Si$ is non-trivial and at each crossing point of $K$ the over-crossing arc and under-crossing arc lift to different sheets, then $K^{(n)}$ is a simple closed curve on $\Si^{(n)}$ representing the trivial virtual knot.  
\end{example}

The next lemma relates the lift $K^{(n)}$ of a virtual knot $K$ to its image $P_n(K)$ under mod $n$ parity projection. 

\begin{lemma} \label{lemma_tur} Let $K$ be a knot in $\Si \times I$ and $K^{(n)}$  the lift of $K$ to the covering space $\Si^{(n)} \times I$. Then the Gauss diagram  $D^{(n)}$  of $K^{(n)}$ is obtained from the Gauss diagram $D$ of $K$ by deleting the chords $c$ with $\ind(c)\neq 0 \pmod n $. In particular, as virtual knots, the lift $K^{(n)}$ of $K$ is equivalent to its image $P_n(K)$ under parity projection.
\end{lemma}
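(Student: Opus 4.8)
The plan is to compare the two Gauss diagrams crossing-by-crossing, using the topological description of the lift together with the intersection-number formula \eqref{eq:intersect} for the index. First I would fix a basepoint $c$ on $K$ and recall from Lemma \ref{lemm_left} that the homomorphism $\psi_K\co \pi_1(\Si,c) \to \ZZ/n$ defining the cover $\Si^{(n)}$ satisfies $\psi_K(\al) = [\al]\cdot[p_*(K)] \pmod n$. The covering group is cyclic of order $d = n/\gcd(n, \cdot)$, generated by the deck transformation $\tau$, and the sheets of $\Si^{(n)}$ over a point can be labelled by $\ZZ/d$ in such a way that a path in $\Si$ representing $\al$ lifts to a path going from sheet $i$ to sheet $i + \psi_K(\al)$.

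The heart of the argument is the following observation: at a crossing $c$ of $K$, the two arcs of the oriented smoothing $K', K''$ (with $K'$ containing the outward over-arc) are loops based near $c$, and lifting the \emph{over-arc} from $c$ back to $c$ along $K$ amounts to traversing one of these loops. I would show that the over-strand and under-strand at $c$ lift to sheets differing by exactly $[p_*(K')]\cdot[p_*(K'')] = \ind(c) \pmod n$; this uses \eqref{eq:intersect} together with the fact that the holonomy picked up going around $K'$ versus $K''$ differs by their mutual intersection number, which is the content of Lemma \ref{lemm_left} applied to the relevant loops. Consequently, a chord $c$ of $D$ lifts to a genuine chord (i.e.\ a self-crossing of a single component $K^{(n)}$) precisely when its two strands lie on the same sheet, i.e.\ when $\ind(c) \equiv 0 \pmod n$; otherwise the over- and under-strands belong to different connected components of $\pi^{-1}(K)$ and, since $K^{(n)}$ is a single such component, that crossing simply disappears from $D^{(n)}$. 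For the surviving chords, their signs and relative positions along the core circle are unchanged because $\pi$ is a local diffeomorphism preserving orientation, so the chord data of $D^{(n)}$ is exactly that of $D$ with the index-nonzero-mod-$n$ chords deleted.

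Comparing this with the definition of mod $n$ parity projection in Subsection \ref{gaussianparity} — $P_n$ acts on $D$ by removing all chords $c$ with $\ind(c) \neq 0 \pmod n$ — we see that the Gauss diagram $D^{(n)}$ of $K^{(n)}$ coincides with $P_n(D)$, and hence $K^{(n)}$ and $P_n(K)$ are equal as virtual knots. I expect the main obstacle to be the bookkeeping in the sheet-labelling step: one must be careful that the loop one traverses in $\pi_1(\Si, c)$ to compute the holonomy at a crossing $c$ really represents the class $[p_*(K')]$ (or $[p_*(K'')]$) rather than something off by the full class $[p_*(K)]$, and that the resulting sheet difference is genuinely $\ind(c) \bmod n$ and not, say, $\ind(c)$ plus a correction. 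This is precisely where the asymmetry built into the definition of $\ind(c)$ via the \emph{outward} over-crossing arc is used, and getting the conventions aligned with \eqref{eq:intersect} is the delicate point; once that is pinned down the rest is a routine check that local diffeomorphism preserves signs and cyclic order.
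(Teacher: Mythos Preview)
Your proposal is correct and follows essentially the same route as the paper: both arguments use the lifting criterion together with Lemma~\ref{lemm_left} to decide, for each crossing $c$, whether the loop $p_*(K')$ closes up in $\Si^{(n)}$, and both identify the obstruction with $\ind(c)\pmod n$ via equation~\eqref{eq:intersect}. The ``bookkeeping'' worry you flag at the end is exactly the one nontrivial step, and the paper dispatches it with a clean trick you may want to adopt: since $[p_*(K)] = [p_*(K')] + [p_*(K'')]$ in $H_1(\Si)$ and the self-intersection $\al\cdot\al$ vanishes, one has
\[
\ind(c) = [p_*(K')]\cdot[p_*(K'')] = [p_*(K')]\cdot[p_*(K)] = \psi_K\bigl(p_*(K')\bigr)\pmod n,
\]
so the sheet difference is on the nose $\ind(c)\pmod n$ with no correction term.
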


\begin{proof} Let $c \in \Si$ be a double point of $p_*(K)$, where $p \co \Si \times I \to \Si$. Applying the oriented smoothing at $c$ gives knots $K'$ and $K''$ on $\Si \times I$,  and using equation \eqref{eq:intersect} and the fact that $\al \cdot \al=0$ holds for any $\al \in H_1(\Si)$, we see that
$$\ind(c) = [p_*(K')]\cdot [p_*(K'')] =[p_*(K')] \cdot [p_*(K)].$$

Now viewing $p_*(K')$ as an element of $\pi_1(\Si,c)$, the lifting criterion and Lemma \ref{lemm_left} imply that $p_*(K')$ lifts to a closed curve in the covering space $\Si^{(n)}$ precisely when $\ind(c)=[p_*(K')] \cdot [p_*(K)] = 0 \pmod n$. On the other hand, if $c$ is a crossing with $\ind(c) \neq 0 \pmod n$,  then $p_*(K')$ does not lift to a closed curve in the covering space $\Si^{(n)} \to \Si$, hence this crossing does not appear in the lifted knot $K^{(n)}$. Thus the Gauss diagram of $K^{(n)}$ is obtained from $D$ by deleting those chords $c$ with $\ind(c) \neq 0 \pmod n$. 
\end{proof}

\begin{figure}[ht]
\includegraphics[scale=0.82]{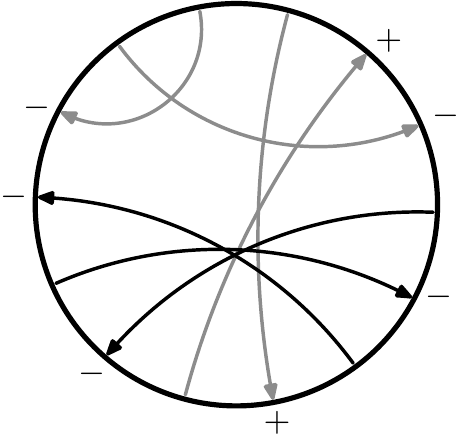}  \qquad
\def\svgwidth{0.52\textwidth}  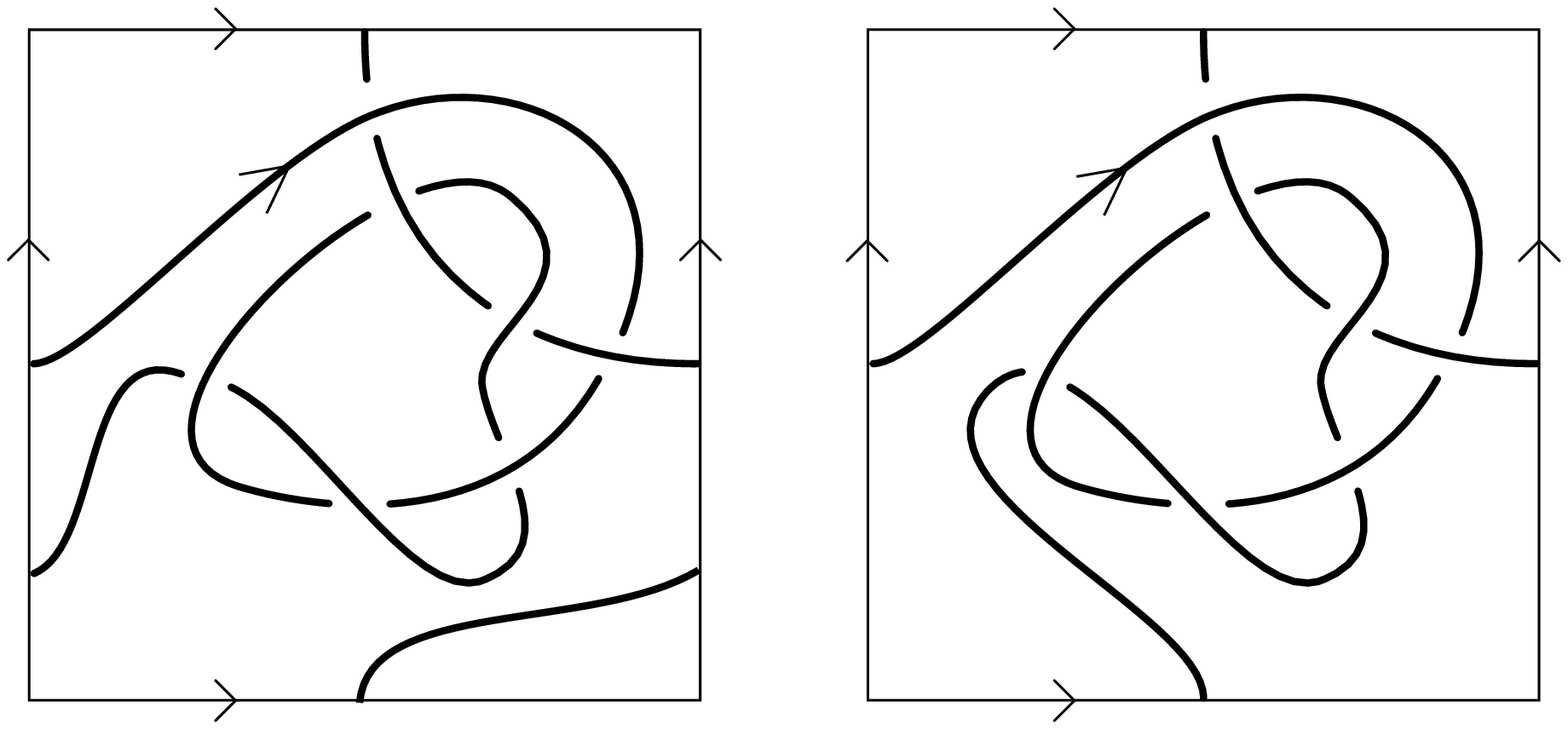
  \caption{A Gauss diagram with two representative knots in $T^2 \times I$.}
\label{covering2}
\end{figure}

\begin{example}
Starting with the Gauss diagram $D$ on the left of Figure \ref{covering2}, we find a representative knot in a surface. Its depiction is not unique, as the surface can be stabilized, and Dehn twists can be applied, which alter the homology class represented by the knot diagram. For $D$, two such representatives are depicted on the right of Figure \ref{covering2}. They are inequivalent as knots in thickened surfaces, but have the same underlying Gauss diagram. Let $K\subset T^2 \times I$ be the knot in the 2-torus depicted on the left in Figure \ref{covering2}, and let $\al,\be \in H_1(T^2)$ be the generators for  its first homology. Notice that $[K]= \al$,  hence, the lift of $K$ to the 2-fold cover is obtained by gluing two copies of $T^2$ along $\al$ and identifying the remaining opposite sides. The lifted knot $K^{(2)}$ is depicted in Figure \ref{covering3}, and one can check that the Gauss diagram $D^{(2)}$ for the lift  $K^{(2)}$ is obtained by forgetting the grey arrows of $D$. 

\begin{figure}[ht]
\def\svgwidth{0.5\textwidth} 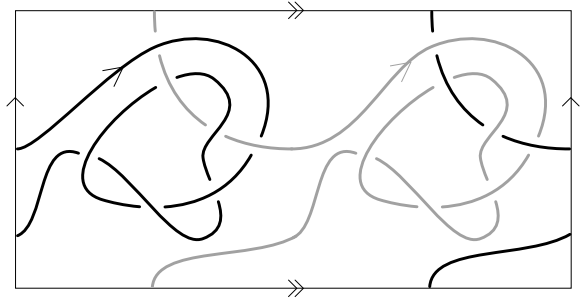
  \caption{Lifting the knot to a two-fold cover}
\label{covering3}
\end{figure}
\end{example}

The following lemma, due to Turaev, shows how concordance behaves under lifting knots along coverings.

\begin{lemma}[Lemma 2.1.1, \cite{Turaev-2008-a}] \label{lemma_turaev_211} 
Suppose $K_i$ is a knot in the thickened surface $\Si_i \times I$ for $i=0,1$. If $K_0$ and $K_1$ are concordant, then the lifted knots $K_0^{(n)}$ and $K_1^{(n)}$ are also concordant.  
\end{lemma}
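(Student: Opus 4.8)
The idea is to lift the whole concordance, not just the knots. Given a concordance $(M,A)$ from $K_0$ to $K_1$ --- so $M$ is a compact oriented $3$-manifold with $\partial M \cong -\Si_0 \sqcup \Si_1$ and $A \subset M \times I$ is an annulus with $\partial A = -K_0 \sqcup K_1$ sitting in $\partial M \times I$ --- I would produce a finite covering $\widetilde M \to M$ that restricts over each $\Si_i$ to (a component of) the covering $\Si_i^{(n)} \to \Si_i$ used to define the lift $K_i^{(n)}$, then lift $A$ to an annulus $\widetilde A$ in $\widetilde M \times I$, and read off that $\partial \widetilde A = -K_0^{(n)} \sqcup K_1^{(n)}$.

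First I would build the covering. Let $q \co M \times I \to M$ be the projection; near $\Si_i$ it restricts to $p_i \co \Si_i \times I \to \Si_i$. The chain-level image $q_*[A]$ is a relative cycle in $H_2(M, \partial M; \ZZ/n)$ with $\partial (q_*[A]) = [-p_{0*}K_0] \sqcup [p_{1*}K_1]$. Let $\Psi \in H^1(M;\ZZ/n) \cong \Hom(\pi_1 M, \ZZ/n)$ be its Poincar\'{e}--Lefschetz dual, so that $\Psi(\gamma)$ is the mod $n$ intersection number in the $3$-manifold $M$ of a loop $\gamma$ with a generic representative of $q_*[A]$. Using the collar picture --- near $\Si_i$ this representative is the vertical annulus over $p_{i*}K_i$ --- one computes $\Psi(\alpha) = [\alpha]\cdot[p_{i*}K_i]$ for $\alpha \in \pi_1 \Si_i$, where for $i=0$ the sign of the boundary orientation $-\Si_0$ and the sign of $-K_0$ cancel. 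By Lemma \ref{lemm_left} this means $\Psi|_{\pi_1 \Si_i} = \psi_{K_i}$. Hence the covering $\widetilde M \to M$ associated to $\ker \Psi \subset \pi_1 M$ is finite and restricts over $\Si_i$ to a covering each of whose components is a copy of $\Si_i^{(n)}$.

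Finally I would lift $A$ and conclude. The inclusion induces $\pi_1 A \to \pi_1(M \times I) = \pi_1 M$, and $\pi_1 A \cong \ZZ$ is generated by a core circle freely homotopic to $K_1$, hence mapping to $p_{1*}[K_1]$. Now $\Psi(p_{1*}[K_1]) = \psi_{K_1}([K_1]) = [p_{1*}K_1]\cdot[p_{1*}K_1] = 0$ in $H_1(\Si_1)$, since self-intersection numbers vanish on a surface; thus $\pi_1 A$ maps into $\ker \Psi$, and $A$ lifts (homeomorphically) to a properly embedded annulus $\widetilde A \subset \widetilde M \times I$. Its two boundary circles are single-sheeted lifts of $-K_0$ and $K_1$ lying in one copy each of $\Si_0^{(n)} \times I$ and $\Si_1^{(n)} \times I$; by the independence of the lift from the chosen component, these circles are $-K_0^{(n)}$ and $K_1^{(n)}$. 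Taking the component $N_0$ of $\widetilde M$ over which $\widetilde A$ lies and capping off its extraneous (closed) boundary components with handlebodies gives a compact oriented $3$-manifold $N$ with $\partial N \cong -\Si_0^{(n)} \sqcup \Si_1^{(n)}$ and $\widetilde A \subset N \times I$ with $\partial \widetilde A = -K_0^{(n)} \sqcup K_1^{(n)}$ --- a concordance. Since a covering is smooth (resp.\ preserves local flatness) whenever its base is, the argument applies to both the smooth and the topological notions. I expect the main obstacle to be the orientation bookkeeping that identifies $\Psi|_{\pi_1 \Si_i}$ with $\psi_{K_i}$ on the nose rather than up to sign, with the disconnectedness of $\widetilde M$ a secondary nuisance handled by the capping step.
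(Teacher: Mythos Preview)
The paper does not actually prove this lemma; it is quoted verbatim from \cite{Turaev-2008-a} and used as a black box. So there is no ``paper's own proof'' to compare against here.

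That said, your argument is correct and is the natural one: extend the two homomorphisms $\psi_{K_i}\co\pi_1(\Si_i)\to\ZZ/n$ coherently to a single $\Psi\co\pi_1(M)\to\ZZ/n$ by taking the Poincar\'e--Lefschetz dual of (the projection of) the concordance annulus, pass to the associated regular cover of $M$, lift $A$ (which you correctly justify via $[K_1]\cdot[K_1]=0$), and then cap off the redundant boundary components of the cover with handlebodies. Two small remarks. First, your sign worry is harmless regardless of how the bookkeeping comes out, since $\ker\psi_{K_i}=\ker(-\psi_{K_i})$ and the covering $\Si_i^{(n)}$ depends only on this kernel. Second, you may as well assume $M$ is connected from the outset (discard components not meeting $A$); then $\widetilde M$ is automatically connected, so ``the component $N_0$'' is just $\widetilde M$ itself, and only the capping step is needed.
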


The following is the main result in this section, and it implies that parity projection preserves concordance. 
\begin{theorem} \label{thm_parity_conc} 
Suppose $K_i$ is a virtual knot for $i=0,1$ and $n\geq 0$ is an integer with $n\neq 1.$ If $K_0$ is concordant to $K_1$, then their images $P_n(K_0)$ and $P_n(K_1)$ under parity projection are concordant.
\end{theorem}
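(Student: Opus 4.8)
The plan is to reduce the statement to Turaev's lifting lemma. The two ingredients already available are Lemma \ref{lemma_tur}, which identifies the mod $n$ parity projection $P_n(K)$ of a virtual knot with the lift $K^{(n)}$ of a surface representative of $K$ along the covering $\Si^{(n)}\to\Si$ determined by $[p_*(K)]\in H_1(\Si;\ZZ/n)$, and Lemma \ref{lemma_turaev_211}, which asserts that this lifting operation sends concordant knots in thickened surfaces to concordant knots in thickened surfaces. The only extra input needed is the passage between the saddle/birth/death notion of virtual concordance from Subsection \ref{subsec-VKC} and the $3$-manifold-with-annulus notion of Definition \ref{defn-conc}, which is supplied by \cite[Lemma 12]{Carter-Kamada-Saito} together with the discussion following Definition \ref{defn-conc}.

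First I would dispose of the case $n\geq 2$. Given that $K_0$ is concordant to $K_1$ as virtual knots, the equivalence just recalled lets me choose surface representatives $K_0\subset\Si_0\times I$ and $K_1\subset\Si_1\times I$ that are concordant in the sense of Definition \ref{defn-conc}, say via an oriented $3$-manifold $M$ with $\partial M\cong -\Si_0\sqcup\Si_1$ and an annulus $A\subset M\times I$ with $\partial A=-K_0\sqcup K_1$. Applying Lemma \ref{lemma_turaev_211} to this concordance yields a concordance between the lifts $K_0^{(n)}\subset\Si_0^{(n)}\times I$ and $K_1^{(n)}\subset\Si_1^{(n)}\times I$. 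By Lemma \ref{lemma_tur} the virtual knot underlying $K_i^{(n)}$ is precisely $P_n(K_i)$, and since concordant knots in thickened surfaces represent concordant virtual knots (again \cite{Carter-Kamada-Saito} and the discussion after Definition \ref{defn-conc}), it follows that $P_n(K_0)$ and $P_n(K_1)$ are concordant.

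Next I would handle $n=0$, i.e.\ the total Gaussian parity. The cleanest route is to observe that a concordance from $K_0$ to $K_1$ involves only finitely many Gauss diagrams, so there is an integer $N$ bounding the number of chords in every diagram that occurs; in particular $K_0$ and $K_1$ admit representative diagrams with at most $N$ chords. For any chord $c$ of such a diagram one has $|\ind(c)|\leq N-1$, hence $\ind(c)\equiv 0\pmod{N+1}$ if and only if $\ind(c)=0$; computing the projections on these representatives shows $P_0(K_i)=P_{N+1}(K_i)$ as virtual knots for $i=0,1$. Since $N+1\geq 2$, the case already treated gives that $P_{N+1}(K_0)$ and $P_{N+1}(K_1)$ are concordant, and therefore so are $P_0(K_0)$ and $P_0(K_1)$. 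Alternatively one could run the main argument directly, replacing $\Si^{(n)}\to\Si$ by the covering associated to the homomorphism $\pi_1(\Si,c)\to\ZZ$, $\al\mapsto[\al]\cdot[p_*(K)]$, and checking that the proofs of Lemmas \ref{lemma_tur} and \ref{lemma_turaev_211} go through for this possibly infinite covering.

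The step I expect to be the main obstacle is the invocation of Lemma \ref{lemma_turaev_211}: one must be sure that the mod $n$ homology data defining the coverings of $\Si_0$ and $\Si_1$ are compatible across the cobordism $M$, i.e.\ that they both arise by restricting a single homomorphism $\pi_1(M)\to\ZZ/n$ dual to $[A]$, so that the annulus $A$ actually lifts to a concordance of the lifted knots. This compatibility is exactly where the hypothesis that $K_0$ and $K_1$ are genuinely concordant is used, and it is the content of Turaev's lemma; everything else is bookkeeping between the Gauss-diagram and thickened-surface pictures, for which the needed dictionary is already in place through Section \ref{sec-1} and Lemmas \ref{lemma_tur} and \ref{lemma_turaev_211}.
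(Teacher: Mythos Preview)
Your proposal is correct and follows essentially the same approach as the paper: for $n\geq 2$ you combine Lemma \ref{lemma_tur} (identifying $P_n(K)$ with the lift $K^{(n)}$) with Lemma \ref{lemma_turaev_211} (lifting preserves concordance), using the Carter--Kamada--Saito dictionary between the two notions of concordance; for $n=0$ you reduce to a large finite $n$ via the observation that $P_0(D)=P_n(D)$ whenever $D$ has fewer than $n$ chords. The paper's proof is terser but logically identical, and your remark that the compatibility of the coverings across the cobordism is precisely the content of Turaev's lemma is exactly right.
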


\begin{proof}
For $n \geq 2$, the theorem is an immediate consequence of combining Lemmas \ref{lemma_tur} and \ref{lemma_turaev_211}.  The case $n=0$ follows from the observation that, for a Gauss diagram with fewer than $n$ chords, $P_0(D) =P_n(D)$.
\end{proof}

\begin{example} Consider the virtual knot $K=6.89907$, whose
Gauss diagram appears on the left in Figure \ref{GD-6-89907}. In \cite{Boden-Chrisman-Gaudreau-2017t}, we show that it has virtual slice genus $g_s(K) \leq 1.$ Applying parity projection with respect to total Gaussian parity, the resulting knot $K'= P_0(K)$ has Gauss diagram on the right in Figure \ref{GD-6-89907}. We see that $K'$ is the reverse of 4.107, and in \cite{Boden-Chrisman-Gaudreau-2017} we show that 4.107 has graded genus $\vartheta(K') =1.$
It follows that $K'$ is not slice, and Theorem \ref{thm_parity_conc} implies that $K$ is also not slice. Hence $g_s(K)=1.$

\begin{figure}[ht]
\includegraphics[scale=1.00]{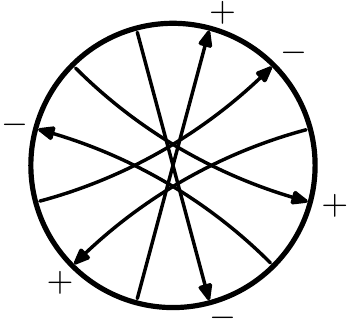}  \qquad
\includegraphics[scale=1.00]{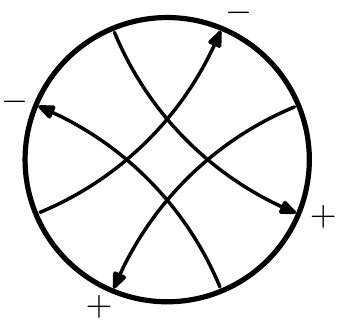}
\caption{Gauss diagrams of $6.89907$ and the reverse of $4.107$.}
\label{GD-6-89907}
\end{figure}
\end{example}

The next result follows by repeated application of Theorem \ref{thm_parity_conc}. 
\begin{theorem} \label{thm_st_parity_conc} 
If $K_0$ and $K_1$ are virtual knots and $K_0$ is concordant to $K_1$, then their images  $P^\infty_n(K_0)$ and $P^\infty_n(K_1)$ under stable projection are concordant for any $n \geq 2$ and $n=0.$
\end{theorem}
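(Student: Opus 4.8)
The plan is to obtain Theorem~\ref{thm_st_parity_conc} by iterating Theorem~\ref{thm_parity_conc}. Recall from Subsection~\ref{PandP} that for a Gauss diagram $D$ each application of $P_n$ either leaves $D$ unchanged (when $D$ has no odd chords) or strictly decreases the number of chords; hence the sequence $D, P_n(D), P_n^2(D),\ldots$ stabilizes after at most $m$ steps, where $m$ is the number of chords of $D$, and $P_n^\infty(D) = P_n^m(D)$.

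First I would fix Gauss diagram representatives $D_0$ and $D_1$ for $K_0$ and $K_1$, let $m_i$ be the number of chords of $D_i$, and set $m = \max(m_0,m_1)$. By the remark above, $P_n^{m}(K_0) = P_n^\infty(K_0)$ and $P_n^{m}(K_1) = P_n^\infty(K_1)$ as virtual knots, using that $P_n^\infty$ descends to a well-defined map on virtual knots, independent of the chosen representative.

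Next I would show by induction on $k$ that $P_n^{k}(K_0)$ is concordant to $P_n^{k}(K_1)$ for every $k\ge 0$. The base case $k=0$ is the hypothesis. For the inductive step, assuming $P_n^{k}(K_0)$ is concordant to $P_n^{k}(K_1)$, Theorem~\ref{thm_parity_conc} applied to this pair, which is valid for all $n\ge 2$ and for $n=0$, shows that $P_n^{k+1}(K_0) = P_n\big(P_n^{k}(K_0)\big)$ is concordant to $P_n^{k+1}(K_1)$. Taking $k=m$ then yields that $P_n^\infty(K_0)$ is concordant to $P_n^\infty(K_1)$, as desired.

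The argument is essentially routine, being a finite iteration of the previous theorem. The only point requiring a moment's attention is choosing a common exponent $m$ that works for both $K_0$ and $K_1$, so that both sequences of projections reach their stable values $P_n^\infty(K_0)$ and $P_n^\infty(K_1)$ after the same finite number of applications of Theorem~\ref{thm_parity_conc}; everything else is a direct induction.
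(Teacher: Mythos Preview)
Your proof is correct and is exactly the approach the paper takes: the paper simply states that the result ``follows by repeated application of Theorem~\ref{thm_parity_conc},'' and your argument spells out this iteration carefully, including the observation that a common finite exponent $m$ suffices for both knots. There is nothing to add.
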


In particular, taking $n=0$ in Theorem \ref{thm_st_parity_conc} allows us to reduce questions about concordance of virtual knots to questions about concordance of almost classical knots. In terms of the  Tristram-Levine signatures defined for almost classical %(long) 
knots in Section \ref{sec-3}, Theorem \ref{thm_st_parity_conc} shows that they lift to define slice obstructions for all virtual knots. 

We give an application of Theorem \ref{thm_parity_conc} to show that the polynomial
invariants introduced in  \cite{Jeong-2016} and  \cite{Im-Kim-2017} are concordance invariants of virtual knots.

In \cite{Jeong-2016}, Jeong introduced the zero polynomial $Z_K(t)$, and in \cite{Im-Kim-2017}, Im and Kim give a sequence of polynomial invariants $Z^n_K(t)$ for $n$ a non-negative integer. They note that for $n=0, Z^0_K(t) = Z_K(t),$ and they give a formula for computing $Z^n_K(t)$ in terms of Gauss diagrams (see Definition 3.1 of \cite{Im-Kim-2017}).

It is clear from that formula that $Z^n_K(t)$ is equal to the index polynomial applied to $P_n(K)$, where $P_n$ denotes mod $n$ Gaussian parity projection. Theorem 7 of \cite{Boden-Chrisman-Gaudreau-2017} shows that the writhe polynomial $W_K(t)$ and Heinrich-Turaev polynomial $P_K(t)$ are concordance invariants. The following, which concerns the polynomials invariants $Z^n_K(t)$  and  $Z_K(t)$ introduced in \cite{Im-Kim-2017} and \cite{Jeong-2016}, respectively, is an immediate consequence of combining the above observations with Theorem \ref{thm_parity_conc}. 

\begin{corollary} 
The polynomials $Z^n_K(t)$ of Im and Kim are concordance invariants for virtual knots. In particular, the zero polynomial $Z_K(t)$ of Jeong is a concordance invariant for virtual knots.
\end{corollary}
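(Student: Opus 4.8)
The plan is to reduce the statement to two facts already in hand: that parity projection preserves concordance (Theorem~\ref{thm_parity_conc}), and that the index polynomial of a virtual knot is a concordance invariant (this is the content of Theorem~7 of \cite{Boden-Chrisman-Gaudreau-2017}, where it appears, up to normalization, as the writhe polynomial $W_K(t)$ and the Heinrich--Turaev polynomial $P_K(t)$). The bridge between the $Z^n_K(t)$ of Im--Kim and these invariants is the observation that $Z^n_K(t)$ is nothing but the index polynomial of $P_n(K)$.

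First I would recall the Gauss-diagram formula for $Z^n_K(t)$ from Definition~3.1 of \cite{Im-Kim-2017}. Inspecting it, the summand attached to a chord $c$ of a Gauss diagram $D$ for $K$ contributes trivially unless $\ind(c) \equiv 0 \pmod n$, and for the surviving chords the summand depends only on data (signs and indices) that is unaffected by deleting all chords of index $\not\equiv 0 \pmod n$. Since $P_n(D)$ is by definition the Gauss diagram obtained from $D$ by deleting exactly those chords, this identifies $Z^n_K(t)$ with the index polynomial evaluated on $P_n(K)$. The one point requiring care is to confirm that the index of a surviving chord, as it enters the formula, is the same whether computed in $D$ or in $P_n(D)$; this is the same bookkeeping with homology classes of smoothings used in the proof of Lemma~\ref{lemma_tur}, and I would carry it out explicitly.

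With that identification, the corollary is immediate. If $K_0$ and $K_1$ are concordant virtual knots, then $P_n(K_0)$ and $P_n(K_1)$ are concordant by Theorem~\ref{thm_parity_conc}; since the index polynomial is a concordance invariant, the index polynomials of $P_n(K_0)$ and $P_n(K_1)$ agree, and hence $Z^n_{K_0}(t) = Z^n_{K_1}(t)$. Taking $n = 0$ (which Theorem~\ref{thm_parity_conc} also permits), total Gaussian parity projection $P_0$ preserves concordance, and the same argument applied to $Z^0_K(t)$ shows that Jeong's zero polynomial $Z_K(t) = Z^0_K(t)$ is a concordance invariant.

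The only genuine obstacle is the first step: extracting from the Im--Kim formula the clean statement that $Z^n_K(t)$ depends only on $P_n(K)$ and equals the index polynomial there. Everything conceptual --- Theorem~\ref{thm_parity_conc} and the concordance invariance of the index/writhe polynomial --- is available from the preceding sections and from \cite{Boden-Chrisman-Gaudreau-2017}, so once this combinatorial identification is pinned down, nothing further is needed.
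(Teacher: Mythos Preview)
Your proposal is correct and follows essentially the same route as the paper: identify $Z^n_K(t)$ with the index polynomial of $P_n(K)$ via the Gauss-diagram formula in \cite{Im-Kim-2017}, then combine Theorem~\ref{thm_parity_conc} with the concordance invariance of the writhe/index polynomial from \cite{Boden-Chrisman-Gaudreau-2017}. The paper records this as an immediate consequence in the paragraph preceding the corollary, without spelling out the index-preservation check you flag, but that is the only difference in level of detail.
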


Theorem \ref{thm_parity_conc} says that the projection $P_n$ with respect to Gaussian parity $f_n$ preserves concordance, and it is an interesting question to determine which other parities $f$ have the property that projection $P_f$ preserves concordance.
%%%%%%%%
\subsection{Iterated lifts of knots}
In this subsection, we present analogues of Lemma \ref{lemma_tur} and Theorem \ref{thm_parity_conc} for iterated lifts and iterated projections. Given a knot $K$ in the thickened surface $\Si \times I$ and a finite sequence of integers $n_1,\ldots,n_r$, with $n_i \ge 2$ for $1 \le i \le r$, the iterated lift of $K$ is the knot 
$$K^{(n_1,\ldots,n_r)} =\left(\cdots\left(\left(K^{(n_1)}\right)^{(n_2)}\right)\cdots\right)^{(n_r)}$$ 
in the thickening of the iterated covering space
$$\Si^{(n_1,\ldots,n_r)}=\left(\cdots\left(\left(\Si^{(n_1)}\right)^{(n_2)}\right)\cdots\right)^{(n_r)}.$$
The next result is obtained  by repeated application of Lemma \ref{lemma_tur} and Theorem \ref{thm_parity_conc}.

\begin{proposition} \label{prop_parity_conc}
Suppose $n_1,\ldots,n_r$ are integers with $n_i \ge 2$ for $1 \le i \le r$. 
\begin{enumerate}
\item[(i)] As virtual knots, the iterated lift $K^{(n_1,\ldots,n_r)}$ of $K$ is equivalent to the image $P_{n_k} \circ \cdots \circ P_{n_1}(K)$ under iterated application of parity projection. 
\item[(ii)] If $K_0$ and $K_1$ are virtual knots and $K_0$ is concordant to $K_1$, then their images $P_{n_r} \circ \cdots \circ P_{n_1}(K_0)$ and $P_{n_r} \circ \cdots \circ P_{n_1}(K_1)$ under iterated projections are concordant.
\end{enumerate}
\end{proposition}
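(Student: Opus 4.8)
The plan is to prove both parts by induction on $r$, bootstrapping from the one-step results Lemma \ref{lemma_tur} and Theorem \ref{thm_parity_conc}. The only bookkeeping needed is that every intermediate iterated lift $K^{(n_1,\ldots,n_j)}$ comes equipped with a canonical geometric realization as a knot in the thickened surface $\Si^{(n_1,\ldots,n_j)} \times I$, so that the one-step lemmas can be reapplied to it verbatim.

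For part (i), the base case $r=1$ is precisely Lemma \ref{lemma_tur}: the Gauss diagram of $K^{(n_1)}$ is obtained from that of $K$ by deleting the chords $c$ with $\ind(c) \neq 0 \pmod{n_1}$, hence $K^{(n_1)}$ and $P_{n_1}(K)$ coincide as virtual knots. For the inductive step, assume the iterated lift $K^{(n_1,\ldots,n_{r-1})}$, viewed as a knot in $\Si^{(n_1,\ldots,n_{r-1})} \times I$, represents the virtual knot $P_{n_{r-1}} \circ \cdots \circ P_{n_1}(K)$. Applying Lemma \ref{lemma_tur} to this knot with the integer $n_r$ shows that its $n_r$-lift, which is $K^{(n_1,\ldots,n_r)}$ by definition, has Gauss diagram obtained by deleting the chords of index $\neq 0 \pmod{n_r}$; since parity projection is defined on the level of Gauss diagrams and descends to a well-defined operation on virtual knots by Proposition \ref{prop-closedparity}, it follows that $K^{(n_1,\ldots,n_r)}$ represents $P_{n_r}\bigl(P_{n_{r-1}} \circ \cdots \circ P_{n_1}(K)\bigr) = P_{n_r} \circ \cdots \circ P_{n_1}(K)$, completing the induction. (As usual, one uses here that the combinatorial index of a chord, which governs parity projection, agrees with the intersection-theoretic index in the relevant thickened surface, as recorded in Subsection \ref{subsec-ack}.)

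For part (ii) I would either induct directly with Theorem \ref{thm_parity_conc} or combine part (i) with Turaev's Lemma \ref{lemma_turaev_211}. Taking the first route: the case $r=1$ is Theorem \ref{thm_parity_conc}, applicable since $n_1 \ge 2$. For the inductive step, if $K_0$ is concordant to $K_1$, the inductive hypothesis gives that $P_{n_{r-1}} \circ \cdots \circ P_{n_1}(K_0)$ and $P_{n_{r-1}} \circ \cdots \circ P_{n_1}(K_1)$ are concordant virtual knots, and one further application of Theorem \ref{thm_parity_conc} with the integer $n_r$ shows that applying $P_{n_r}$ to each preserves concordance, which is the claim.

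I do not expect any genuine obstacle here; the one point requiring care is the observation just made, namely that at every stage the index computation used by Gaussian parity projection is the same quantity — combinatorial, equivalently the intersection-theoretic index in $\Si^{(n_1,\ldots,n_j)}$ — to which Lemma \ref{lemma_tur} applies, so that the induction does not silently alter the meaning of $P_{n_{j+1}}$ between steps. Granting this, both statements follow formally by iterating the two cited one-step results.
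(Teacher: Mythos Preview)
Your proposal is correct and matches the paper's approach exactly: the paper simply states that the result ``is obtained by repeated application of Lemma \ref{lemma_tur} and Theorem \ref{thm_parity_conc},'' which is precisely the induction you have spelled out. Your added remark that each intermediate lift $K^{(n_1,\ldots,n_j)}$ is canonically realized in $\Si^{(n_1,\ldots,n_j)}\times I$ so that the one-step lemmas reapply is the only bookkeeping needed, and the paper leaves it implicit.
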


Proposition \ref{prop_parity_conc} raises the question of commutativity of iterated projections, and the simplest instance is whether  $K^{(n,m)} \stackrel{?}{=} K^{(m,n)}$, or equivalently whether $P_{m} \circ P_n(K) \stackrel{?}{=} P_{n} \circ P_m(K).$ More generally, one can ask whether the iterated lifts $K^{(n_1,\ldots,n_r)}$ are independent of the order $n_1, \ldots, n_r$. To that end, let $N^{(n_1,\ldots,n_r)}$ denote the normal subgroup of $\pi_1(\Si,c)$ defining the covering space $\Si^{(n_1,\ldots,n_r)} \to \Si$. The relationship between the lifts of $K$ is entirely determined by the structure of the lattice of normal subgroups of $\pi_1(\Si,c)$. Thus, the question of commutativity of iterated lifts is really a question about the structure of the lattice of normal subgroups. 
 
Consider, for example, the portion of the lattice of regular covering spaces in the commutative diagram below. Here  $n,m \ge 2$ are integers, and $q$ is any non-zero multiple of $\text{lcm}(n,m)$. The expression on each arrow corresponds to the index of the covering projection. For simplicity of exposition, we will assume that the indices of the covering maps are as indicated, although this need not be the case in general. Note that since $m|q$ and $n|q$ there are natural maps $\ZZ/q \to\ZZ/n$ and $\ZZ/q \to\ZZ/m$. It follows from the definition of $\psi_K$ that $N^{(q)} \subset N^{(n)} \cap N^{(m)}$. Hence, $\Si^{(q)} \to \Si^{(n)}$ and $\Si^{(q)} \to \Si^{(m)}$ are also covering projections. 

\[
\xymatrix{
\Si^{(n,m)} \ar[dr]_{m} &  & \Si^{(q)} \ar[dd]_{q} \ar[dr]^{q/m} \ar[dl]_{q/n}& & \Si^{(m,n)} \ar[dl]_{n} \\
                         &  \Si^{(n)} \ar[dr]_{n}&  & \Si^{(m)} \ar[dl]^{m}&  \\
                         &                       & \Si & & 
}
\] 
Furthermore, the homotopy class of $K$ is an element of each of $N^{(n)}$, $N^{(m)}$, and $N^{(q)}$. Since the diagram commutes, it follows from the lifting criterion that $K^{(q)}$ is a lift of both $K^{(n)}$ and $K^{(m)}$. The following theorem shows how to obtain the Gauss diagram of $K^{(q)}$ from that of either $K^{(n)}$ or $K^{(m)}$. In this instance, it follows that $K^{(n,m)}=K^{(m,n)}.$

\begin{theorem} The Gauss diagram $D^{(q)}$ of $K^{(q)}$ is obtained from the Gauss diagram $D^{(m)}$ of $K^{(m)}$ by deleting the chords whose index in $K$ are $= 0 \pmod m$ and $\neq 0 \pmod q$. Similarly, $D^{(q)}$ is obtained from the Gauss diagram $D^{(n)}$ of $K^{(n)}$.
\end{theorem}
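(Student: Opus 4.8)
The plan is to deduce the theorem from two applications of Lemma~\ref{lemma_tur}, both taken with respect to coverings of $\Si$ itself, together with an elementary divisibility observation. The point is that $K^{(m)}$ and $K^{(q)}$ are, by definition, the lifts of $K$ to the covers $\Si^{(m)}\to\Si$ and $\Si^{(q)}\to\Si$, so Lemma~\ref{lemma_tur} describes \emph{both} $D^{(m)}$ and $D^{(q)}$ directly as chord-subdiagrams of $D$; the comparison between $D^{(m)}$ and $D^{(q)}$ is then purely combinatorial, governed by the indices of the chords \emph{in} $K$.

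First I would record, from Lemma~\ref{lemma_tur} applied to $\Si^{(m)}\to\Si$, that $D^{(m)}$ is obtained from $D$ by deleting exactly the chords $c$ with $\ind(c)\not\equiv 0\pmod m$; and, applied to $\Si^{(q)}\to\Si$ (using that $K^{(q)}$ is the lift of $K$ to $\Si^{(q)}$), that $D^{(q)}$ is obtained from $D$ by deleting exactly the chords $c$ with $\ind(c)\not\equiv 0\pmod q$. Here in both cases $\ind(c)$ is the index of $c$ computed in $K$, and both lifts are honest chord-subdiagrams of $D$ because the restriction of the covering map to the connected lift of $K$ is a homeomorphism onto $K$ (as $[K]$ lies in the defining subgroup), so the core circles are canonically identified and the chord data---endpoints, signs, arrow directions---are preserved.

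Next, since $q$ is a multiple of $\operatorname{lcm}(n,m)$ we have $m\mid q$, whence for any integer $x$ one has $m\nmid x\Rightarrow q\nmid x$. Consequently the set of chords deleted from $D$ to form $D^{(q)}$, namely $\{c : \ind(c)\not\equiv 0\pmod q\}$, is the disjoint union of $\{c : \ind(c)\not\equiv 0\pmod m\}$ (the chords deleted to form $D^{(m)}$) and $\{c : \ind(c)\equiv 0\pmod m,\ \ind(c)\not\equiv 0\pmod q\}$. Since deleting a union of chords from a Gauss diagram may be carried out one subset at a time, it follows that $D^{(q)}$ is obtained from $D^{(m)}$ by deleting precisely the chords whose index in $K$ is $=0\pmod m$ and $\neq 0\pmod q$, which is the first assertion. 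The second assertion, with $D^{(n)}$ in place of $D^{(m)}$, follows by the identical argument with $n\mid q$ in place of $m\mid q$.

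I do not expect a genuine obstacle here; the only point requiring care is the bookkeeping that $D^{(q)}$, viewed as a chord-subdiagram of $D$, really coincides with the corresponding chord-subdiagram of $D^{(m)}$ (with matching cyclic positions, signs, and directions), which is exactly what Lemma~\ref{lemma_tur} provides once one notes that chord deletion is associative. One could instead try to argue in a single step by applying a version of Lemma~\ref{lemma_tur} to the covering $\Si^{(q)}\to\Si^{(m)}$ coming from the commutative diagram, but that would force one to compare the index of a chord computed in $K^{(m)}$ with its index computed in $K$, and routing the argument through $D$ sidesteps this entirely.
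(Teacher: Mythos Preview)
Your proposal is correct and follows essentially the same approach as the paper: both apply Lemma~\ref{lemma_tur} separately to the covers $\Si^{(m)}\to\Si$ and $\Si^{(q)}\to\Si$ to identify $D^{(m)}$ and $D^{(q)}$ as chord-subdiagrams of $D$, and then use $m\mid q$ to read off which additional chords must be deleted from $D^{(m)}$ to obtain $D^{(q)}$. Your write-up is more detailed about the bookkeeping (identification of core circles, associativity of chord deletion), but the argument is the same.
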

\begin{proof} The Gauss diagram $D^{(q)}$ is obtained from $D$ by deleting those chords having index $\neq 0 \pmod q$, and $D^{(m)}$ is obtained from $D$ by deleting those chords that have index $\neq 0 \pmod m$. Since $m|q$, the only chords of $D$ that need to be deleted from $D^{(m)}$ to obtain $D^{(q)}$ are those having index $= 0 \pmod m$ and $\neq 0 \pmod q$.
\end{proof}

On the other hand, by Lemma \ref{lemma_tur}, the Gauss diagram $D^{(m,n)}$ can be obtained from $D^{(m)}$ by deleting chords in $D^{(m)}$ with index  $\neq 0 \pmod n$. We may similarly obtain $D^{(n,m)}$ from $D^{(n)}$. In the general case, it is not always true that $D^{(m,n)}=D^{(n,m)}$, and in certain cases, it may turn out that the iterated covering $\Si^{(m,n)} \to \Si$ is nonabelian, as explained in the next proposition.

\begin{proposition} The group $\Ga$ of covering transformations of $\Si^{(m,n)} \to \Si$ is a group extension of $\ZZ/n$ by $\ZZ/m$. In other words, there is short exact sequence
\[
\xymatrix{
1\ar[r] &\ZZ/n \ar[r] & \Ga \ar[r] &\ZZ/m  \ar[r] & 1.
}
\]
\end{proposition}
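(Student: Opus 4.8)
The plan is to translate the statement into the language of normal subgroups of $G=\pi_1(\Si,c)$ and to isolate the one substantive point, namely that the composite covering $\Si^{(m,n)}\to\Si$ is \emph{regular}. Set $N^{(m)}=\ker\psi_K$; with the standing assumption on the indices of the covering maps one has $N^{(m)}\trianglelefteq G$, $G/N^{(m)}\cong\ZZ/m$, and $\pi_1(\Si^{(m)})\cong N^{(m)}$. Applying the construction recalled above to the lift $K^{(m)}\subset\Si^{(m)}\times I$ produces a homomorphism $\psi_{K^{(m)}}\co\pi_1(\Si^{(m)})\to\ZZ/n$; write $N^{(m,n)}=\ker\psi_{K^{(m)}}$, viewed as a subgroup of $N^{(m)}\subseteq G$. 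Then $N^{(m,n)}\trianglelefteq N^{(m)}$ with $N^{(m)}/N^{(m,n)}\cong\ZZ/n$, and $N^{(m,n)}$ is the subgroup defining $\Si^{(m,n)}\to\Si$. If one knows in addition that $N^{(m,n)}$ is normal in $G$, the proposition follows at once: $\Ga:=G/N^{(m,n)}$ is then the deck group of $\Si^{(m,n)}\to\Si$, the image $N^{(m)}/N^{(m,n)}\cong\ZZ/n$ of the normal subgroup $N^{(m)}$ is normal in $\Ga$, and $\Ga\big/\big(N^{(m)}/N^{(m,n)}\big)\cong G/N^{(m)}\cong\ZZ/m$, which is exactly the asserted short exact sequence.

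Hence everything reduces to showing that $N^{(m,n)}\trianglelefteq G$, that is, that $\Si^{(m,n)}\to\Si$ is regular. Since $N^{(m)}\trianglelefteq G$ already, it is enough to show that $N^{(m,n)}$ is invariant under the conjugation action of $G$ on $N^{(m)}$. This action is inner on $N^{(m)}$, so it descends to an action of $G/N^{(m)}\cong\ZZ/m$ by outer automorphisms, and — because $N^{(m,n)}\trianglelefteq N^{(m)}$, so inner automorphisms preserve it — only this outer action matters; it is realized geometrically by the deck transformations $\th$ of $\Si^{(m)}\to\Si$. A short computation with intersection numbers shows that $\psi_{K^{(m)}}$ precomposed with $\th$ equals $\psi_{\th^{-1}(K^{(m)})}$, and $\th^{-1}(K^{(m)})$ is again a lift of $K$ to $\Si^{(m)}\times I$. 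So the problem becomes: \emph{for any two lifts $\widetilde K,\widetilde K'$ of $K$ to $\Si^{(m)}\times I$, the homomorphisms $\psi_{\widetilde K},\psi_{\widetilde K'}\co\pi_1(\Si^{(m)})\to\ZZ/n$ have the same kernel.} By Lemma~\ref{lemm_left} these are the mod~$n$ reductions of the intersection pairings with $[p_{*}\widetilde K]$ and $[p_{*}\widetilde K']$; by Poincar\'e duality over $\ZZ/n$ on the closed oriented surface $\Si^{(m)}$, their kernels agree precisely when $[p_{*}\widetilde K]$ and $[p_{*}\widetilde K']$ coincide in $H_1(\Si^{(m)};\ZZ/n)$ up to a unit.

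To prove this I would use an explicit cut-and-reglue model of $\Si^{(m)}\to\Si$. By Lemma~\ref{lemm_left}, $\Si^{(m)}\to\Si$ is the cyclic $m$-fold covering associated to $\al\mapsto\al\cdot[p_{*}K]\pmod m$; choose an embedded simple closed curve $\ga\subset\Si$ whose integral homology class reduces mod~$m$ to that of $p_{*}K$ (possible under the standing index assumption). Then $\Si^{(m)}$ is obtained by cutting $\Si$ along $\ga$ and cyclically regluing $m$ copies $\Si''_{0},\dots,\Si''_{m-1}$; the lifts of $\ga$ are the $m$ gluing curves $\ga_{0},\dots,\ga_{m-1}$, and since $\partial\Si''_{j}=\ga_{j}-\ga_{j-1}$ they all represent the same class in $H_1(\Si^{(m)};\ZZ)$. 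Decomposing $p_{*}K$ into the transverse arcs cut out by $\ga$ and reassembling them in the $\Si''_{j}$ then shows that every lift of $p_{*}K$ represents one and the same class in $H_1(\Si^{(m)};\ZZ)$; in particular $\psi_{\widetilde K}=\psi_{\widetilde K'}$ for any two lifts, so $N^{(m,n)}\trianglelefteq G$, and the proposition follows.

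The step I expect to be the main obstacle is precisely this last one — pinning down the homology classes of the lifts of $p_{*}K$. The curve $p_{*}K$ is immersed and its class need not be primitive, so the cut-and-reglue picture cannot be applied to it directly; over $\QQ$ one knows only that the class of the full preimage $\sum_{j}[p_{*}\widetilde K_{j}]$ is $(\ZZ/m)$-invariant (the deck group permutes the components), and promoting this to the statement that each summand is invariant — equivalently, that a lift of $p_{*}K$ differs from an integer multiple of a lift of an embedded representative of its reduced class only by a deck-invariant correction — requires careful bookkeeping of how $p_{*}K$ meets the curves $\ga_{j}$ and of the resulting arcs in the pieces $\Si''_{j}$. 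Once that homological input is in place, the rest of the argument is purely formal manipulation of the lattice of normal subgroups of $\pi_1(\Si)$.
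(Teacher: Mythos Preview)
The paper's own proof is a single sentence: it applies the third isomorphism theorem to the chain $N^{(m,n)}\subset N^{(m)}\subset G=\pi_1(\Si,c)$, having simply \emph{declared} in the paragraph before the proposition that $N^{(m,n)}$ is a normal subgroup of $G$. Your argument contains this same formal step, but you go much further by attempting to \emph{prove} that $N^{(m,n)}\trianglelefteq G$, which you correctly isolate as the only point with content. Your reduction of this normality to the assertion ``all lifts of $K$ to $\Si^{(m)}$ represent the same class in $H_1(\Si^{(m)};\ZZ/n)$'' is also correct.

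The difficulty is that this last assertion --- and hence the regularity of $\Si^{(m,n)}\to\Si$ --- is not true in general, so the cut-and-paste sketch in your final paragraph cannot be completed. Take $\Si$ of genus~$2$ with symplectic basis $a_1,b_1,a_2,b_2$ and a knot $K$ with $[p_*K]=a_1+ma_2$. Then $\psi_K(\alpha)\equiv\alpha\cdot a_1\pmod m$, so $\Si^{(m)}$ is built by cutting along the simple closed curve $\gamma=a_1$ and regluing $m$ copies cyclically, and $K$ may be chosen disjoint from $\gamma$. The lift $\tilde K_j$ lying entirely in the $j$-th sheet then has class $[\gamma_0]+m[(a_2)_j]$ in $H_1(\Si^{(m)})$; since the classes $[(a_2)_j]$ are linearly independent there, distinct lifts differ by $m\big([(a_2)_j]-[(a_2)_{j'}]\big)\neq 0$, and this remains nonzero mod~$n$ whenever $n\nmid m$. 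Pairing against $(b_2)_0$ shows directly that $\ker\psi_{\tilde K_0}\neq\ker\psi_{\tilde K_1}$, so $N^{(m,n)}$ is not normal in $G$ and the composite cover is not regular. In short, you have put your finger on a genuine issue that the paper's one-line proof glosses over; the obstruction you anticipated in your last paragraph is real, and it is the proposition (or its unstated hypotheses), rather than your reduction, that needs repair.
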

\begin{proof} This follows immediately from the third isomorphism theorem, since $$\frac{\pi_1(\Si,c)}{N^{(m)}} \cong \frac{\pi_1(\Si,c)/N^{(m,n)}}{N^{(m)}/N^{(m,n)}}.$$
\end{proof}

\begin{figure}[ht]
\includegraphics[scale=0.6]{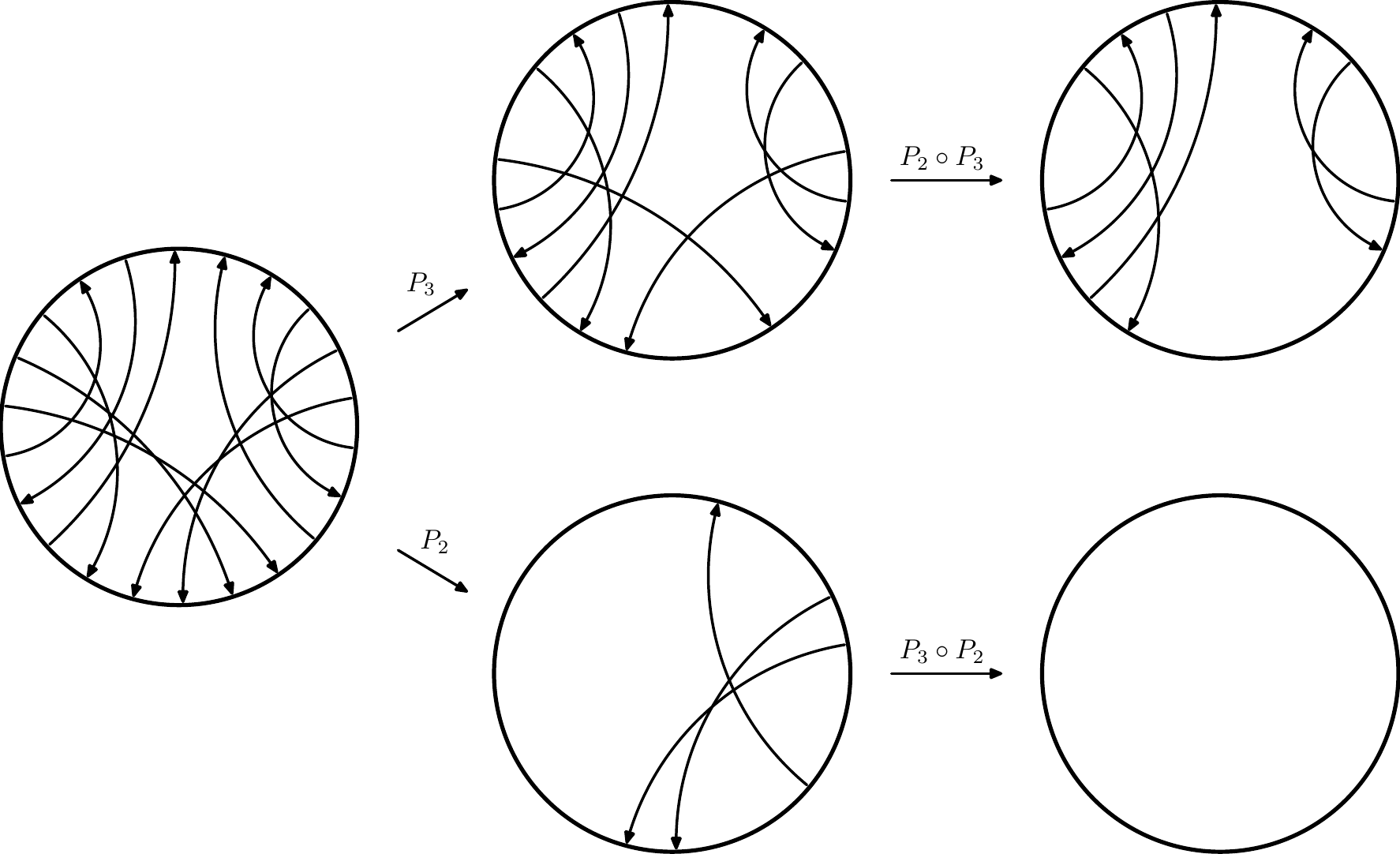}  
\caption{In the Gauss diagram above, all chords have negative sign. For the diagram $D$, notice that $P_2\circ P_3(D) \neq P_3 \circ P_2(D)$.}
\label{GD-iterated}
\end{figure}

\begin{example} 
Figure \ref{GD-iterated} shows a Gauss diagram $D$ such that $ P_2(P_3(D)) \neq P_3(P_2(D)).$
One can easily check that the four diagrams $P_2(D), P_3(D), P_2(P_3(D)),$ and $P_3(P_2(D))$ all represent distinct virtual knots. Notice that $P_2(P_3(D)) = P_6(D)$, thus $K^{(3,2)}$ is a knot in the abelian 6-fold cover. On the other hand, $K^{(2,3)}$ is a knot in the non-abelian cover whose associated covering group $G$ is isomorphic to the dihedral group of order 6 and sits in the short exact sequence
$$ 1 \to \ZZ/3 \to G \to \ZZ/2 \to 1.$$ 
\end{example}

%%%%%%%%%%%%%%%%%%%%%%%%%
\subsection*{Acknowledgements} We would like to thank Dror Bar Natan, Matthias Nagel, Andrew Nicas and Robb Todd for useful discussions, and we are grateful to Homayun Karimi for alerting us to an error in an earlier version of this paper.  H. Boden was supported by a grant from the Natural Sciences and Engineering Research Council of Canada,
M. Chrisman was supported by a Monmouth University Creativity and Research Grant, and
R. Gaudreau was supported by a scholarship from the National Centre of Competence in Research SwissMAP.

\newpage
%\section{Figures}
\bigskip
\bigskip

\begin{figure}[H]  
%\centering
\hspace{-0.6cm}\includegraphics[scale=0.92]{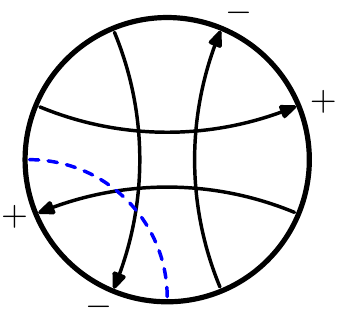} \hspace{0.8cm}  
\includegraphics[scale=0.92]{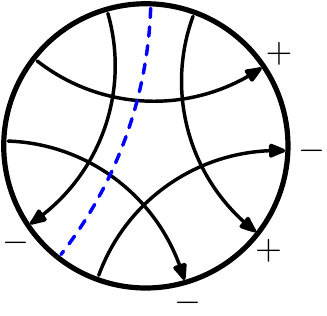} \hspace{0.8cm}
\includegraphics[scale=0.92]{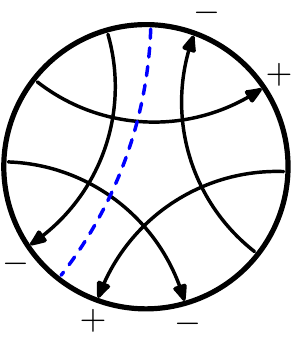} \hspace{0.8cm}
\includegraphics[scale=0.92]{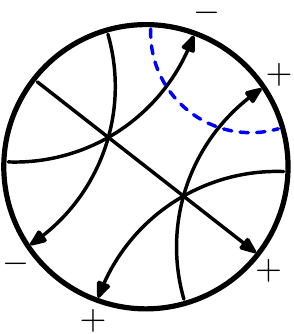}   
 
\smallskip 
\hspace{-.2cm} 4.99 \hspace{2.8cm} 5.2025 \hspace{2.8cm} 5.2133 \hspace{2.8cm} 5.2160 

\bigskip 
\hspace{-0.2cm}\includegraphics[scale=0.92]{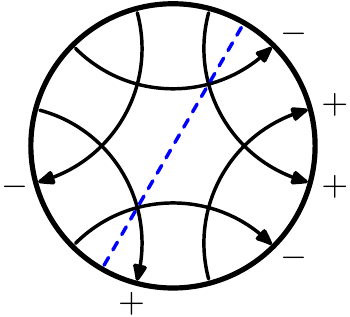} \hspace{0.4cm}
\includegraphics[scale=0.92]{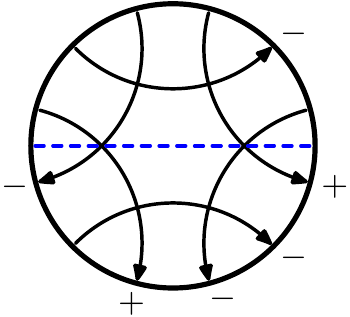}  \hspace{0.4cm}
\includegraphics[scale=0.92]{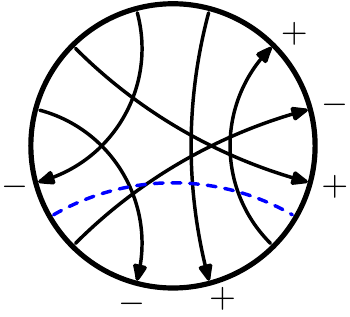} \hspace{0.4cm}
\includegraphics[scale=0.92]{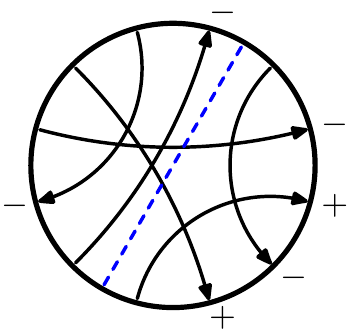}  

\smallskip 
\hspace{-0.4cm} 6.72557 \hspace{2.4cm} 6.72975 \hspace{2.4cm} 6.73007  \hspace{2.4cm} 6.73583

\bigskip
\includegraphics[scale=0.92]{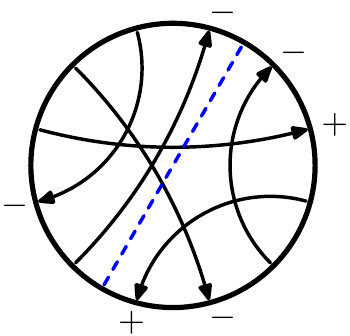} \hspace{0.4cm}
\includegraphics[scale=0.92]{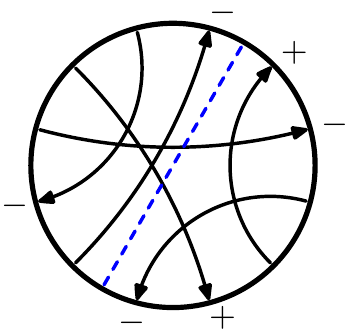} \hspace{0.4cm}
\includegraphics[scale=0.92]{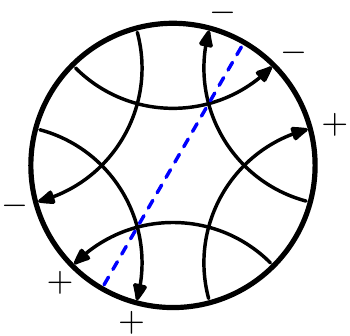} \hspace{0.4cm}
\includegraphics[scale=0.92]{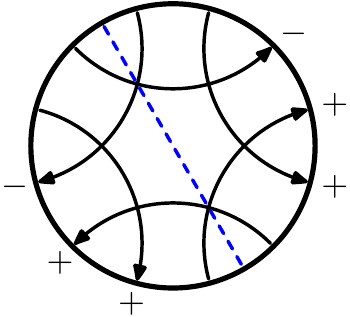} 

\smallskip 
6.75341 \hspace{2.4cm} 6.75348 \hspace{2.4cm} 6.77833  \hspace{2.5cm} 6.77844

\bigskip 
\includegraphics[scale=0.92]{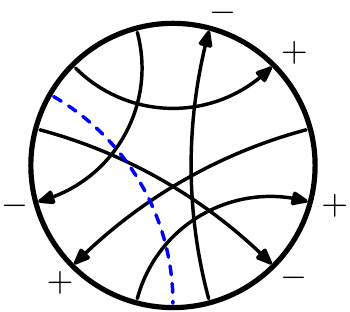} \hspace{0.6cm}
\includegraphics[scale=0.92]{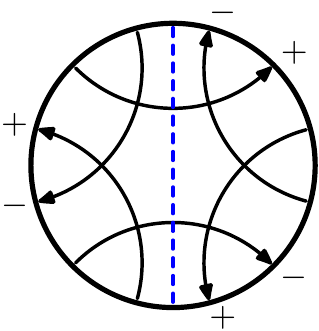} \hspace{0.6cm}
\includegraphics[scale=0.92]{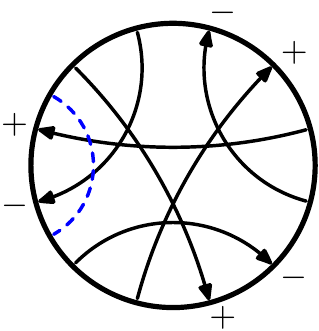} 

\smallskip 
\hspace{0.2cm} 6.79342 \hspace{2.4cm} 6.87269  \hspace{2.4cm} 6.87319   

\bigskip 

\includegraphics[scale=0.92]{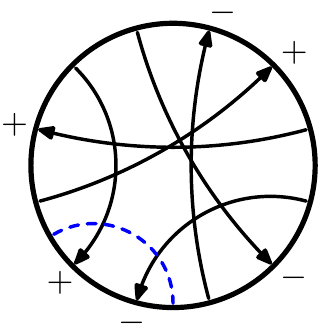} \hspace{0.6cm}
\includegraphics[scale=0.92]{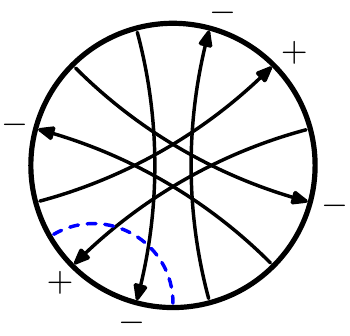}

\smallskip 
6.89623 \hspace{2.6cm} 
6.89815 

\bigskip
\caption{{\sc  Slice Gauss diagrams of almost classical knots}}
\label{slice-Gauss}
\end{figure}

\begin{table}[H] 

\begin{tabular}{cc}
\begin{minipage}{0.50\textwidth}
\begin{tabular}{|c|c|}
\hline
Virtual &  Alexander    \\
Knot &  Conway polynomial    \\
\hline \hline
{\bf 3.6} & $t-1+t^{-1}$\\ \hline
4.99  & $2-t^{-1}$ \\ \hline
4.105 & $2t-2+t^{-1}$ \\ \hline
{\bf 4.108}  & $t-3+t^{-1} $ \\  \hline
5.2012 & $t$ \\  \hline
5.2025 & $t$ \\  \hline
5.2080 & $1$ \\  \hline
5.2133 & $2-t^{-1}$ \\  \hline
5.2160 & $t-1+t^{-1} $ \\  \hline
5.2331 & $t^2-1+t^{-1} $ \\  \hline
5.2426 & $(t-1+t^{-1})^2$ \\  \hline
5.2433 & \;\; $t^2-2t +4 -3t^{-1} +t^{-2} $ \;\; \\  \hline
{\bf 5.2437} & $2t - 3 + 2t^{-1} $ \\  \hline
5.2439 & $t^2-2t +3 -t^{-1}$ \\  \hline
{\bf 5.2445} & $ t^2 - t + 1 - t^{-1} + t^{-2}$\\  \hline
6.72507 & $t$ \\  \hline
6.72557 &$t$ \\  \hline
6.72692 & $1 $ \\  \hline
6.72695 & $t$ \\  \hline
6.72938 & $t^2-t+1  $ \\  \hline
6.72944 & $2t-1  $ \\  \hline
6.72975 & $t$ \\  \hline
6.73007 & $1$ \\  \hline
6.73053 & $t-1+t^{-1} $ \\  \hline
6.73583 & $1$ \\  \hline
6.75341 & $2-t^{-1} $ \\  \hline
6.75348 & $-t^2+2t$ \\  \hline
6.76479 & $t-1+t^{-1} $ \\  \hline
6.77833 & $t^2-t+1 $ \\  \hline
6.77844 & $t^2-t+1 $ \\  \hline
6.77905 & $-t+3-t^{-1}$ \\  \hline
6.77908 & $2t-2+t^{-1} $ \\  \hline
6.77985 & $t-1+t^{-1} $ \\  \hline
6.78358 & $-t+3-t^{-1}$ \\  \hline
6.79342 & $-t+3-t^{-1}$ \\  \hline
6.85091 & $1+t^{-1}-t^{-2} $ \\  \hline
6.85103 & $t-1+2t^{-1}-t^{-2} $ \\  \hline
\;6.85613\; & $t-2+2t^{-1} $  \\  \hline

\end{tabular}
\end{minipage}

\begin{minipage}{0.50\textwidth}
 \begin{tabular}{|c|c|} 
\hline
Virtual &  Alexander   \\
Knot  &  Conway polynomial   \\ 
\hline \hline
6.85774 & $t - 1 + t^{-2} $  \\  \hline
6.87188 & $2t - 3+3 t^{-1}-t^{-2}$ \\  \hline
6.87262 & $t^3 - 2t^2 + 3t - 2 +t^{-1}$ \\  \hline
6.87269 & $4-4t^{-1} + t^{-2}$ \\  \hline
6.87310 & $t^2 -t +2 -2t^{-1} +t^{-2} $ \\  \hline
6.87319 & $3-3t^{-1}+t^{-2}$ \\  \hline
6.87369 & $t-2+3t^{-1}-t^{-2} $ \\  \hline
6.87548 & $-t^2+2t +1-t^{-1} $ \\  \hline
6.87846 &  $t-1+2t^{-1}-t^{-2}$ \\  \hline
6.87857 & $-t+4-2t^{-1} $ \\  \hline
6.87859 & $3-3t^{-1}+t^{-2} $ \\  \hline
6.87875 & $t+1-2t^{-1}+t^{-2} $ \\  \hline
6.89156 &  $2t-1-t^{-1}+t^{-2} $ \\  \hline
{\bf 6.89187} & $(t-1+t^{-1})^2$ \\  \hline
{\bf 6.89198} & $(t-1+t^{-1})^2$ \\  \hline
6.89623 & $2-2t^{-1}+t^{-2} $ \\  \hline
6.89812 & $t^2 -2+2t^{-1} $ \\  \hline
6.89815 & $2t^{-1} - t^{-2} $ \\  \hline
6.90099 & $t - t^{-1} + t^{-3} $ \\  \hline
6.90109 & \;\; $2t^2 - 4t +5 - 3t^{-1} + t^{-2}$ \;\; \\  \hline
6.90115 & $-t^2 + 4t -5+4t^{-1} - t^{-2}$ \\  \hline
6.90139 & $3t^2 - 6t +7 - 4t^{-1} + t^{-2}$ \\  \hline
6.90146 & $t^3 - 5t^2 + 9t - 5 + t^{-1}$ \\  \hline
6.90147 & $t^2-3t+6-5t^{-1}+2t^{-2} $ \\  \hline
6.90150 & $-t^2 +5t-6+4t^{-1}-t^{-2} $ \\  \hline
6.90167 & $t^2-2t+4 -4t^{-1}+2t^{-2} $ \\  \hline
{\bf 6.90172} & $t^2-3t+5-3t^{-1}+t^{-2} $ \\  \hline
6.90185 & $3t^2-6t+6-3t^{-1}+t^{-2}$ \\  \hline
6.90194 & $t^2-4t+8-5t^{-1}+t^{-2}$ \\  \hline
6.90195 & $2t^2-3t+3-2t^{-1}+t^{-2} $ \\  \hline
{\bf 6.90209} & $t^2-3t+3-3t^{-1}+t^{-2} $ \\  \hline
6.90214 & $3t-4+2t^{-1} $ \\  \hline
6.90217 & $-t^2+4t-3+t^{-1} $ \\  \hline
6.90219 & $2t - 3 + 3t^{-1} - t^{-2}$ \\  \hline
{\bf 6.90227} & $2t-3+2t^{-1}$ \\  \hline
6.90228 & $4t - 6 + 3t^{-1}$ \\  \hline
6.90232 & $-2 t + 6 - 4t^{-1} + t^{-2}$ \\  \hline
\; 6.90235 \; & $t -3+5t^{-1}-3t^{-2}+t^{-3} $ \\  \hline  
\end{tabular}
\end{minipage}
\end{tabular}
\bigskip
\caption{The Alexander--Conway polynomials of almost classical knots. Boldface font is used for classical knots, e.g. ${\bf 3.6} = 3_1$.}
\label{table-1}
\end{table}

\begin{table}[H] %\footnotesize

\begin{tabular}{cc}
\begin{minipage}{0.5\textwidth}
\begin{tabular}{|c|c|l|c|}
\hline

Virtual & Graded & \;\; Signatures  & Slice \\ 
knot & genus & $\quad \si \quad \{\widehat{\si}^\pm_\om\}$ 
& genus  \\ 
\hline 
\hline
{\bf 3.6}   & 0 & \quad 2 $\quad \{0,2\}$& 1 \\ \hline
4.99   & 0 & \quad 0 $\quad \{0\}$& 0 \\ \hline
4.105  & 1 & \quad 2 $\quad \{0, 2\}$  & 1 \\ \hline
{\bf 4.108}  & 0 & \quad 0 $\quad \{0\}$ & 1\\ \hline
5.2012  & 1 & \quad 2 $\quad \{0, 2\}$ & 1 \\ \hline 
5.2025  & 0 & \quad 0 $\quad \{0\}$ & 0 \\ \hline 
5.2080  & 1 & \quad 2 $\quad \{0, 2\}$ & 1 \\ \hline
5.2133  & 0 & \quad 0 $\quad \{0\}$ & 0 \\ \hline 
5.2160  & 0 & \quad 0 $\quad \{0\}$ & 0 \\ \hline 
5.2331  & 1 & \quad 2 $\quad \{2\}$ & 1 \\ \hline
5.2426  & 1 & \quad 4 $\quad \{0, 2, 4\}$ & 2 \\ \hline 
5.2433  & 2 & \quad 4 $\quad \{0, 2, 4\}$ & 2 \\ \hline
{\bf 5.2437} & 0 & \quad 2 $\quad \{0, 2\}$ & 1 \\ \hline
5.2439  & 0 & \quad 0 $\quad \{0\}$  & 1 \\ \hline
{\bf 5.2445} & 0 & \quad 4 $\quad \{0, 2, 4\}$ & 2 \\ \hline 
6.72507 & 1 & \quad 2 $\quad \{0, 2\}$ & 1 \\ \hline 
6.72557 & 0 & \quad 0 $\quad \{0\}$ & 0 \\ \hline 
6.72692 & 1 & \quad 2 $\quad \{2\}$  & 1 \\ \hline 
6.72695 & 0 & \quad 0 $\quad \{0\}$ & 1 \\ \hline 
6.72938 & 1 & \quad 2 $\quad \{0, 2\}$ & 1 \\ \hline
6.72944 & 0 & \quad 2 $\quad \{0, 2\}$ & 1 \\ \hline
6.72975 & 0 & \quad 0 $\quad \{0\}$ & 0 \\ \hline
6.73007 & 0 & \quad 0 $\quad \{0\}$ & 0 \\ \hline 
6.73053 & 1 & \quad 2 $\quad \{0, 2\}$ & 1 \\ \hline 
6.73583 & 0 & \quad 1 $\quad \{1\}$ & 0 \\ \hline 
6.75341 & 0 & \quad 0 $\quad \{0\}$ & 0 \\ \hline
6.75348 & 0 & \quad 1 $\quad \{0\}$ & 0 \\ \hline 
6.76479 & 1 & \quad 2 $\quad \{2\}$  & 1 \\ \hline
6.77833 & 0 & \quad 0 $\quad \{0\}$ & 0 \\ \hline
6.77844 & 0 & \quad 0 $\quad \{0\}$ & 0 \\ \hline 
6.77905 & 1 & \quad 1 $\quad \{1\}$  & 1\\ \hline 
6.77908 & 0 &  \, \!\!\! $-1$ $\quad \{-1, 0\}$ & 1 \\ \hline 
6.77985 & 1 &  \, \!\!\! $-1$ $\quad \{-1\}$ & 1 \\ \hline 
6.78358 & 1 & \quad 0 $\quad \{0\}$ & 1 \\ \hline 
6.79342 & 0 & \quad 0 $\quad \{0\}$ & 0 \\ \hline
6.85091 & 1 & \quad 0 $\quad \{0\}$  & 1 \\ \hline
6.85103 & 0 & \, \!\!\! $-1$ $\quad \{0\}$ & 1 \\ \hline 
6.85613 & 1 & \quad 2 $\quad \{0, 2\}$  & 1 \\ \hline
\end{tabular}
\end{minipage}

\begin{minipage}{0.5\textwidth}
 \begin{tabular}{|c|c|l|c|} 
\hline
Virtual & Graded &\;\; Signatures & Slice \\ 
knot & genus & $ \quad \si  \quad \{\widehat{\si}^\pm_\om\}$
& genus\\ \hline 
\hline
6.85774  & 1 & \quad 2 $\quad \{0, 2\}$  & 1 \\ \hline 
6.87188  & 1 & \quad 2 $\quad \{0, 2\}$ & 1 \\ \hline 
6.87262  & 2 & \quad 2 $\quad \{0 ,2\}$  & 2 \\ \hline
6.87269  & 0 & \quad 0 $\quad \{0\}$ & 0 \\ \hline
6.87310  & 1 & \quad 4 $\quad \{2, 4\}$  & 2 \\ \hline
6.87319  & 0 & \quad 0 $\quad \{0\}$ & 0 \\ \hline 
6.87369  & 1 & \,\!\! $-2$ $\quad \{-2, 0\}$ & 1 \\ \hline 
6.87548  & 0 & \quad 0 $\quad \{0\}$ & 1 \\ \hline 
6.87846  & 1 & \quad 2 $\quad \{0, 2\}$ & 1 \\ \hline 
6.87857  & 1 & \quad 0 $\quad \{0\}$ & 1 \\ \hline 
6.87859  & 1 & \quad 0 $\quad \{-2, 0\}$ & 1 \\ \hline
6.87875  & 1 & \,\!\! $-1$ $\quad \{-2, 0\}$  & 1 \\ \hline 
6.89156  & 1 & \quad 2 $\quad \{0, 2\}$ & 1 \\ \hline 
{\bf 6.89187} & 0 & \quad 4 $\quad \{0, 4 \}$ & 2 \\ \hline 
{\bf 6.89198} & 0 & \quad 0 $\quad \{0\}$ & 0 \\ \hline
6.89623  & 0 & \quad 0 $\quad \{0\}$ & 0 \\ \hline
6.89812  & 1 & \quad 2 $\quad \{0, 2\}$ & 1 \\ \hline 
6.89815  & 0 & \quad 0 $\quad \{0\}$ & 0 \\ \hline 
6.90099  & 2 & \quad 2 $\quad \{0, 2\}$  & 1 \\ \hline 
6.90109  & 1 & \quad 4 $\quad \{0, 2, 4\}$ & 2 \\ \hline 
6.90115  & 0 & \quad 2 $\quad \{0, 2\}$  & 1 or 2 \\ \hline 
6.90139  & 1 & \quad 4 $\quad \{0, 2, 4\}$ & 2 \\ \hline
6.90146  & 0 & \quad 2 $\quad \{0, 2\}$ & 1 or 2 \\ \hline 
6.90147  & 2 & \quad 4 $\quad \{2, 4\}$ & 2 \\ \hline 
6.90150  & 0 & \quad 2 $\quad \{0, 2\}$ & 1 or 2 \\ \hline 
6.90167  & 1 & \quad 4 $\quad \{0, 2, 4 \}$  & 2 \\ \hline 
{\bf 6.90172} & 0 & \quad 0 $\quad \{0\}$ & 1 \\ \hline 
6.90185  & 2 & \quad 4 $\quad \{0, 2, 4\}$  & 2 \\ \hline 
6.90194  & 0 & \quad 0 $\quad \{0\}$ & 1 or 2 \\ \hline
6.90195  & 1 & \quad 4 $\quad \{0, 2, 4\}$  & 2 \\ \hline
{\bf 6.90209} & 0 & \quad 2 $\quad \{0, 2\}$ & 1 \\ \hline
6.90214  & 1 & \quad 2 $\quad \{0, 2\}$& 1 \\ \hline 
6.90217  & 1 & \quad 2 $\quad \{0, 2\}$ & 1 \\ \hline
6.90219  & 1 & \quad 2 $\quad \{0, 2\}$ & 1 \\ \hline
{\bf 6.90227} & 0 & \quad 0 $\quad \{0\}$ & 0 \\ \hline 
6.90228  & 1 & \quad 2 $\quad \{0, 2\}$ & 1 \\ \hline
6.90232  & 1 & \quad 0 $\quad \{0\}$ & 1 \\ \hline 
6.90235  & 0 & \,\!\! $-2$ $\quad \{-2, 0, 2\}$ & 1 \\ \hline 

\end{tabular}
\end{minipage}
\end{tabular}
\vspace{5mm}
\caption{The graded genus, signature, $\om$-signatures, and slice genus of almost classical knots. Boldface font is used for classical knots.}
\label{table-2}
\end{table}

 \newpage
 
 {\tiny

\begin{figure}[H]
\caption{{\sc Diagrams of almost classical knots in surfaces}}
\label{ACknot-diagrams}
\bigskip
\def\svgwidth{2.9cm}   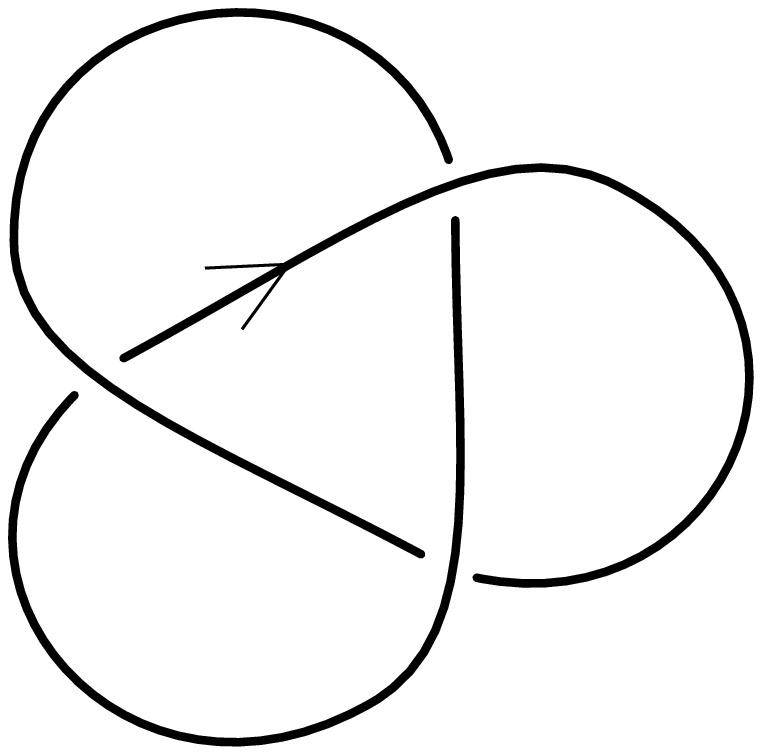 
\def\svgwidth{2.9cm}   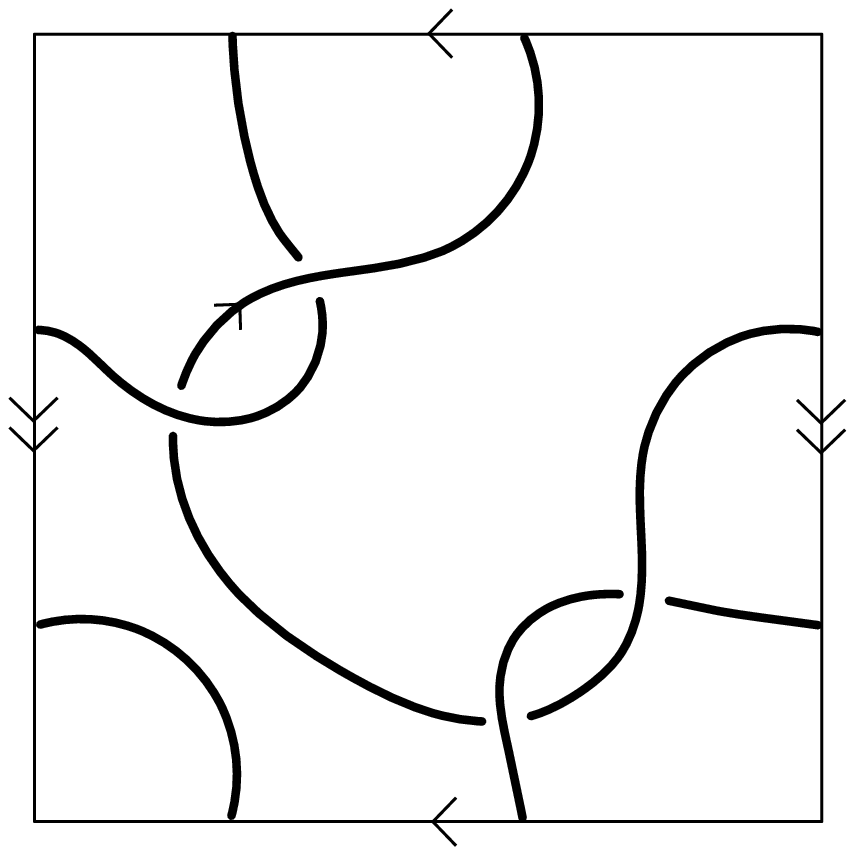 
\def\svgwidth{2.9cm}   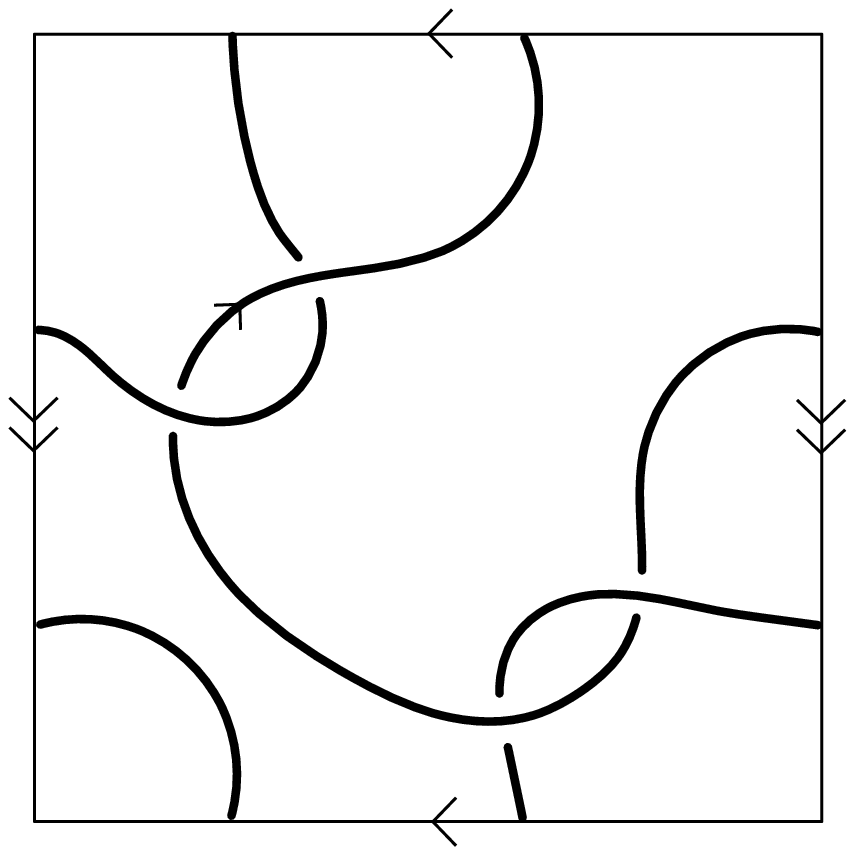
\def\svgwidth{2.9cm}   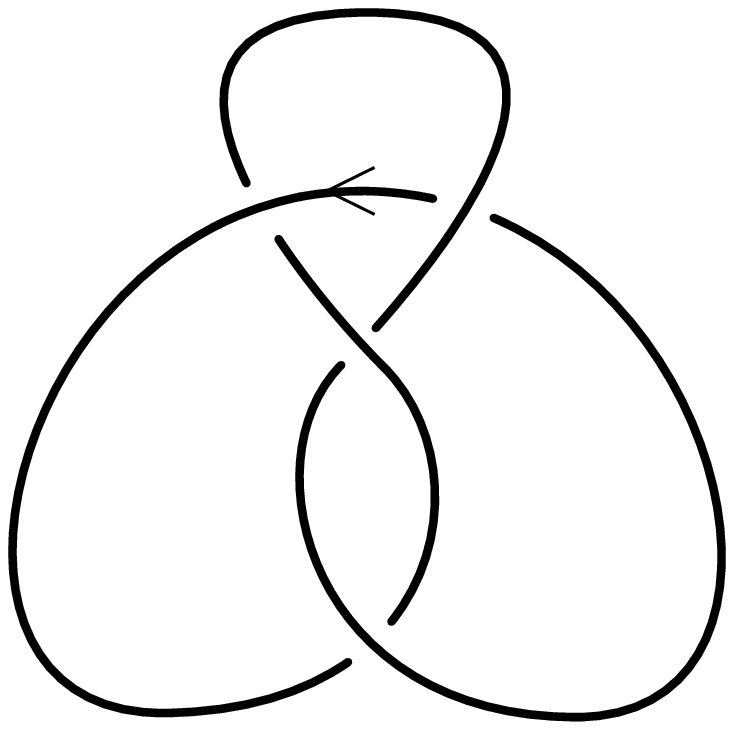
\def\svgwidth{2.9cm}   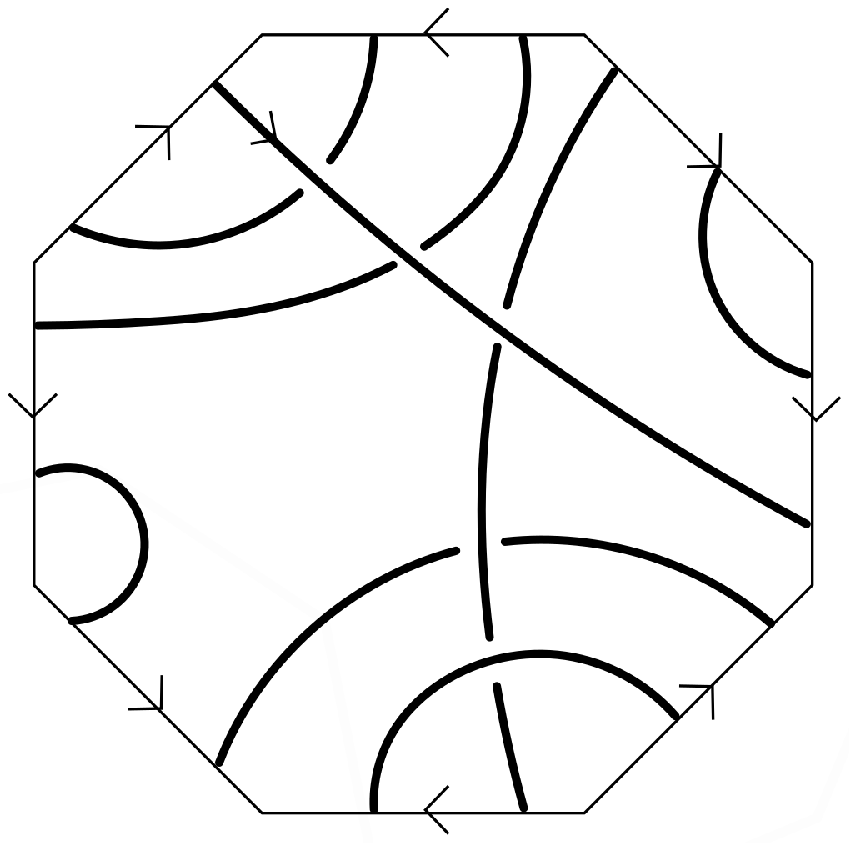 

\def\svgwidth{2.9cm}   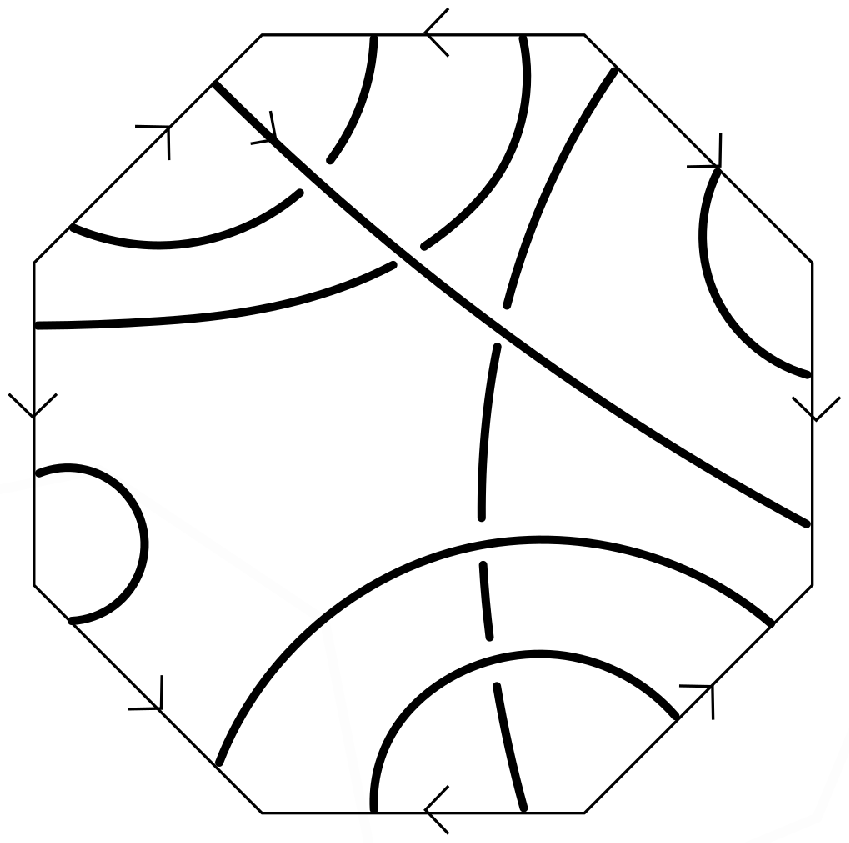
\def\svgwidth{2.9cm}   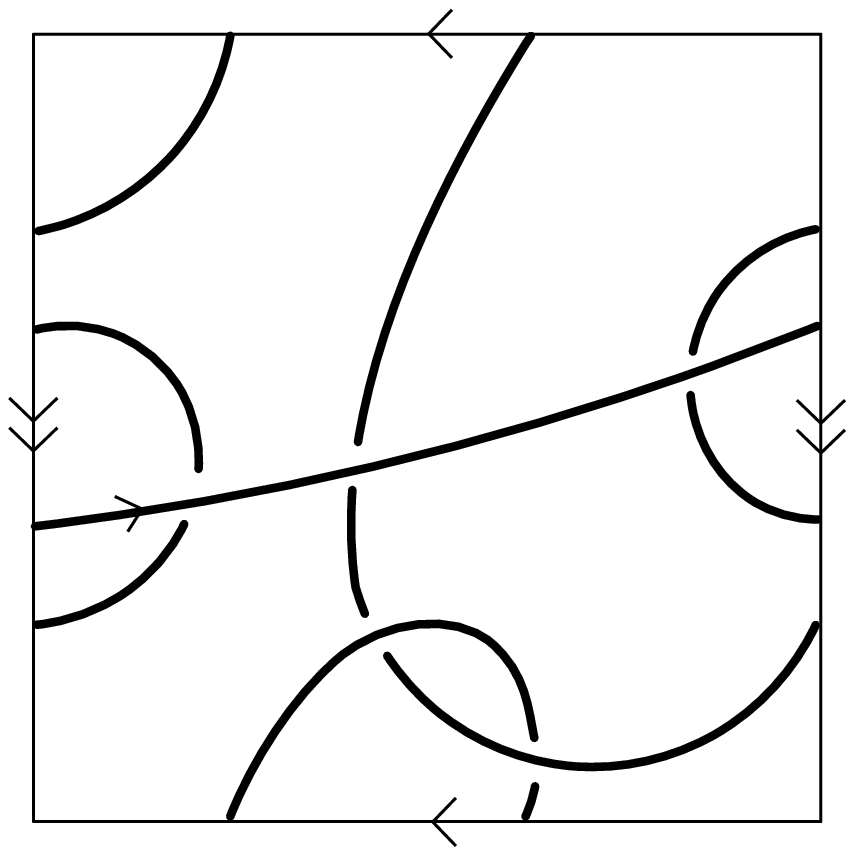
\def\svgwidth{2.9cm}   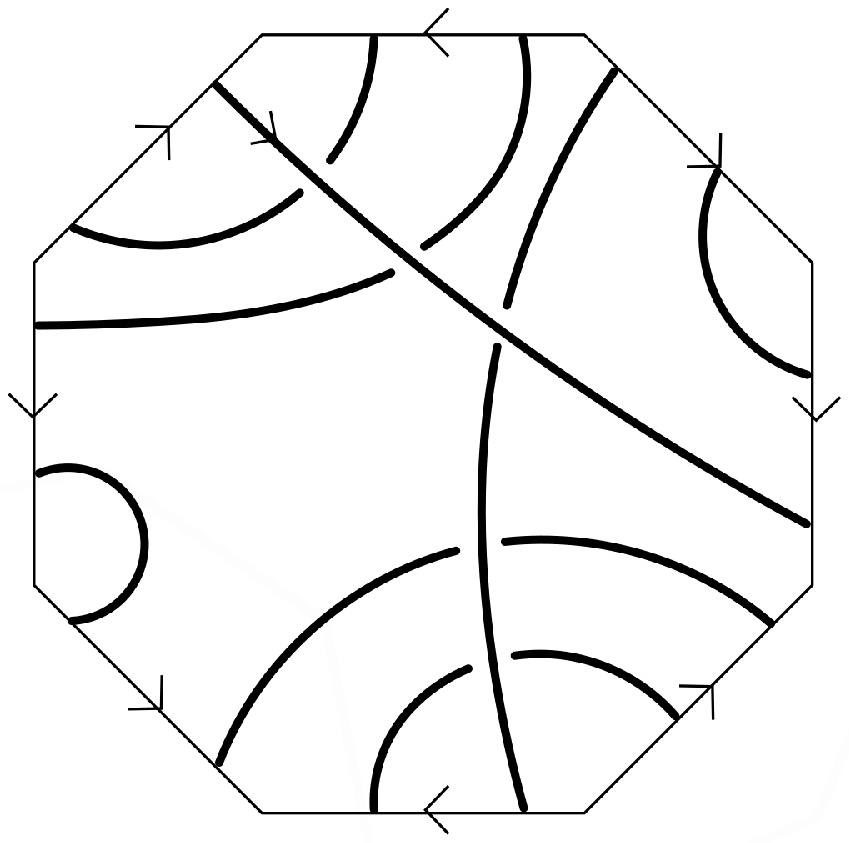
\def\svgwidth{2.9cm}   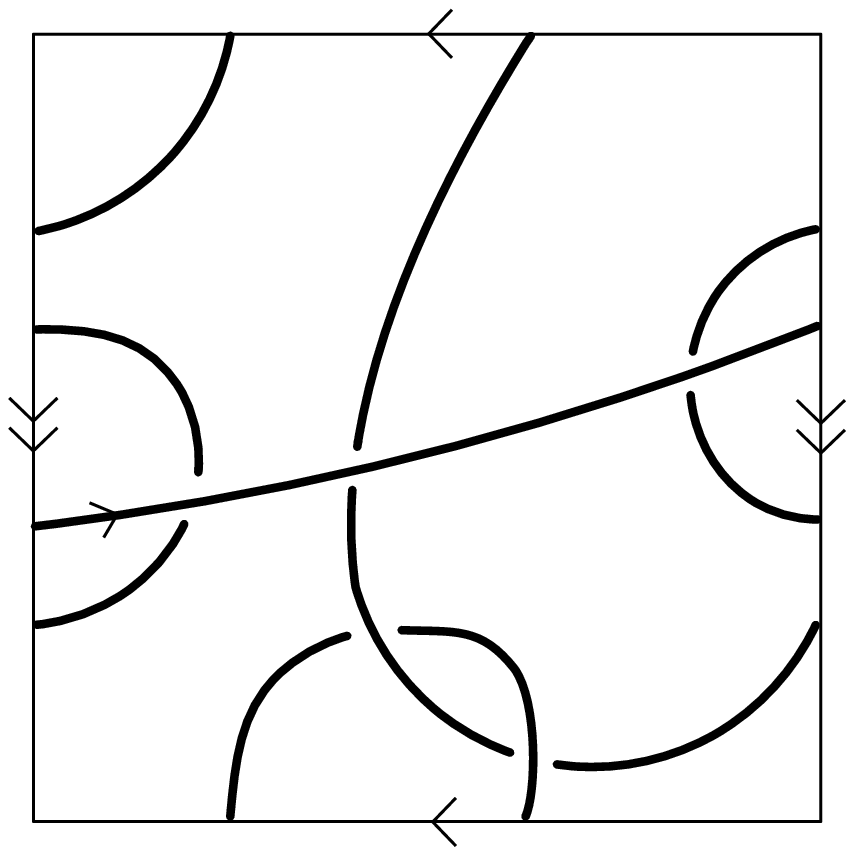
\def\svgwidth{2.9cm}   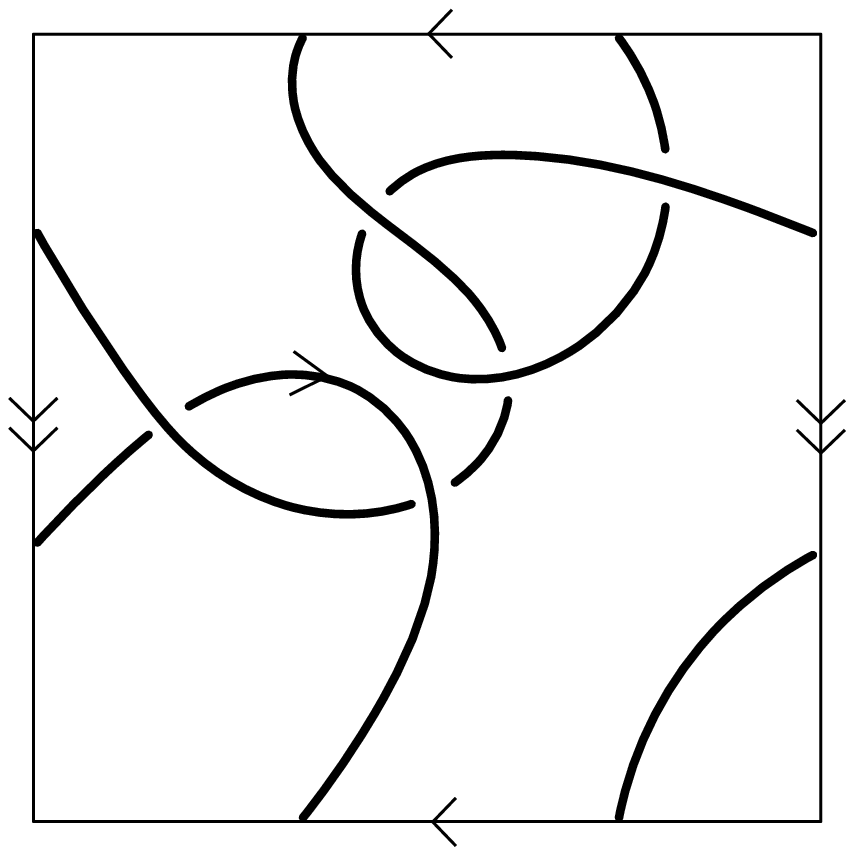

\def\svgwidth{2.9cm}   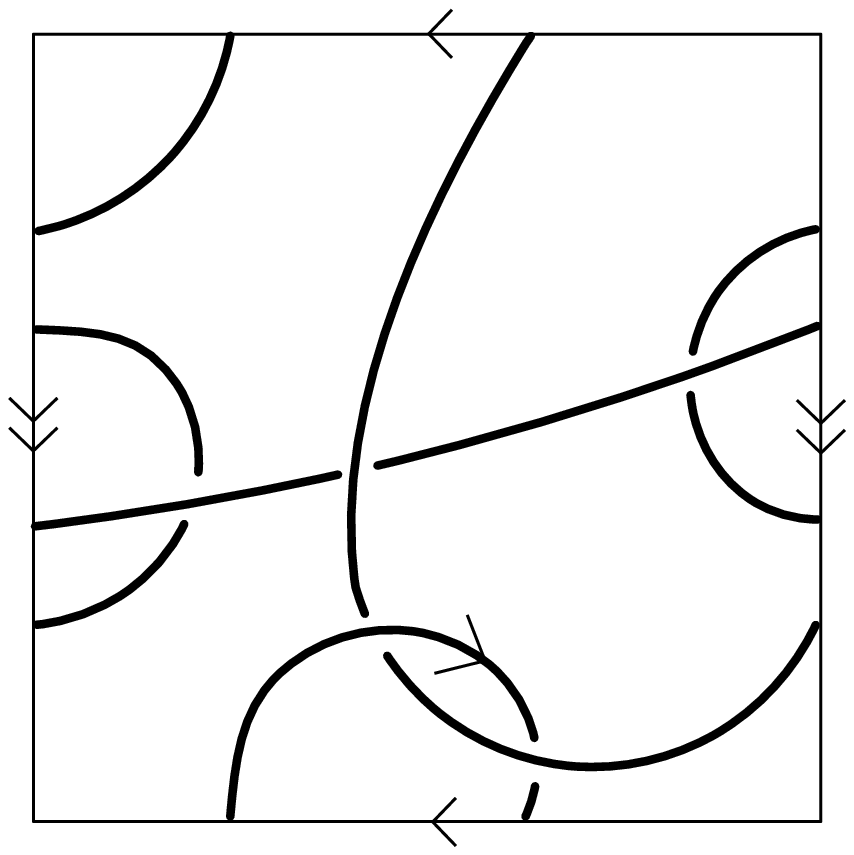
\def\svgwidth{2.9cm}   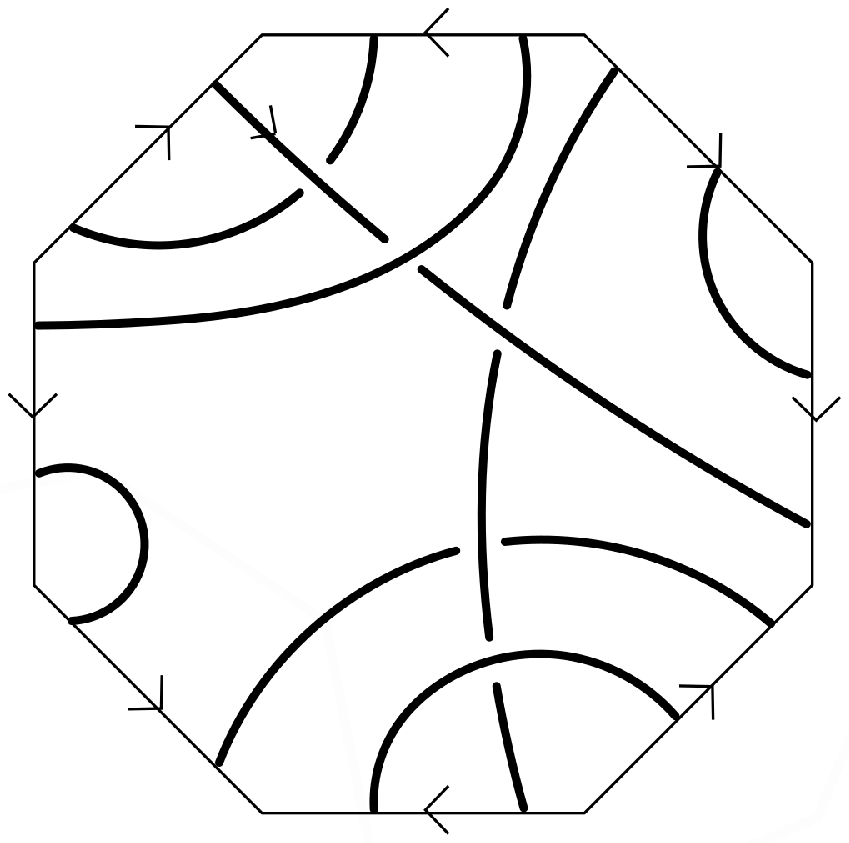
\def\svgwidth{2.9cm}   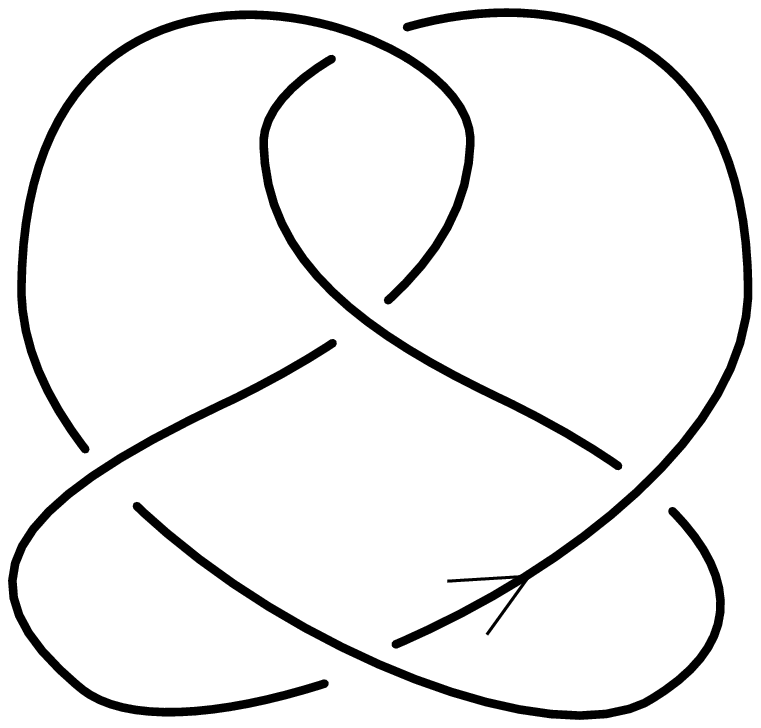 
\def\svgwidth{2.9cm}   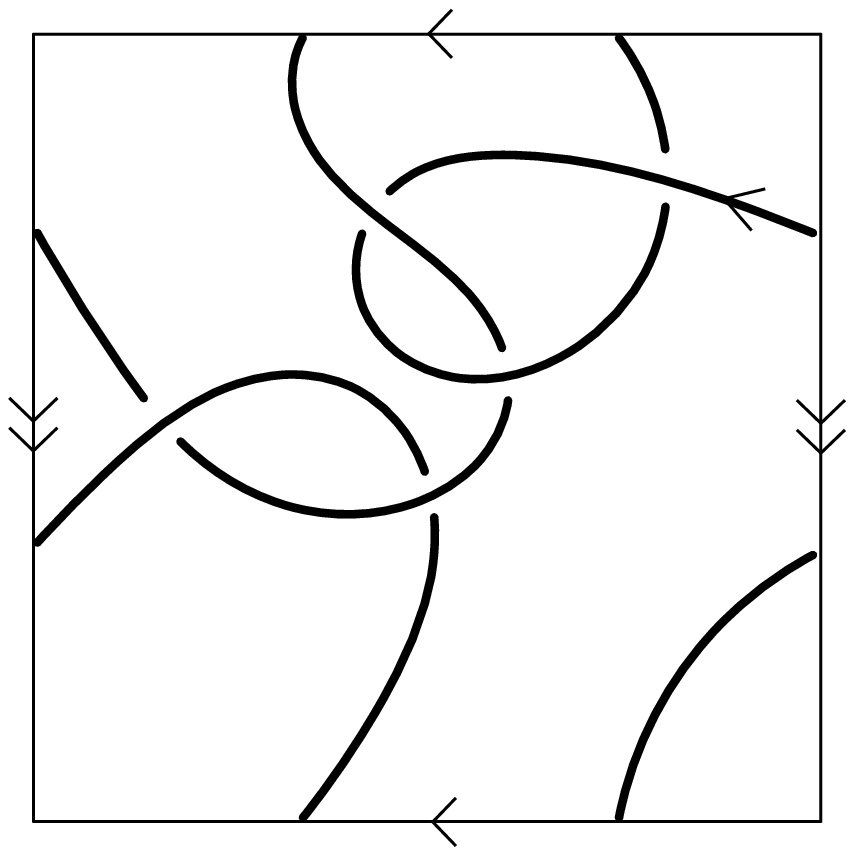
\def\svgwidth{2.9cm}   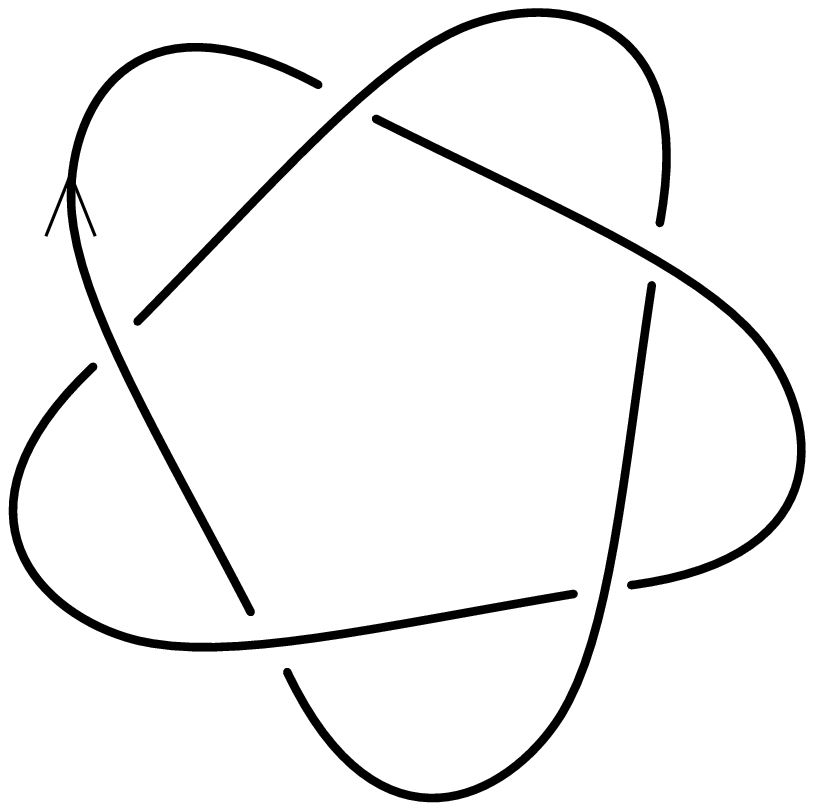 

\def\svgwidth{2.9cm}    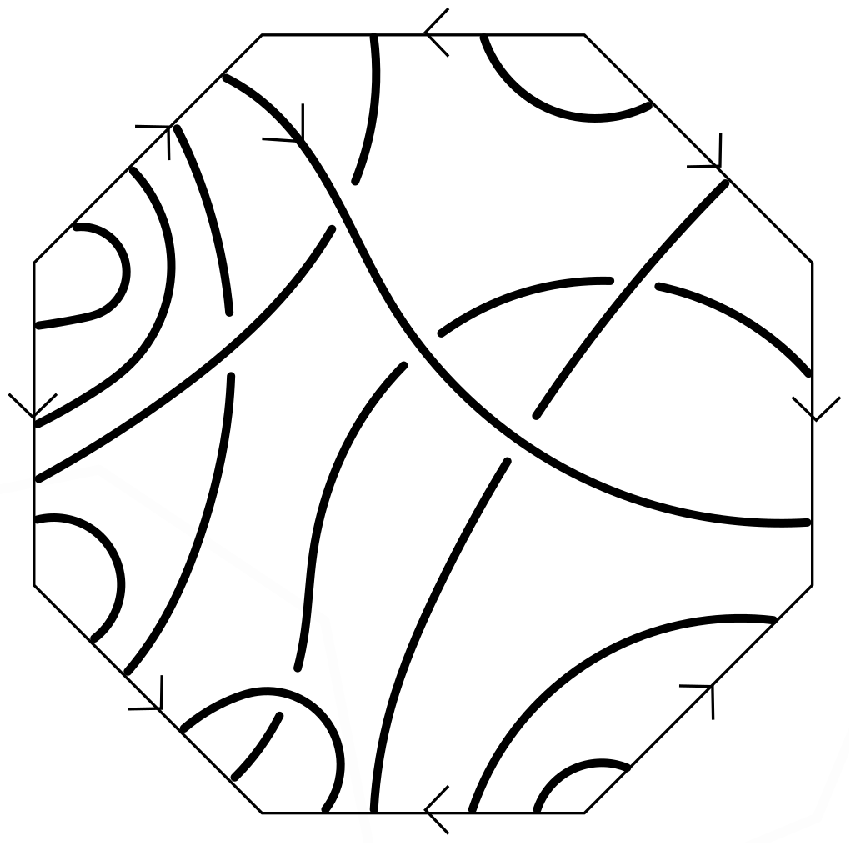
\def\svgwidth{2.9cm}    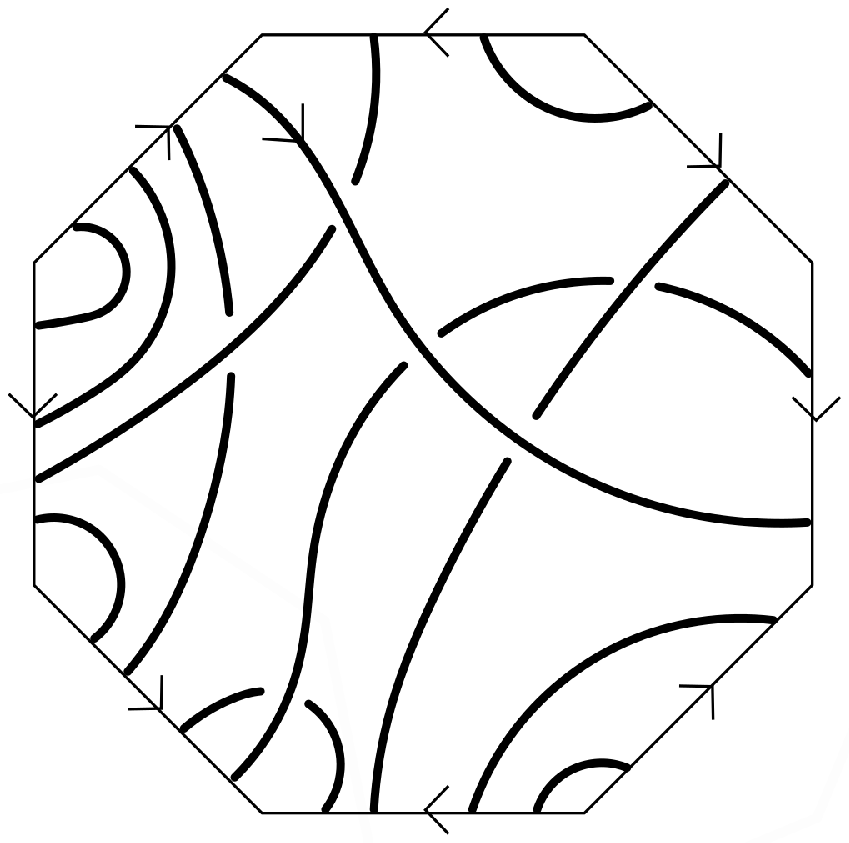
\def\svgwidth{2.9cm}   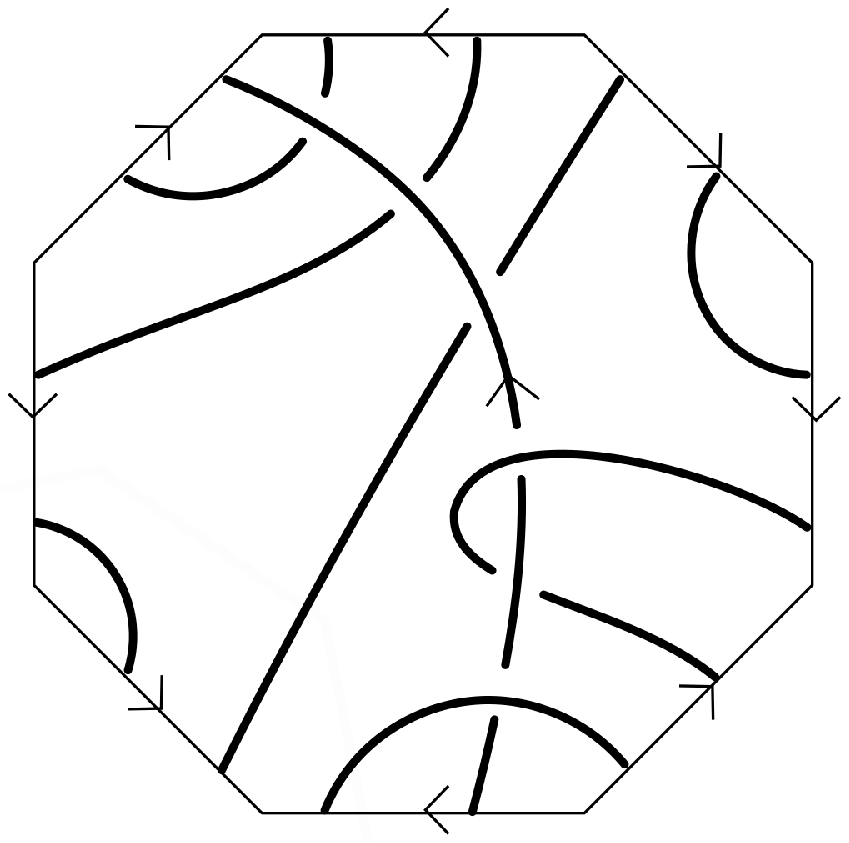
\def\svgwidth{2.9cm}   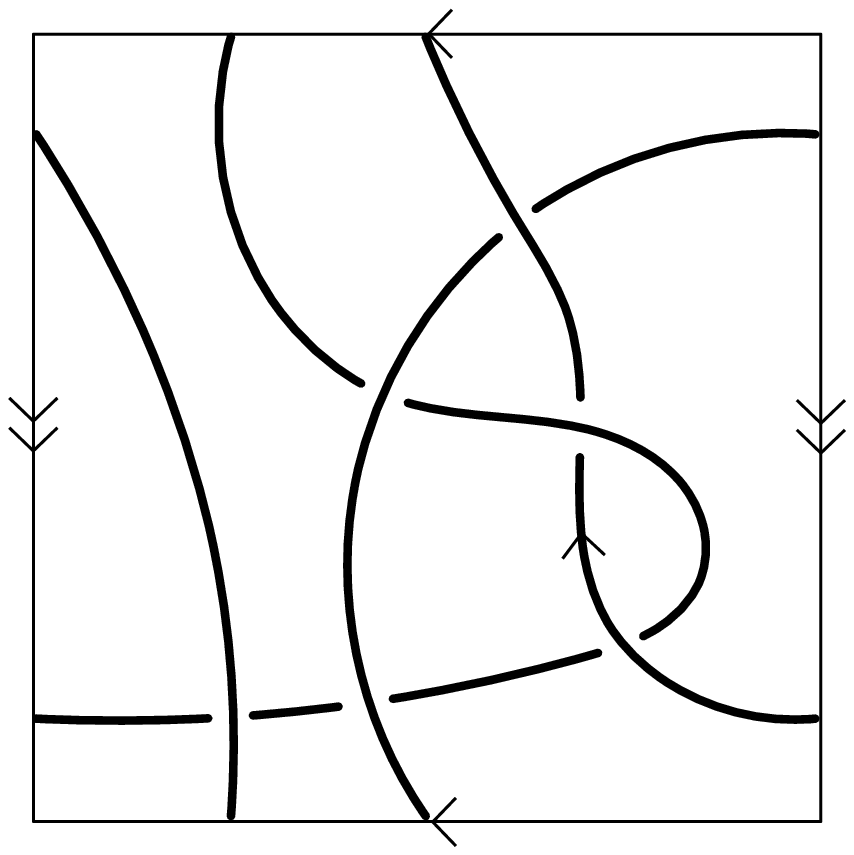
\def\svgwidth{2.9cm}    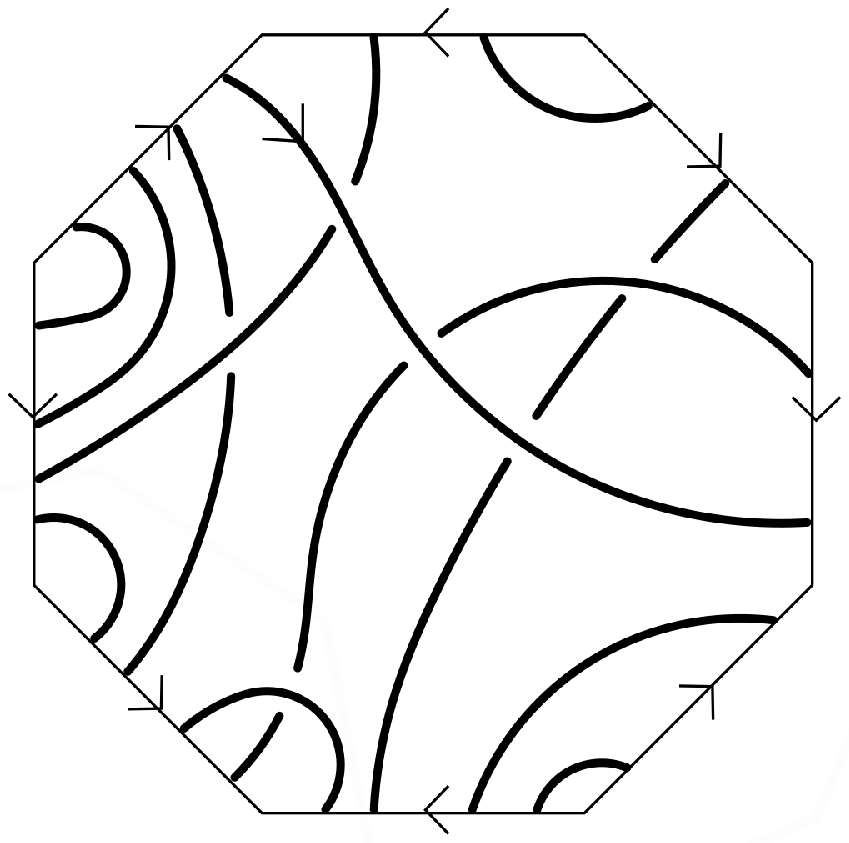

\def\svgwidth{2.9cm}    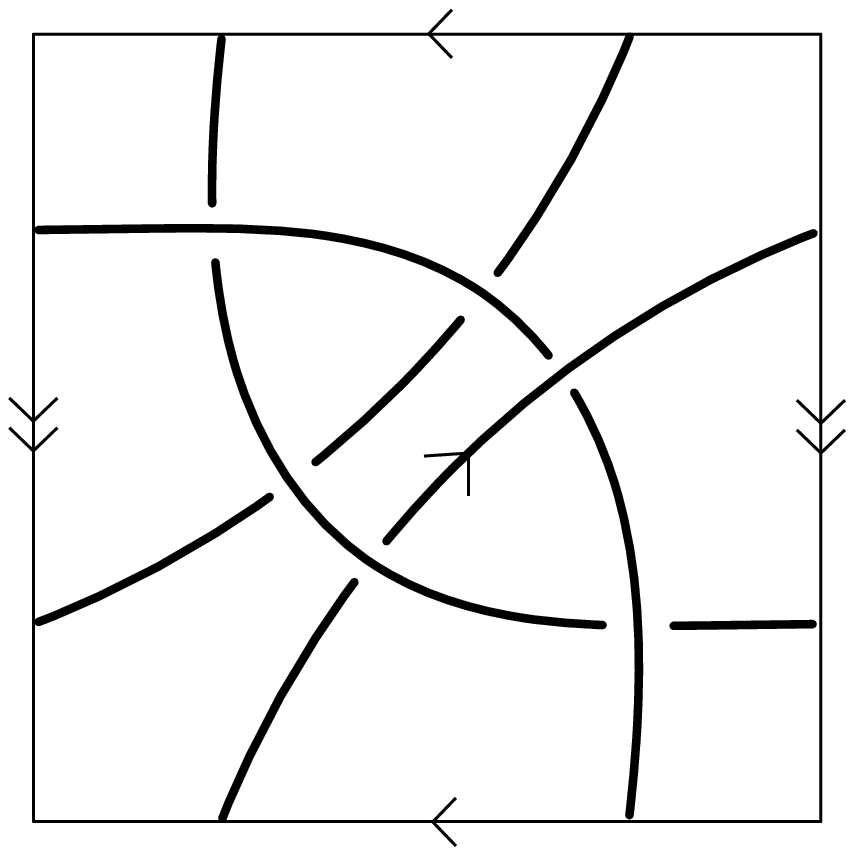
\def\svgwidth{2.9cm}    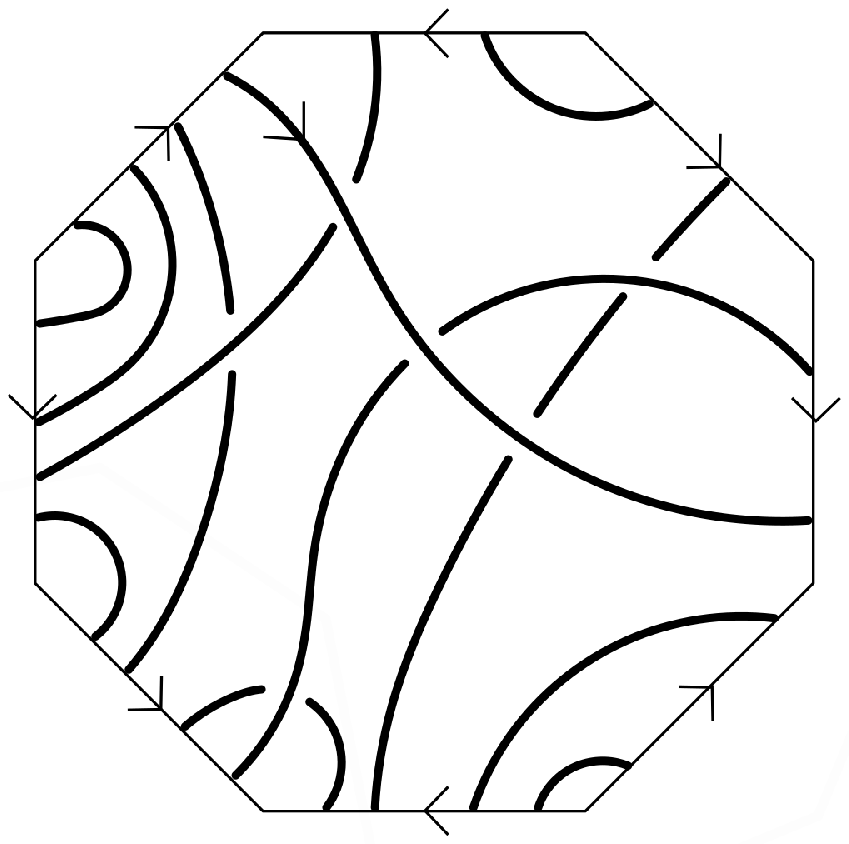
\def\svgwidth{2.9cm}    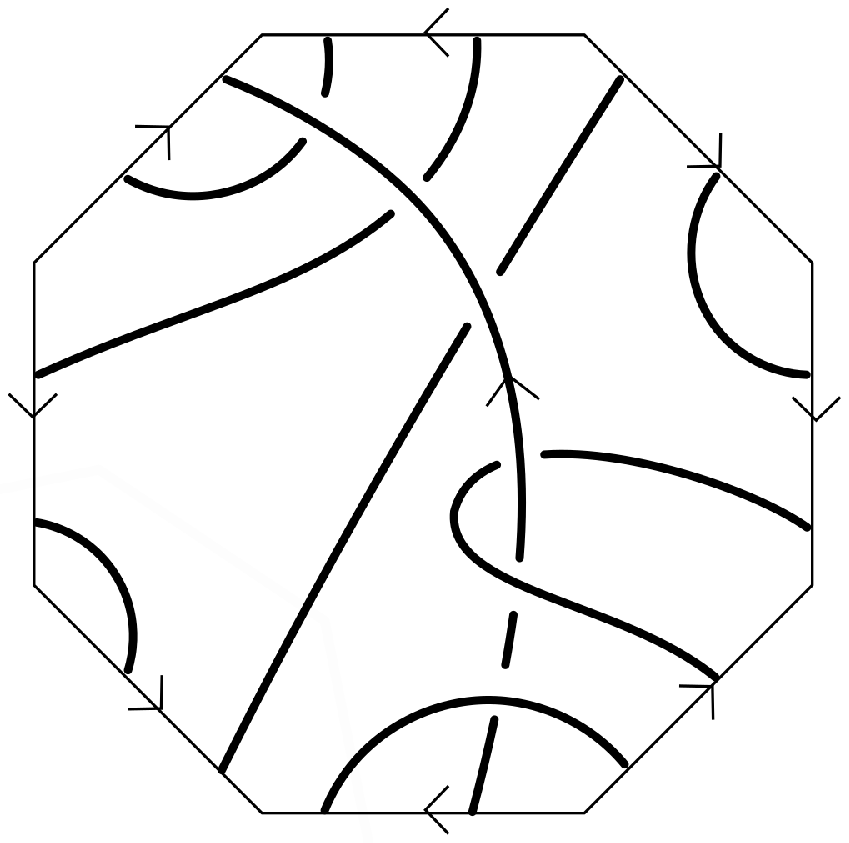
\def\svgwidth{2.9cm}    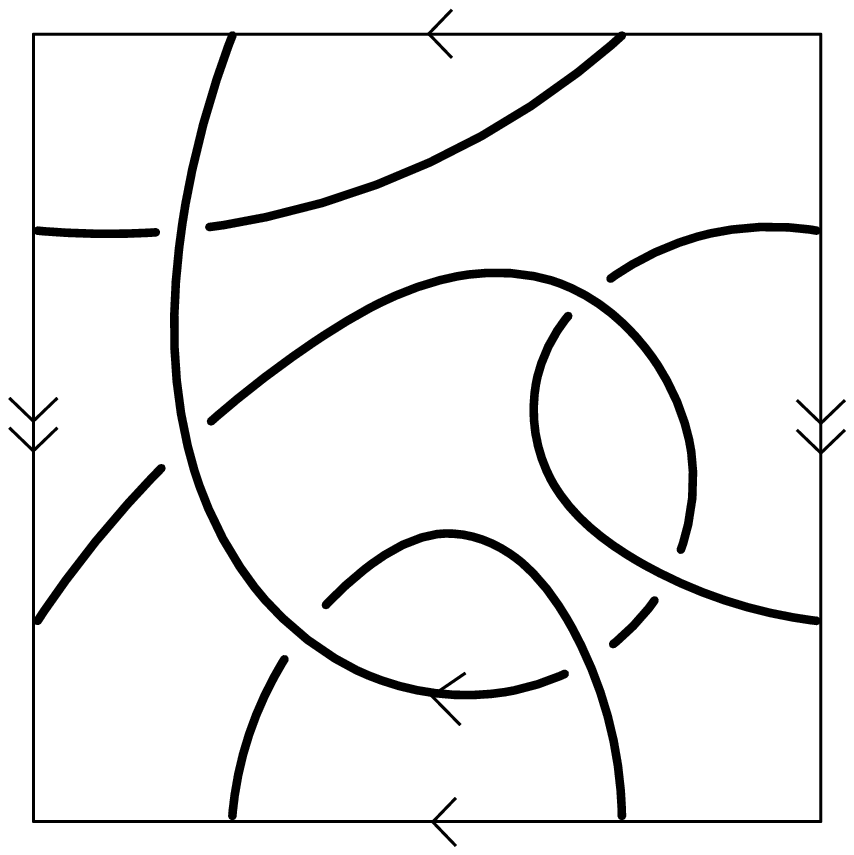
\def\svgwidth{2.9cm}    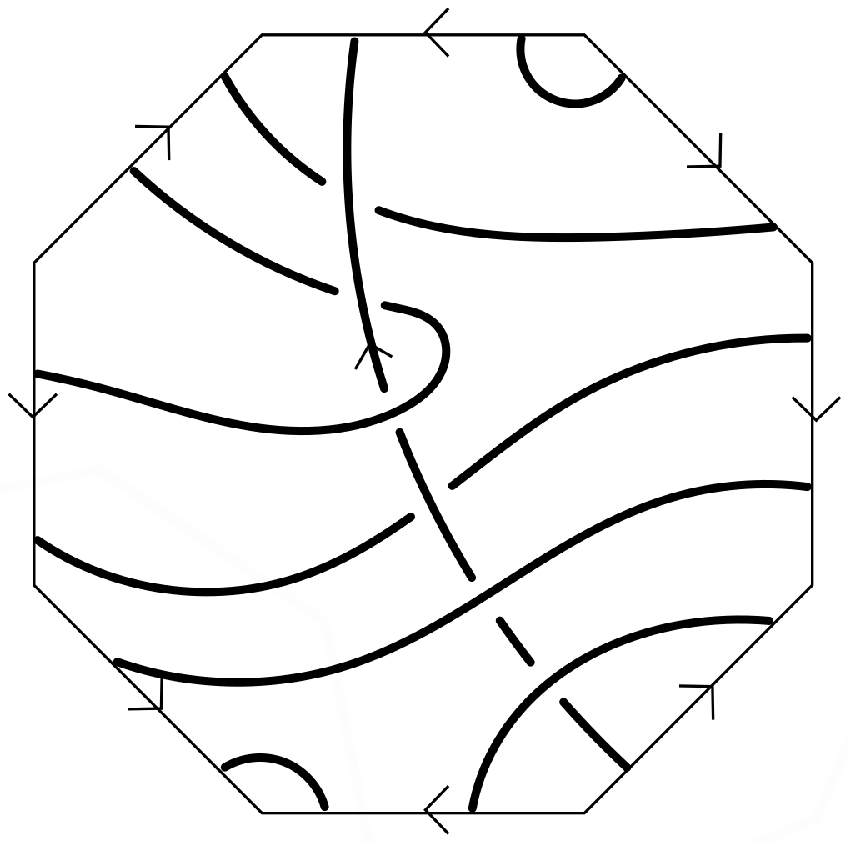

\def\svgwidth{2.9cm}   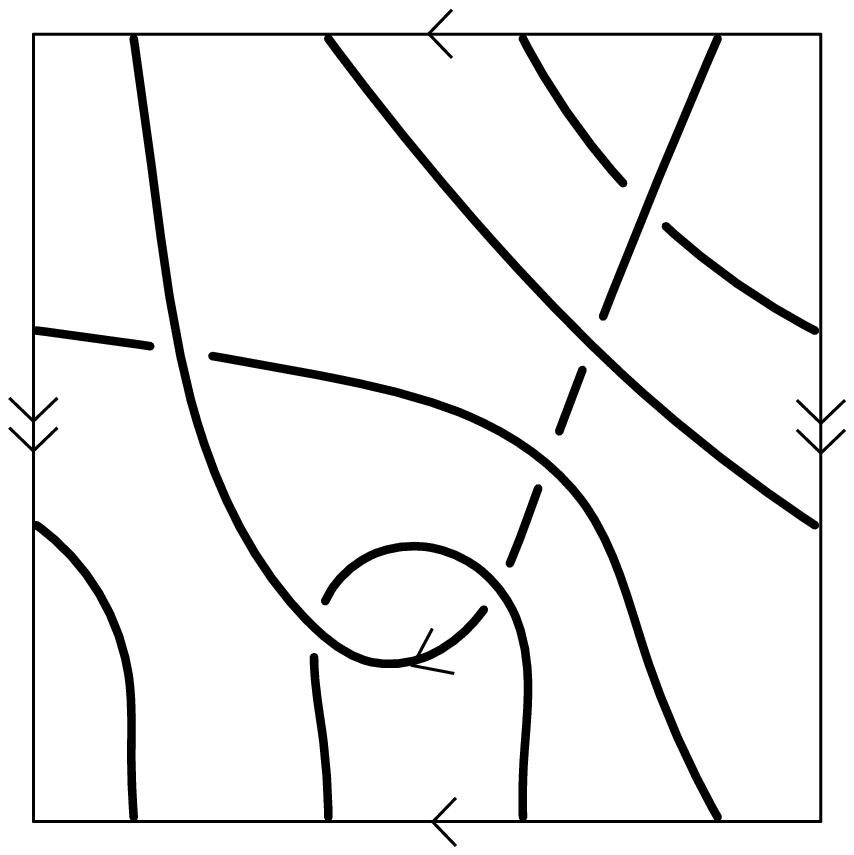
\def\svgwidth{2.9cm}  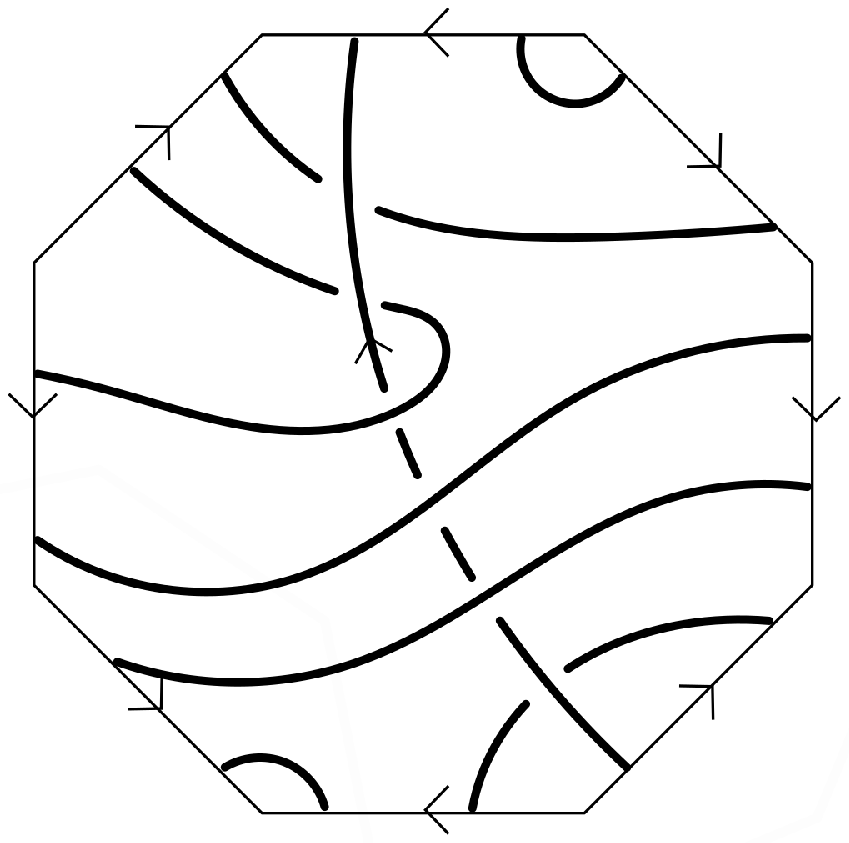
\def\svgwidth{2.9cm}  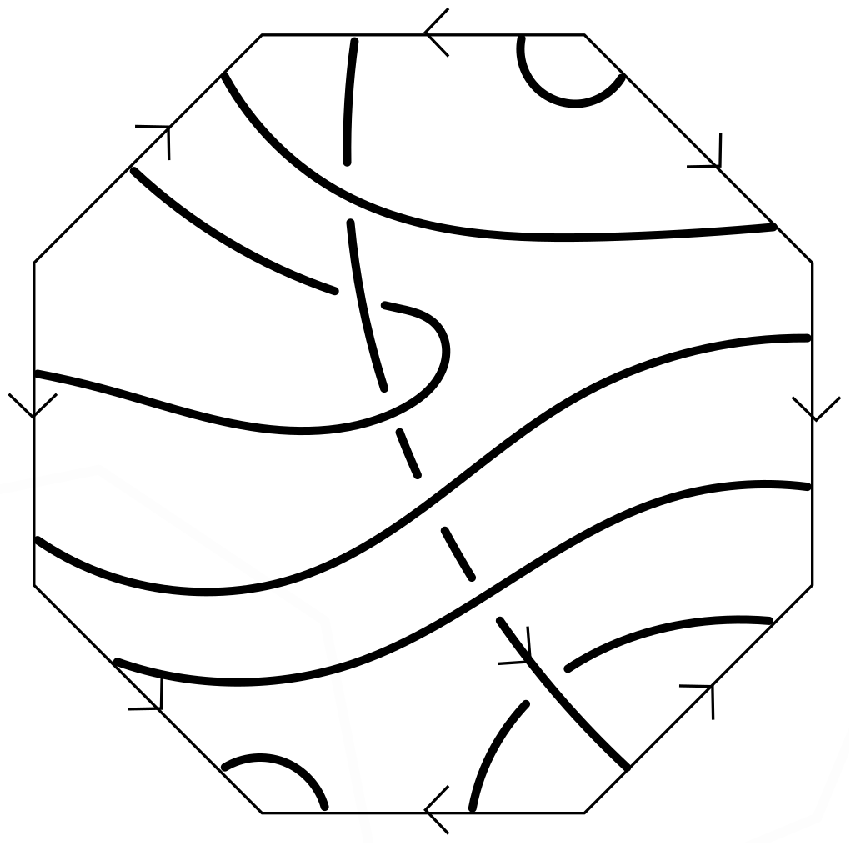
\def\svgwidth{2.9cm}  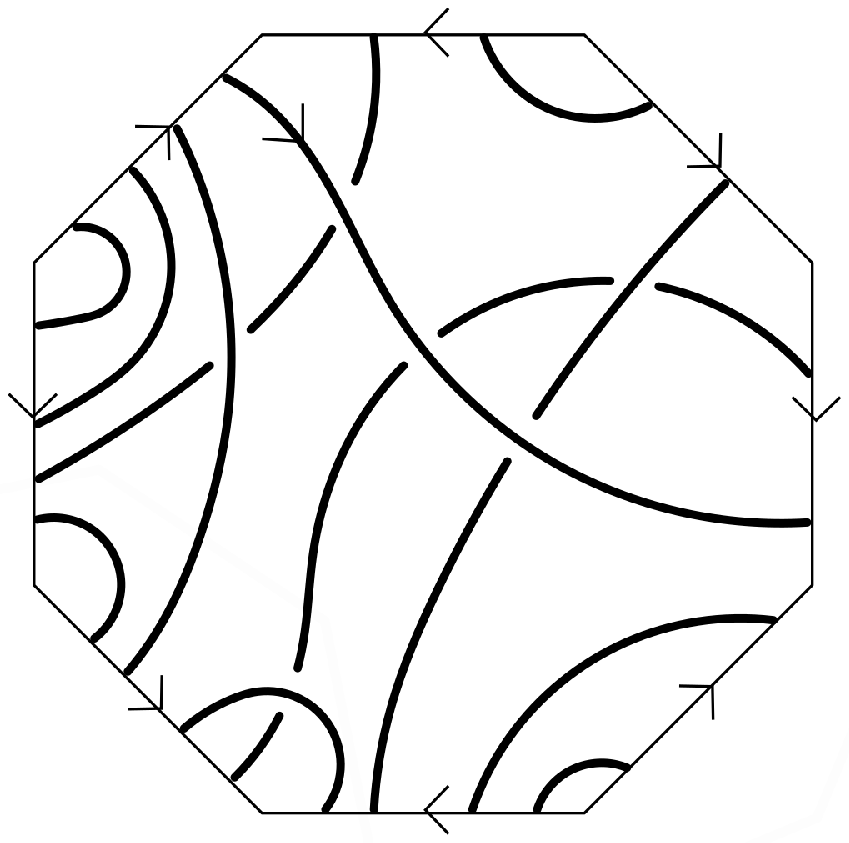
\def\svgwidth{2.9cm}  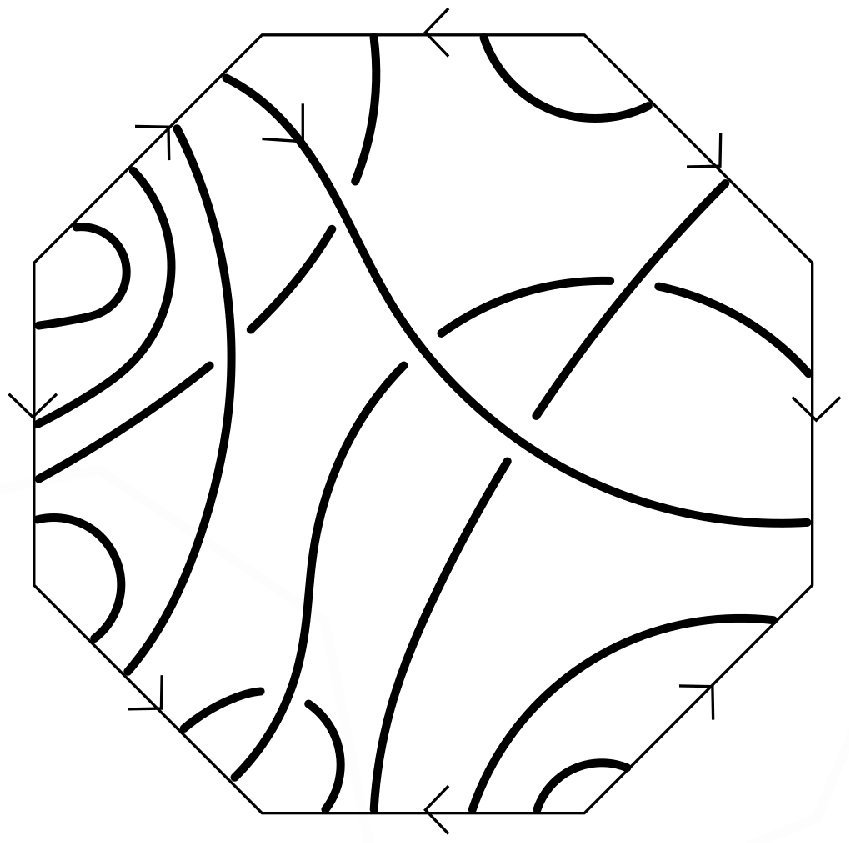

\def\svgwidth{2.9cm}   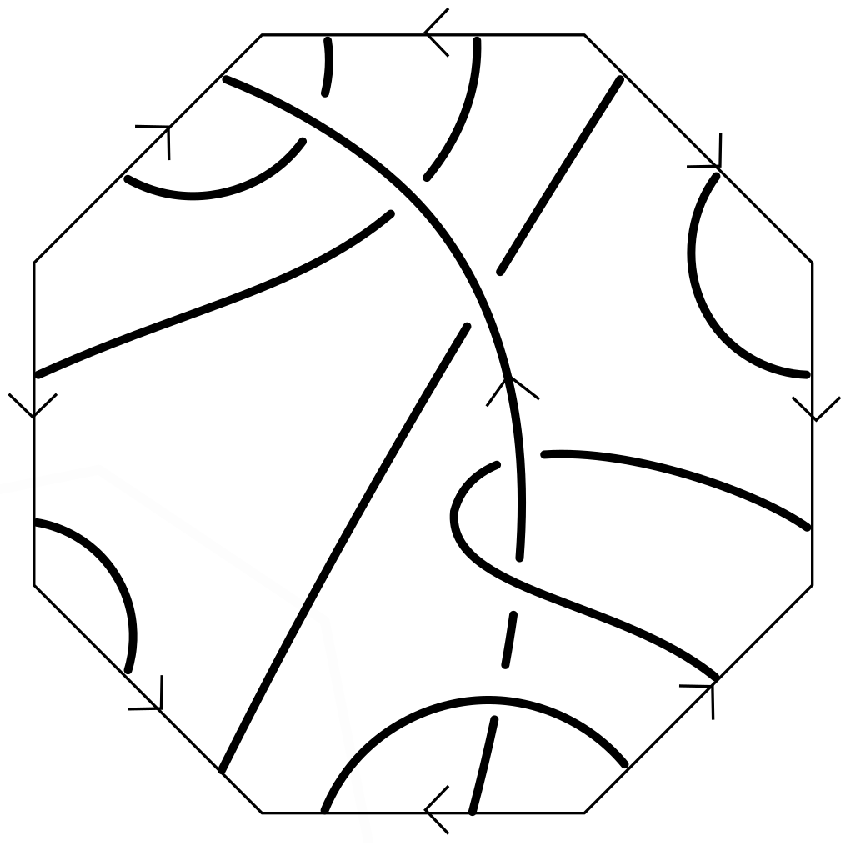
\def\svgwidth{2.9cm}  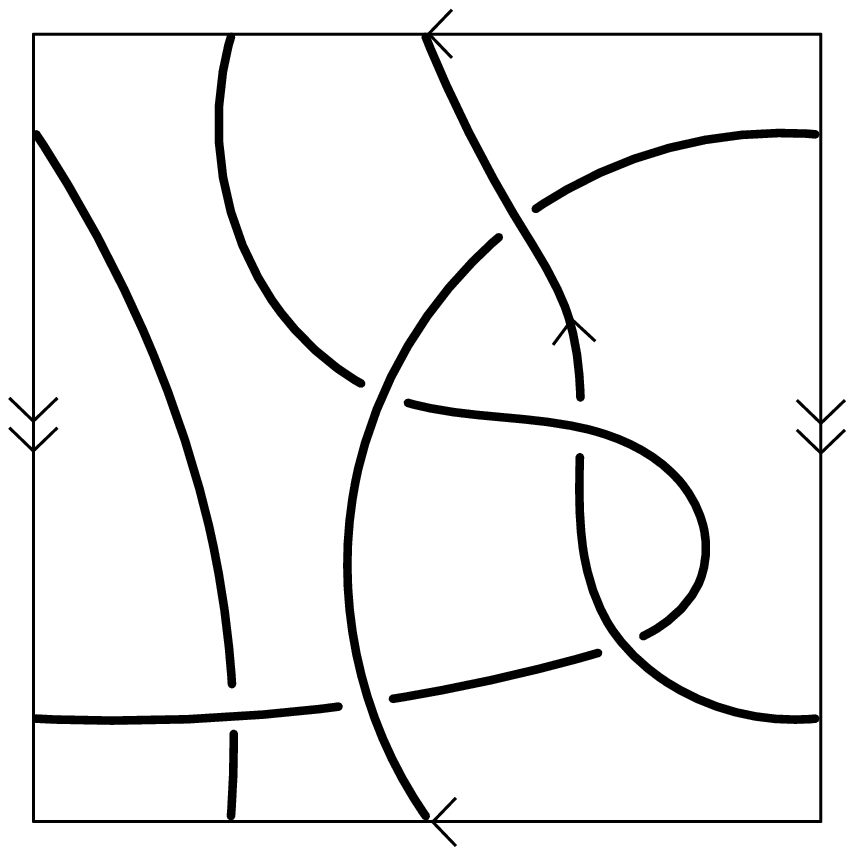
\def\svgwidth{2.9cm}   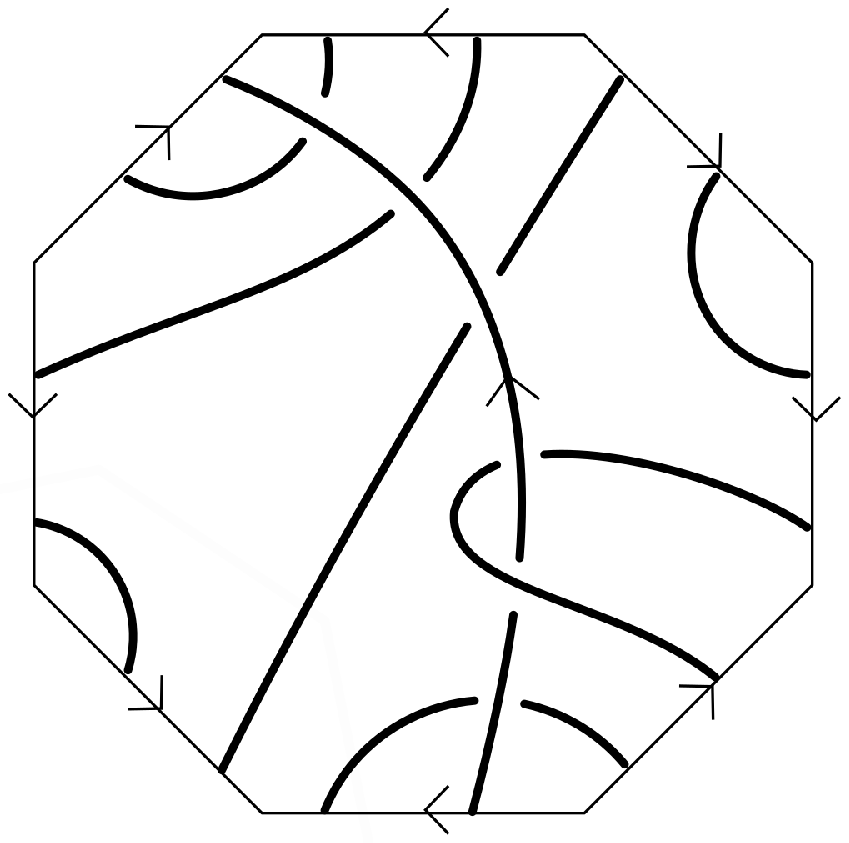
\def\svgwidth{2.9cm}  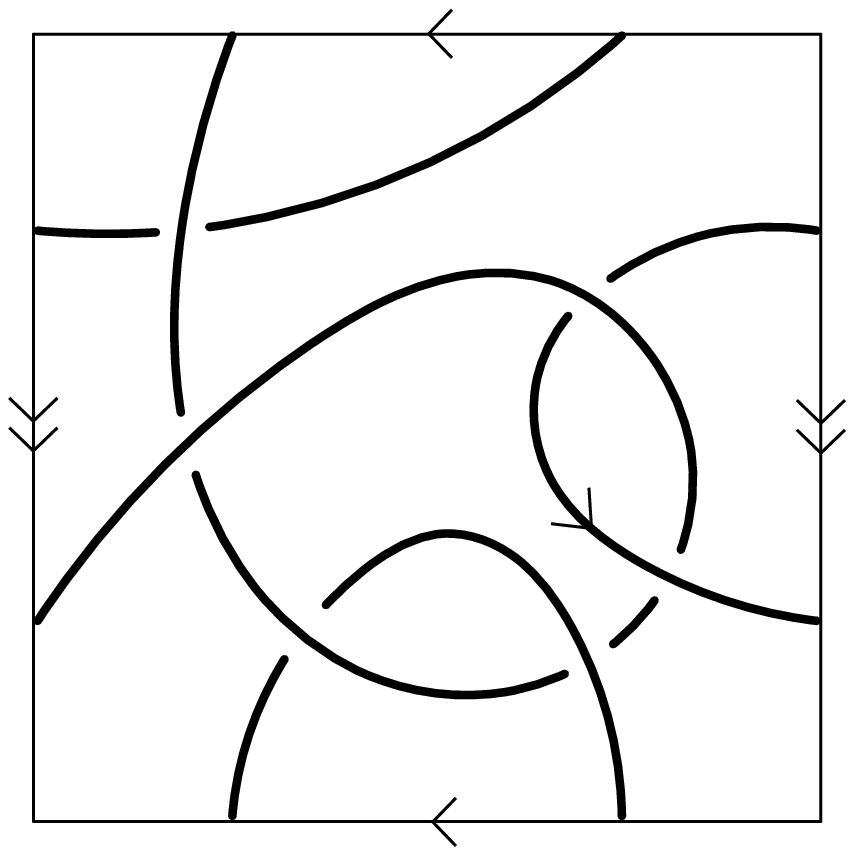
\def\svgwidth{2.9cm}   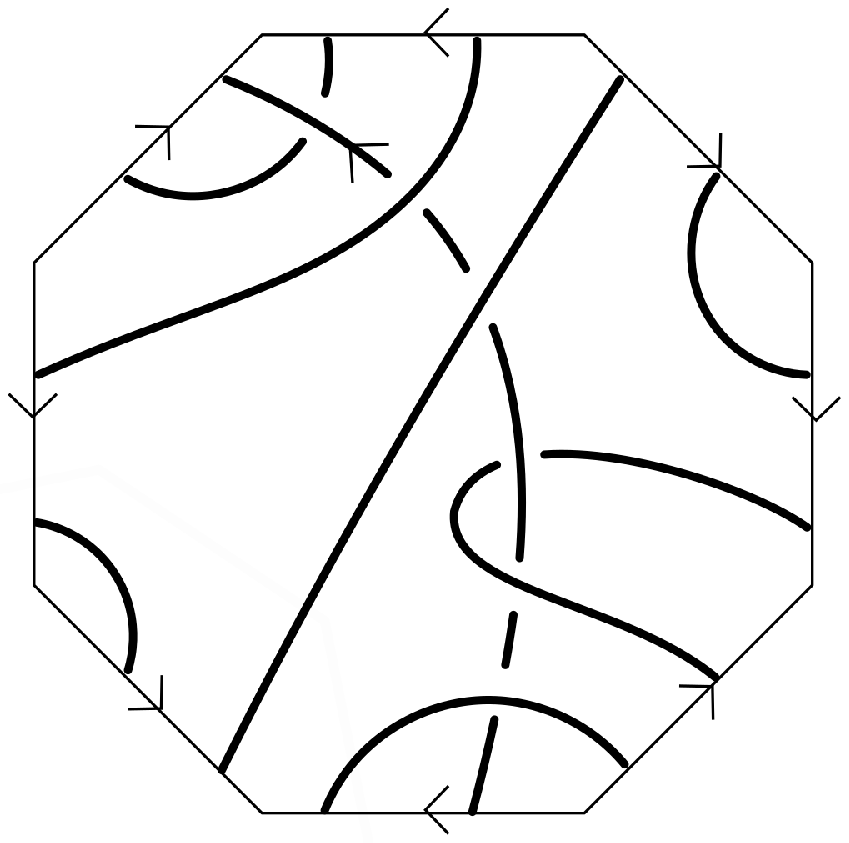

\end{figure}

\begin{figure}[H]
\def\svgwidth{2.9cm}  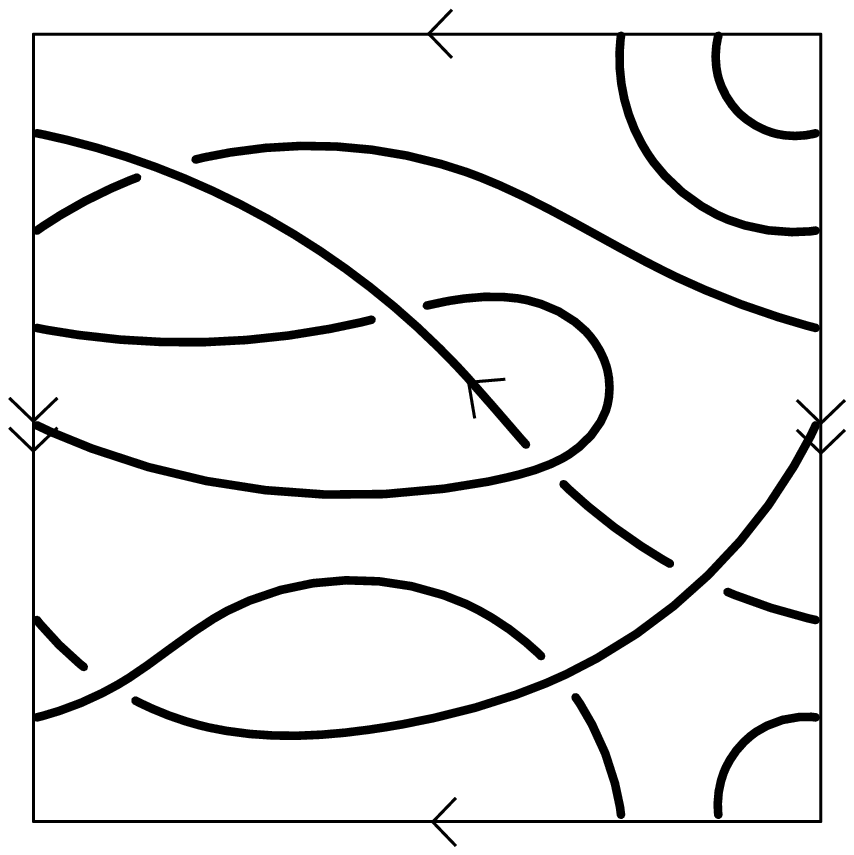
\def\svgwidth{2.9cm}  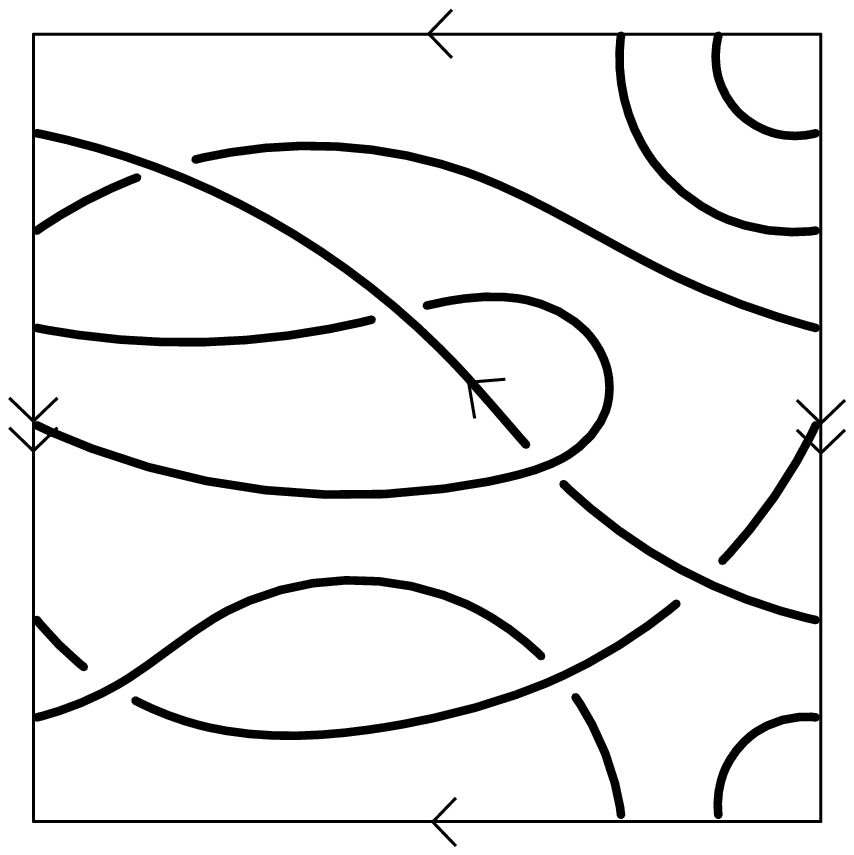
\def\svgwidth{2.9cm}   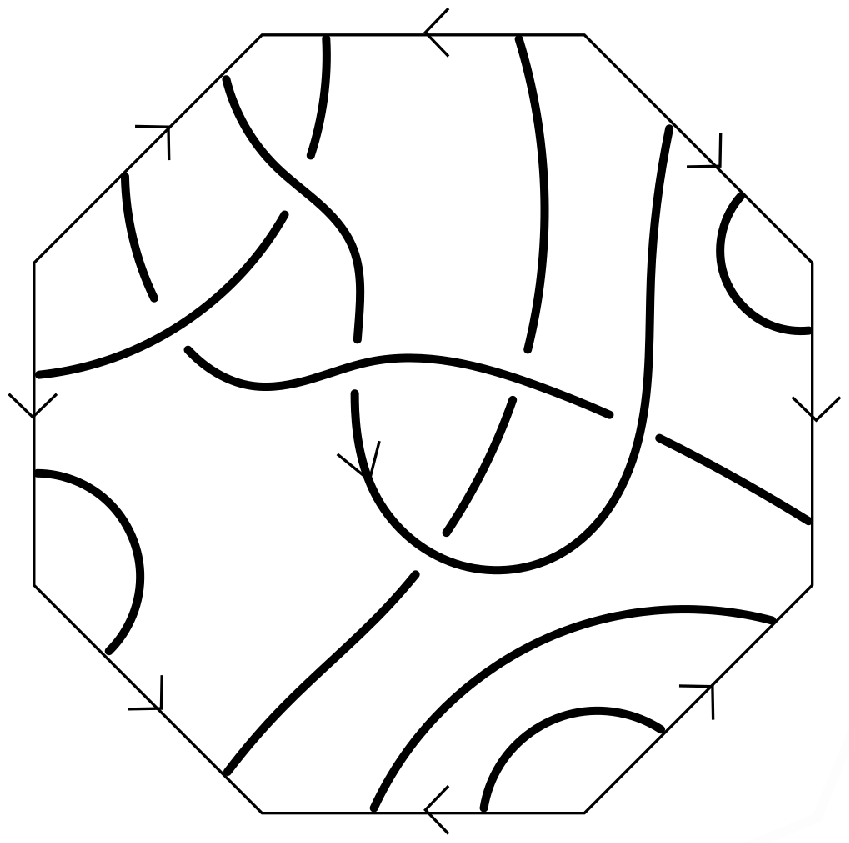
\def\svgwidth{2.9cm}   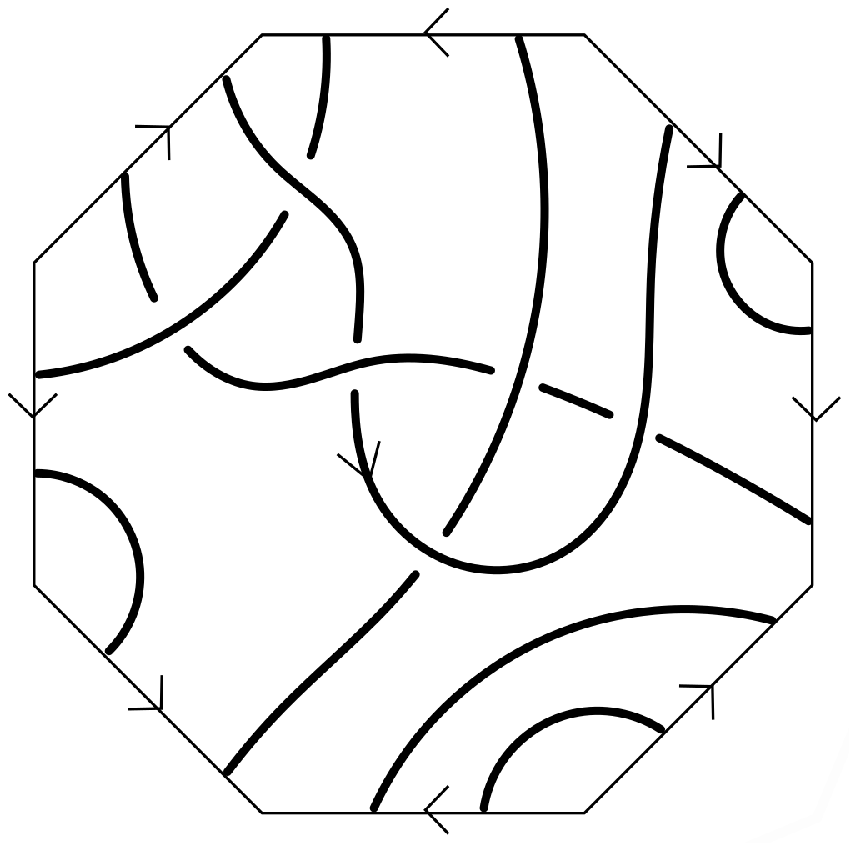
\def\svgwidth{2.9cm}   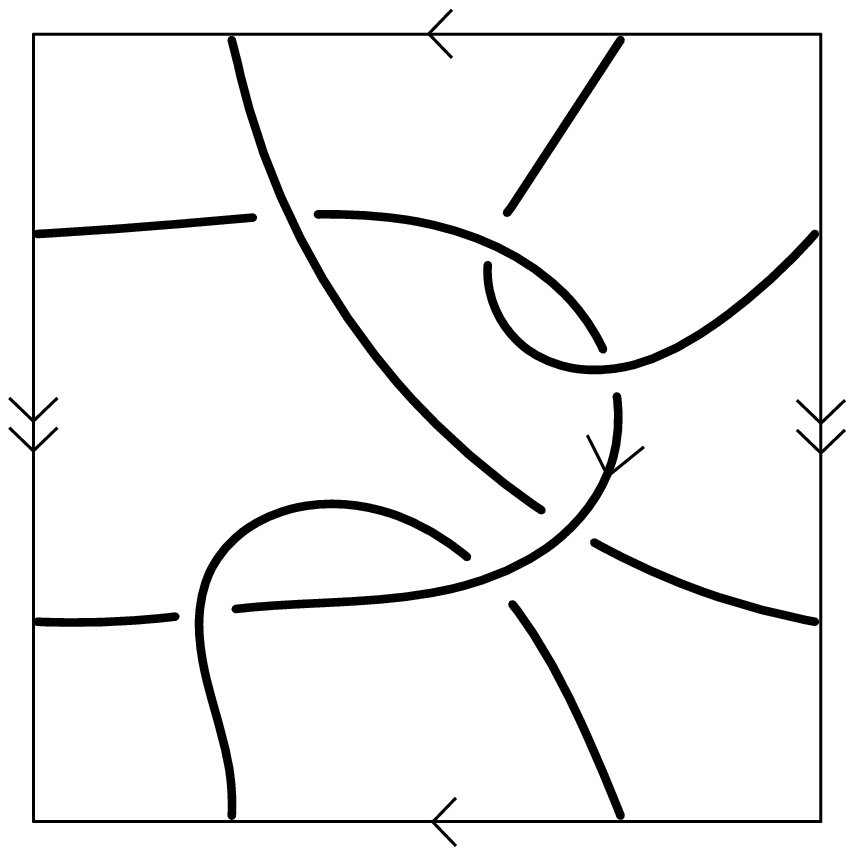

\def\svgwidth{2.9cm}  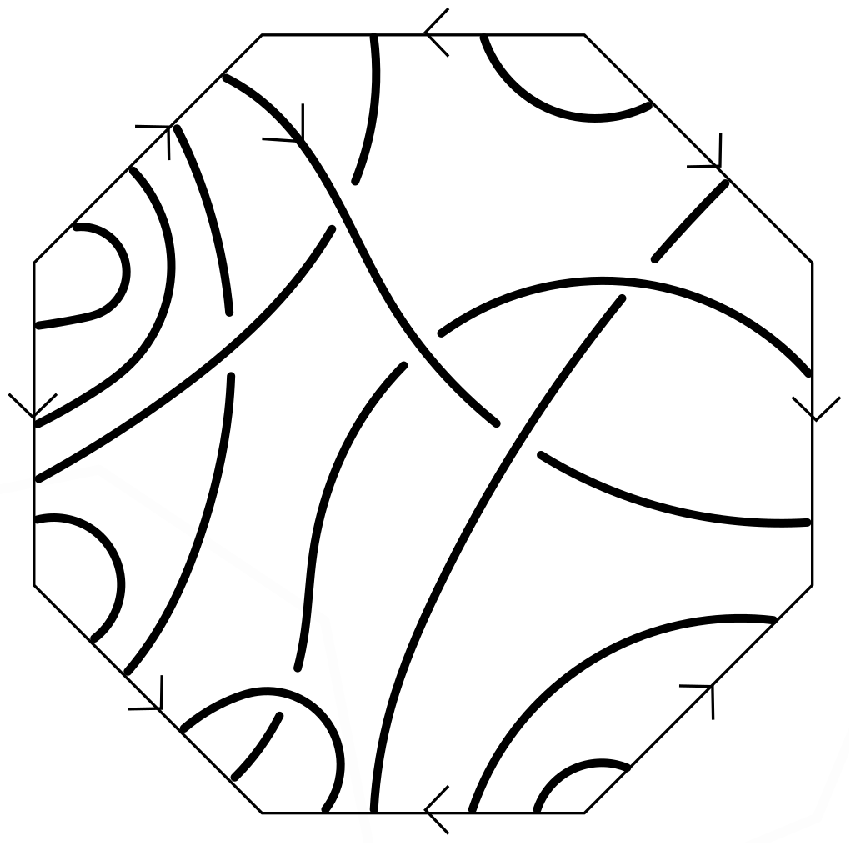
\def\svgwidth{2.9cm}   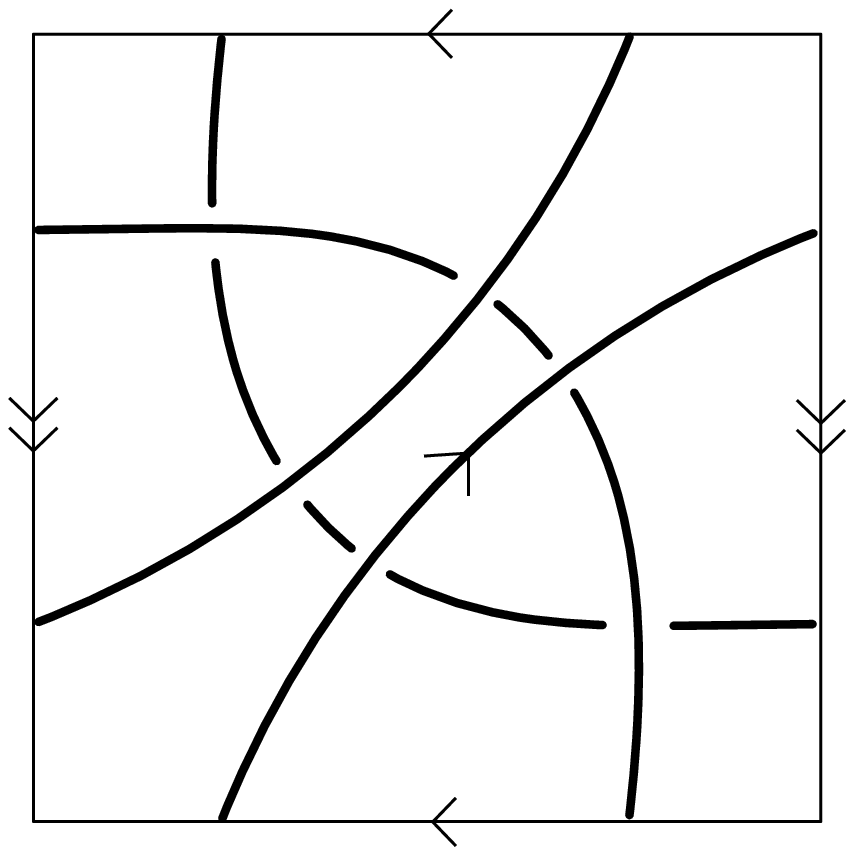
\def\svgwidth{2.9cm}  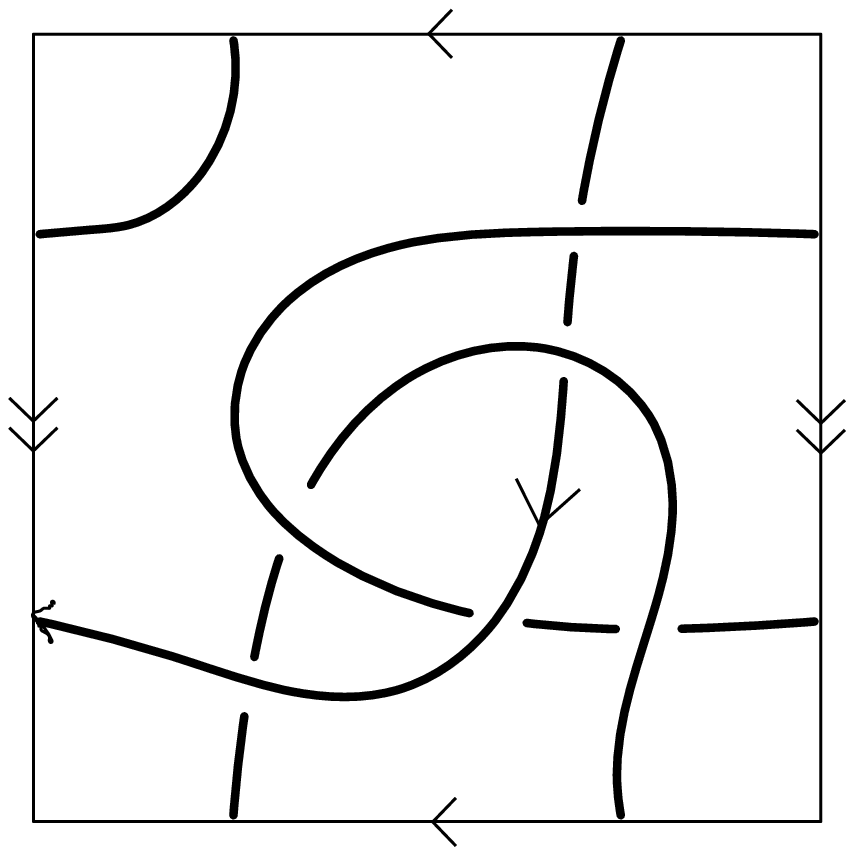
\def\svgwidth{2.9cm}   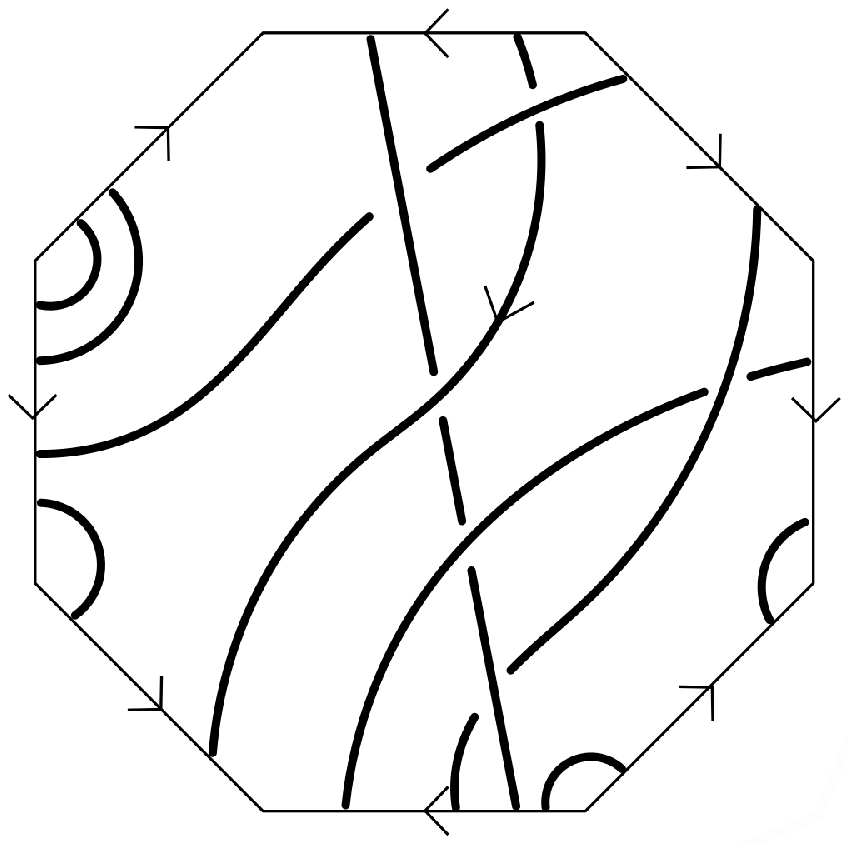
\def\svgwidth{2.9cm}   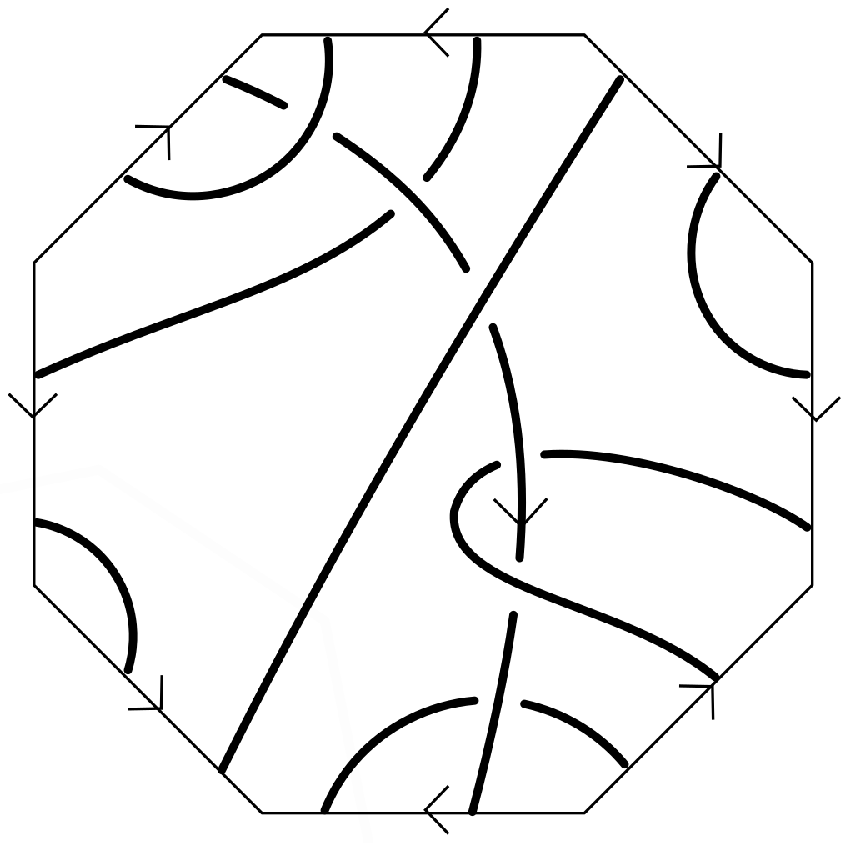

\def\svgwidth{2.9cm}   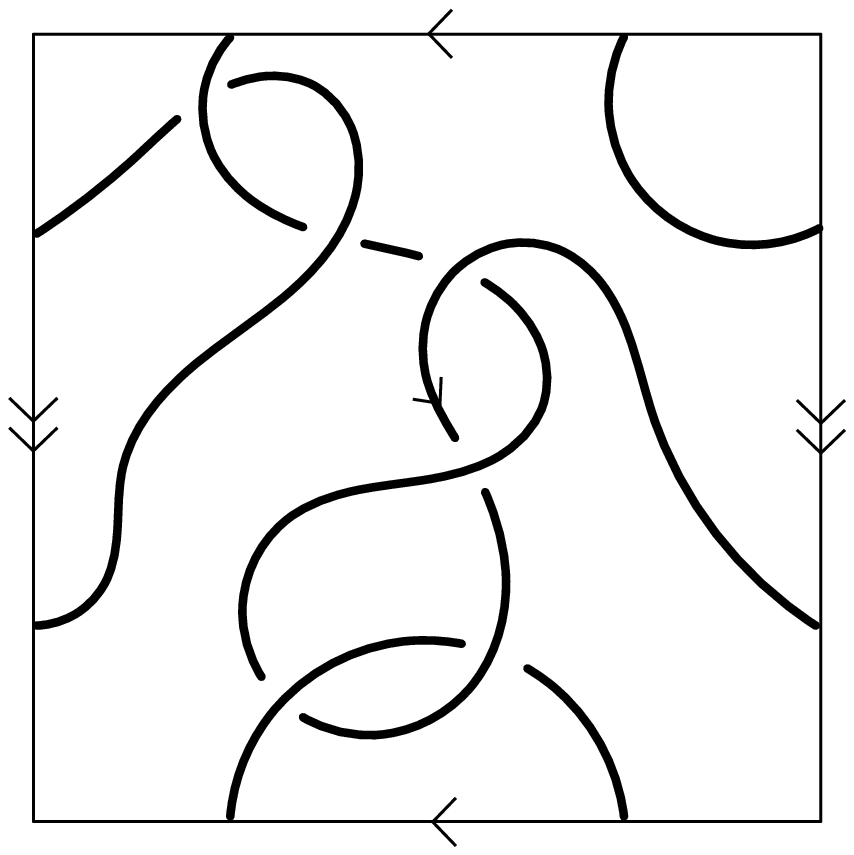
\def\svgwidth{2.9cm}  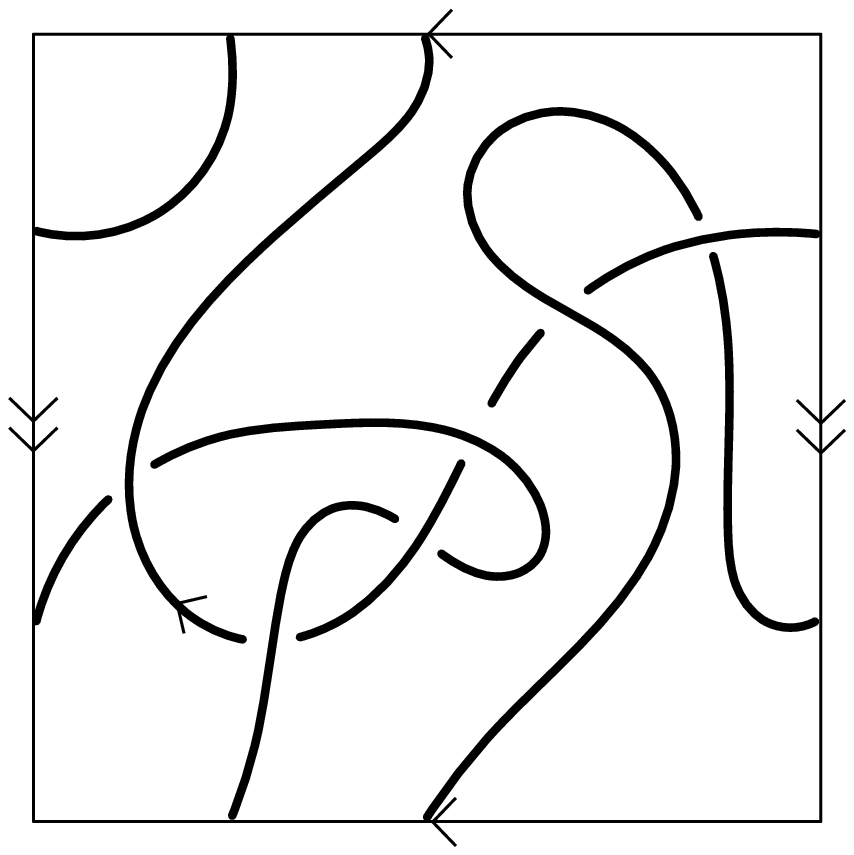
\def\svgwidth{2.9cm}  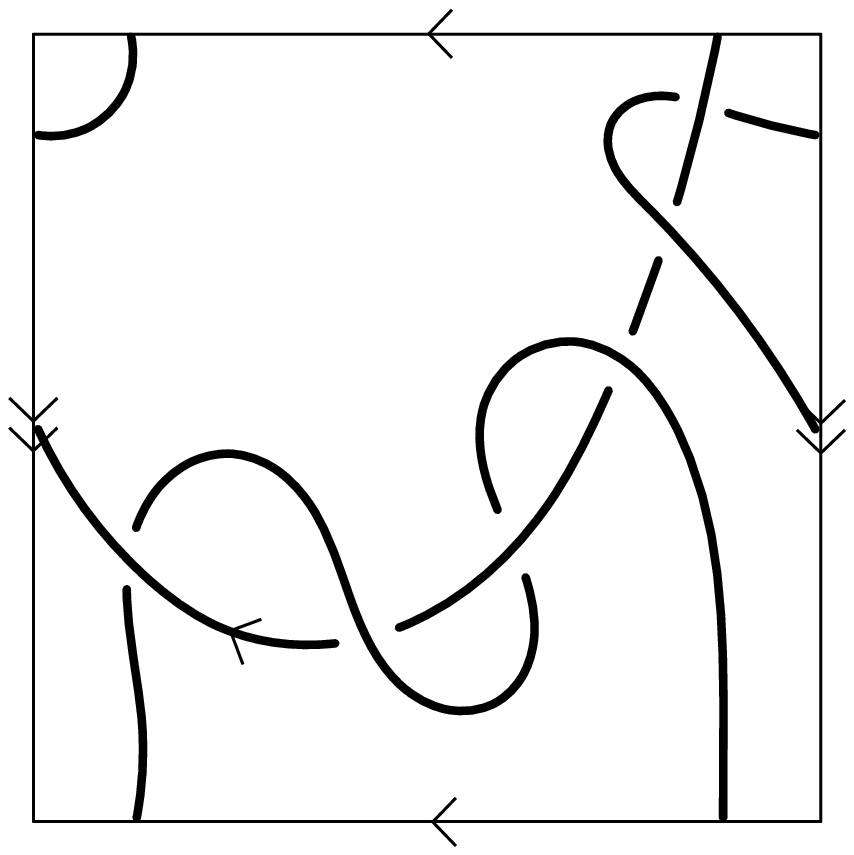
\def\svgwidth{2.9cm}   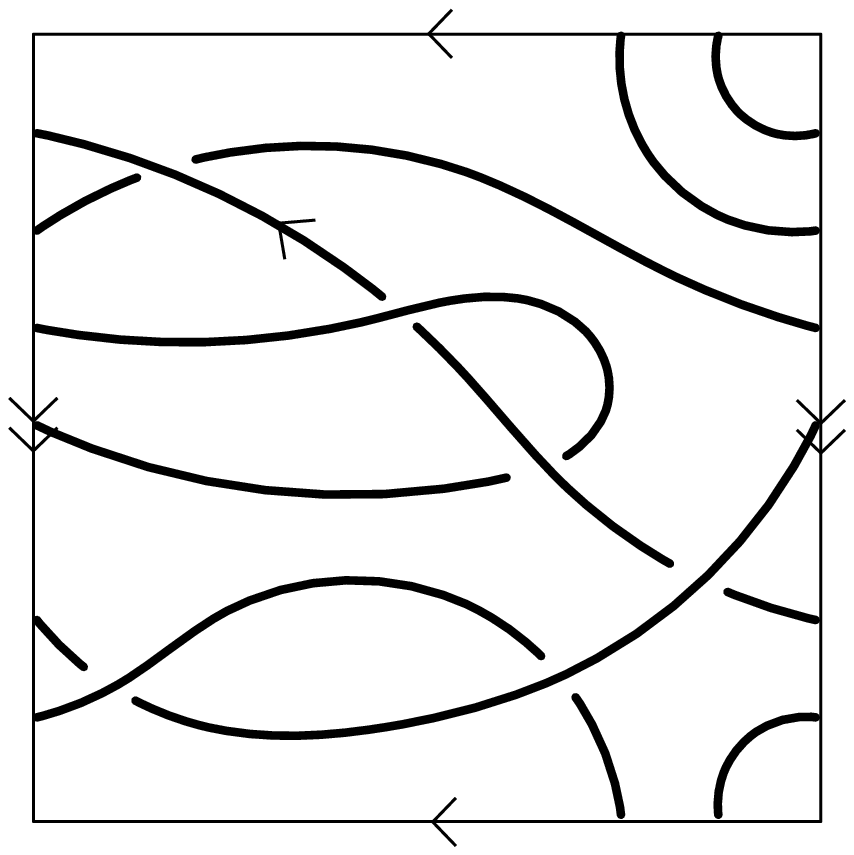
\def\svgwidth{2.9cm}   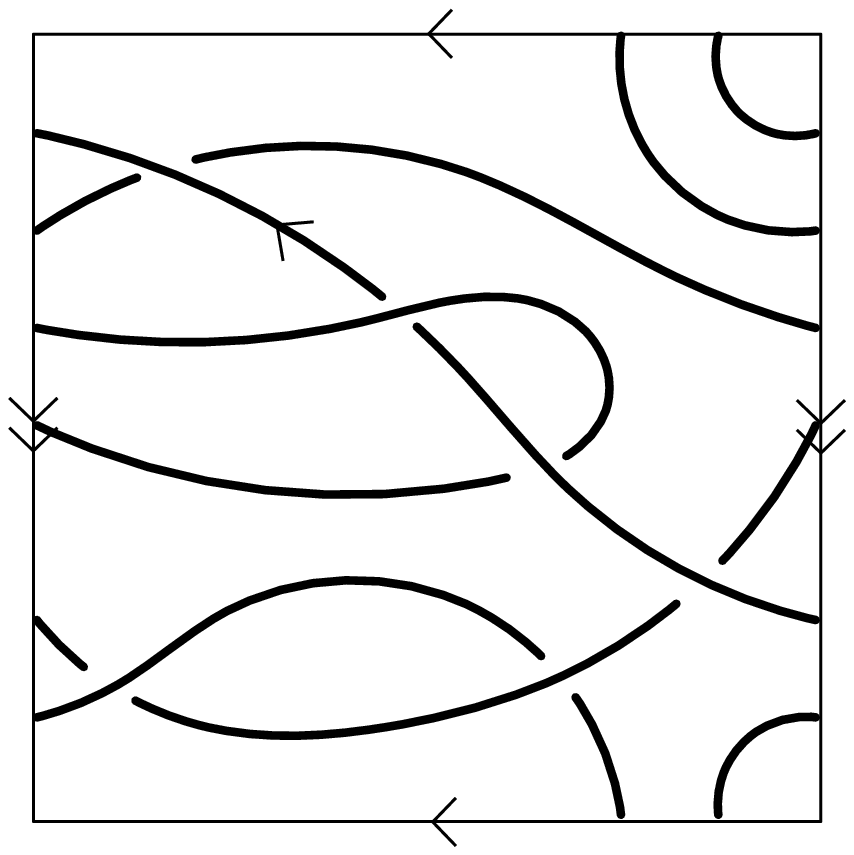

\def\svgwidth{2.9cm} 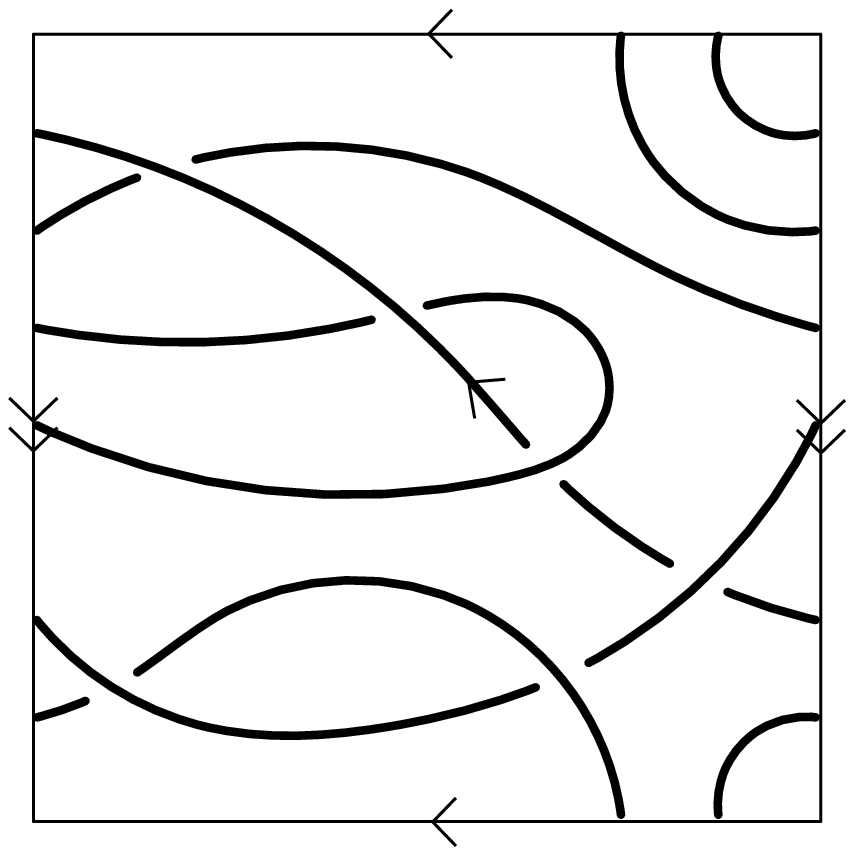
\def\svgwidth{2.9cm}   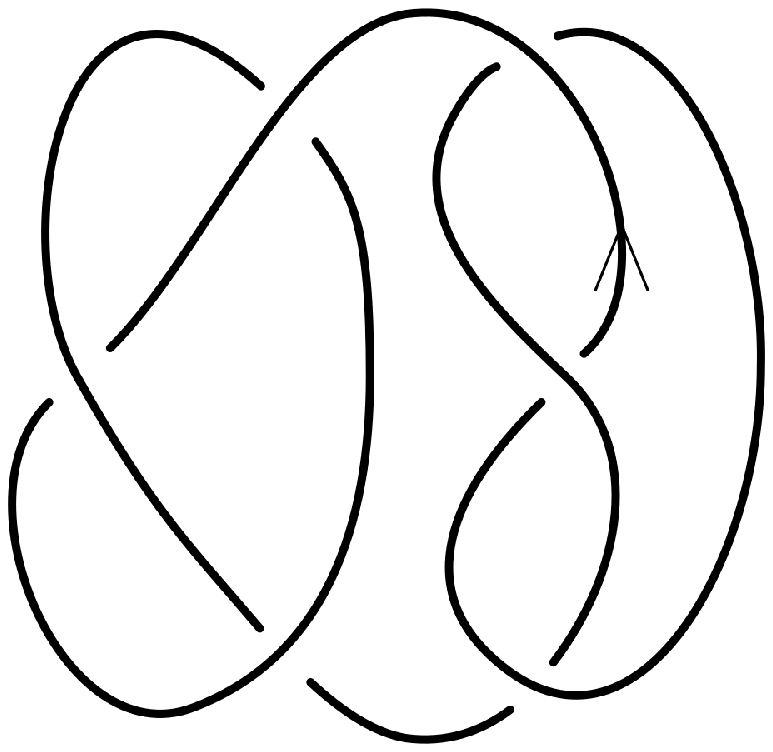
\def\svgwidth{2.9cm}   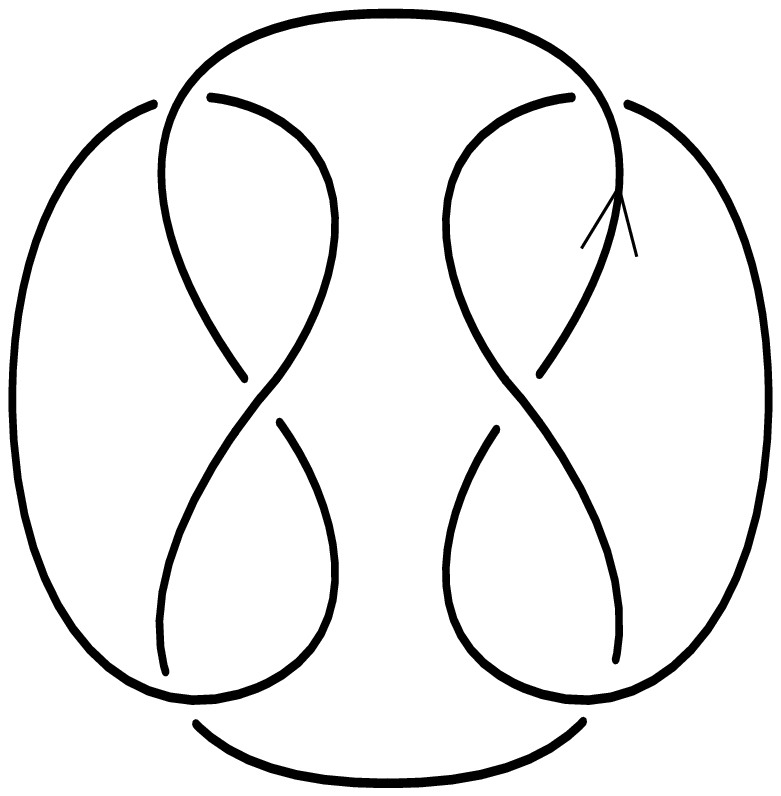
\def\svgwidth{2.9cm}   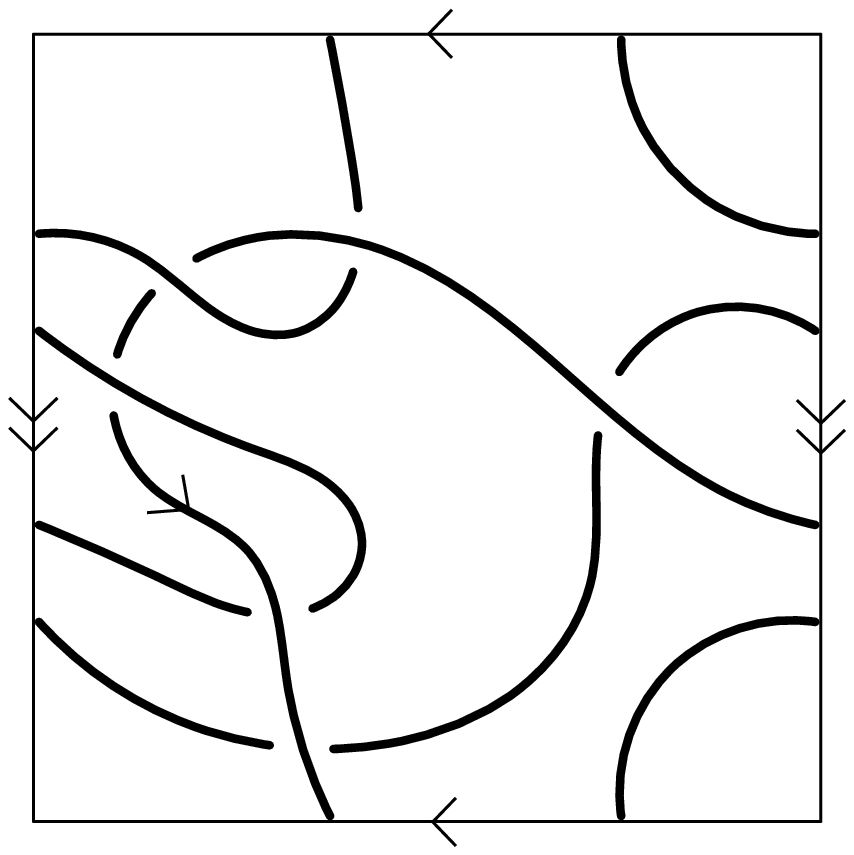
\def\svgwidth{2.9cm}   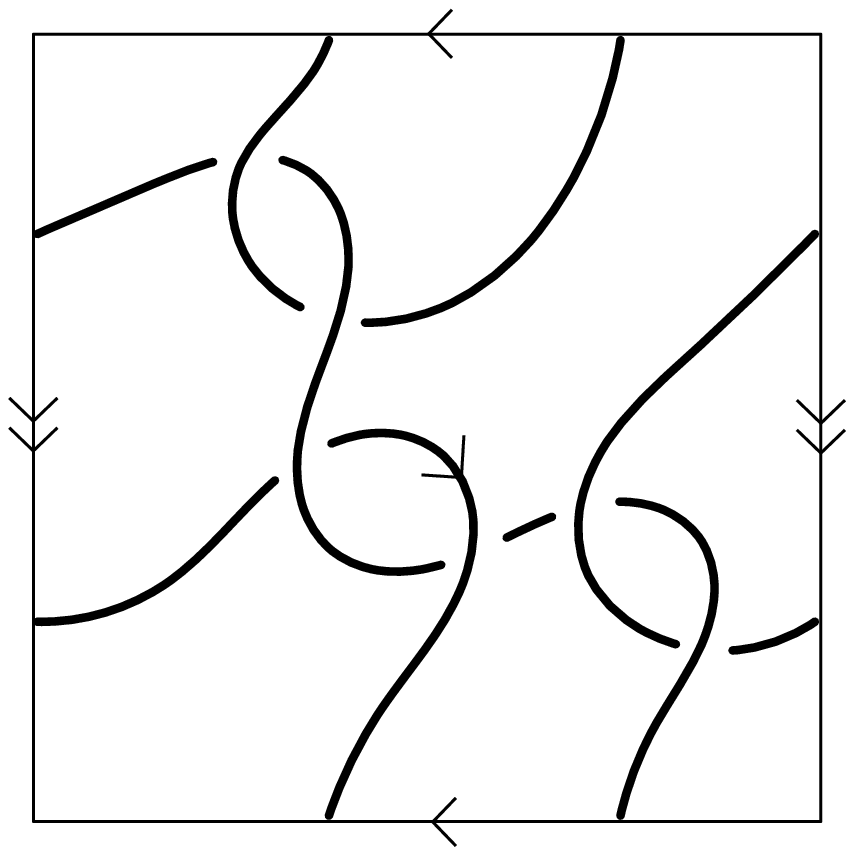

\def\svgwidth{2.9cm}  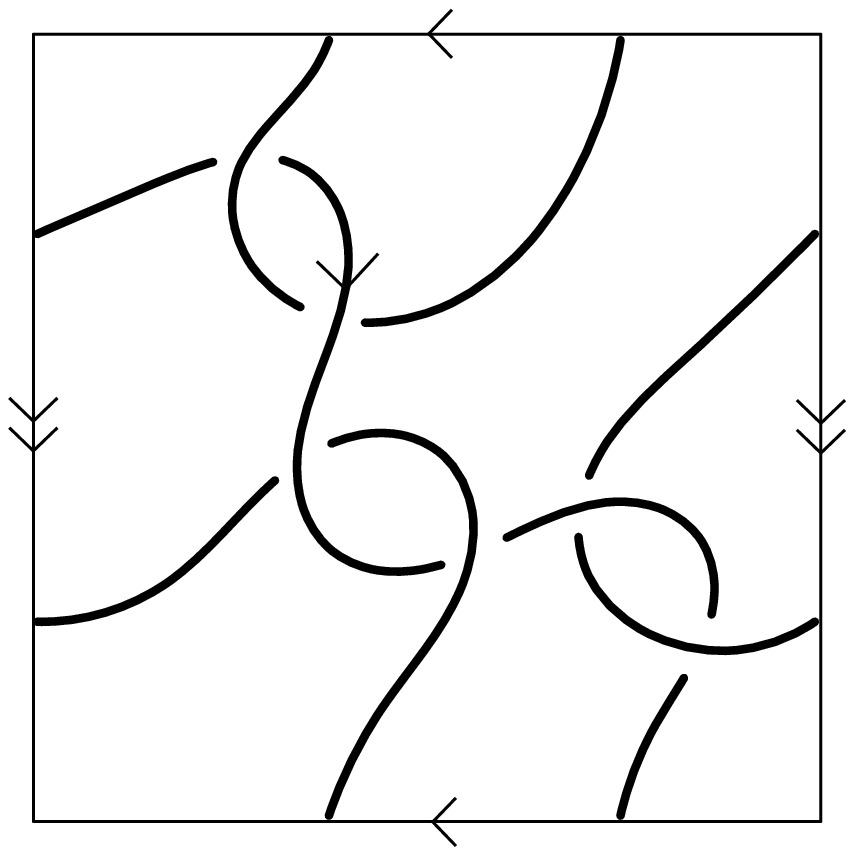
\def\svgwidth{2.9cm}   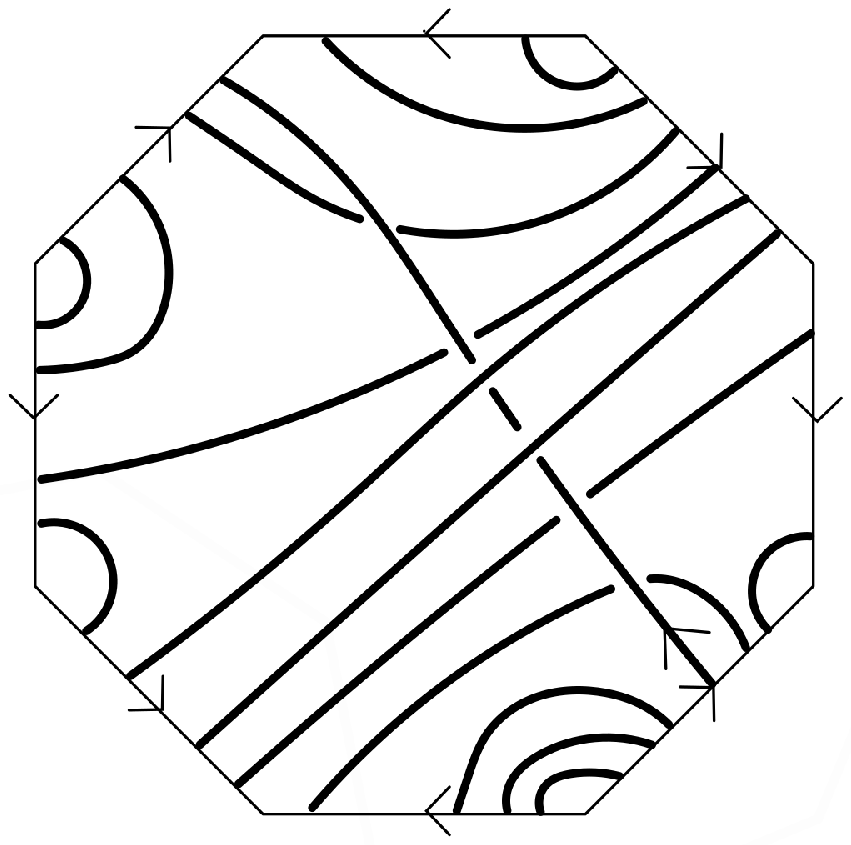
\def\svgwidth{2.9cm}   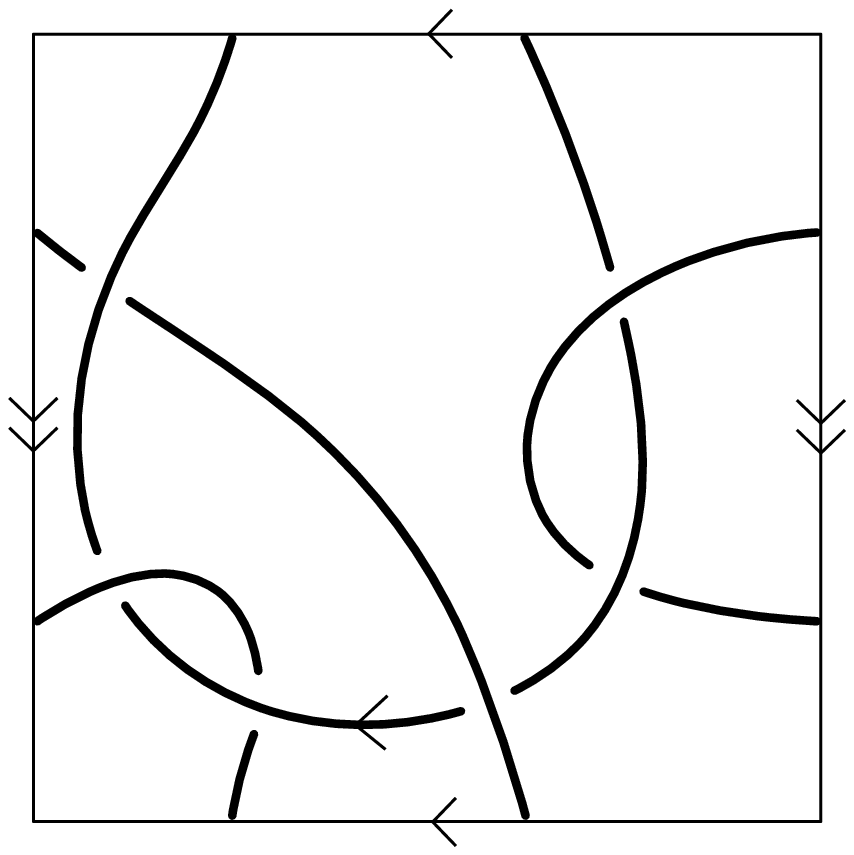
\def\svgwidth{2.9cm}   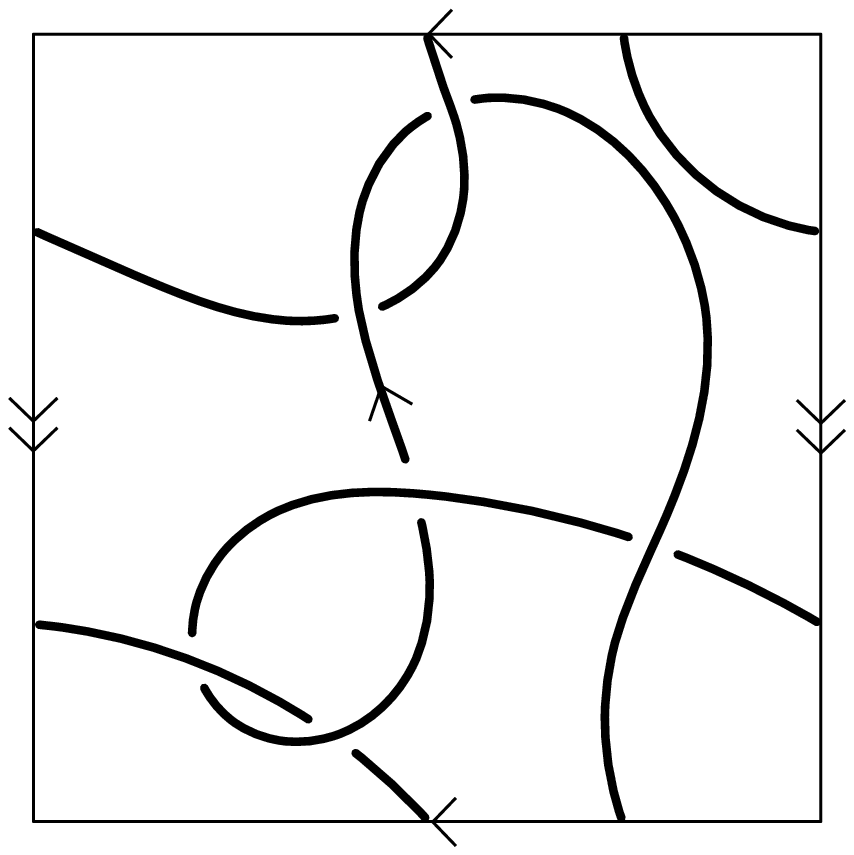
\def\svgwidth{2.9cm}  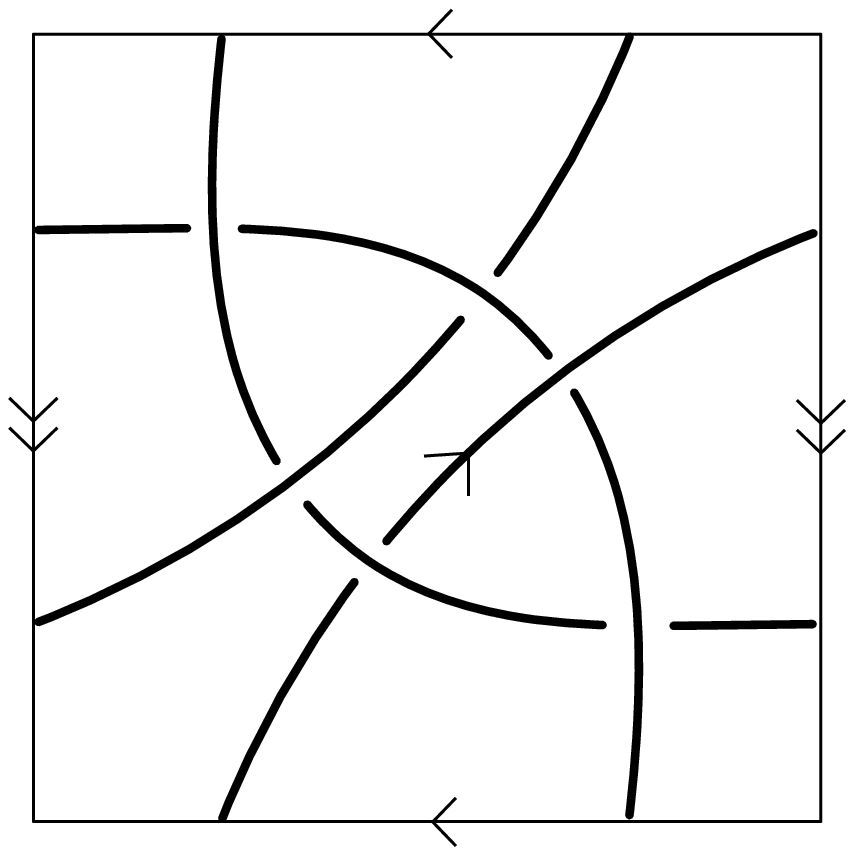

\def\svgwidth{2.9cm}  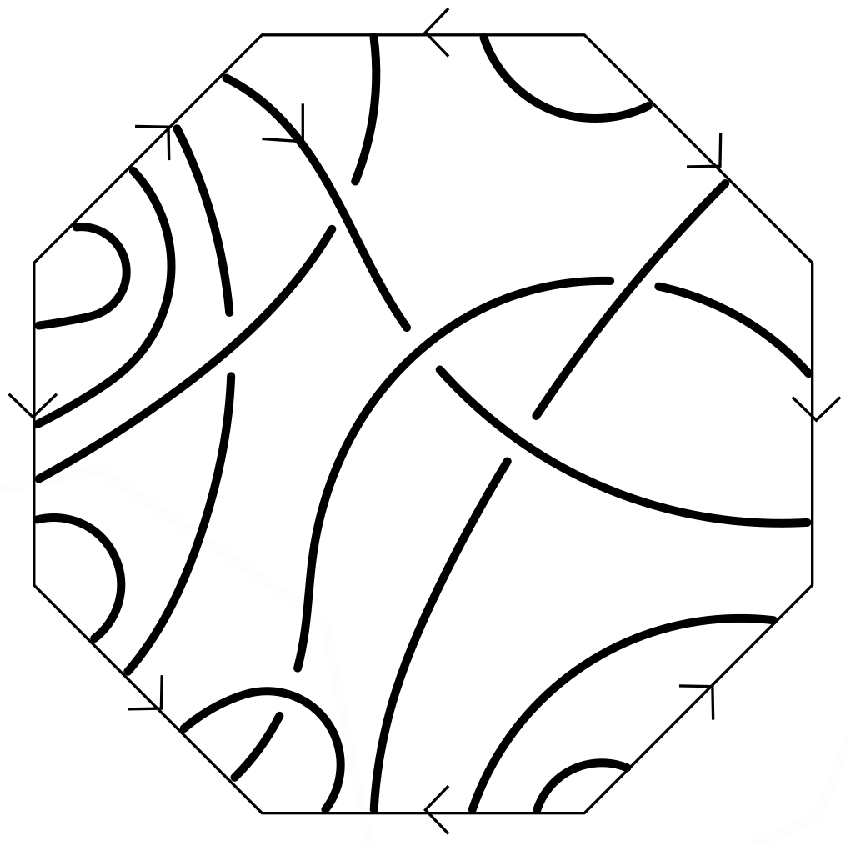
\def\svgwidth{2.9cm}   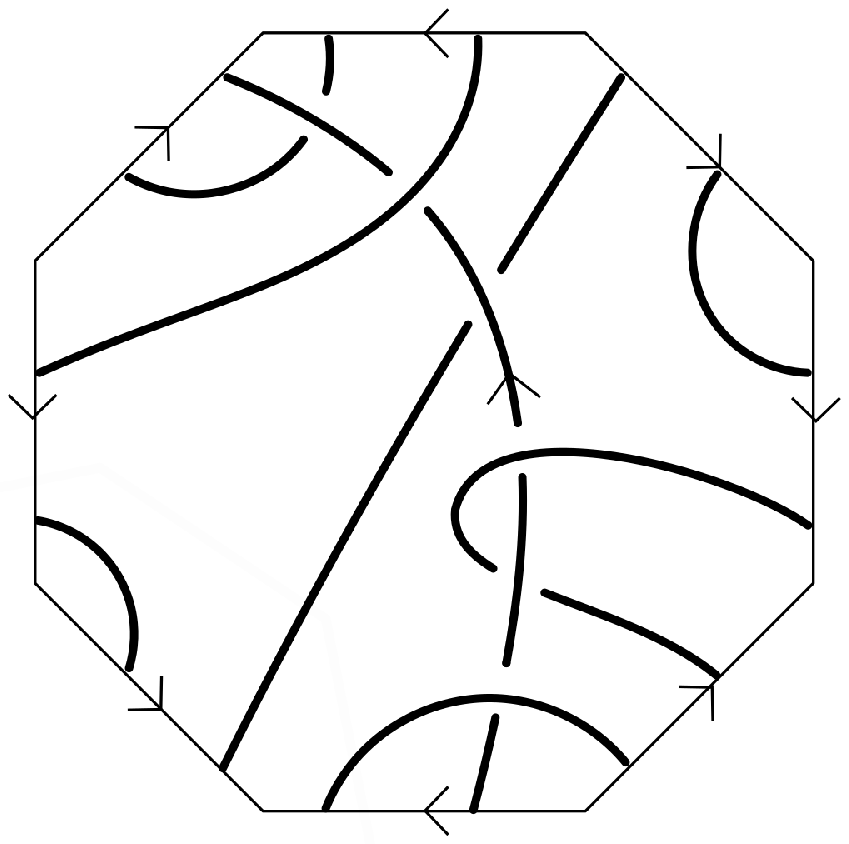
\def\svgwidth{2.9cm}   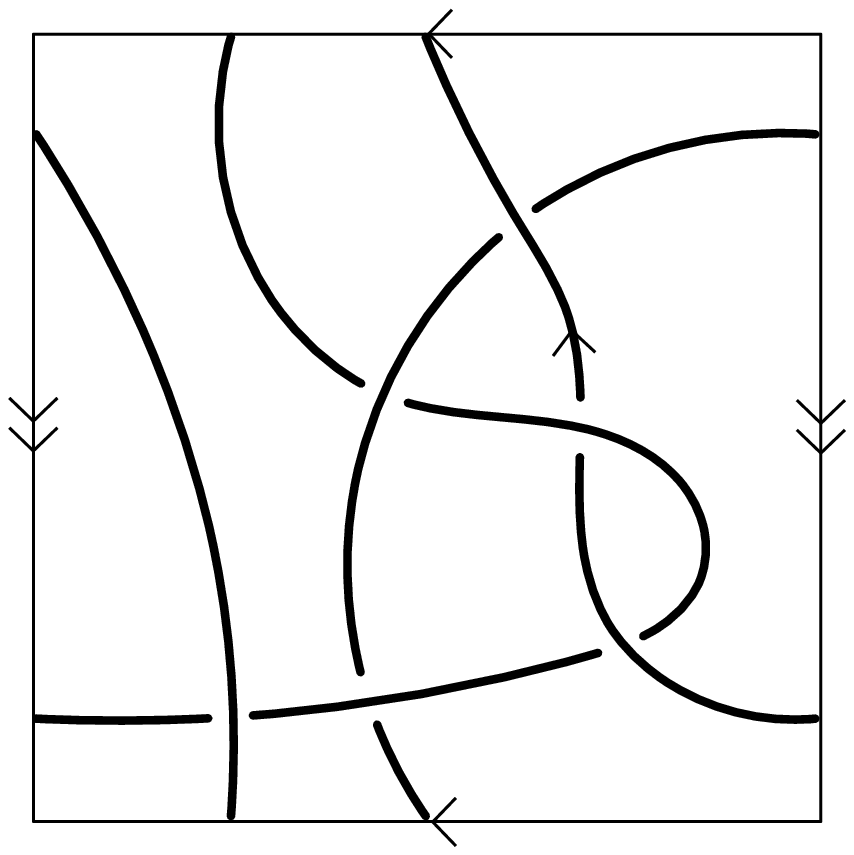
\def\svgwidth{2.9cm}   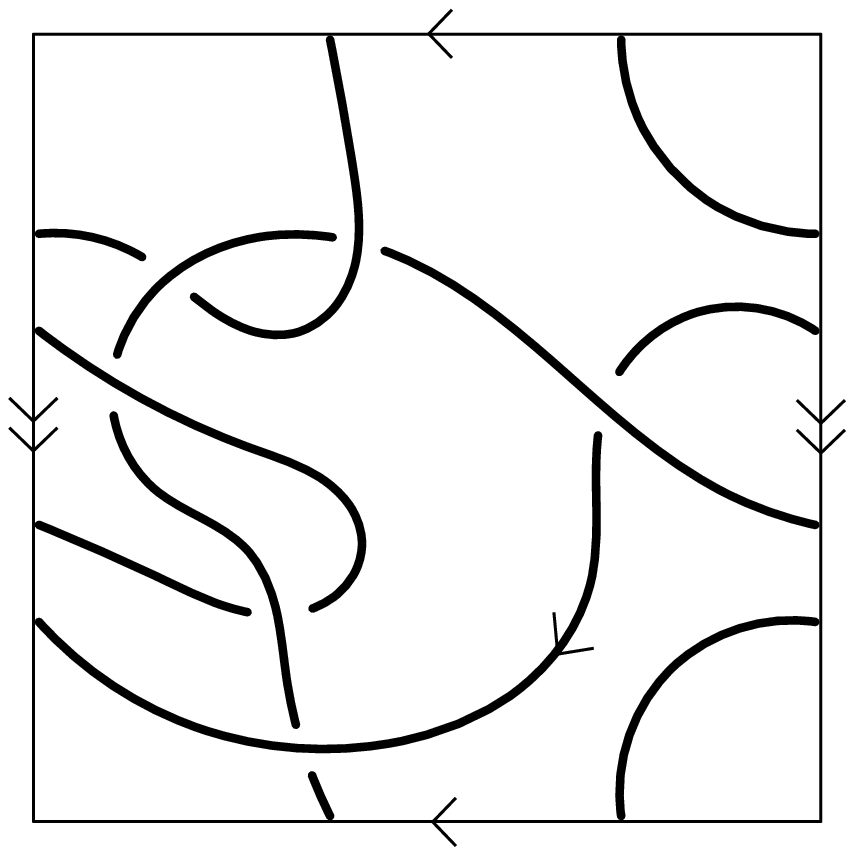
\def\svgwidth{2.9cm}   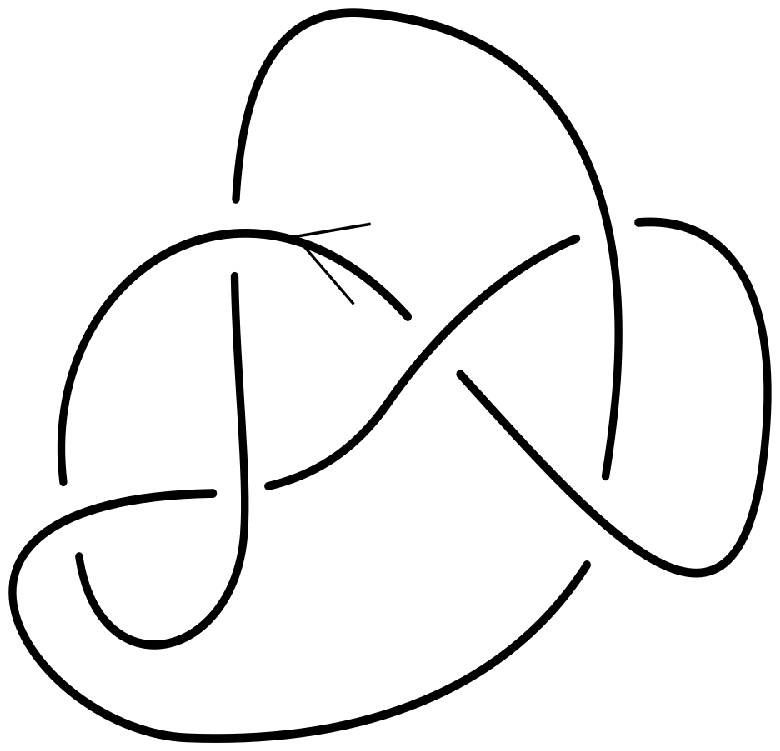

\def\svgwidth{2.9cm}   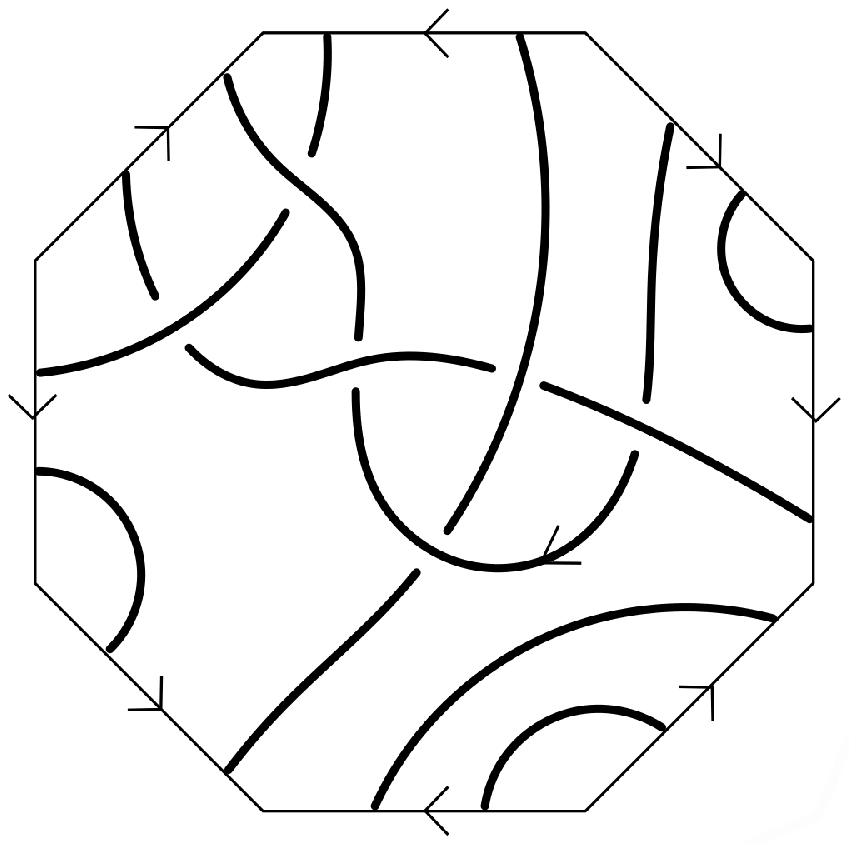
\def\svgwidth{2.9cm}   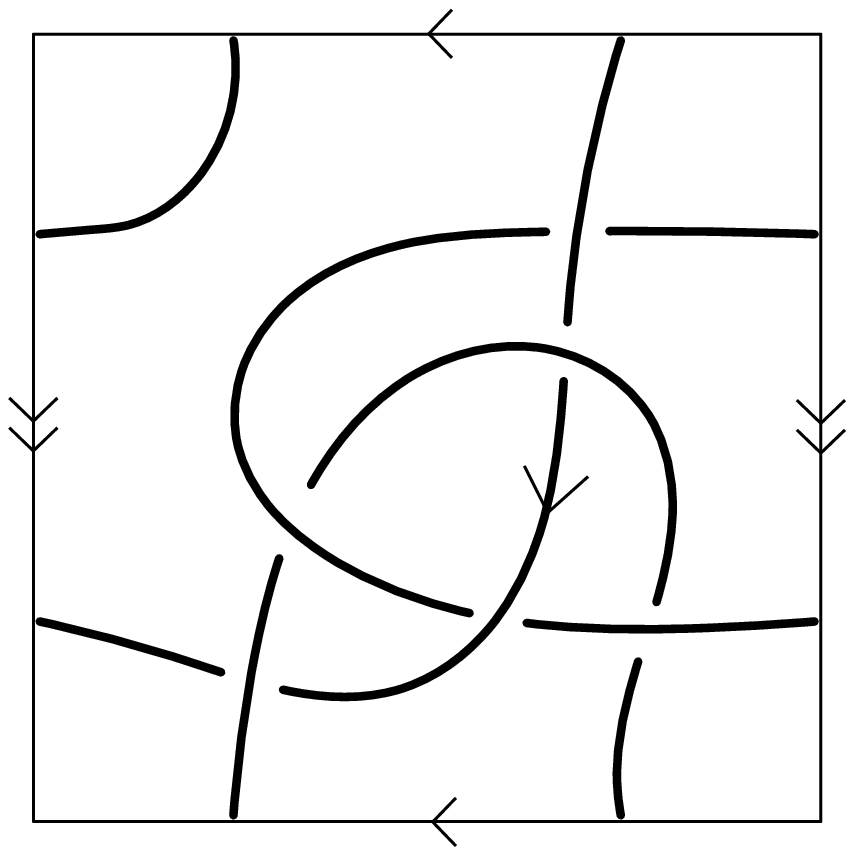
\def\svgwidth{2.9cm}   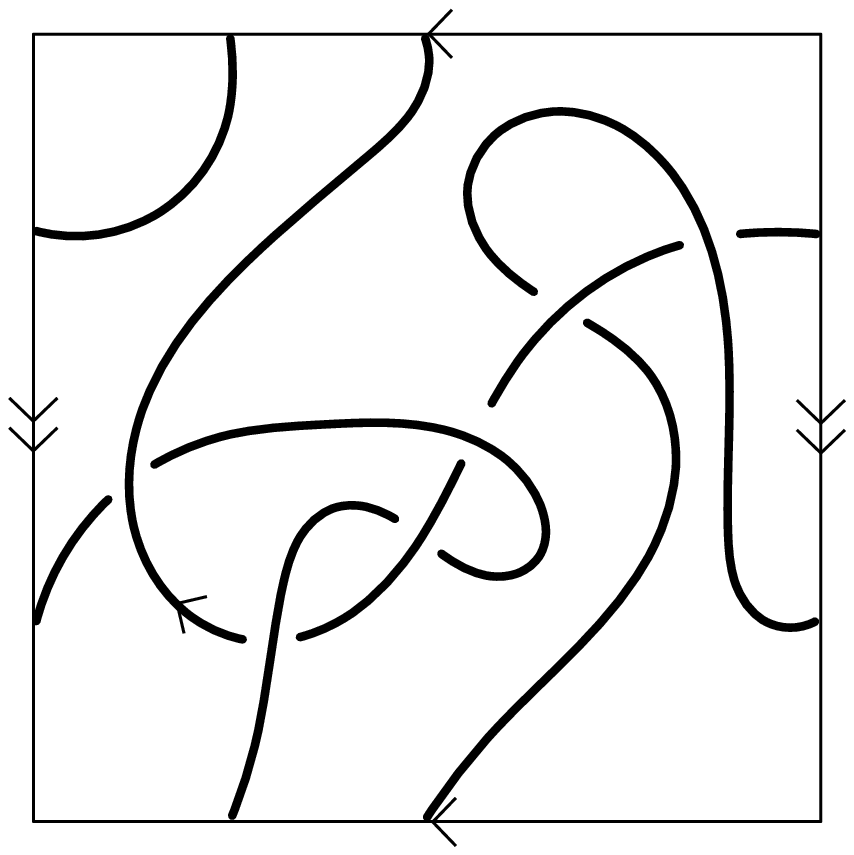
\def\svgwidth{2.9cm} 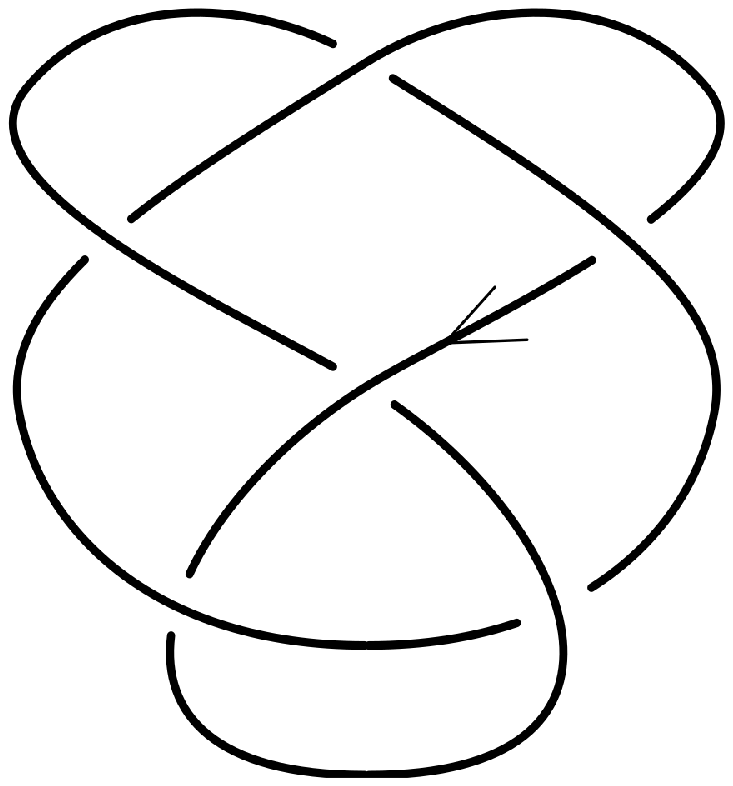
\def\svgwidth{2.9cm}   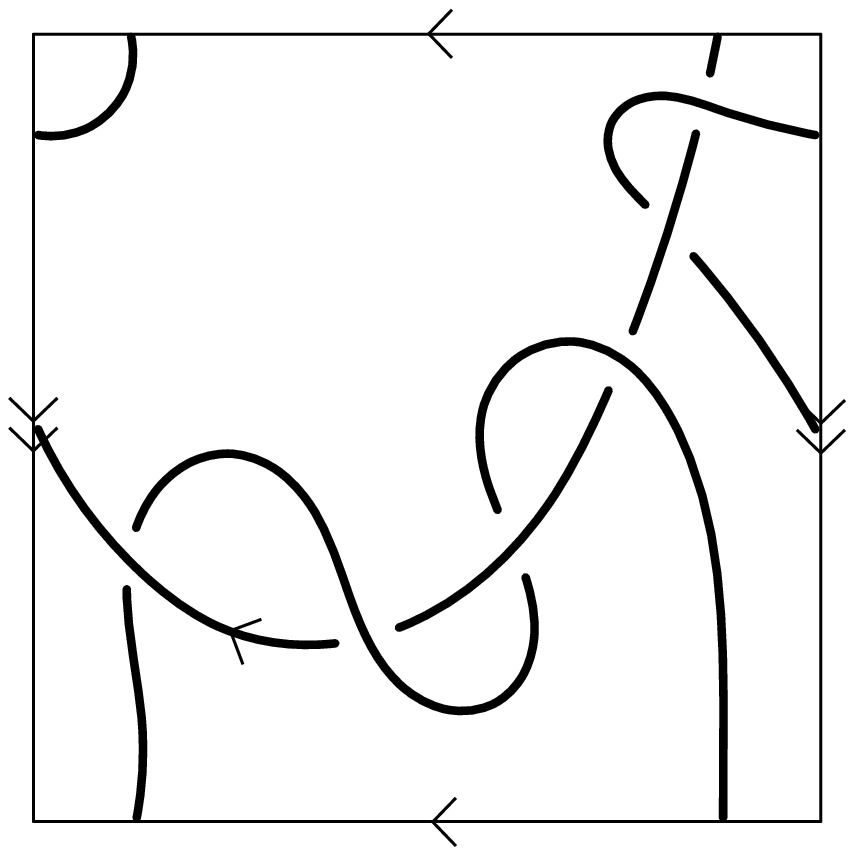
\end{figure}

\begin{figure}[H]
\def\svgwidth{2.9cm}   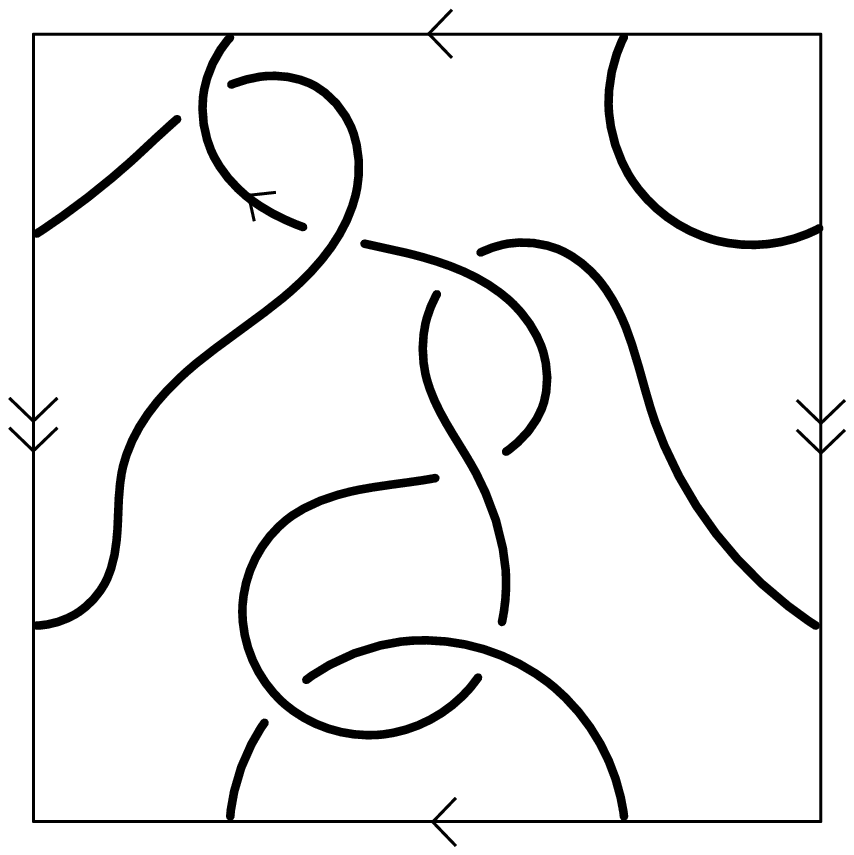
\def\svgwidth{2.9cm}   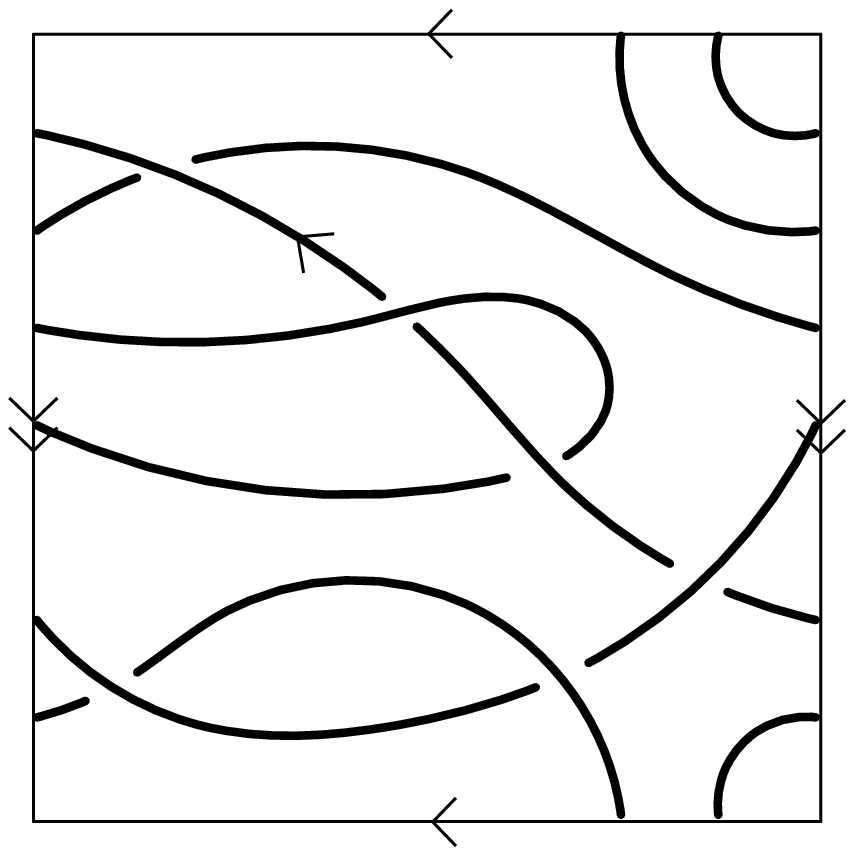
\def\svgwidth{2.9cm}   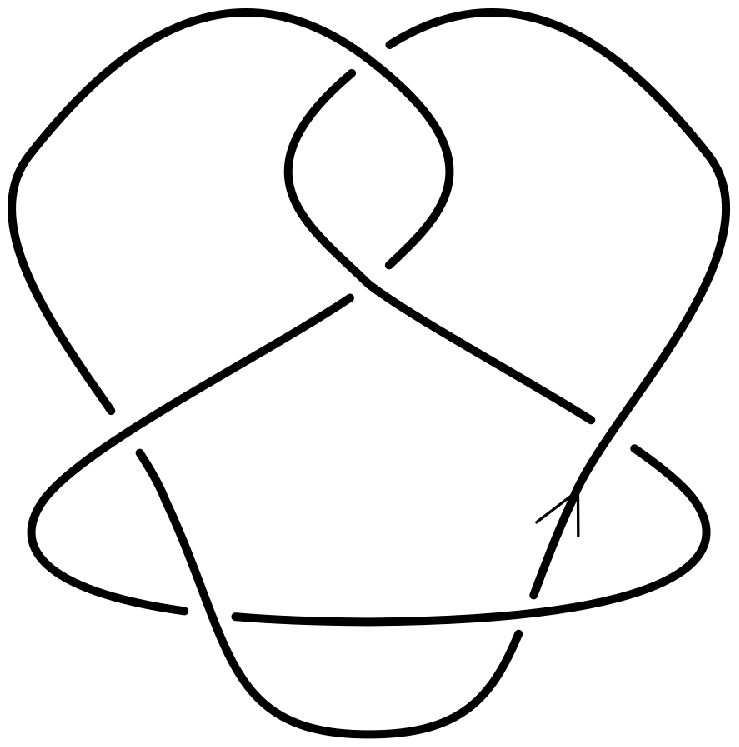
\def\svgwidth{2.9cm}  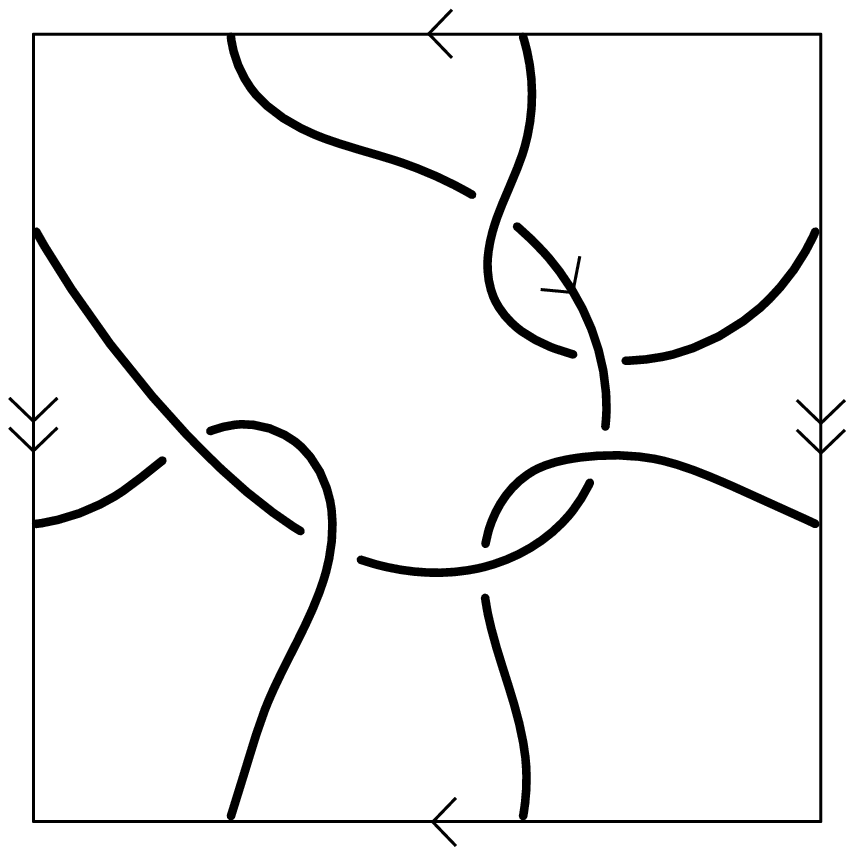 \\

\def\svgwidth{2.9cm}  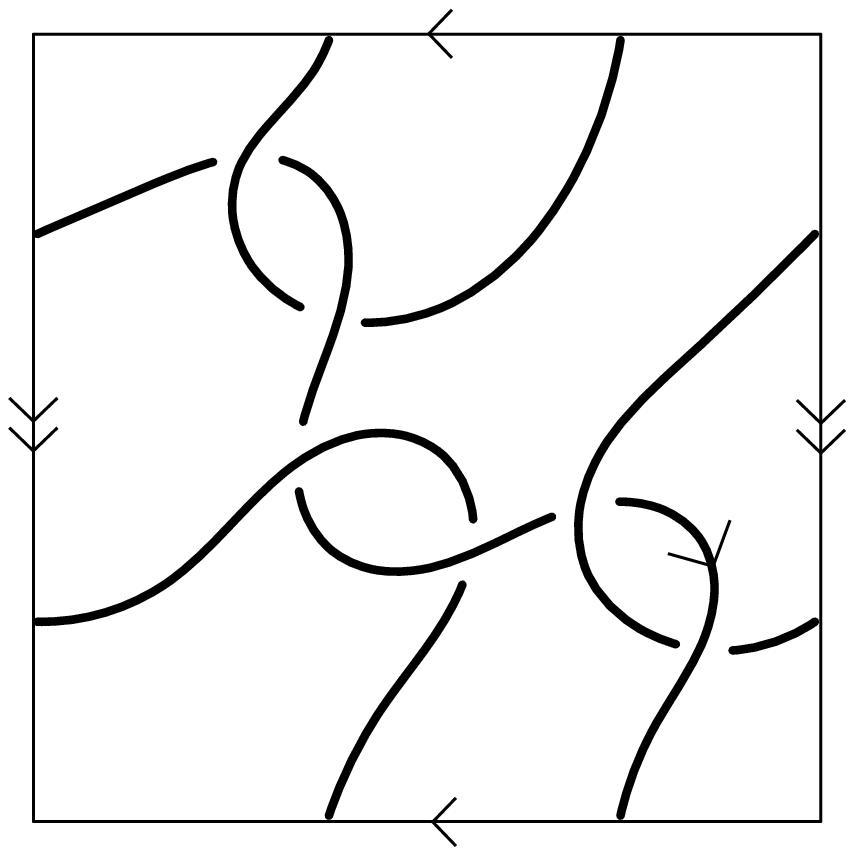
\def\svgwidth{2.9cm} 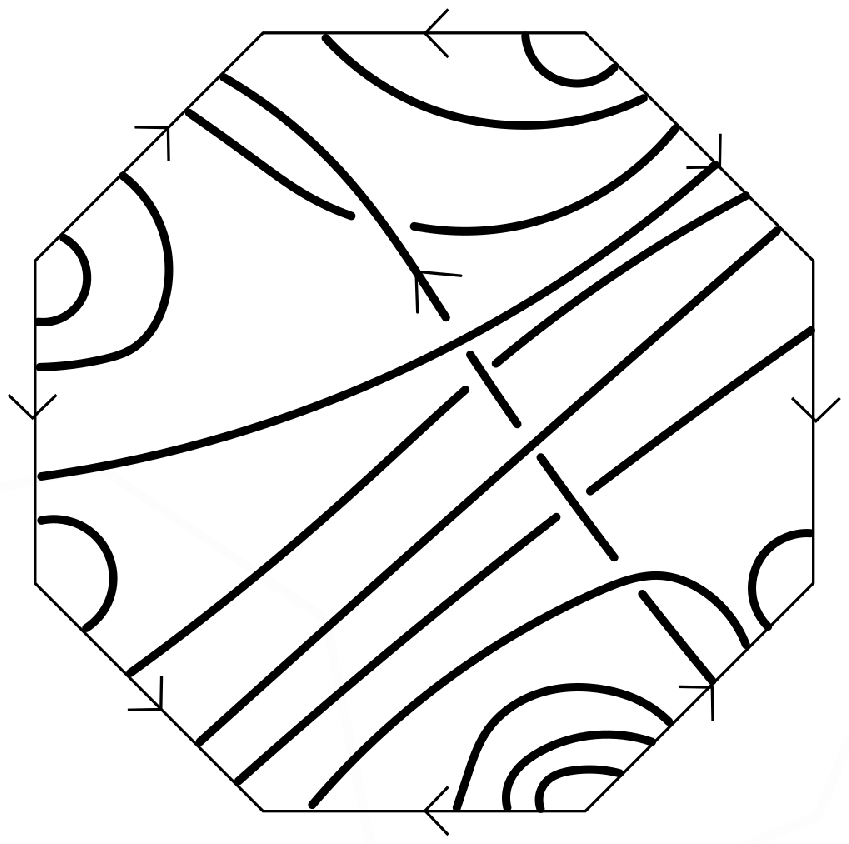
\end{figure}}

\bigskip

\begin{tabular}{|c|l|} \hline
{Knot} & {Seifert matrices} \\ \hline \hline  

{\bf 3.6} & $V^+ = \hbox{\tt [[1,-1],[0,1]]}$ \\ 
& $V^- = \hbox{\tt [[1,0],[-1,1]]}$ \\ \hline 
 
4.99 & $V^+ = \hbox{\tt [[-1,0],[0,1]]}$ \\
& $V^- = \hbox{\tt [[-1,1],[-1,1]]}$ \\ \hline
 
4.105 &  $V^+ = \hbox{\tt [[1,0],[0,1]]}$ \\ 
& $V^- = \hbox{\tt [[1,1],[-1,1]]}$ \\ \hline
 
{\bf 4.108}  & $V^+ = \hbox{\tt [[-1,0],[1,1]]}$ \\
& $V^- = \hbox{\tt [[-1,1],[0,1]]} $ \\ \hline 
 
5.2012 &  $V^+ = \hbox{\tt [[0,0,-1,1],[0,1,0,0],[0,0,0,0],[0,-1,0,1]]}$ \\
& $V^- =\hbox{\tt [[0,-1,-1,1],[1,1,1,0],[0,-1,0,1],[0,-1,-1,1]]}$ \\ \hline  

5.2025
&  $V^+ =\hbox{\tt [[0,0,-1,1],[0,1,0,1],[0,0,0,0],[0,0,0,0]]}$ \\
& $V^- =\hbox{\tt [[0,-1,-1,1],[1,1,1,1],[0,-1,0,1],[0,0,-1,0]]}$    \\ \hline 
 
5.2080
&$V^+ =\hbox{\tt [[1,1,0,0],[0,0,0,0],[0,0,1,1],[-1,0,0,1]]}$  \\
& $V^- =\hbox{\tt  [[1,0,0,-1],[1,0,0,-1],[0,0,1,0],[0,1,1,1]]}$   \\ \hline  

5.2133
&$V^+ =\hbox{\tt [[0,0,-1,1],[0,1,0,0],[0,0,-1,0],[0,-1,0,1]]}$ \\
& $V^- =\hbox{\tt [[0,-1,-1,1],[1,1,1,0],[0,-1,-1,1],[0,-1,-1,1]]}$    \\ \hline 
 
5.2160
&$V^+ =\hbox{\tt [[-1,0,0,1],[-1,-1,0,0],[0,0,1,1],[0,0,0,0]]}$ \\
&$V^- =\hbox{\tt [[-1,-1,0,0],[0,-1,0,-1],[0,0,1,0],[1,1,1,0]]}$    \\ \hline 
 
5.2331
& $V^+ =\hbox{\tt [[1,1,0,0],[-1,0,1,0],[0,0,1,0],[0,0,-1,1]]}$ \\
& $V^- =\hbox{\tt [[1,0,0,0],[0,0,0,0],[0,1,1,-1],[0,0,0,1]]} $   
\\ \hline
 
5.2426
& $V^+ =\hbox{\tt [[1,1,0,0],[0,1,0,0],[0,0,1,1],[-1,0,0,1]]} $ \\
& $V^- =\hbox{\tt [[1,0,0,-1],[1,1,0,-1],[0,0,1,0],[0,1,1,1]]} $  
\\ \hline
 
5.2433
& $V^+ = \hbox{\tt [[1,0,0,1],[0,1,0,0],[1,0,1,0],[0,-1,0,1]]} $ \\
& $V^- = \hbox{\tt [[1,-1,0,1],[1,1,1,0],[1,-1,1,1],[0,-1,-1,1]]} $  
\\ \hline
 
{\bf 5.2437} &$V^+ = \hbox{\tt [[2,-2],[-1,2]]} $ \\
& $V^- = \hbox{\tt [[2,-1],[-2,2]]} $
\\ \hline
 
5.2439
& $V^+ = \hbox{\tt [[-1,1,0,0],[-1,0,1,0],[0,0,1,0],[0,0,-1,1]]} $ \\
& $V^- = \hbox{\tt [[-1,0,0,0],[0,0,0,0],[0,1,1,-1],[0,0,0,1]]} $  
\\ \hline
 
{\bf 5.2445}  
& $V^+ = \hbox{\tt [[1,0,-1,0],[0,1,0,-1],[0,0,1,0],[-1,0,0,1]]} $ \\
& $V^- = \hbox{\tt [[1,0,0,-1],[0,1,0,0],[-1,0,1,0],[0,-1,0,1]]} $  
\\ \hline

\end{tabular}

\newpage
\begin{tabular}{|c|l|} \hline
{Knot} & {Seifert matrices} \\ \hline \hline  
 
6.72557
& $V^+ = \hbox{\tt [[0,1,0,0,0,0],[0,1,0,0,0,0],[1,1,0,-1,1,1],[0,0,0,-1,1,0],}$\\
& \qquad \quad $\hbox{\tt  [0,0,0,0,0,0],[0,0,0,0,0,0]]} $ \\
& $V^- = \hbox{\tt [[0,1,1,0,0,0],[0,1,1,0,0,-1],[0,0,0,0,0,0],[0,0,-1,-1,0,0],}$\\
&  \qquad \quad $\hbox{\tt [0,0,1,1,0,-1],[0,1,1,0,1,0]]}$ 
\\ \hline

6.72692
& $V^+ = \hbox{\tt [[1,1,1,1],[-1,0,-1,0],[-1,0,0,0],[-1,1,0,1]]}$ \\
& $V^- = \hbox{\tt [[1,0,0,0],[0,0,-1,1],[0,0,0,0],[0,0,0,1]]}$  
\\ \hline

6.72695  
& $V^+ = \hbox{\tt [[0,1,0,0],[0,-1,0,0],[1,1,1,1],[-1,1,-1,0]]}$ \\
& $V^- = \hbox{\tt [[0,0,0,0],[1,-1,1,1],[1,0,1,0],[-1,0,0,0]]}$  
\\  \hline

6.72938
& $V^+ = \hbox{\tt [[1,1,0,0,0,0],[0,1,0,0,0,0],[1,1,1,0,1,1],[0,0,1,0,1,0],}$\\
&\qquad \quad $\hbox{\tt [0,0,0,0,0,0],[0,0,0,0,0,0]]}$ \\ 
& $V^- = \hbox{\tt [[1,1,1,0,0,0],[0,1,1,0,0,-1],[0,0,1,1,0,0],[0,0,0,0,0,0],}$ \\
& \qquad \quad $\hbox{\tt [0,0,1,1,0,-1],[0,1,1,0,1,0]] } $ 
\\ \hline

6.72944  
& $V^+ = \hbox{\tt [[-1,-1,0,0],[0,0,0,0],[0,0,1,0],[0,0,1,1]]}$ \\
& $V^- = \hbox{\tt [[-1,-1,0,-1],[0,0,1,0],[0,-1,1,0],[1,0,1,1]]}$ 
\\ \hline

6.72975
& $V^+ = \hbox{\tt [[0,1,0,0,0,0],[0,1,0,0,0,0],[1,1,1,0,1,1],[0,0,1,0,1,0],}$\\
& \qquad \quad $\hbox{\tt [0,0,0,0,0,0],[0,0,0,0,0,0]]}$  \\
& $V^- = \hbox{\tt [[0,1,1,0,0,0],[0,1,1,0,0,-1],[0,0,1,1,0,0],[0,0,0,0,0,0],}$ \\
& \qquad \quad $\hbox{\tt [0,0,1,1,0,-1],[0,1,1,0,1,0]]}$  
\\ \hline

6.73007
& $V^+ = \hbox{\tt [[-1,1,1,1],[-1,0,-1,0],[-1,0,0,0],[-1,1,0,1]]}$  \\
& $V^- = \hbox{\tt [[-1,0,0,0],[0,0,-1,1],[0,0,0,0],[0,0,0,1]]}$  
\\ \hline

6.73053
& $V^+ = \hbox{\tt [[1,0,1,0],[-1,1,0,0],[-1,1,0,1],[0,0,0,1]]}$  \\
& $V^- = \hbox{\tt [[1,-1,0,0],[0,1,1,0],[0,0,0,0],[0,0,1,1]]}$  
\\ \hline

6.73583
& $V^+ = \hbox{\tt [[1,0,0,0],[0,0,0,0],[0,0,0,-1],[0,1,0,0]]}$  \\
& $V^- = \hbox{\tt [[1,-1,1,0],[1,0,0,1],[-1,0,0,-1],[0,0,0,0]]}$   
\\ \hline

6.75341  
& $V^+ = \hbox{\tt [[0,1,1,1],[-1,0,1,0],[0,0,1,0],[0,-1,0,-1]]}$  \\
& $V^+ = \hbox{\tt [[0,0,0,0],[0,0,0,0],[1,1,1,0],[1,-1,0,-1]]}$  
\\ \hline

6.75348
& $V^+ = \hbox{\tt [[1,0,0,0],[0,0,0,-1],[0,0,0,-1],[0,0,0,-1]]}$  \\
& $V^- = \hbox{\tt [[1,-1,1,0],[1,0,0,0],[-1,0,0,-1],[0,-1,0,-1]]}$  
\\ \hline

$6.76479^r$
& $V^+ = \hbox{\tt [[1,0,0,0],[0,0,0,-1],[0,0,1,0],[0,0,1,0]]}$  \\
& $V^- = \hbox{\tt [[1,-1,1,0],[1,0,0,0],[-1,0,1,0],[0,-1,1,0]]}$  
\\ \hline

6.77833
& $V^+ = \hbox{\tt [[1,1,0,0,0,0],[0,0,0,0,0,0],[1,1,0,-1,1,1],[0,0,0,-1,1,0],} $ \\
& \qquad \quad $\hbox{\tt  [0,0,0,0,0,0],[0,0,0,0,0,0]]}$  \\
& $V^- = \hbox{\tt [[1,1,1,0,0,0],[0,0,1,0,0,-1],[0,0,0,0,0,0],[0,0,-1,-1,0,0],} $ \\
& \qquad \quad $\hbox{\tt [0,0,1,1,0,-1],[0,1,1,0,1,0]]}$  
\\ \hline

6.77844
& $V^+ = \hbox{\tt [[0,1,0,0,0,0],[0,0,0,0,0,0],[1,1,0,-1,1,1],[0,0,0,-1,1,0],} $ \\
& \qquad \quad $\hbox{\tt [0,0,0,0,0,0],[0,0,0,0,0,0]]}$  \\
& $V^- = \hbox{\tt [[0,1,1,0,0,0],[0,0,1,0,0,-1],[0,0,0,0,0,0],[0,0,-1,-1,0,0],} $ \\
& \qquad \quad $\hbox{\tt  [0,0,1,1,0,-1],[0,1,1,0,1,0]]}$  
\\ \hline

6.77905
& $V^+ = \hbox{\tt [[1,1,1,1],[-1,0,-1,0],[-1,0,-1,0],[-1,1,0,1]]}$  \\
& $V^- = \hbox{\tt [[1,0,0,0],[0,0,-1,1],[0,0,-1,0],[0,0,0,1]]}$  
\\ \hline

6.77908 
& $V^+ = \hbox{\tt [[-1,1,-1,0],[0,-1,0,0],[0,1,0,1],[-1,1,-1,0]]}$  \\
& $V^- = \hbox{\tt [[-1,0,-1,0],[1,-1,1,1],[0,0,0,0],[-1,0,0,0]]}$  
\\ \hline

6.77985
& $V^+ = \hbox{\tt  [[-1,1,1,1],[-1,0,-1,0],[-1,0,-1,0],[-1,1,0,1]]}$  \\
& $V^- = \hbox{\tt [[-1,0,0,0],[0,0,-1,1],[0,0,-1,0],[0,0,0,1]]}$  
\\ \hline

6.78358
& $V^+ = \hbox{\tt [[0,0,1,0],[-1,1,0,0],[-1,1,0,1],[0,0,0,1]]}$  \\
& $V^- = \hbox{\tt [[0,-1,0,0],[0,1,1,0],[0,0,0,0],[0,0,1,1]]}$  
\\  \hline

\end{tabular}

\begin{tabular}{|c|l|} \hline
{Knot} & {Seifert matrices} \\ \hline \hline %& $\si,\si_\om $ & $g_s$ \\ \hline

6.79342  
& $V^+ =  \hbox{\tt [[-1,1,1,1],[-1,0,0,-1],[-1,1,1,0],[-1,0,0,0]]} $ \\ 
& $ V^- = \hbox{\tt [[-1,0,0,0],[0,0,0,0],[0,1,1,0],[0,-1,0,0]]} $  \\ \hline

6.85091
& $ V^+ = \hbox{\tt [[-1,0,0,0],[0,1,-1,0],[0,1,0,1],[0,0,0,1]]} $ \\ 
& $ V^- = \hbox{\tt [[-1,1,0,0],[-1,1,0,0],[0,0,0,0],[0,0,1,1]]} $  \\ \hline

6.85103
& $ V^+ = \hbox{\tt [[-1,0,0,0],[0,0,-1,0],[0,1,0,1],[0,0,0,1]]} $ \\ 
& $V^- = \hbox{\tt [[-1,1,0,0],[-1,0,0,0],[0,0,0,0],[0,0,1,1]]} $  \\ \hline

6.85613
& $ V^+ = \hbox{\tt [[0,0,0,1],[-1,0,-1,0],[0,1,1,0],[-1,0,-1,1]]} $ \\ 
& $ V^- = \hbox{\tt [[0,-1,0,0],[0,0,0,0],[0,0,1,0],[0,0,-1,1]]} $  \\ \hline

6.85774
& $ V^+ = \hbox{\tt [[0,0,0,1],[-1,1,-1,0],[0,1,1,0],[-1,0,-1,1]]} $ \\ 
& $ V^- = \hbox{\tt [[0,-1,0,0],[0,1,0,0],[0,0,1,0],[0,0,-1,1]]} $  \\ \hline

6.87188 
& $ V^+ = \hbox{\tt [[-1,0,0,0],[0,1,0,0],[0,0,1,-1],[0,1,0,1]]} $ \\ 
& $ V^- = \hbox{\tt [[-1,1,0,0],[-1,1,-1,1],[0,1,1,0],[0,0,-1,1]]} $  \\ \hline

6.87262
& $ V^+ = \hbox{\tt [[1,1,0,0,0,0],[0,1,0,0,0,0],[1,1,1,0,1,1],[0,0,1,1,1,0],}$\\
& \qquad \quad $\hbox{\tt [0,0,0,0,0,0],[0,0,0,0,0,0]]} $ \\ 
& $ V^- = \hbox{\tt [[1,1,1,0,0,0],[0,1,1,0,0,-1],[0,0,1,1,0,0],[0,0,0,1,0,0],}$ \\
& \qquad \quad $\hbox{\tt [0,0,1,1,0,-1],[0,1,1,0,1,0]]} $  \\ \hline

6.87269
& $ V^+ = \hbox{\tt [[-1,-1,0,0],[0,-1,0,0],[0,0,1,0],[0,0,1,1]]} $ \\ 
& $ V^- = \hbox{\tt [[-1,-1,0,-1],[0,-1,1,0],[0,-1,1,0],[1,0,1,1]]} $  \\ \hline

6.87310
& $ V^+ = \hbox{\tt [[1,1,-1,-1],[0,1,0,-1],[0,0,1,1],[0,0,0,1]]} $ \\ 
& $ V^- = \hbox{\tt [[1,1,0,0],[0,1,0,0],[-1,0,1,0],[-1,-1,1,1]]} $  \\ \hline

6.87319
& $ V^+ = \hbox{\tt [[1,0,0,1],[-1,1,-1,0],[0,1,-1,1],[-1,0,0,-1]]} $ \\ 
& $V^- =  \hbox{\tt [[1,-1,0,0],[0,1,0,0],[0,0,-1,1],[0,0,0,-1]]} $  \\ \hline

$6.87369^r$  
& $V^+ =  \hbox{\tt [[1,1,1,1],[-1,-1,-1,-1],[-1,0,-1,0],[-1,0,0,-1]]} $ \\ 
& $V^- =  \hbox{\tt [[1,0,0,0],[0,-1,-1,0],[0,0,-1,0],[0,-1,0,-1]]} $  \\ \hline

6.87548
& $V^+ =  \hbox{\tt [[0,0,0,0],[1,1,0,0],[0,0,1,0],[0,0,0,-1]]} $ \\ 
& $ V^- = \hbox{\tt [[0,1,1,0],[0,1,0,0],[-1,0,1,-1],[0,0,1,-1]]} $ \\ \hline

6.87846
& $V^+ =  \hbox{\tt [[1,1,0,0],[0,1,0,0],[-1,0,1,0],[0,0,0,-1]]} $ \\ 
& $ V^- = \hbox{\tt [[1,0,-1,0],[1,1,0,-1],[0,0,1,0],[0,1,0,-1]]} $  \\ \hline

6.87857
& $V^+ =  \hbox{\tt [[2,0],[0,-1]]} $  \\  
& $ V^- = \hbox{\tt [[2,1],[-1,-1]]} $  \\ \hline

6.87859
& $V^+ =  \hbox{\tt [[-1,0,0,0],[0,1,-1,0],[0,1,0,1],[0,0,0,-1]]} $ \\ 
& $ V^- = \hbox{\tt [[-1,1,0,0],[-1,1,0,0],[0,0,0,0],[0,0,1,-1]]} $  \\ \hline

6.87875
& $ V^+ = \hbox{\tt [[-1,0,0,0],[0,0,-1,0],[0,1,0,1],[0,0,0,-1]]} $ \\ 
& $ V^- = \hbox{\tt [[-1,1,0,0],[-1,0,0,0],[0,0,0,0],[0,0,1,-1]]} $  \\ \hline

6.89156
& $ V^+ = \hbox{\tt [[1,0,0,0],[0,1,-1,0],[0,1,0,1],[0,0,0,1]]} $ \\ 
& $V^- =  \hbox{\tt [[1,1,0,0],[-1,1,0,0],[0,0,0,0],[0,0,1,1]]} $  \\ \hline

{\bf 6.89187}
& $ V^+ = \hbox{\tt [[1,-1,0,0],[0,1,0,0],[0,0,-1,1],[0,0,0,-1]]} $ \\ 
& $V^- =  \hbox{\tt [[1,0,0,0],[-1,1,0,0],[0,0,-1,0],[0,0,1,-1]]} $  \\ \hline

{\bf 6.89198}
& $ V^+ = \hbox{\tt [[1,-1,0,0],[0,1,0,0],[0,0,1,1],[0,0,0,1]]} $ \\ 
& $V^- =  \hbox{\tt [[1,0,0,0],[-1,1,0,0],[0,0,1,0],[0,0,1,1]]} $  \\ \hline

6.89623
& $V^+ =  \hbox{\tt [[-1,0,0,-1],[0,1,0,0],[0,-1,1,0],[0,0,0,-1]]} $ \\ 
& $V^- =  \hbox{\tt [[-1,-1,0,0],[1,1,-1,0],[0,0,1,0],[-1,0,0,-1]]} $  \\ \hline

6.89812
& $ V^+ = \hbox{\tt [[0,0,0,0],[1,1,0,0],[0,0,1,0],[1,0,1,1]]} $ \\ 
& $ V^- = \hbox{\tt [[0,1,1,1],[0,1,0,0],[-1,0,1,1],[0,0,0,1]]} $  \\ \hline

$6.89815^r$
& $ V^+ = \hbox{\tt [[0,0,0,0],[1,-1,0,0],[0,0,1,0],[1,0,1,1]]} $ \\ 
& $V^- =  \hbox{\tt [[0,1,1,1],[0,-1,0,0],[-1,0,1,1],[0,0,0,1]]} $  \\ \hline

6.90099
& $V^+ =  \hbox{\tt [[1,-1,0,0,1,0],[1,0,0,0,0,0],[0,1,1,0,0,0],[0,0,1,1,0,1],}$\\
&\qquad \quad $\hbox{\tt [0,0,0,0,1,-1],[0,0,0,0,1,0]]} $ \\ 
& $ V^- = \hbox{\tt [[1,0,0,0,1,0],[0,0,1,0,0,0],[0,0,1,1,0,0],[0,0,0,1,0,0],}$ \\
&\qquad \quad $\hbox{\tt [0,0,0,0,1,0],[0,0,0,1,0,0]]} $  \\ \hline

\end{tabular}

\begin{table}
\begin{tabular}{|c|l|} \hline
{Knot} & {Seifert matrices} \\ \hline \hline %& $\si,\si_\om $ & $g_s$ \\ \hline

6.90109 
& $V^+ =  \hbox{\tt [[1,-1,0,0],[0,1,1,0],[0,0,1,0],[0,0,0,1]]} $ \\ 
& $ V^- = \hbox{\tt [[1,0,1,1],[-1,1,0,0],[-1,1,1,0],[-1,0,0,1]]} $  \\ \hline

6.90115
& $ V^+ = \hbox{\tt [[1,0,0,0],[1,-1,0,0],[0,0,1,-1],[1,0,0,1]]} $ \\ 
& $ V^- = \hbox{\tt [[1,1,0,0],[0,-1,0,0],[0,0,1,0],[1,0,-1,1]]} $  \\ \hline

6.90139
& $V^+ =  \hbox{\tt [[1,0,0,0],[1,1,0,0],[0,0,1,0],[0,0,1,1]]} $ \\ 
& $ V^- = \hbox{\tt [[1,0,0,-1],[1,1,1,0],[0,-1,1,0],[1,0,1,1]]} $  \\ \hline

6.90146  
& $ V^+ = \hbox{\tt [[1,1,0,0,0,0],[0,1,0,0,0,0],[1,1,-1,-1,1,1],[0,0,0,-1,1,0],}$ \\ 
&\qquad \quad $\hbox{\tt [0,0,0,0,0,0],[0,0,0,0,0,0]]} $ \\ 
& $V^- =  \hbox{\tt [[1,1,1,0,0,0],[0,1,1,0,0,-1],[0,0,-1,0,0,0],[0,0,-1,-1,0,0],}$ \\ 
&\qquad \quad $\hbox{\tt [0,0,1,1,0,-1],[0,1,1,0,1,0]]} $  \\ \hline

6.90147
& $V^+ =  \hbox{\tt [[1,1,1,1],[-1,1,0,0],[-1,1,1,0],[-1,1,0,1]]} $ \\ 
& $V^- =  \hbox{\tt [[1,0,0,0],[0,1,0,1],[0,1,1,0],[0,0,0,1]]} $  \\ \hline

6.90150  
& $V^+ =  \hbox{\tt [[1,1,0,1],[0,-1,0,0],[1,1,1,1],[0,1,-1,1]]} $ \\ 
& $V^- =  \hbox{\tt [[1,0,0,1],[1,-1,1,1],[1,0,1,0],[0,0,0,1]]} $  \\ \hline

$6.90167^r$
& $V^+ =  \hbox{\tt [[1,0,0,0],[0,1,0,0],[0,-1,1,0],[1,0,0,1]]} $ \\ 
& $V^- = \hbox{\tt [[1,-1,0,1],[1,1,-1,0],[0,0,1,0],[0,0,0,1]]} $  \\ \hline

{\bf 6.90172}
& $ V^+ = \hbox{\tt [[1,-1,0,0],[0,1,0,0],[1,-1,-1,0],[-1,1,1,-1]]} $ \\ 
& $ V^- = \hbox{\tt [[1,0,1,-1],[-1,1,-1,1],[0,0,-1,1],[0,0,0,-1]]} $ \\ \hline

$6.90185^r$
& $ V^+ = \hbox{\tt [[1,0,0,1],[-1,1,-1,0],[0,1,1,0],[-1,0,-1,1]]} $ \\ 
& $ V^- = \hbox{\tt [[1,-1,0,0],[0,1,0,0],[0,0,1,0],[0,0,-1,1]]} $  \\ \hline

6.90194 
& $V^+ =  \hbox{\tt [[-1,0,-1,-1],[-1,-1,0,-1],[0,0,1,1],[0,0,0,1]]} $ \\ 
& $ V^- = \hbox{\tt [[-1,0,0,0],[-1,-1,0,0],[-1,0,1,0],[-1,-1,1,1]]} $  \\ \hline

6.90195
& $ V^+ = \hbox{\tt [[1,1,0,0],[0,1,0,0],[-1,0,1,0],[0,0,0,1]]} $ \\ 
& $ V^- =\hbox{\tt [[1,0,-1,0],[1,1,0,-1],[0,0,1,0],[0,1,0,1]]} $  \\ \hline

{\bf 6.90209}
& $ V^+ = \hbox{\tt [[1,0,-1,0],[0,1,0,0],[0,-1,1,0],[-1,0,1,-1]]} $ \\ 
& $ V^- = \hbox{\tt [[1,0,0,-1],[0,1,-1,0],[-1,0,1,1],[0,0,0,-1]]} $ \\ \hline
 
6.90214
& $ V^+ = \hbox{\tt [[2,0],[0,1]]} $  \\ 
& $ V^- = \hbox{\tt [[2,1],[-1,1]]} $ \\ \hline

6.90217
& $ V^+ = \hbox{\tt [[0,0,0,0],[1,-1,0,0],[0,0,1,0],[0,0,0,1]]} $ \\ 
& $ V^- = \hbox{\tt [[0,1,1,0],[0,-1,0,0],[-1,0,1,-1],[0,0,1,1]]} $ \\ \hline
 
6.90219
& $ V^+ = \hbox{\tt [[1,0,0,0],[0,1,-1,0],[0,1,0,1],[0,0,0,-1]]} $ \\ 
& $ V^- = \hbox{\tt [[1,1,0,0],[-1,1,0,0],[0,0,0,0],[0,0,1,-1]]} $  \\ \hline
 
{\bf 6.90227}
& $V^+ =  \hbox{\tt [[-1,0],[1,2]]} $ \\ 
& $ V^- = \hbox{\tt [[-1,1],[0,2]]} $ \\ \hline

6.90228
& $ V^+ = \hbox{\tt [[2,1],[1,2]]} $  \\ 
& $ V^- = \hbox{\tt [[2,2],[0,2]]} $  \\ \hline
 
$6.90232^r$
& $V^+ =  \hbox{\tt [[0,0,0,0],[1,1,0,0],[0,0,-1,0],[1,0,1,1]]} $ \\ 
& $V^- =   \hbox{\tt [[0,1,1,1],[0,1,0,0],[-1,0,-1,1],[0,0,0,1]]} $ \\ \hline
 
6.90235
& $ V^+ = \hbox{\tt [[-1,0,0,0,0,-1],[-1,-1,-1,0,0,0],[0,1,0,-1,0,0],[0,0,0,1,-1,0],}$\\
&\qquad \quad $\hbox{\tt  [0,0,0,0,1,0],[1,0,0,0,1,0]]} $ \\ 
& $ V^- = \hbox{\tt [[-1,0,0,0,0,0],[-1,-1,0,0,0,0],[0,0,0,0,0,0],[0,0,-1,1,0,0],}$\\ 
&\qquad \quad $\hbox{\tt[0,0,0,-1,1,1],[0,0,0,0,0,0]]} $ \\ \hline
\end{tabular}

\bigskip
\caption{Seifert matrices of almost classical knots. Boldface is used for classical knots.}
\label{table-3}
\end{table}

\clearpage

\bibliographystyle{alpha}
%\bibliography{bibmaker}
% \bib, bibdiv, biblist are defined by the amsrefs package.
\begin{bibdiv}
\begin{biblist}

\bib{Boden-Chrisman-Gaudreau-2017}{misc}{
      author={Boden, Hans~U.},
      author={Chrisman, Micah},
      author={Gaudreau, Robin},
       title={Virtual knot cobordism and bounding the slice genus},
        date={2017},
        note={\href{https://arxiv.org/pdf/1708.05982.pdf}{ArXiv/1708.05982},
  published online 02 Feb 2018 in Experiment. Math.},
}

\bib{Boden-Chrisman-Gaudreau-2017t}{misc}{
      author={Boden, Hans~U.},
      author={Chrisman, Micah},
      author={Gaudreau, Robin},
       title={Virtual slice genus tables},
        date={2017},
         url={https://micah46.wixsite.com/micahknots/slicegenus},
  note={\href{https://micah46.wixsite.com/micahknots/slicegenus}{micah46.wixsite.com/micahknots/slicegenus}},
}

\bib{Boden-Gaudreau-Harper-2016}{article}{
      author={Boden, Hans~U.},
      author={Gaudreau, Robin~I.},
      author={Harper, Eric},
      author={Nicas, Andrew~J.},
      author={White, Lindsay},
       title={Virtual knot groups and almost classical knots},
        date={2017},
     journal={Fundamenta Mathematicae},
      volume={138},
       pages={101\ndash 142},
}

\bib{Boden-Nagel-2016}{article}{
      author={Boden, Hans~U.},
      author={Nagel, Matthias},
       title={Concordance group of virtual knots},
        date={2017},
        ISSN={0002-9939},
     journal={Proc. Amer. Math. Soc.},
      volume={145},
      number={12},
       pages={5451\ndash 5461},
         url={https://doi-org.libaccess.lib.mcmaster.ca/10.1090/proc/13667},
      review={\MR{3717970}},
}

\bib{Burde-Zieschang-Heusener}{book}{
      author={Burde, Gerhard},
      author={Zieschang, Heiner},
      author={Heusener, Michael},
       title={Knots},
     edition={extended},
      series={De Gruyter Studies in Mathematics},
   publisher={De Gruyter, Berlin},
        date={2014},
      volume={5},
        ISBN={978-3-11-027074-7; 978-3-11-027078-5},
      review={\MR{3156509}},
}

\bib{Carter}{article}{
      author={Carter, J.~Scott},
       title={Classifying immersed curves},
        date={1991},
        ISSN={0002-9939},
     journal={Proc. Amer. Math. Soc.},
      volume={111},
      number={1},
       pages={281\ndash 287},
         url={http://dx.doi.org/10.2307/2047890},
      review={\MR{1043406 (91d:57002)}},
}

\bib{Casson-Gordon-1978}{incollection}{
      author={Casson, Andrew~J.},
      author={Gordon, Cameron~McA.},
       title={On slice knots in dimension three},
        date={1978},
   booktitle={Algebraic and geometric topology ({S}tanford, {C}alif., 1976), {P}art 2},
      series={Proc. Sympos. Pure Math., XXXII},
   publisher={Amer. Math. Soc., Providence, R.I.},
   note={Proc. Sympos. Pure Math., XXXII, Amer. Math. Soc., Providence, R.I., 1978},
       pages={39\ndash 53},
      review={\MR{520521}},
}

\bib{Casson-Gordon-1986}{incollection}{
      author={Casson, Andrew~J.},
      author={Gordon, Cameron~McA.},
       title={Cobordism of classical knots. {\rm With an appendix by P. M. Gilmer. Progr. Math. {\bf 62}}},
        date={1986},
   booktitle={{\it \`{A} la recherche de la topologie perdue}},
      series={Progr. Math.},
      volume={62},
   publisher={Birkh\"auser Boston, Boston, MA},
       pages={181\ndash 199},
        note={Birkh\"auser Boston, Boston, MA, 1986},
      review={\MR{900252}},
}

\bib{Chrisman-2017}{misc}{
      author={Chrisman, Micah},
       title={Virtual {S}eifert surfaces},
        date={2017},
         url={https://arxiv.org/pdf/1712.05715},
        note={\href{https://arxiv.org/pdf/1712.05715.pdf}{ArXiv/1712.05715}},
}

\bib{Carter-Kamada-Saito}{article}{
      author={Carter, J.~Scott},
      author={Kamada, Seiichi},
      author={Saito, Masahico},
       title={Stable equivalence of knots on surfaces and virtual knot
  cobordisms},
        date={2002},
        ISSN={0218-2165},
     journal={J. Knot Theory Ramifications},
      volume={11},
      number={3},
       pages={311\ndash 322},
         url={http://dx.doi.org/10.1142/S0218216502001639},
        note={Knots 2000 Korea, Vol. 1 (Yongpyong)},
      review={\MR{1905687 (2003f:57011)}},
}

\bib{Knotinfo}{misc}{
      author={Cha, Jae~Choon},
      author={Livingston, Charles},
       title={Knotinfo: Table of knot invariants},
        date={2018},
         url={http://www.indiana.edu/~knotinfo},
  note={\href{http://www.indiana.edu/~knotinfo}{www.indiana.edu/knotinfo}},
}

\bib{Cochran-Orr-Teichner-2003}{article}{
      author={Cochran, Tim~D.},
      author={Orr, Kent~E.},
      author={Teichner, Peter},
       title={Knot concordance, {W}hitney towers and {$L^2$}-signatures},
        date={2003},
        ISSN={0003-486X},
     journal={Ann. of Math. (2)},
      volume={157},
      number={2},
       pages={433\ndash 519},
         url={https://doi.org/10.4007/annals.2003.157.433},
      review={\MR{1973052}},
}

\bib{Cimasoni-Turaev}{article}{
      author={Cimasoni, David},
      author={Turaev, Vladimir},
       title={A generalization of several classical invariants of links},
        date={2007},
        ISSN={0030-6126},
     journal={Osaka J. Math.},
      volume={44},
      number={3},
       pages={531\ndash 561},
         url={http://projecteuclid.org/euclid.ojm/1189717421},
      review={\MR{2360939}},
}

\bib{Dye-Kaestner-Kauffman-2014}{article}{
      author={Dye, Heather~A.},
      author={Kaestner, Aaron},
      author={Kauffman, Louis~H.},
       title={Khovanov homology, {L}ee homology and a {R}asmussen invariant for
  virtual knots},
        date={2017},
        ISSN={0218-2165},
     journal={J. Knot Theory Ramifications},
      volume={26},
      number={3},
       pages={1741001, 57},
         url={http://dx.doi.org/10.1142/S0218216517410012},
      review={\MR{3627701}},
}

\bib{Fedoseev-Manturov-2017}{misc}{
      author={Fedoseev, Denis},
      author={Manturov, Vassily},
       title={Cobordisms of graphs. {A} sliceness criterion for stably odd free
  knots and related results on cobordisms},
        date={2017},
         url={https://arxiv.org/pdf/1708.07365},
        note={\href{https://arxiv.org/pdf/1708.07365.pdf}{ArXiv/1708.07365}},
}

\bib{Fox-Milnor-1966}{article}{
      author={Fox, Ralph~H.},
      author={Milnor, John~W.},
       title={Singularities of {$2$}-spheres in {$4$}-space and cobordism of
  knots},
        date={1966},
        ISSN={0030-6126},
     journal={Osaka J. Math.},
      volume={3},
       pages={257\ndash 267},
         url={http://projecteuclid.org/euclid.ojm/1200691730},
      review={\MR{0211392}},
}

\bib{Fox-1962}{incollection}{
      author={Fox, Ralph~H.},
       title={A quick trip through knot theory},
        date={1962},
   booktitle={Topology of 3-manifolds and related topics ({P}roc. {T}he {U}niv.
  of {G}eorgia {I}nstitute, 1961)},
   publisher={Prentice-Hall, Englewood Cliffs, N.J.},
       pages={120\ndash 167},
      review={\MR{0140099}},
}

\bib{Fox-1962-b}{incollection}{
      author={Fox, Ralph~H.},
       title={Some problems in knot theory},
        date={1962},
   booktitle={Topology of 3-manifolds and related topics ({P}roc. {T}he {U}niv.
  of {G}eorgia {I}nstitute, 1961)},
      series={Ser. Knots Everything},
   publisher={Prentice-Hall, Englewood Cliffs, N.J.},
       pages={168\ndash 176},
      review={\MR{0140100}},
}

\bib{Freedman-Quinn}{book}{
      author={Freedman, Michael~H.},
      author={Quinn, Frank},
       title={Topology of 4-manifolds},
      series={Princeton Mathematical Series},
   publisher={Princeton University Press, Princeton, NJ},
        date={1990},
      volume={39},
        ISBN={0-691-08577-3},
      review={\MR{1201584}},
}

\bib{Gaudreau-2018}{misc}{
      author={Gaudreau, Robin~I.},
       title={Virtual string links cobordisms},
        date={2018},
        note={in preparation},
}

\bib{Giller-1982}{article}{
      author={Giller, Cole~A.},
       title={A family of links and the {C}onway calculus},
        date={1982},
        ISSN={0002-9947},
     journal={Trans. Amer. Math. Soc.},
      volume={270},
      number={1},
       pages={75\ndash 109},
         url={http://dx.doi.org/10.2307/1999762},
      review={\MR{642331}},
}

\bib{Gompf-1986}{article}{
      author={Gompf, Robert~E.},
       title={Smooth concordance of topologically slice knots},
        date={1986},
        ISSN={0040-9383},
     journal={Topology},
      volume={25},
      number={3},
       pages={353\ndash 373},
         url={https://doi.org/10.1016/0040-9383(86)90049-2},
      review={\MR{842430}},
}

\bib{Green}{misc}{
      author={Green, Jeremy},
       title={A table of virtual knots},
        date={2004},
         url={http://www.math.toronto.edu/drorbn/Students/GreenJ},
  note={\href{http://www.math.toronto.edu/drorbn/Students/GreenJ}{www.math.toronto.edu/drorbn/Students/GreenJ}},
}

\bib{Hom-2017}{article}{
      author={Hom, Jennifer},
       title={A survey on {H}eegaard {F}loer homology and concordance},
        date={2017},
        ISSN={0218-2165},
     journal={J. Knot Theory Ramifications},
      volume={26},
      number={2},
       pages={1740015, 24},
         url={https://doi.org/10.1142/S0218216517400156},
      review={\MR{3604497}},
}

\bib{Im-Kim-2017}{article}{
      author={Im, Young~Ho},
      author={Kim, Sera},
       title={A sequence of polynomial invariants for {G}auss diagrams},
        date={2017},
        ISSN={0218-2165},
     journal={J. Knot Theory Ramifications},
      volume={26},
      number={7},
       pages={1750039, 9},
  url={https://doi-org.libaccess.lib.mcmaster.ca/10.1142/S0218216517500390},
      review={\MR{3660094}},
}

\bib{Im-Lee-Lee-2010}{article}{
      author={Im, Young~Ho},
      author={Lee, Kyeonghui},
      author={Lee, Sang~Youl},
       title={Signature, nullity and determinant of checkerboard colorable
  virtual links},
        date={2010},
        ISSN={0218-2165},
     journal={J. Knot Theory Ramifications},
      volume={19},
      number={8},
       pages={1093\ndash 1114},
         url={http://dx.doi.org/10.1142/S0218216510008315},
      review={\MR{2718629}},
}

\bib{Ilyutko-Manturov-Nikonov-2011}{article}{
      author={Ilyutko, Denis~P.},
      author={Manturov, Vassily~O.},
      author={Nikonov, Igor~M.},
       title={Parity in knot theory and graph links},
        date={2011},
     journal={Sovrem. Mat. Fundam. Napravl.},
      volume={41},
       pages={3\ndash 163},
         url={http://dx.doi.org/10.1007/s10958-013-1499-y},
      review={\MR{3011999}},
}

\bib{Jeong-2016}{article}{
      author={Jeong, Myeong-Ju},
       title={A zero polynomial of virtual knots},
        date={2016},
        ISSN={0218-2165},
     journal={J. Knot Theory Ramifications},
      volume={25},
      number={1},
       pages={1550078, 19},
  url={https://doi-org.libaccess.lib.mcmaster.ca/10.1142/S0218216515500789},
      review={\MR{3449532}},
}

\bib{Kamada}{article}{
      author={Kamada, Seiichi},
       title={Braid presentation of virtual knots and welded knots},
        date={2007},
        ISSN={0030-6126},
     journal={Osaka J. Math.},
      volume={44},
      number={2},
       pages={441\ndash 458},
         url={http://projecteuclid.org/getRecord?id=euclid.ojm/1183667989},
      review={\MR{2351010 (2008k:57013)}},
}

\bib{Kauffman-2012}{article}{
      author={Kauffman, Louis~H.},
       title={Introduction to virtual knot theory},
        date={2012},
        ISSN={0218-2165},
     journal={J. Knot Theory Ramifications},
      volume={21},
      number={13},
       pages={1240007, 37},
         url={https://doi.org/10.1142/S021821651240007X},
      review={\MR{2994594}},
}

\bib{Kauffman-2015}{incollection}{
      author={Kauffman, Louis~H.},
       title={Virtual knot cobordism},
        date={2015},
   booktitle={New ideas in low dimensional topology},
      series={Ser. Knots Everything},
      volume={56},
   publisher={World Sci. Publ., Hackensack, NJ},
       pages={335\ndash 377},
         url={http://dx.doi.org/10.1142/9789814630627_0009},
      review={\MR{3381329}},
}

\bib{Kauffman-1999}{article}{
      author={Kauffman, Louis~H.},
       title={Virtual knot theory},
        date={1999},
        ISSN={0195-6698},
     journal={European J. Combin.},
      volume={20},
      number={7},
       pages={663\ndash 690},
         url={http://dx.doi.org/10.1006/eujc.1999.0314},
      review={\MR{1721925 (2000i:57011)}},
}

\bib{Kawauchi-1990}{book}{
      author={Kawauchi, Akio},
       title={A survey of knot theory},
   publisher={Birkh\"auser Verlag, Basel},
        date={1996},
        ISBN={3-7643-5124-1},
        note={Translated and revised from the 1990 Japanese original by the
  author},
      review={\MR{1417494}},
}

\bib{Kamada-Kamada-2000}{article}{
      author={Kamada, Naoko},
      author={Kamada, Seiichi},
       title={Abstract link diagrams and virtual knots},
        date={2000},
        ISSN={0218-2165},
     journal={J. Knot Theory Ramifications},
      volume={9},
      number={1},
       pages={93\ndash 106},
         url={http://dx.doi.org/10.1142/S0218216500000049},
      review={\MR{1749502 (2001h:57007)}},
}

\bib{Levine-1969}{article}{
      author={Levine, Jerome},
       title={Invariants of knot cobordism},
        date={1969},
        ISSN={0020-9910},
     journal={Invent. Math. 8 (1969), 98--110; addendum, ibid.},
      volume={8},
       pages={355},
         url={https://doi.org/10.1007/BF01404613},
      review={\MR{0253348}},
}

\bib{Lickorish}{book}{
      author={Lickorish, W. B.~Raymond},
       title={An introduction to knot theory},
      series={Graduate Texts in Mathematics},
   publisher={Springer-Verlag, New York},
        date={1997},
      volume={175},
        ISBN={0-387-98254-X},
         url={http://dx.doi.org/10.1007/978-1-4612-0691-0},
      review={\MR{1472978}},
}

\bib{Lewark-McCoy-2017}{misc}{
      author={Lewark, Lukas},
      author={McCoy, Duncan},
       title={On calculating the slice genera of 11- and 12-crossing knots},
        date={2015},
         url={https://arxiv.org/pdf/1508.01098.pdf},
        note={\href{https://arxiv.org/pdf/1508.01098.pdf}{ArXiv/1508.01098},
  published online 22 Aug 2017 in Experiment. Math.},
}

\bib{Lobb-2009}{article}{
      author={Lobb, Andrew},
       title={A slice genus lower bound from {${\rm sl}(n)$}
  {K}hovanov-{R}ozansky homology},
        date={2009},
        ISSN={0001-8708},
     journal={Adv. Math.},
      volume={222},
      number={4},
       pages={1220\ndash 1276},
         url={https://doi.org/10.1016/j.aim.2009.06.001},
      review={\MR{2554935}},
}

\bib{Manturov-2007}{article}{
      author={Manturov, Vassily~O.},
       title={Khovanov's homology for virtual knots with arbitrary
  coefficients},
        date={2007},
        ISSN={1607-0046},
     journal={Izv. Ross. Akad. Nauk Ser. Mat.},
      volume={71},
      number={5},
       pages={111\ndash 148},
         url={http://dx.doi.org/10.1070/IM2007v071n05ABEH002381},
      review={\MR{2362875}},
}

\bib{Manturov-2010}{article}{
      author={Manturov, Vassily~O.},
       title={Parity in knot theory},
        date={2010},
        ISSN={0368-8666},
     journal={Mat. Sb.},
      volume={201},
      number={5},
       pages={65\ndash 110},
         url={http://dx.doi.org/10.1070/SM2010v201n05ABEH004089},
      review={\MR{2681114}},
}

\bib{Murakami-Sugishita-1984}{article}{
      author={Murakami, Hitoshi},
      author={Sugishita, Kouji},
       title={Triple points and knot cobordism},
        date={1984},
        ISSN={0289-9051},
     journal={Kobe J. Math.},
      volume={1},
      number={1},
       pages={1\ndash 16},
      review={\MR{784341}},
}

\bib{Murasugi-1965}{article}{
      author={Murasugi, Kunio},
       title={On a certain numerical invariant of link types},
        date={1965},
        ISSN={0002-9947},
     journal={Trans. Amer. Math. Soc.},
      volume={117},
       pages={387\ndash 422},
         url={https://doi-org.libaccess.lib.mcmaster.ca/10.2307/1994215},
      review={\MR{0171275}},
}

\bib{Nikonov-2016}{article}{
      author={Nikonov, Igor~M.},
       title={Weak parities and functional maps},
        date={2013},
        ISSN={2413-3639},
     journal={Sovrem. Mat. Fundam. Napravl.},
      volume={51},
       pages={123\ndash 141},
        note={{\it J. Math. Sci. (N.Y.)} {\bf 214} (2016), no. 5 699--717},
      review={\MR{3527973}},
}

\bib{Ozsvath-Szabo-2003}{article}{
      author={Ozsv\'ath, Peter},
      author={Szab\'o, Zolt\'an},
       title={Knot {F}loer homology and the four-ball genus},
        date={2003},
        ISSN={1465-3060},
     journal={Geom. Topol.},
      volume={7},
       pages={615\ndash 639},
         url={https://doi.org/10.2140/gt.2003.7.615},
      review={\MR{2026543}},
}

\bib{Ozsvath-Szabo-Stipsicz-2017}{article}{
      author={Ozsv\'ath, Peter~S.},
      author={Stipsicz, Andr\'as~I.},
      author={Szab\'o, Zolt\'an},
       title={Concordance homomorphisms from knot {F}loer homology},
        date={2017},
        ISSN={0001-8708},
     journal={Adv. Math.},
      volume={315},
       pages={366\ndash 426},
         url={https://doi.org/10.1016/j.aim.2017.05.017},
      review={\MR{3667589}},
}

\bib{Polyak}{article}{
      author={Polyak, Michael},
       title={Minimal generating sets of {R}eidemeister moves},
        date={2010},
        ISSN={1663-487X},
     journal={Quantum Topol.},
      volume={1},
      number={4},
       pages={399\ndash 411},
         url={http://dx.doi.org/10.4171/QT/10},
      review={\MR{2733246}},
}

\bib{Rasmussen-2010}{article}{
      author={Rasmussen, Jacob},
       title={Khovanov homology and the slice genus},
        date={2010},
        ISSN={0020-9910},
     journal={Invent. Math.},
      volume={182},
      number={2},
       pages={419\ndash 447},
         url={https://doi.org/10.1007/s00222-010-0275-6},
      review={\MR{2729272}},
}

\bib{Robertello-1965}{article}{
      author={Robertello, Raymond~A.},
       title={An invariant of knot cobordism},
        date={1965},
        ISSN={0010-3640},
     journal={Comm. Pure Appl. Math.},
      volume={18},
       pages={543\ndash 555},
         url={https://doi.org/10.1002/cpa.3160180309},
      review={\MR{0182965}},
}

\bib{Rushworth-2017}{misc}{
      author={Rushworth, William},
       title={Computations of the slice genus of virtual knots},
        date={2017},
         url={https://arxiv.org/pdf/1706.08279},
        note={\href{https://arxiv.org/pdf/1706.08279.pdf}{ArXiv/1706.08279}},
}

\bib{Seifert-1935}{article}{
      author={Seifert, Herbert},
       title={\"{U}ber das {G}eschlecht von {K}noten},
        date={1935},
        ISSN={0025-5831},
     journal={Math. Ann.},
      volume={110},
      number={1},
       pages={571\ndash 592},
         url={https://doi.org/10.1007/BF01448044},
      review={\MR{1512955}},
}

\bib{Silver-Williams-2006b}{article}{
      author={Silver, Daniel~S.},
      author={Williams, Susan~G.},
       title={Crowell's derived group and twisted polynomials},
        date={2006},
        ISSN={0218-2165},
     journal={J. Knot Theory Ramifications},
      volume={15},
      number={8},
       pages={1079\ndash 1094},
         url={http://dx.doi.org/10.1142/S0218216506004956},
      review={\MR{2275098 (2008i:57011)}},
}

\bib{Tristram-1969}{article}{
      author={Tristram, Andrew~G.},
       title={Some cobordism invariants for links},
        date={1969},
     journal={Proc. Cambridge Philos. Soc.},
      volume={66},
       pages={251\ndash 264},
      review={\MR{0248854}},
}

\bib{Trotter-1962}{article}{
      author={Trotter, Hale~F.},
       title={Homology of group systems with applications to knot theory},
        date={1962},
        ISSN={0003-486X},
     journal={Ann. of Math. (2)},
      volume={76},
       pages={464\ndash 498},
         url={https://doi-org.libaccess.lib.mcmaster.ca/10.2307/1970369},
      review={\MR{0143201}},
}

\bib{Turaev-2008-a}{article}{
      author={Turaev, Vladimir},
       title={Cobordism of knots on surfaces},
        date={2008},
        ISSN={1753-8416},
     journal={J. Topol.},
      volume={1},
      number={2},
       pages={285\ndash 305},
         url={http://dx.doi.org/10.1112/jtopol/jtn002},
      review={\MR{2399131}},
}

\end{biblist}
\end{bibdiv}

\end{document}